\documentclass{amsart}
\usepackage[T1]{fontenc}
\usepackage[utf8]{inputenc}
\usepackage[ margin=1in]{geometry}
\usepackage{graphicx} 
\usepackage{enumitem}
\usepackage{tabularx}
\usepackage{array}
\usepackage{comment}
\usepackage{amsmath}
\usepackage{amsfonts}
\usepackage{amsthm, amssymb}
\usepackage{thmtools}
\usepackage{booktabs}
\usepackage[dvipsnames]{xcolor}
\usepackage{bbm}
\usepackage{dirtytalk}
\usepackage{mathrsfs}
\usepackage{subcaption}
\usepackage{mathtools}
\usepackage{float}
\usepackage{tikz}
\usepackage{algorithm}
\usepackage[noend]{algpseudocode}
\usepackage{makecell}
\usepackage{longtable}
\usepackage{fancyhdr}
\usetikzlibrary{arrows.meta,decorations.markings, calc, positioning}



\newcommand{\man}{M}

\definecolor{cyan}{cmyk}{1, 0.4, 0, 0}

\newcommand{\HK}{\operatorname{HK}}
\newcommand{\PT}{\operatorname{PT}}

\usepackage[round]{natbib}
\usepackage[colorlinks=true,linkcolor=blue,citecolor=blue]{hyperref}
\usepackage[
        noabbrev,
        capitalise,
        nameinlink,
    ]
    {cleveref}

\newtheorem{thm}{Theorem}[section] 

\newtheorem{prop}[thm]{Proposition}
\newtheorem{lemma}[thm]{Lemma}
\newtheorem{asmpt}[thm]{Assumption}
\newtheorem{corollary}[thm]{Corollary}
\newtheorem{defn}[thm]{Definition}
\newtheorem{rmk}[thm]{Remark}

\crefname{thm}{theorem}{theorems}
\Crefname{thm}{Theorem}{Theorems}
\crefname{prop}{proposition}{propositions}
\Crefname{prop}{Proposition}{Propositions}

\title[Differential-Geometric Equivalence of HK and Cone-Wasserstein Spaces]{On the Differential-Geometric Equivalence of Hellinger-Kantorovich and Cone-Wasserstein Spaces}

\author{Tristan Luca Saidi}
\address{Department of Statistics and Data Science, Carnegie Mellon University}
\email{tsaidi@andrew.cmu.edu}

\author{Gonzalo Mena}
\address{Department of Statistics and Data Science, Carnegie Mellon University}
\email{gmena@andrew.cmu.edu}

\author{Florian Gunsilius}
\address{Department of Economics, Emory University}
\email{fgunsil@emory.edu}

\begin{document}

\begin{abstract}
    The Hellinger-Kantorovich (HK) space provides a natural geometry for nonnegative measures with varying total mass, but its differential-geometric structure is less well understood than that of the closely related Wasserstein space of probability measures. In this paper, we take a step toward resolving this issue. We show that the cone representation of the HK geometry via the Wasserstein metric preserves the local Riemannian geometry along a class of lifted geodesics. Specifically, we give a constructive procedure that produces a Wasserstein geodesic on the cone along which the HK Riemannian geometry is preserved pointwise, yielding an explicit isometry of tangent spaces between HK geodesics and their Wasserstein lifts. This connection makes many Wasserstein-geometric tools available for HK computations. Concretely, we use it to approximate parallel transport on HK space by lifting to the cone and applying recently developed Wasserstein parallel transport tools, circumventing the high-dimensional PDE arising from the HK covariant derivative. We also derive closed-form expressions for the covariant derivative and parallel transport on Euclidean metric cones, using the theory of warped-product manifolds. Finally, we present simulations illustrating the behavior of parallel geodesics in HK space, which reveal that the HK geometry couples spatial and mass variation through the geometry of the cone---a feature with nontrivial implications for applied use of the framework.
\end{abstract}

\maketitle


\section{Introduction}

Many modern data analytic problems involve operating with positive measures whose total mass is meaningful and may vary across time. Examples include problems involving imaging data with varying intensity, domain adaptation problems dealing with outliers, or inference problems in biology where cells proliferate and die. In such settings, classical optimal transport using probability measures is too rigid, as the Wasserstein metric captures displacement well but it enforces exact mass conservation. The Hellinger-Kantorovich distance (also known as the Wasserstein--Fisher--Rao distance) has been studied as it resolves this mismatch by combining transport and reaction. Seminal work demonstrated that the Hellinger--Kantorovich geometry enjoys many analogous properties and characterizations that Wasserstein geometry does -- in particular, Hellinger-Kantorovich geometry admits both a static entropy transport formulation and a dynamic continuity-reaction formulation \citep{liero2016optimal, liero2018optimal, kondratyev2016newoptimaltransportdistance, chizat2018interpolating, sejourne2023unbalanced, monsaingeon2021new}. 

The theory of optimal transport geometry has been developed extensively over the past decade. Computationally, scalable Sinkhorn-type algorithms have made unbalanced optimal transport tractable beyond toy problems, which has spurred its adoption in various application domains \citep{chizat2016scaling, sejourne2023unbalanced}. In particular, unbalanced optimal transport methods now play a central role in imaging and the analysis of biological dynamics, and have become standard tools in single-cell trajectory inference \citep{schiebinger2019optimal, zhang2024learning}. These methodological and application-driven developments make an understanding of the differential-geometric properties of the Hellinger-Kantorovich space even more valuable, which has been relatively understudied in comparison to its balanced counterpart. In the balanced setting, the geometry of the quadratic Wasserstein space has been rigorously and extensively developed. The Otto calculus in combination with the PDE formulations have provided a formal Riemannian structure on the Wasserstein space that supports gradient flows and second-order calculus \citep{ambrosio2005gradient, ambrosio2012user, gigli2012second, chewi2024statisticaloptimaltransport}. This Riemannian structure has been key in the development and analysis of methods for sampling, local linearization and parallel transport \citep{chewi2023log, cloninger2025linearized, cai2022linearized, saidi2026wassersteinparalleltransportpredicting}.

For the Hellinger--Kantorovich geometry, an analogous differential theory has only been partially studied. In particular, the cone representation of HK/WFR characterizes the \textit{metric} structure with Wasserstein distance over a suitable metric cone -- geometric properties of that cone have been studied in detail \citep{liero2016optimal, laschos2019geometric}. Beyond metric geometry, \cite{clancy2021interpolating} and \citet{clancy2022wasserstein} partially developed a Riemannian treatment of HK/WFR for applications in measure-valued splines, while others have derived explicit logarithmic and exponential maps for HK geometry in the pursuit of tangent space embeddings for data-analytic procedures \citep{cai2022linearized}. From a statistical perspective, recent work by \citet{ponnoprat2026minimaxoptimalestimationtransportgrowth} analyze the minimax properties of estimating transport-growth pairs, objects closely related to HK tangents, in the Gaussian-Hellinger geometry. Finally, \citet{gallouet2025regularity} show that the regularity of unbalanced optimal transport is inherited from the regularity of optimal transport, and they also show that the Ma-Trudinger-Wang condition for general costs on the associated metric cone implies the same condition for the cost on the original space.  Despite these developments, a basic obstruction still remains: the cone correspondence between Hellinger-Kantorovich geometry and Wasserstein geometry has only been exploited at the level of distances, \textit{geodesics} and cost functions, but higher-order differential objects of the Hellinger-Kantorovich space are still difficult to access both computationally and conceptually. 

The goal of this paper is to close that gap. In particular, we show that under suitable regularity conditions, the metric-cone representation of the Hellinger-Kantorovich geometry can be upgraded from a \textit{metric} equivalence to a local differential-geometric equivalence along an entire geodesic. Specifically, we construct an explicit lifting procedure that associates to any sufficiently regular Hellinger-Kantorovich geodesic a Wasserstein geodesic on the cone that admits explicit lifting and projection operators at the level of tangent objects that are isometric inverses. We show that this stronger equivalence allows one to import the tools of Wasserstein geometry to characterize and compute higher order differential objects for the Hellinger-Kantorovich space. In particular, we use this equivalence to tractably approximate Hellinger-Kantorovich parallel transport without solving its associated parallel transport PDE. 

Our contributions in this work are threefold. Firstly, we give an explicit lifting procedure that produces optimal cone lifts of suitably regular Hellinger-Kantorovich geodesics that preserves the local-differential geometric structure of the Hellinger-Kantorovich space.  Secondly, we instantiate this lifting procedure to leverage recently developed tools for Wasserstein geometry to compute Hellinger-Kantorovich parallel transport \textit{without} solving the PDE arising from the covariant derivative. Finally, we derive closed form expressions for the covariant derivative and parallel transport on Euclidean metric cones, which are of independent geometric interest. Conceptually, our results strengthen the utility of the cone representation in unbalanced transport by expanding the equivalence from one of metric structure to one of differential structure. 

The rest of the paper is organized as follows. In \Cref{sec: setup and background} we review the formal Riemannian structure of the Wasserstein and Hellinger-Kantorovich space, and in \Cref{sec: HK geometry via cone} we review the cone representation of Hellinger--Kantorovich geometry and introduce our lifting and projection operators. We then develop an explicit characteristic-based lifting procedure and prove its isometric and optimality properties. In \Cref{sec: HK parallel transport} we apply this framework to parallel transport, showing that the pullback cone Wasserstein covariant derivative and the Hellinger-Kantorovich covariant derivative coincide, and deriving an approximation theorem for Hellinger-Kantorovich parallel transport via cone transport. In \Cref{sec: cone parallel transport} we compute the cone covariant derivative and obtain closed-form formulas for parallel transport on Euclidean metric cones. The remaining sections contain proofs, implementation details, and simulations illustrating the properties of parallel geodesics in the Hellinger-Kantorovich space.

\section{Setup and Background} \label{sec: setup and background}

In this section, we describe the construction of Wasserstein space and its Riemannian structure, and we list key notation in \Cref{sec:notation}.  We will then use this construction to analogously characterize the Hellinger-Kantorovich geometry on general positive measures. The constructions described in this section have been described and studied in detail in \cite{clancy2021interpolating, chewi2024statisticaloptimaltransport, liero2018optimal, liero2016optimal, chizat2018interpolating, kondratyev2016newoptimaltransportdistance}. We remark that our ultimate goal in this work is to establish a stronger notion of equivalence between $(\mathfrak{M}_+^\Gamma(\Omega), \HK)$ and $(\mathcal{P}_2(\mathfrak{C}_\Omega), W_\mathfrak{C})$, where $\Omega$ is a compact subset of $\mathbb{R}^d$, and $\mathfrak{C}_\Omega$ is its associated metric cone. However, for the sake of generality, in this section we will present existing results regarding the Wasserstein geometry of $\mathcal{P}_2(\man)$, the space of probability measures with finite second moment on a $C^\infty$, boundaryless and complete Riemannian manifold $\man$. In subsequent sections, we will then instantiate $\man$ as an open and smooth extension of the cone $\mathfrak{C}_\Omega$ in order to characterize the geometry of $\mathfrak{M}_+^\Gamma(\Omega)$.

A minor technical point is worth making at the outset. The standard Riemannian-geometric description of the Wasserstein space is most cleanly stated when the underlying manifold $\man$ is smooth, complete and without boundary \citep{gigli2012second}; for the Hellinger-Kantorovich geometry, \citet{liero2016optimal} consider the space of positive measures over a compact subset $\Omega$ of $\mathbb{R}^d$. The latter construction can encounter a boundary issue, so throughout the Hellinger-Kantorovich portions of the following section we work on a localized class of measures contained in a compact subset $\Gamma \Subset \Omega^\circ$ to ensure that all measures are bounded away from $\partial\Omega$. We then choose an open neighborhood $U$ of $\Gamma$ such that $U \Subset \Omega^\circ $ and use test potentials $\varphi \in C^\infty_c(U)$ for our tangent space and weak solution characterizations.

\subsection{Optimal Transport} Define $\mathcal{P}_{2}(\man)$ to be the set of probability measures with finite second moment over a complete, connected, boundaryless and $C^{\infty}$ Riemannian manifold $\man$ with metric tensor $g$. The $2$-Wasserstein distance between two probability measures $\mu, \nu \in \mathcal{P}_{2}(\man)$ is defined by
\begin{equation}
    W_2(\mu, \nu) = \inf_{\gamma \in \Gamma_{\mu, \nu}}\left(\int_{\man \times \man} d_M(x,y)^2\,\gamma(dx, dy)\right)^{1/2} \label{eq: 2 wasserstein dist}
\end{equation}
where $\Gamma_{\mu, \nu}$ denotes the set of couplings of $\mu$ and $\nu$. The $2$-Wasserstein distance (which we will henceforth refer to as the Wasserstein distance) is indeed a metric, which renders $(\mathcal{P}_{2}(\man), W_2)$ a metric space. A key result is that of Brenier, who showed that for $\man = \mathbb{R}^d$, the optimal coupling takes the form $(X, \nabla \varphi(X)), X \sim \mu$ for some convex $\varphi$ when $\mu$ is absolutely continuous with respect to the Lebesgue measure.

\begin{thm}[Brenier]
\label{brenier}
    Let $\mu, \nu \in \mathcal{P}_2(\mathbb{R}^d)$ be probability measures such that $\mu$ has a density, and let $X \sim \mu$. If $\gamma^*$ is optimal for \cref{eq: 2 wasserstein dist} with $\man = \mathbb{R}^d$, then there exists a convex function $\varphi:\mathbb{R}^d \rightarrow \mathbb{R}$ such that $(X, \nabla\varphi(X))\sim \gamma^*$. 
\end{thm}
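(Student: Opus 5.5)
The plan is to follow the classical route through Kantorovich duality and $c$-cyclical monotonicity. I would use the quadratic cost $|x-y|^2$ and the identity $|x-y|^2 = |x|^2 + |y|^2 - 2\langle x,y\rangle$. Since $\mu,\nu\in\mathcal{P}_2(\mathbb{R}^d)$, the terms $\int|x|^2\,d\mu$ and $\int |y|^2\,d\nu$ are finite and independent of the coupling. Hence $\gamma^*$ is optimal for \cref{eq: 2 wasserstein dist} if and only if it maximizes the correlation $\int \langle x,y\rangle\,\gamma(dx,dy)$ over $\Gamma_{\mu,\nu}$. The problem therefore becomes showing that the support of any maximizing coupling lies in the graph of the subdifferential of a convex function.

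The first step is to show that $\operatorname{supp}\gamma^*$ is cyclically monotone. That is, for all $(x_1,y_1),\dots,(x_n,y_n)\in\operatorname{supp}\gamma^*$ and every permutation $\sigma$,
\begin{equation*}
\sum_{i=1}^n \langle x_i, y_i\rangle \ \ge\ \sum_{i=1}^n \langle x_i, y_{\sigma(i)}\rangle .
\end{equation*}
I would argue by contradiction. If the inequality failed, continuity would make it fail strictly on small product neighborhoods $U_i\times V_i$ of the points. Each such neighborhood has positive $\gamma^*$-mass, so I can remove a small amount of mass $\varepsilon$ from each (normalized restrictions of $\gamma^*$ to $U_i\times V_i$) and reinsert it with the second marginals permuted according to $\sigma$. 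This keeps both marginals fixed and strictly increases the correlation, contradicting optimality. This perturbation is the main technical obstacle: it requires measure-theoretic care (equalizing masses across the $n$ neighborhoods and taking products of the conditional pieces), but it is standard.

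The second step applies Rockafellar's theorem. A cyclically monotone set is contained in the graph of $\partial\varphi$ for some proper lower semicontinuous convex $\varphi$. Concretely, fix $(x_0,y_0)\in\operatorname{supp}\gamma^*$ and set
\begin{equation*}
\varphi(x)=\sup\Bigl\{\langle y_n, x-x_n\rangle+\dots+\langle y_0,x_1-x_0\rangle\Bigr\},
\end{equation*}
where the supremum runs over finite chains in the support. One must check that $\varphi$ is finite on the interior of the convex hull of the projection of the support, which contains $\operatorname{supp}\mu$ up to a null set. If needed, I would extend by $+\infty$ or restrict the domain suitably. The statement asks for $\varphi:\mathbb{R}^d\to\mathbb{R}$, and a finite-valued convex extension on the relevant region is enough $\mu$-a.e.

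The final step uses the density of $\mu$. By Alexandrov/Rademacher, a convex function is differentiable Lebesgue-almost everywhere on the interior of its domain. Moreover, the boundary of a convex set is Lebesgue-null. Since $\mu\ll \mathrm{Leb}$, $\varphi$ is therefore differentiable $\mu$-a.e., and there $\partial\varphi(x)=\{\nabla\varphi(x)\}$. So for $\gamma^*$-a.e. $(x,y)$ we have $y\in\partial\varphi(x)$, which forces $y=\nabla\varphi(x)$. Hence $\gamma^*=(\mathrm{id},\nabla\varphi)_\#\mu$, i.e. $(X,\nabla\varphi(X))\sim\gamma^*$ for $X\sim\mu$.
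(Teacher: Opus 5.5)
The paper does not prove this statement. It is quoted as classical background, so there is no paper proof to compare with. Your route (reduction to maximal correlation, cyclical monotonicity of $\operatorname{supp}\gamma^*$ by mass swapping, Rockafellar's potential, a.e.\ differentiability) is the standard Gangbo--McCann/Villani argument rather than Brenier's original duality proof, and its steps are sound.

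You should write out the step you leave as ``one must check''. For $(x,y)\in\operatorname{supp}\gamma^*$, appending $(x,y)$ to an arbitrary chain gives $\varphi(z)\ge\varphi(x)+\langle y,z-x\rangle$ for all $z$. Cyclical monotonicity gives $\varphi(x_0)=0$. Hence $\varphi(x)<\infty$ and $y\in\partial\varphi(x)$. So $\varphi$ is finite on the ($\sigma$-compact, hence Borel) projection of $\operatorname{supp}\gamma^*$ onto the first factor, which has full $\mu$-measure. Your boundary argument then yields $\mu(\operatorname{int}\operatorname{dom}\varphi)=1$. Rademacher alone suffices for the differentiability; Alexandrov is not needed.

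The real issue is your remark that ``a finite-valued convex extension on the relevant region is enough.'' Taken literally, with $\varphi:\mathbb{R}^d\to\mathbb{R}$ finite everywhere, the claim is false, so no such extension exists in general. Take $d=1$, $\mu$ uniform on $(0,1)$, $\nu=N(0,1)$, and let $F$ be the standard normal distribution function. The monotone plan $(X,F^{-1}(X))$ is optimal. However, any finite convex $\varphi$ on $\mathbb{R}$ satisfies $\varphi'_+(-1)\le\varphi'(x)\le\varphi'_-(2)$ at every differentiability point $x\in(0,1)$. This bound is incompatible with $\varphi'=F^{-1}$ on a set of full measure in $(0,1)$, because $F^{-1}$ is unbounded on any such set.

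What your argument actually proves, and what the correct form of Brenier's theorem asserts, is the following: there is a proper lower semicontinuous convex $\varphi:\mathbb{R}^d\to\mathbb{R}\cup\{+\infty\}$, finite and differentiable $\mu$-a.e., with $\gamma^*=(\operatorname{id},\nabla\varphi)_\#\mu$. You should state this conclusion explicitly rather than gesture at an extension. The finite-valued version does hold under an extra hypothesis such as $\operatorname{supp}\nu\subset\overline{B}(0,R)$. In that case your $\varphi$ is a supremum of affine functions with slopes of norm at most $R$ that is finite at $x_0$, hence finite and $R$-Lipschitz on all of $\mathbb{R}^d$.
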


This theorem guarantees that if the source measure has a density, then the optimal transport coupling can be written as the source measure $\mu$ and $\nabla \varphi_\#\mu$, the pushforward of the source under some convex function -- note that the pushforward of a measure $\mu$ under a map $T$ is simply the measure $(T_\#\mu) (B) = \mu(T^{-1}(B))$ for all measurable $B$. The function $\nabla\varphi$ from \Cref{brenier} is often referred to as the \textit{Brenier map}. Brenier's theorem was later generalized to Riemannian manifolds, which requires generalizing the notion of convexity and concavity to non-Euclidean settings. In particular, given a function $\psi: M \rightarrow \mathbb{R} \cup \{\pm \infty\}$ its \textit{infimal convolution} $\psi^c$ with a cost function $c$ is defined by 
\[\psi^c(y) = \inf_{x \in M}\left\{c(x,y) - \psi(x)\right\}.\]
We say $\psi$ is $c$-concave if and only if $\psi^{cc} = (\psi^c)^c = \psi.$

\begin{thm}[Brenier-McCann, \citet{mccann2001polar}]
\label{brenier-mcann}
    Let $(M, g)$ be a complete Riemannian manifold and let $\mu, \nu \in \mathcal{P}_2(\man)$ with $\mu \ll \operatorname{vol}_g$. Then there exists a $c$-concave function $\psi: \man \rightarrow \mathbb{R}$ with $c(x,y) = \frac 12d_M(x,y)^2$ such that the optimal plan (in the sense of \Cref{eq: 2 wasserstein dist}) is induced by a $\mu$-a.e. unique map
    \[T(x) = \exp_x\left(-\nabla\psi(x)\right), \quad \gamma^* = (\operatorname{id}, T)_\# \mu\]
    where $\nabla$ is the Riemannian gradient and $\exp_p(v)$ is the Riemannian exponential map. Indeed, when $\man = \mathbb{R}^d$ we recover the Brenier map $T(x) = x - \nabla \psi(x) = \nabla \varphi(x)$ where $\varphi(x) = \frac{1}{2}\|x\|_2^2-\psi(x)$ is convex. 
\end{thm}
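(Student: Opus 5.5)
The approach is McCann's original argument, which runs through Kantorovich duality and the differentiability of $c$-concave functions. First, by standard existence results (weak compactness of $\Gamma_{\mu,\nu}$ via tightness of the marginals, together with lower semicontinuity of $\gamma \mapsto \int c\, d\gamma$ since $c$ is continuous and nonnegative), an optimal plan $\gamma^*$ exists. Kantorovich duality for the cost $c(x,y)=\frac12 d_M(x,y)^2$ then gives a pair of potentials $(\psi,\psi^c)$, with $\psi$ $c$-concave, such that
\[\psi(x)+\psi^c(y)\le c(x,y)\ \text{everywhere},\qquad \psi(x)+\psi^c(y)=c(x,y)\ \ \gamma^*\text{-a.e.}\]
The finite second moments of $\mu$ and $\nu$ guarantee integrability of $\psi$ and $\psi^c$. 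Equivalently, $\operatorname{supp}\gamma^*$ is contained in the $c$-superdifferential $\partial^c\psi$.

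Second, I will show that $\psi$ is differentiable $\operatorname{vol}_g$-a.e., hence $\mu$-a.e. because $\mu\ll\operatorname{vol}_g$. The plan is to prove local semiconcavity of $\psi$ on the set where it is finite. Since $\psi(x)=\inf_y\{c(x,y)-\psi^c(y)\}$, it is an infimum of functions $x\mapsto \frac12 d_M(x,y)^2$. On a compact set $K$, the relevant $y$ can be restricted to a bounded set; this uses finiteness of $\psi$ together with the growth of $c$. The functions $\frac12 d_M(\cdot,y)^2$ with $y$ ranging over a bounded set are uniformly semiconcave on $K$, i.e. they have a uniform upper Hessian bound in charts, using comparison geometry and the fact that they are smooth away from the cut locus and only develop concave kinks there. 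An infimum of uniformly semiconcave functions is semiconcave, so Alexandrov/Rademacher in local coordinates gives a.e. differentiability.

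I expect this step to be the main obstacle. Establishing the uniform semiconcavity bound and the localization of minimizers requires care near the cut locus and on noncompact $M$. I would handle it via McCann's lemma that $d_M(\cdot,y)^2$ is semiconcave locally uniformly in $y$.

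Third, take a point $x$ where $\psi$ is differentiable and $(x,y)\in\partial^c\psi$. Then $z\mapsto c(z,y)-\psi(z)$ attains its minimum at $z=x$. Because $\psi$ is differentiable at $x$ and $c(\cdot,y)$ is superdifferentiable (semiconcave), $c(\cdot,y)$ must be differentiable at $x$, with $\nabla_x c(x,y)=\nabla\psi(x)$. Differentiability of $\frac12 d_M^2(\cdot,y)$ at $x$ forces $y$ not to be a cut point of $x$ with multiple minimizing geodesics. In that case the gradient is $-\exp_x^{-1}(y)$, so $y=\exp_x(-\nabla\psi(x))=:T(x)$. Hence $\gamma^*$ is concentrated on the graph of $T$, which gives $\gamma^*=(\operatorname{id},T)_\#\mu$.

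Uniqueness follows by convexity. If $T_1,T_2$ both induce optimal plans, then $\frac12(\gamma_1+\gamma_2)$ is optimal, so by the above it is also supported on a graph; this forces $T_1=T_2$ $\mu$-a.e. The Euclidean case is direct: $\exp_x(v)=x+v$, so $T(x)=x-\nabla\psi(x)=\nabla\varphi(x)$. Convexity of $\varphi=\frac12\|\cdot\|^2-\psi$ follows from $c$-concavity, because $\varphi(x)=\sup_y\{\langle x,y\rangle-\frac12\|y\|^2+\psi^c(y)\}$ is a supremum of affine functions. This recovers Brenier's theorem.
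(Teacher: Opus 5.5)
The paper gives no proof of this statement; it is quoted from McCann (2001). Your outline is McCann's argument:
\begin{itemize}
\item duality, giving $\gamma^*$ concentrated on $\partial^c\psi$;
\item a.e.\ differentiability of $\psi$;
\item the sub/superdifferential squeeze at $(x,y)\in\partial^c\psi$, which actually shows that \emph{every} minimizing geodesic from $x$ to $y$ has initial velocity $-\nabla\psi(x)$, so no separate cut-locus discussion is needed;
\item uniqueness by averaging two optimal plans;
\item the Legendre computation in $\mathbb{R}^d$.
\end{itemize}
These steps are correct. On a compact $M$, which is the setting McCann actually treats, $c$ is bounded and Lipschitz, so $\psi$ is real-valued and globally Lipschitz, Rademacher applies, and your sketch is complete.

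The gap is in your second step for noncompact $M$, which is the generality the statement claims. Boundedness of the near-minimizing $y$ does not hold ``on the set where $\psi$ is finite''. It holds on compact subsets of $D\triangleq\operatorname{int}\{\psi>-\infty\}$, where your growth argument works, but $\{\psi>-\infty\}$ need not be open.

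For instance, take $M=\mathbb{R}$, $\mu$ uniform on $[0,1]$, and $\nu$ the exponential law. The optimal map $T(x)=-\log(1-x)$ blows up as $x\to1^-$. The convex function $\varphi=\tfrac12x^2-\psi$ has $\varphi'=T$ a.e.\ on $(0,1)$, so it cannot be finite anywhere on $(1,\infty)$: by convexity it would then be finite, hence Lipschitz, near $1$, contradicting $\varphi'\to\infty$. Thus $\psi\equiv-\infty$ on $(1,\infty)$, while $\psi(1)$ is finite and the near-minimizers at $x=1$ escape to infinity.

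Integrability of $\psi$ only gives $\mu(\{\psi=-\infty\})=0$. You also need $\mu(\{\psi>-\infty\}\setminus D)=0$, i.e.\ that $\partial D$ is $\operatorname{vol}_g$-negligible; this is a second, essential use of $\mu\ll\operatorname{vol}_g$.
\begin{itemize}
\item In $\mathbb{R}^d$ this is the Lebesgue-nullity of the boundary of the convex set $\{\varphi<\infty\}$.
\item On a general complete manifold it is the theorem of Figalli and Gigli: the potential is locally semiconcave on a ``region of interest'' whose complement is $\mu$-null whenever $\mu$ does not charge $(\dim M-1)$-rectifiable sets.
\end{itemize}
McCann's uniform semiconcavity of $d_M^2(\cdot,y)$ does not supply this. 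You can sidestep it with truncated potentials $\psi_R=\inf_{d_M(y,y_0)\le R}\{c(\cdot,y)-\psi^c(y)\}$, but then $T$ is expressed through an approximate gradient of $\psi$ rather than $\nabla\psi$.

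The same example shows that the statement's ``$\psi:M\to\mathbb{R}$'' must be weakened to $\psi:M\to\mathbb{R}\cup\{-\infty\}$, finite and differentiable $\mu$-a.e.
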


This concludes the \textit{static} formulation of optimal transport. The dynamic formulation is an equivalent characterization of Wasserstein geometry that borrows ideas from the theory of fluid mechanics. 

\subsection{Dynamic Formulation and Unbalanced Transport}

As alluded to, an equivalent perspective on optimal transport comes from fluid mechanics, which arrives at the same metric structure through a different formulation. In particular, let $(v_t)_{t \geq 0}$ be a time dependent family of vector fields over $\man$, and consider the ODE $\dot X_t = v_t(X_t)$. Let $\mu_t$ denote the law of $X_t$, where $X_0 \sim \mu_0$ and $X_t$ evolves according to the ODE described previously. Then, the dynamics of $\mu_t$ obey the so-called \textit{continuity equation},
\begin{equation}
    \partial_t\mu_t + \operatorname{div}_g (\mu_t v_t) = 0 \label{eq: balanced continuity eqn}
\end{equation}
in the distributional (or weak) sense, where $\operatorname{div}_g$ is the Riemannian divergence. Distributional solutions to the continuity equation are described in \Cref{def: weak solutions}.  
\begin{defn}[Distributional Solutions to \Cref{eq: balanced continuity eqn}, \citet{ambrosio2012user}]
\label{def: weak solutions}
    We say that a family of pairs $(\mu_t, v_t)$ solves the continuity equation on $(0, 1)$ weakly (also known as in the distributional sense) if for any bounded and Lipschitz test function $\varphi \in C^1_c((0, 1)\times \man)$ we have
    \[\int_0^1\int_{\man} \partial_t\varphi \, d\mu_tdt + \int_0^1\int_{\man} \langle\nabla\varphi,  v_t\rangle_g \, d\mu_tdt = 0\]
    where $\nabla$ is the Riemannian gradient.
\end{defn}
It turns out, one can identify optimal transport maps with vector fields satisfying the continuity equation that are optimal in a certain sense. This gives rise to the \textit{dynamic} formulation of optimal transport. Before stating this connection, we need to define a notion of continuity on the curve of measures with which we will bridge the continuity equation and the Wasserstein distance. 
\begin{defn}[Weakly Continuous Curve of Measures]
    A weakly continuous curve of measures is a map $\mu: [0, 1] \rightarrow \mathfrak{M}(\man)$ 
    from the interval to the space of measures where the measures $\mu_t$ evolve continuously with respect to the weak topology. In particular, for every bounded and continuous test function $f$, the mapping
    \[t \mapsto \int f \,d\mu_t \quad \text{is continuous in $t$.}\] 
\end{defn}
With this definition of weak continuity of curves of measures, we are now in a position to bridge the continuity equation and the Wasserstein distance. This connection is due to the celebrated \textit{Benamou-Brenier} theorem, stated below.

\begin{thm}[Benamou-Brenier, \citet{ ambrosio2012user, chewi2024statisticaloptimaltransport}] \label{benamou-brenier}
    Let $\mu_0, \mu_1 \in \mathcal{P}_{2}(\man)$ be absolutely continuous with respect to $\operatorname{vol}_g$. Then 
    \begin{equation*}
        W_2^2(\mu_0, \mu_1) = \inf \left\{\int_{0}^1\|v_t\|^2_{L^2(\widetilde\mu_t)} dt \right\}
    \end{equation*}
    where the infimum is taken over all weakly continuous distributional solutions to the continuity equation $(\tilde \mu_t, v_t)$ such that $\widetilde \mu_0 = \mu_0$ and $\widetilde \mu_1 = \mu_1$. Moreover, if $M = \mathbb{R}^d$ then the optimal curve $(\mu_t)_{t\geq 0}$ is unique and is described by $X_t \sim \mu_t$, where $X_t = (1-t)X_0 + tX_1$ and $(X_0, X_1) \sim \gamma^* \in \Gamma_{\mu_0, \mu_1}$ with $\gamma^*$ being an optimal coupling.
\end{thm}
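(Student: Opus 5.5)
We prove the two inequalities separately, using the Brenier--McCann theorem (\Cref{brenier-mcann}) for existence of an optimal map.

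\textbf{Lower bound.} Let $(\widetilde\mu_t, v_t)$ be any weakly continuous distributional solution of the continuity equation with the prescribed endpoints and finite action. We would like to show
\[
W_2^2(\mu_0,\mu_1) \le \int_0^1 \|v_t\|^2_{L^2(\widetilde\mu_t)}\,dt .
\]

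1. For smooth, sufficiently regular $v_t$, consider the flow $\dot X_t = v_t(X_t)$ with $X_0 \sim \mu_0$. Uniqueness for the continuity equation with Lipschitz fields gives $X_t \sim \widetilde\mu_t$. Hence $(X_0, X_1)$ is a coupling of $\mu_0$ and $\mu_1$.

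2. Along each trajectory, the length bounds the distance, and Cauchy--Schwarz bounds the length:
\[
d_M(X_0,X_1)^2 \le \Big(\int_0^1 |\dot X_t|_g\,dt\Big)^2 \le \int_0^1 |v_t(X_t)|_g^2\,dt .
\]
Taking expectations and using Fubini yields the inequality above.

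3. For general (non-smooth) $v_t$, we replace the flow argument by the superposition principle (Ambrosio--Gigli--Savaré). It gives a probability measure on absolutely continuous curves with $\dot\gamma_t = v_t(\gamma_t)$ a.e. and time marginals $\widetilde\mu_t$. The same length estimate then applies curve by curve.

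\textbf{Upper bound (attainment).} By \Cref{brenier-mcann}, since $\mu_0 \ll \operatorname{vol}_g$, there is an optimal map $T(x) = \exp_x(-\nabla\psi(x))$.

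1. Define the displacement interpolation
\[
T_t(x) = \exp_x(-t\nabla\psi(x)), \qquad \mu_t = (T_t)_\#\mu_0 .
\]
Each curve $t\mapsto T_t(x)$ is a minimizing geodesic of constant speed $d_M(x,T(x))$.

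2. Define $v_t$ on the support of $\mu_t$ by $v_t(T_t(x)) = \dot T_t(x)$.

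3. The main obstacle is that $v_t$ must be well defined, which needs $T_t$ to be injective $\mu_0$-a.e. for $t<1$. On $\mathbb{R}^d$ this is immediate: $T_t = (1-t)\mathrm{id} + t\nabla\varphi$ is the gradient of a strictly convex function. On a manifold one invokes the non-crossing property of optimal geodesics (cyclical monotonicity plus McCann's interpolation results), giving injectivity and absolute continuity of $\mu_t$.

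4. A direct computation with test functions shows that $(\mu_t, v_t)$ solves the continuity equation weakly. Weak continuity follows from continuity of $t\mapsto T_t(x)$ and dominated convergence.

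5. The action equals the transport cost:
\[
\int_0^1 \|v_t\|^2_{L^2(\mu_t)}\,dt = \int d_M(x,T(x))^2\,d\mu_0 = W_2^2(\mu_0,\mu_1).
\]
So the infimum is attained and equals $W_2^2(\mu_0,\mu_1)$.

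\textbf{Uniqueness on $\mathbb{R}^d$.} Take any optimal curve with its superposition measure. Equality in step 2 of the lower bound forces two things: almost every curve is a constant-speed straight line, and its endpoint pair is an optimal coupling. By \Cref{brenier} the optimal coupling is unique, namely $(\mathrm{id},\nabla\varphi)_\#\mu_0$. Hence $X_t = (1-t)X_0 + tX_1$ with $(X_0,X_1)\sim\gamma^*$, which identifies $\mu_t$ uniquely.
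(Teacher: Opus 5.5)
The paper does not prove this statement. It is quoted as background from \citet{ambrosio2012user} and \citet{chewi2024statisticaloptimaltransport}, so there is no internal argument to measure yours against. Your proposal is correct and is the standard proof. You use superposition plus the length/Cauchy--Schwarz bound for $W_2^2\le\int_0^1\|v_t\|^2_{L^2(\widetilde\mu_t)}\,dt$, displacement interpolation for attainment, and the equality case of the lower bound for uniqueness on $\mathbb{R}^d$. Your equality analysis is right. Equality in $W_2^2(\mu_0,\mu_1)\le\int|\gamma_1-\gamma_0|^2\,d\eta$ makes $(e_0,e_1)_\#\eta$ optimal. Equality in both the triangle inequality and Cauchy--Schwarz forces $\dot\gamma_t=\gamma_1-\gamma_0$ for a.e.\ $t$.

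A few points should be tightened.

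\textbf{Superposition on a manifold.} The superposition principle in \citet{ambrosio2005gradient} (Theorem 8.2.1) is stated on $\mathbb{R}^d$. On a general complete $\man$, transfer it through a closed isometric embedding $\man\hookrightarrow\mathbb{R}^N$. The superposition curves stay in $\man$ because every time marginal does and $\man$ is closed, and their Euclidean length equals their Riemannian length.

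\textbf{Injectivity of $T_t$.} You flag this as the main obstacle, but it is not actually needed. Set $v_t\,\mu_t\triangleq(T_t)_\#(\dot T_t\,\mu_0)$, i.e.\ let $v_t(y)$ be the conditional average of $\dot T_t(x)\in T_y\man$ over $\{T_t(x)=y\}$. This still gives a distributional solution. Jensen then yields
\[
\int_0^1\|v_t\|^2_{L^2(\mu_t)}\,dt\le\int d_{\man}(x,T(x))^2\,d\mu_0=W_2^2(\mu_0,\mu_1).
\]
Combined with the lower bound, this gives equality without appealing to Riemannian non-crossing results.

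\textbf{Minimizing curves.} The curves $t\mapsto T_t(x)$ are minimizing with speed $d_{\man}(x,T(x))$ only for $\mu_0$-a.e.\ $x$, which suffices.

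\textbf{Uniqueness of $\gamma^*$.} \Cref{brenier} as written in the paper only asserts that every optimal plan is induced by a gradient map, not that the plan is unique. Add the midpoint argument: if $\gamma_1,\gamma_2$ are optimal, then so is $\tfrac12(\gamma_1+\gamma_2)$. That plan must also be induced by a map, which forces the two maps to agree $\mu_0$-a.e.
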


The Benamou-Brenier theorem (\Cref{benamou-brenier}) bridges the static and dynamic formulations of optimal transport, proving that they are indeed equivalent characterizations of optimal transport geometry. In classical balanced transport, however, it is only possible to transport a measure $\mu$ to a measure $\nu$ if they have the same total measure. In many applications, it can be useful to consider situations where total mass can change over time. The dynamical formulation stated above, which considers minimum cost transport between probability measures, can be augmented to allow for \textit{unbalanced} transport: transport where total mass can change. To do this, the \textit{continuity-reaction} equation has been proposed, which allows for the creation and destruction of mass. In particular, the \textit{continuity-reaction} equation (also known as the \textit{inhomogeneous} continuity equation) is given by
    \begin{equation}
    \partial_t\mu_t + \nabla \cdot (\mu_tv_t) = 4\beta_t\mu_t. \label{eq: unbalanced continuity eqn}
\end{equation}

\begin{defn}[Distributional Solutions to \Cref{eq: unbalanced continuity eqn}, \citet{liero2016optimal}] \label{def: weak solns cre}
    
    We say that a family of triplets $(\mu_t, v_t, \beta_t)$ solve the continuity-reaction equation on $(0,1)$ in the distributional sense if for any bounded and Lipschitz test function  $\varphi \in C^1_c((0,1)\times \Omega)$ we have
    \[\int_0^1\int_{\Omega}\left( \partial_t \varphi_t + \langle \nabla \varphi_t, v_t\rangle + 4\beta_t\varphi_t\right) \, d\mu_t dt = 0.\]
\end{defn}
At first glance, the weak characterization appears to have a sign flip for the reaction component $4\beta_t \varphi_t$. But one can see why this sign flip occurs by integrating the (sign-flipped) reaction continuity equation against a test function $\varphi_t \in C^\infty_c((0,1) \times \Omega)$ and applying integration by parts in time and space,
\begin{multline*}
    -\int \varphi_t \partial_t\mu_t - \int \varphi_t\,\nabla \cdot(\mu_t v_t) + \int 4\beta_t\varphi_t\mu_t \\= \left[\int \partial_t\varphi_t d\mu_t + \int \langle \nabla \varphi_t, v_t\rangle \, d\mu_t + \int 4\beta_t\varphi_t\,d\mu_t\right] - \int_0^1\int_{\partial\Omega}  \langle v_t, n\rangle\rho_t\varphi_t\, dS\,dt = 0
\end{multline*}
where $n$ is the surface normal and $\rho_t$ is the Lebesgue density of $\mu_t$. Note that boundary terms (in time) disappear due to the compact support of $\varphi_t$, but the spatial boundary terms do not. By characterizing the weak solutions by 
\[\int \partial_t\varphi_t d\mu_t + \int \langle \nabla \varphi_t, v_t\rangle \, d\mu_t + \int 4\beta_t\varphi_t\,d\mu_t = 0\]
we are implicitly encoding the no flux condition $\rho_t\langle v_t, n\rangle = 0$ on $\partial\Omega$. This continuity-reaction equation leads to an analogous notion of distance between general non-negative measures, commonly referred to as the \textit{Wasserstein-Fisher-Rao} (WFR) or the \textit{Hellinger-Kantorovich} (HK) distance. In fact, in \citet{liero2016optimal} it was shown that HK is indeed a metric on the space of non-negative Radon measures over $\Omega$. 

\begin{defn}[Hellinger-Kantorovich Distance, \cite{liero2016optimal, kondratyev2016newoptimaltransportdistance, chizat2018interpolating}]
    Let $\Omega\Subset \mathbb{R}^d$ and let $\mathfrak{M}_+(\Omega)$ denote the space of non-negative measures on $\Omega$. For any absolutely continuous $\mu_0, \mu_1 \in \mathfrak{M}_+(\Omega)$ we define the Hellinger-Kantorovich distance to be
    \begin{equation}
        \HK^2(\mu_0, \mu_1) = \inf\left\{\int_0^1 \left(\|\widetilde v_t\|_{L^2(\widetilde \mu_t)}^2 + 4\|\widetilde\beta_t\|_{L^2(\widetilde \mu_t)}^2\right)dt \right\}.
    \end{equation}
    where the infimum is taken over all weakly continuous distributional solutions to the continuity-reaction equation $(\widetilde \mu_t, \widetilde v_t, \widetilde \beta_t)$ such that $\widetilde \mu_0 = \mu_0$ and $\widetilde \mu_1 = \mu_1$.
\end{defn}

\subsection{Riemannian Structure of Wasserstein and Hellinger-Kantorovich}  The dynamic formulations of balanced and unbalanced optimal transport are key concepts that allow one to formalize the Riemannian structure on the space of measures and probability measures. As we will see, the continuity and continuity-reaction equations indicate that we can view the space of velocity fields (Wasserstein) and velocity-reaction fields (Hellinger-Kantorovich) as a vector space of infinitesimal perturbations to a measure.
\medskip

\noindent \textbf{The Tangent Space.} The tangent space in optimal transport geometry indeed is the vector space of infinitesimal perturbations to a measure. We will start with formalizing the Wasserstein tangent space. In doing so, one would find that to each perturbation of a measure one can associate an infinite number of vector fields that produce it, as adding a divergence-free field does not change the marginal behavior of $\mu_t$. Thus, we need to establish a selection principle. To do so, we will define a notion of a \say{derivative} in a general metric space.

\begin{defn}[Metric Derivative]
    Let $(\mathcal{X}, d)$ be a metric space and $(x_t)_{t \geq 0}$ be a curve in $\mathcal{X}$. The metric derivative of the curve at time $t$ is given by
    \begin{equation*}
        |\dot x|(t) \triangleq \lim_{s\rightarrow t}\frac{d(x_s, x_t)}{|s - t|}
    \end{equation*}
    provided that the limit exists. 
\end{defn}

Now, for a pair of probability measures $\mu, \nu \in \mathcal{P}_{2}(\man)$ where $\mu$ admits a density, we will write $T_{\mu \rightarrow \nu}$ for the Brenier (or Brenier-McCann) map from $\mu$ to $\nu$ and we will write $|\dot \mu|$ for the metric derivative of a curve in $\mathcal{P}_{2}(\man)$ with respect to the Wasserstein metric.

\begin{thm}[\citet{ambrosio2005gradient}]
    Let $(M, g)$ be a smooth and complete Riemannian manifold without boundary and let $(\mu_t)_{t\geq 0}$ be an absolutely continuous curve, i.e. $\mu_t$ admits a Riemannian density and $|\dot \mu_t|$ exists for all $t \geq 0$. Then for every family of vector fields $(v_t)_{t\geq 0}$ for which \cref{eq: balanced continuity eqn} holds, it holds that $|\dot \mu_t|(t) \leq \|v_t\|_{L^2(\mu_t)}$ for all $t \geq 0$. Moreover, there exists a unique family $(v_t)_{t\geq 0}$ such that \cref{eq: balanced continuity eqn} holds and for which $|\dot \mu_t|(t) = \|v_t\|_{L^2(\mu_t)}$ for every $t \geq 0$. This family is such that
    \begin{equation*}
        v_t = \lim_{h \rightarrow0^+}\left(h^{-1}\log_x(T_{\mu_t\rightarrow \mu_{t+h}}(x))\right) \qquad \text{ in $L^2(\mu_t)$}
    \end{equation*}
    and
    \begin{equation*}
        v_t = \arg\inf \left\{\|\tilde{v}_t\|_{L^2(\mu_t)}^2 \, \Big|\, \partial_t\mu_t + \operatorname{div}_g(\tilde v_t\mu_t) = 0\right\}
    \end{equation*}
    where $\log$ is the Riemannian logarithmic map, i.e. the inverse of the Riemannian exponential map.  
\end{thm}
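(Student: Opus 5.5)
The plan has three parts: bound the metric derivative from above for any admissible field, construct one field that attains the bound, and show that this field is unique and has the stated descriptions.

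\textbf{Upper bound.} Fix $t$ and $h>0$, and let $(v_t)$ be any family solving \cref{eq: balanced continuity eqn} for the absolutely continuous curve $(\mu_t)$.
\begin{itemize}
\item Restricting $(\mu_s,v_s)_{s\in[t,t+h]}$ and rescaling time gives an admissible competitor in \Cref{benamou-brenier}. Together with Jensen's inequality this yields
\[W_2^2(\mu_t,\mu_{t+h}) \le h\int_t^{t+h}\|v_s\|^2_{L^2(\mu_s)}\,ds.\]
\item Dividing by $h^2$ and letting $h\to0^+$ at Lebesgue points of $s\mapsto\|v_s\|^2_{L^2(\mu_s)}$ gives $|\dot\mu|(t)\le\|v_t\|_{L^2(\mu_t)}$.
\end{itemize}
Strictly, this holds for a.e.\ $t$. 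Reading "for all $t$" literally needs continuity in $t$, which I will assume under the stated absolute-continuity hypothesis.

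\textbf{Existence of an optimal field.} I will construct it as a limit of logarithms of transport maps.
\begin{itemize}
\item Since $\mu_t\ll\operatorname{vol}_g$, \Cref{brenier-mcann} gives a map $T_{\mu_t\to\mu_{t+h}}=\exp(-\nabla\psi_h)$ with
\[\big\|h^{-1}\log(T_{\mu_t\to\mu_{t+h}})\big\|_{L^2(\mu_t)}=W_2(\mu_t,\mu_{t+h})/h.\]
\item These difference quotients are bounded in $L^2(\mu_t;TM)$, because the metric derivative exists. Hence a subsequence converges weakly to some $v_t$.
\item Testing against $\varphi\in C_c^\infty(M)$, a Taylor expansion along geodesics gives
\[\int\varphi\,d\mu_{t+h}-\int\varphi\,d\mu_t=\int\langle\nabla\varphi,\log_x T\rangle\,d\mu_t+O(W_2^2)\]
(using $\log_x T = -\nabla\psi_h$). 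Dividing by $h$ shows $\frac{d}{dt}\int\varphi\,d\mu_t=\int\langle\nabla\varphi,v_t\rangle\,d\mu_t$, so $v_t$ solves the continuity equation.
\item Weak lower semicontinuity gives $\|v_t\|\le|\dot\mu|(t)$. Combined with the upper bound, equality holds.
\item Equality of norms upgrades weak convergence to strong $L^2$ convergence. The uniqueness step below then shows every subsequence has the same limit, so the full $h\to0^+$ limit exists in $L^2(\mu_t)$, as claimed.
\end{itemize}

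\textbf{Uniqueness and the argmin characterization.}
\begin{itemize}
\item Two solutions of the continuity equation at time $t$ differ by a field $w$ with $\operatorname{div}_g(w\mu_t)=0$.
\item The set of admissible fields at $\mu_t$ is therefore an affine translate of $\{w:\operatorname{div}(w\mu_t)=0\}$, which is a closed subspace of $L^2(\mu_t)$.
\item Strict convexity of the $L^2$ norm gives a unique minimizer. It is characterized as the element orthogonal to the divergence-free fields, i.e.\ the one lying in $\overline{\{\nabla\varphi:\varphi\in C_c^\infty\}}^{L^2(\mu_t)}$.
\item By the upper bound, every admissible field has norm at least $|\dot\mu|(t)$. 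The field constructed above attains this value, so it is the minimizer, and hence the unique metric-derivative-attaining family.
\item It lies in the gradient closure because each difference quotient $-\nabla\psi_h/h$ is a gradient, and the closure is preserved under $L^2$ limits.
\end{itemize}

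\textbf{Main obstacle.} The hardest step is the limit identification in the existence part on a curved $M$. The remainder estimate for $\varphi(\exp_x(u))-\varphi(x)-\langle\nabla\varphi,u\rangle$ must be uniform, and $\log_x$ must be well defined $\mu_t$-a.e., which requires avoiding the cut locus. I would first handle this using McCann's result that $T(x)$ is not in the cut locus of $x$ for $\mu_t$-a.e.\ $x$. Then compact support of $\varphi$ with bounded Hessian gives $|\text{remainder}|\le C|u|^2$, which integrates to $O(W_2^2)=O(h^2)$.
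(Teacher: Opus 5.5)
The paper gives no proof of this statement; it is quoted from \citet{ambrosio2005gradient} (Theorem~8.3.1 and Propositions~8.4.5--8.4.6 there; the Riemannian version is in \citet{gigli2012second}), so the comparison is with that source.

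\textbf{How the standard proof differs.} There the field is obtained by duality. One proves $\bigl|\frac{d}{dt}\int\varphi\,d\mu_t\bigr|\le|\dot\mu|(t)\,\|\nabla\varphi\|_{L^2(\mu_t)}$. This makes $\nabla_x\varphi\mapsto-\int\!\!\int\partial_t\varphi\,d\mu_t\,dt$ (for space-time test functions) a bounded functional on the closure of gradients in $L^2$ of the space-time measure $\int\mu_t\,dt$, and one takes its Riesz representative. That representative is automatically Borel in $(t,x)$, lies in $T_{\mu_t}\mathcal P_2(M)$, and has norm at most $|\dot\mu|(t)$. Only afterwards is it identified with $\lim_h h^{-1}\log_x T_{\mu_t\to\mu_{t+h}}$, by essentially your compactness argument. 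Building the field directly from optimal maps is a reasonable alternative. Your Taylor estimate, lower semicontinuity, and Hilbert-space uniqueness steps are fine.

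\textbf{The gap.} As organized, your existence step is circular. Your $v_t$ is a subsequential weak limit chosen separately for each $t$, so you do not yet have a Borel family solving the continuity equation in the sense of \Cref{def: weak solutions}. Yet you get $\|v_t\|=|\dot\mu|(t)$, hence strong convergence, by ``combining with the upper bound'', and later use ``every admissible field has norm at least $|\dot\mu|(t)$''. Both rely on your Lebesgue-point estimate, which concerns a measurable, locally integrable family on a time interval, not one field at one time. The subsequence-independence you then extract from uniqueness is exactly what you would need to have a family in the first place. Your final bullet inherits the problem, since ``closure is preserved under $L^2$ limits'' uses the strong convergence just obtained.

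\textbf{How to repair it.} The circle can be broken at a fixed time, but this costs two things you have not supplied.
\begin{itemize}
\item Closed subspaces are weakly closed, so every weak subsequential limit lies in $T_{\mu_t}\mathcal P_2(M)$, provided each $\nabla\psi_h$ does. Since $s\mapsto\int\varphi\,d\mu_s$ is absolutely continuous for Lipschitz $\varphi$, for a.e.\ $t$ it is differentiable simultaneously for a countable $C^1$-dense family of $\varphi$. This fixes the pairings $\int\langle\nabla\varphi,v_t\rangle\,d\mu_t$, hence $v_t$ itself, independently of the subsequence.
\item This makes $\nabla\psi_h\in\overline{\{\nabla\varphi:\varphi\in C_c^\infty(M)\}}^{L^2(\mu_t)}$ load-bearing, and ``it is a gradient'' does not give it. The function $\psi_h$ is only locally semiconcave and not compactly supported, so you need a cutoff-and-regularization argument using $\mu_t\ll\mathrm{vol}_g$ and the second-moment bound (cf.\ Section~8.5 of \citet{ambrosio2005gradient} in $\mathbb{R}^d$).
\item You must then still prove joint measurability of $(t,x)\mapsto v_t(x)$ before the upper bound yields $\|v_t\|=|\dot\mu|(t)$ a.e.
\end{itemize}

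\textbf{On ``for all $t$''.} Do not add a continuity assumption to reach this. The literal statement is false, because altering $v$ at a single time $t_0$ does not affect the distributional continuity equation. Setting $v_{t_0}=0$ breaks the inequality; if $v_{t_0}\neq0$, replacing it by $-v_{t_0}$ breaks uniqueness. A repaired version of your argument proves the statement for Lebesgue-a.e.\ $t$, with uniqueness up to null sets.
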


Note that this result -- coupled with Brenier's theorem -- indicates that the velocity field that coincides (in an $L^2(\mu_t)$ sense) with the minimal norm solution and the metric derivative of the path is a limit of gradients. This will be our selection principle: we will choose the representative for a perturbation $\partial_t\mu_t$ to be the minimal-norm velocity field generating it. Now we can formally define the Wasserstein tangent space. 

\begin{defn}[Wasserstein Tangent Space, \citet{ambrosio2012user}]
    Let $\mu \in \mathcal{P}_2(\man)$. We define the tangent space to $\mathcal{P}_2(\man)$ at $\mu$ to be 
    \begin{equation}
        T_\mu \mathcal{P}_2(\man) = \overline{\left\{\nabla \varphi \, | \, \varphi \in C_c^{\infty}(\man)\right\}}^{L^2(\mu)}
    \end{equation}
    where $\overline{\{\cdot \}}^{L^2(\mu)}$ denotes the $L^2(\mu)$ closure and $C_c^{\infty}$ denotes the set of compactly supported and smooth maps. We also endow $T_\mu \mathcal{P}_2(\man)$ with the metric tensor
    \begin{equation}
        \langle \nabla \varphi_1, \nabla \varphi_2\rangle_{\mu} \triangleq \int_\man \langle\nabla\varphi_1, \nabla\varphi_2 \rangle_g \,d\mu 
    \end{equation}
    where $\langle \cdot, \cdot \rangle_g$ is the inner product defined by the metric tensor $g$.
\end{defn}

We are now in a position to introduce analogous structure on the Hellinger Kantorovich space. Starting from the continuity-reaction equation, one can show that the minimum norm solution is unique and is the gradient of a potential. We can then use this to construct the tangent space and the metric tensor as we did for the Wasserstein space.
\begin{prop}[\citet{clancy2021interpolating}, Proposition 27]
    Suppose $(\mu_t, v_t, \beta_t)$ satisfy \Cref{def: weak solns cre}. Then there exists a unique and minimal norm solution to the problem,
    \begin{equation*}
        (\overline v_t, \overline \beta_t) = \arg\inf\left\{\|\tilde v_t\|_{L^2(\mu_t)}^2 + 4\|\tilde \beta_t\|_{L^2(\mu_t)}^2  \, \Big| \, \nabla \cdot (\tilde v_t \mu_t) - 4\tilde \beta_t \mu_t = \nabla \cdot (v_t\mu_t) - 4\beta_t\mu_t\right\}
    \end{equation*}
    with $(\overline v_t, \overline \beta_t) \in \overline{\left\{(\nabla \varphi, \varphi) \, | \, \varphi \in C^{\infty}_c(\mathbb{R}^d)\right\}}^{L^2(\mu_t) \times L^2(\mu_t)}$.
\end{prop}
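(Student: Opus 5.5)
Fix $t$ and write $\mu=\mu_t$. The plan is to recast the problem as the minimum-norm element of an affine closed subspace of a Hilbert space, and then identify the minimizer by an orthogonal-projection argument.

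\textbf{The Hilbert space.} Let $H = L^2(\mu;\mathbb{R}^d)\times L^2(\mu)$ with the weighted inner product
\[
\langle (v,\beta),(w,\alpha)\rangle_H = \int \langle v,w\rangle\,d\mu + 4\int \beta\alpha\,d\mu .
\]
The objective is exactly $\|(\tilde v,\tilde\beta)\|_H^2$.

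\textbf{The constraint in weak form.} Following \Cref{def: weak solns cre}, the constraint $\nabla\cdot(\tilde v\mu)-4\tilde\beta\mu = \nabla\cdot(v\mu)-4\beta\mu$ should be read as
\[
\int \big(\langle\nabla\varphi,\tilde v - v\rangle + 4\varphi(\tilde\beta-\beta)\big)\,d\mu = 0 \quad\text{for all } \varphi\in C_c^\infty .
\]
Equivalently, $(\tilde v,\tilde\beta)-(v,\beta)$ is $H$-orthogonal to every pair $(\nabla\varphi,\varphi)$.

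Define
\[
V = \overline{\{(\nabla\varphi,\varphi):\varphi\in C_c^\infty\}}^{H}.
\]
The feasible set is then the affine subspace $(v,\beta)+V^\perp$. It is nonempty because it contains $(v,\beta)$ itself. For this we need $v_t,\beta_t\in L^2(\mu_t)$, which I assume holds for a.e.\ $t$, as implicit in finiteness of the HK action.

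\textbf{Existence and uniqueness.} The feasible set is closed and convex: $V^\perp$ is closed, being an orthogonal complement. By the Hilbert projection theorem it has a unique element of minimal norm, namely the projection of $0$ onto $(v,\beta)+V^\perp$.

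Writing $(v,\beta)=P_V(v,\beta)+P_{V^\perp}(v,\beta)$, this minimizer is
\[
(\overline v,\overline\beta) = P_V(v,\beta).
\]
To verify: every feasible point has the form $P_V(v,\beta)+w$ with $w\in V^\perp$, and
\[
\|P_V(v,\beta)+w\|^2 = \|P_V(v,\beta)\|^2+\|w\|^2 .
\]
Thus the norm is minimized exactly at $w=0$. This also shows $(\overline v,\overline\beta)\in V$, which is the claimed closure. The closures in the weighted norm and in the plain $L^2\times L^2$ norm coincide, since the two norms are equivalent.

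\textbf{Where care is needed.} I expect the only real work to be in setting up the weak formulation correctly. Two points need attention.
\begin{itemize}
\item The constraint must be exactly orthogonality to generators of $V$. The factor $4$ in the reaction term matches the weight $4$ in the inner product, and this matching is what makes the pairing work.
\item The test functions must be admissible. The paper uses $\varphi\in C^\infty_c(U)$ with measures supported in $\Gamma\Subset U$. Since $\mu$ lives on $\Gamma$, testing with $C_c^\infty(\mathbb{R}^d)$ is equivalent to testing with $C_c^\infty(U)$ after multiplying by a cutoff equal to $1$ near $\Gamma$. So both give the same $V$.
\end{itemize}

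\textbf{Measurability in $t$.} Uniqueness holds for a.e.\ $t$ pointwise, and measurability in $t$ of $t\mapsto(\overline v_t,\overline\beta_t)$ would follow from the linearity and continuity of the projection. This is not needed for the statement as posed.
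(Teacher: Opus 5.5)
Your proof is correct. You read the constraint as $H$-orthogonality of $(\tilde v_t-v_t,\tilde\beta_t-\beta_t)$ to every pair $(\nabla\varphi,\varphi)$, and the factor $4$ in the reaction term matching the weight $4$ in the norm is exactly what makes this work. The feasible set is then $(v_t,\beta_t)+V^\perp$, and the Pythagorean identity identifies the unique minimizer as $P_V(v_t,\beta_t)\in V$. You are also right to flag that $(v_t,\beta_t)\in L^2(\mu_t)$, or at least some $L^2$ feasible pair, must be assumed for the problem to be well posed.

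The paper gives no proof of its own and simply imports the result from \citet{clancy2021interpolating}. Your orthogonal-projection argument is the standard route to such minimal-norm statements, the HK analogue of the minimal-norm velocity characterization in \citet{ambrosio2005gradient}.
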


Note that \citet{clancy2021interpolating} formulates the HK tangent space and covariant derivative on $\mathfrak{M}_+(\mathbb{R}^d)$, while \citet{liero2016optimal} work on ($\mathfrak{M}_+(\Omega)$, HK) -- despite this discrepancy, the two descriptions coincide on any class of curves supported in a fixed interior region $\Gamma \Subset \Omega^\circ$. We now make this localization precise. Let $\Gamma \Subset U \Subset \Omega^\circ$
with $U$ open and define
\[
\mathfrak{M}_+^\Gamma(\Omega)
\triangleq
\{\mu\in \mathfrak{M}_+(\Omega): \operatorname{supp}\mu\subset \Gamma\}.
\]
As discussed earlier, throughout this section we work on the localized class $\mathfrak{M}_+^\Gamma(\Omega)$ and use test potentials in $C^\infty_c(U)$ for a fixed open neighborhood $U$ of $\Gamma$ contained in $\Omega^\circ$.
 
\begin{defn}[Hellinger-Kantorovich Tangent Space]
    For $\mu \in \mathfrak{M}_+^\Gamma(\Omega)$, we define the tangent space to be 
    \begin{equation}
        T_\mu \mathfrak{M}_+^{\Gamma} = \overline{\left\{(\nabla \varphi, \varphi) \, | \, \varphi \in C_c^{\infty}(U)\right\}}^{L^2(\mu) \times L^2(\mu)}.
    \end{equation}
    We also endow $T_\mu \mathfrak{M}_+^{\Gamma}$ with the metric tensor
    \begin{equation}
        \langle s_1,  s_2\rangle_{\mu} \triangleq \int \langle v_1, v_2\rangle + 4\beta_1\beta_2 \,  d\mu
    \end{equation}
    for any $s_1, s_2\in T_\mu \mathfrak{M}_+^{\Gamma}$, with $s_i = (v_i, \beta_i)$. 
\end{defn}

\noindent \textbf{Connections and the Covariant Derivative.} 
On an abstract manifold $(\man, g)$ that isn't embedded in an ambient space, we have no obvious way to compare vectors in the tangent space at two points $p, q \in \man$, $p \neq q$. Therefore, we need a way of connecting the separate vector spaces $T_p\man$ and $T_q\man$ when $p \neq q$. One can achieve this by defining a rule $\nabla$ for differentiating vector fields against each other on $\man$ in a way that preserves the structure of the metric $g$.

\begin{defn}[Affine connections and the covariant derivative, \citet{lee2018introduction}]
    An affine connection on a smooth manifold \(\man\) is a rule \(\nabla\) that assigns to each pair of smooth vector fields \(X,Y\) another smooth vector field \(\nabla_XY\), satisfying the following properties. For all smooth vector fields \(X,Y,Z\), smooth functions \(f,h\in C^\infty(\man)\), and constants \(a,b\in\mathbb R\),
    \begin{enumerate}
        \item \textit{\(C^\infty(\man)\)-linearity in the first argument:} $\nabla_{fX+hZ}Y=f\nabla_XY+h\nabla_ZY.$
        \item \textit{\(\mathbb R\)-linearity in the second argument:} $\nabla_X(aY+bZ)=a\nabla_XY+b\nabla_XZ.$
        \item \textit{Leibniz rule in the second argument:} $\nabla_X(fY)=X(f)Y+f\nabla_XY,$
        
    \end{enumerate}
    where \(X(f)=g(X,\nabla f)\) is the derivative of \(f\)
    in the direction \(X\) and $\nabla f$ is the Riemannian gradient of $f$. The vector field \(\nabla_XY\) is called the covariant derivative of \(Y\)
    in the direction \(X\).
\end{defn}

While this gives us a way to connect tangent spaces, the covariant derivative might distort the geometry induced by the metric $g$ -- i.e. it might not be \say{metric compatible}. A fundamental result of Riemannian geometry, however, is that for any Riemannian manifold $(\man, g)$ there exists a \textit{unique} connection that is torsion free and is metric compatible. 

\begin{thm}[The fundamental theorem of Riemannian Geometry, \cite{petersen2006riemannian}]
    If $\man$ is endowed with a Riemannian metric $g$, then there exists a unique connection $\nabla$ called the \textit{Levi-Civita connection} that is
    \begin{enumerate}
        \item \textit{Torsion free:} $\nabla_XY - \nabla_YX = [X,Y]$, where $[X,Y]$ is the Lie bracket of $X$ and $Y$. 
        \item \textit{Metric compatible:} $Xg(Y, Z) = g(\nabla_XY, Z) + g(Y, \nabla_XZ).$ 
    \end{enumerate}
\end{thm}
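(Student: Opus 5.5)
The plan is the classical two-step argument: derive an explicit formula for any connection with the two properties, which gives uniqueness, then take that formula as a definition and check it is a torsion-free, metric-compatible connection, which gives existence.

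\emph{Uniqueness via the Koszul formula.} Let $\nabla$ be any torsion-free, metric-compatible connection and let $X,Y,Z$ be smooth vector fields. Write metric compatibility three times, cyclically permuting the arguments:
\begin{align*}
Xg(Y,Z) &= g(\nabla_XY,Z)+g(Y,\nabla_XZ),\\
Yg(Z,X) &= g(\nabla_YZ,X)+g(Z,\nabla_YX),\\
Zg(X,Y) &= g(\nabla_ZX,Y)+g(X,\nabla_ZY).
\end{align*}
Add the first two identities and subtract the third. Then use torsion-freeness, in the form $\nabla_AB-\nabla_BA=[A,B]$, to replace each difference of covariant derivatives by a Lie bracket. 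This yields the Koszul formula
\begin{multline*}
2g(\nabla_XY,Z) = Xg(Y,Z)+Yg(Z,X)-Zg(X,Y)\\
+g([X,Y],Z)-g([Y,Z],X)+g([Z,X],Y).
\end{multline*}
The right-hand side involves only $g$ and Lie brackets. Since $g$ is nondegenerate, the formula determines $\nabla_XY$ completely, so any two such connections coincide.

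\emph{Existence.} Fix $X,Y$ and define $\nabla_XY$ as the unique vector field whose pairing with every $Z$ equals one half of the right-hand side above; this uses the musical isomorphism given by $g$. For this to produce a vector field, the right-hand side must be $C^\infty(\man)$-linear in $Z$. One checks this by replacing $Z$ with $fZ$: the terms $-Zg(X,Y)$ and the two brackets involving $Z$ produce derivatives of $f$, namely $X(f)g(Y,Z)$ and $Y(f)g(Z,X)$ terms, and these cancel in pairs. The same kind of bookkeeping verifies the connection axioms:
\begin{enumerate}
\item linearity over functions in $X$, by substituting $fX$;
\item $\mathbb R$-linearity in $Y$, which is immediate;
\item the Leibniz rule, by substituting $fY$, where the terms $X(f)g(Y,Z)$ survive with the correct factor of $2$.
\end{enumerate}
Finally, the two required properties fall out of symmetry:
\begin{enumerate}
\item \emph{Torsion-freeness:} compute $2g(\nabla_XY-\nabla_YX,Z)$; the symmetric terms cancel and what remains is $2g([X,Y],Z)$.
\item \emph{Metric compatibility:} add the Koszul expressions for $2g(\nabla_XY,Z)$ and $2g(\nabla_XZ,Y)$; everything cancels except $2Xg(Y,Z)$.
\end{enumerate}

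The only genuinely delicate step is the tensoriality check in $Z$ and the Leibniz check in $Y$. These are where the bracket identity $[X,fY]=f[X,Y]+X(f)Y$ must be tracked carefully with signs. I would carry out the $fZ$ computation in full and treat the others as analogous.
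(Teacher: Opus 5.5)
Your proposal is correct and is the standard argument. The paper gives no proof of its own and simply cites Petersen, where the Levi-Civita connection is likewise characterized and constructed through the Koszul formula, written there as $2g(\nabla_YX,Z)=(L_Xg)(Y,Z)+(d\theta_X)(Y,Z)$ with $\theta_X=g(X,\cdot)$; this is your formula with $X$ and $Y$ interchanged.

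One bookkeeping slip to fix when you write out the $fZ$ check: $-(fZ)g(X,Y)=-f\,Zg(X,Y)$ produces no derivative of $f$. The terms $X(f)g(Y,Z)$ and $Y(f)g(Z,X)$ actually come from $Xg(Y,fZ)$ and $Yg(fZ,X)$, and they are cancelled by the terms from the brackets $[fZ,X]$ and $[Y,fZ]$, respectively.
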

In the case of $(\mathcal{P}_2(\man), W_2)$ and $(\mathfrak{M}_+^{\Gamma}(\Omega), \HK)$ we can explicitly construct the covariant derivative, but one needs to manually verify that they are torsion free and metric compatible as neither spaces are finite-dimensional Riemannian manifolds.

\begin{prop}[Wasserstein Covariant Derivative, \citet{clancy2021interpolating, gigli2012second}]
\label{Wasserstein covariant deriv}
    Let $\man$ be a $C^\infty$, complete and boundaryless manifold endowed with its Levi-Civita connection $\nabla^\man$. Let $(\mu_t)_{t\geq 0}$ be a curve through $\mathcal{P}_2(\man)$ with tangent field $\nabla \varphi_t$ solving \cref{eq: balanced continuity eqn}, and therefore driving the dynamics of $\mu_t$. Also let $v_t$ be an absolutely continuous tangent vector field along $\mu_t$ and let $\Pi_{\mu_t}$ be the orthogonal projection onto $T_{\mu_t}\mathcal{P}_2(\man)$ in $L^2(\mu_t)$. Then the differential operator $\nabla_{(\nabla\varphi_t)}^{W_2}$ given by
    \begin{equation*}
        \mathbf{D}_t^{W_2}v_t = \partial_t v_t + \nabla^{\man}_{( \nabla \varphi_t)} v_t, \qquad \nabla_{(\nabla\varphi_t)}^{W_2}v_t = \Pi_{\mu_t}\left(\mathbf{D}^{W_2}_tv_t \right)
    \end{equation*}
    is a valid covariant derivative, is torsion free, and is metric compatible. The operator $\mathbf{D}_t^{W_2}$ is referred to as the total derivative. 
\end{prop}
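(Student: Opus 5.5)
The plan is to verify the three claims in turn: that $\nabla^{W_2}$ is a covariant derivative along the curve, that it is metric compatible, and that it is torsion free. Throughout I work formally with smooth, compactly supported potentials, so that $v_t=\nabla\psi_t$ with $\psi_t\in C_c^\infty(\man)$. The general case of the $L^2(\mu_t)$-closure then follows by density, using that $\Pi_{\mu_t}$ is a continuous linear projection.

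\textbf{Covariant derivative axioms.} $\mathbb R$-linearity of $v\mapsto \Pi_{\mu_t}(\partial_t v+\nabla^\man_{\nabla\varphi_t}v)$ is immediate. For the Leibniz rule along the curve, take a scalar $f(t)$ and compute
\[
\mathbf D_t^{W_2}(fv_t)=\dot f v_t+f\mathbf D^{W_2}_t v_t .
\]
Projecting and using $\Pi_{\mu_t}v_t=v_t$ gives the Leibniz rule. Linearity in the direction holds because $\nabla^\man_X$ is $C^\infty$-linear in $X$ pointwise.

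\textbf{Metric compatibility.} This is the core computation. For tangent fields $u_t,w_t$ along $\mu_t$, I differentiate $t\mapsto\int\langle u_t,w_t\rangle_g\,d\mu_t$ using the weak continuity equation of \Cref{def: weak solutions}, with test function $\langle u_t,w_t\rangle_g$. This yields
\[
\frac{d}{dt}\int\langle u_t,w_t\rangle_g\,d\mu_t=\int\big(\partial_t\langle u_t,w_t\rangle_g+\langle\nabla\varphi_t,\nabla\langle u_t,w_t\rangle_g\rangle_g\big)\,d\mu_t .
\]
Metric compatibility of the Levi-Civita connection $\nabla^\man$ lets me write $\nabla\varphi_t(\langle u,w\rangle_g)=\langle\nabla^\man_{\nabla\varphi_t}u,w\rangle_g+\langle u,\nabla^\man_{\nabla\varphi_t}w\rangle_g$. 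The right-hand side therefore equals
\[
\int\langle\mathbf D_tu_t,w_t\rangle_g\,d\mu_t+\int\langle u_t,\mathbf D_tw_t\rangle_g\,d\mu_t .
\]
Since $w_t$ lies in $T_{\mu_t}\mathcal P_2(\man)$ and $\Pi_{\mu_t}$ is the $L^2(\mu_t)$-orthogonal projection, I may replace $\mathbf D_tu_t$ by $\Pi_{\mu_t}\mathbf D_tu_t$ in the first term, and symmetrically in the second. This is exactly metric compatibility. The delicate point is justifying the use of a time-dependent, non-compactly-supported test function. I would first cut off in time, then use the compact support of the smooth potentials in space.

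\textbf{Torsion freeness.} I take a two-parameter family $\mu_{s,t}$ with gradient velocity fields $\nabla\varphi$ in $t$ and $\nabla\psi$ in $s$. The claim is that $\nabla^{W_2}_{\partial_t}\nabla\psi=\nabla^{W_2}_{\partial_s}\nabla\varphi$. By the finite-dimensional identity $\nabla^\man_{\nabla\varphi}\nabla\psi=\nabla^2\psi(\nabla\varphi)$ and symmetry of the Hessian, the difference of the total derivatives is
\[
\partial_t\nabla\psi-\partial_s\nabla\varphi+[\nabla\varphi,\nabla\psi].
\]
The first two terms form the gradient $\nabla(\partial_t\psi-\partial_s\varphi)$, which is already tangent. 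I expect the main obstacle to be the remaining step: the compatibility of the two continuity equations ($\partial_s\partial_t\mu=\partial_t\partial_s\mu$) must force the projection of this whole expression to vanish. I would handle it by pairing with an arbitrary $\nabla\chi$ and computing $\partial_s\partial_t\int\chi\,d\mu$ in both orders. Equality of mixed partials converts this into
\[
\int\langle\nabla\chi,\nabla(\partial_t\psi-\partial_s\varphi)+[\nabla\varphi,\nabla\psi]\rangle\,d\mu=0 .
\]
This would be carried out following \citet{gigli2012second}.
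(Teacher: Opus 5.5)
The paper does not prove this proposition; it imports it from \citet{gigli2012second} and \citet{clancy2021interpolating}, so your proposal is judged on its own. Your formal identities are the standard Otto-calculus ones, and they are correct.
\begin{itemize}
\item \textbf{Metric compatibility.} Testing the continuity equation against $\langle u_t,w_t\rangle_g$, then inserting $\Pi_{\mu_t}$ because $w_t$ is tangent, gives the identity.
\item \textbf{Torsion.} The bracket $\nabla^\man_{\nabla\varphi}\nabla\psi-\nabla^\man_{\nabla\psi}\nabla\varphi=[\nabla\varphi,\nabla\psi]$ comes from torsion-freeness of $\nabla^\man$, not from Hessian symmetry. Hessian symmetry is used elsewhere: when you expand $\partial_s\partial_t\int\chi\,d\mu$ in both orders, the terms $\nabla^2\chi(\nabla\psi,\nabla\varphi)$ cancel by symmetry of $\nabla^2\chi$. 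What remains is exactly your final identity.
\end{itemize}

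\textbf{The gap.} The problem is your opening claim that the general absolutely continuous case ``follows by density, using that $\Pi_{\mu_t}$ is a continuous linear projection.'' Continuity of $\Pi_{\mu_t}$ is beside the point. The map you must extend is $v\mapsto\Pi_{\mu_t}(\partial_t v_t+\nabla^\man_{\nabla\varphi_t}v_t)$, which differentiates in $t$ and $x$ and is unbounded on $L^2(\mu_t)$. Knowing $\nabla\psi^n_t\to v_t$ in $L^2(\mu_t)$ for each $t$ gives no control of $\mathbf{D}^{W_2}_t\nabla\psi^n_t$ or of $\frac{d}{dt}\langle\nabla\psi^n_t,\nabla\chi^n_t\rangle_{\mu_t}$. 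So neither the Leibniz rule nor metric compatibility passes to the limit. More basically, for a field that lies only in $L^2(\mu_t)$ on moving supports, $\partial_t v_t$ and $\nabla^\man_{\nabla\varphi_t}v_t$ are not defined at all. There is nothing yet to extend, and any density argument would need approximation in a norm controlling $\mathbf{D}^{W_2}_t$, which presupposes an intrinsic definition of it.

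\textbf{What fills it.} Gigli's notion of absolute continuity supplies that definition. It needs an assumption the statement leaves implicit: the curve must be regular ($\int_0^1\operatorname{Lip}(\nabla\varphi_t)\,dt<\infty$), so that $\nabla\varphi_t$ has flow maps $T(t,s)$ with $T(t,s)_\#\mu_t=\mu_s$.
\begin{itemize}
\item Define $\mathbf{D}^{W_2}_t v_t$ as the $L^2(\mu_t)$-derivative at $h=0$ of $v_{t+h}\circ T(t,t+h)$, parallel-translated in $\man$ back along the Lagrangian paths. For smooth fields this reduces to $\partial_t v_t+\nabla^\man_{\nabla\varphi_t}v_t$.
\item These translations are isometries $L^2(\mu_{t+h})\to L^2(\mu_t)$. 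Differentiating the translated inner product therefore gives metric compatibility for all absolutely continuous fields, with no density argument.
\item Torsion-freeness is a statement about smooth two-parameter families, or smooth vector fields on $\mathcal{P}_2(\man)$. There a computation like yours is essentially what is done.
\end{itemize}
So either state explicitly that you prove the proposition only for smooth gradient fields along smooth curves, or replace the density sentence with this Lagrangian definition and isometry argument.

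\textbf{A smaller point.} Your ``linearity in the direction'' remark invokes the wrong property. Pointwise $C^\infty(\man)$-linearity of $\nabla^\man$ is irrelevant, since $f\nabla\varphi_t$ is not a gradient, and the direction also enters through $\partial_t$. Along a curve, the correct check is that reparametrizing $t\mapsto ct$ scales both terms by $c$ via the chain rule.
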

This result establishes a closed form PDE describing a differential operator with the desired characteristics of the Levi-Civita connection. We also have an analogous covariant derivative for the Hellinger-Kantorovich space.

\begin{prop}[Hellinger-Kantorovich Covariant Derivative, \citet{clancy2021interpolating}]
\label{HK covariant deriv}
    Let $\Gamma \Subset U \Subset \Omega ^\circ$ with $U$ open and $\Omega \Subset \mathbb{R}^d$ and let $(\mu_t)_{t\geq 0}$ be a curve of measures supported on $\Gamma$ with tangent field $(\nabla \varphi_t, \varphi_t)$. Also let $(v_t, \beta_t)$ be a sufficiently regular tangent vector field along $\mu_t$ and let $\Pi_{\mu_t}$ be the orthogonal projection onto $T_{\mu_t}\mathfrak{M}_+^\Gamma$. Then the differential operator $\nabla_{(\nabla\varphi_t, \varphi_t)}^{\HK}$ given by
    \begin{equation*}
        \mathbf{D}_t^{\HK} \begin{pmatrix}
            v_t \\
            \beta_t
        \end{pmatrix} = \begin{pmatrix}
            \partial_t v_t + \nabla v_t \cdot \nabla \varphi_t + 2\varphi_tv_t+2\beta_t\nabla\varphi_t \\
            \partial_t \beta_t + \frac{1}{2}\langle \nabla \beta_t, \nabla \varphi_t\rangle + 2\varphi_t\beta_t
        \end{pmatrix}, \qquad \nabla_{(\nabla\varphi_t, \varphi_t)}^{\HK} \begin{pmatrix}
            v_t \\
            \beta_t
        \end{pmatrix} = \Pi_{\mu_t}\left(\mathbf{D}_t^{\HK}\begin{pmatrix}
            v_t \\ \beta_t
        \end{pmatrix}\right)
    \end{equation*}
    is a valid covariant derivative, is torsion free, and is metric compatible. The operator $\mathbf{D}_t^{\HK}$ is referred to as the total derivative.
\end{prop}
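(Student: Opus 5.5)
Our plan is to verify the three claims in turn: that $\nabla^{\HK}$ is a connection along the curve, that it is metric compatible, and that it is torsion free. All computations are done at the level of smooth potentials. We take $s_t=(\nabla\psi_t,\psi_t)$ and $r_t=(\nabla\chi_t,\chi_t)$ with $\psi_t,\chi_t\in C^\infty_c(U)$, and pass to general tangent fields by $L^2(\mu_t)$ density at the end. Throughout we use the weak continuity-reaction equation of \Cref{def: weak solns cre}, with $v_t=\nabla\varphi_t$ and $4\beta_t=4\varphi_t$ (the reaction rate is the potential itself). This gives, for any sufficiently regular $f_t$ supported in $U$,
\[
\frac{d}{dt}\int f_t\,d\mu_t=\int\big(\partial_t f_t+\langle\nabla f_t,\nabla\varphi_t\rangle+4\varphi_t f_t\big)\,d\mu_t .
\]
Localization to $\Gamma\Subset U$ ensures there are no boundary terms.

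\textbf{Connection axioms.} The $\mathbb R$-linearity of $\nabla^{\HK}$ is immediate. For the Leibniz rule, take a scalar $a(t)$. We have $\mathbf D_t^{\HK}(a s_t)=\dot a\, s_t+a\,\mathbf D_t^{\HK}s_t$, and projecting with $\Pi_{\mu_t}$ preserves this identity because $s_t$ is already tangent. Linearity in the driving field holds because $\mathbf D_t^{\HK}$ is affine in $\varphi_t$.

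\textbf{Metric compatibility.} We must show
\[
\frac{d}{dt}\langle s_t,r_t\rangle_{\mu_t}=\langle \nabla^{\HK}s_t,r_t\rangle_{\mu_t}+\langle s_t,\nabla^{\HK}r_t\rangle_{\mu_t}.
\]
Because $r_t$ is tangent, the projection can be dropped inside each pairing, so it suffices to prove the identity with $\mathbf D_t^{\HK}$ in place of $\nabla^{\HK}$. We apply the displayed identity to $f_t=\langle v_t,w_t\rangle+4\beta_t\gamma_t$, where $s_t=(v_t,\beta_t)$ and $r_t=(w_t,\gamma_t)$, and then compare term by term. The $\partial_t$ terms match directly. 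The advection term $\langle\nabla f_t,\nabla\varphi_t\rangle$ matches the $\nabla v_t\cdot\nabla\varphi_t$ contributions, and also the $\frac12\langle\nabla\beta,\nabla\varphi\rangle$ contributions once the factor $4$ is accounted for (two copies of $4\cdot\frac12$ give $4$).

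The zeroth-order terms are the delicate part. The reaction term $4\varphi_t f_t$ must be matched by the two $2\varphi_t v_t$ terms and the two $2\varphi_t\beta_t$ terms, which sum to $4\varphi_t\langle v,w\rangle+16\varphi_t\beta\gamma$. What remains are the cross terms $2\beta\langle\nabla\varphi,w\rangle+2\gamma\langle\nabla\varphi,v\rangle$. For gradient fields ($w=\nabla\gamma$, $v=\nabla\beta$) these should cancel the advection mismatch of $\frac12$ versus $1$ in the $\beta$-component. We expect this bookkeeping to require the tangent structure $v=\nabla\beta$ essentially, and we will check it carefully, possibly after integrating by parts against $\mu_t$.

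\textbf{Torsion-freeness.} We take a two-parameter family $\mu_{t,s}$ with tangent fields $(\nabla\varphi,\varphi)$ in $t$ and $(\nabla\psi,\psi)$ in $s$, and show that $\nabla_{\partial_t}\partial_s=\nabla_{\partial_s}\partial_t$. Compatibility of the continuity-reaction equations in $t$ and $s$ (equality of mixed partials of $\mu$) yields a weak identity relating $\partial_t\psi-\partial_s\varphi$ to the terms $\langle\nabla\varphi,\nabla\psi\rangle$ and $\varphi\psi$. The expression $\mathbf D_t^{\HK}(\nabla\psi,\psi)-\mathbf D_s^{\HK}(\nabla\varphi,\varphi)$ is symmetric in the quadratic terms: $\nabla^2\psi\nabla\varphi-\nabla^2\varphi\nabla\psi$ is a gradient of nothing symmetric, but after projection only its pairing with tangent vectors matters, and the commutation identity makes that pairing vanish.

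\textbf{Main obstacle.} We expect the main difficulty to be verifying the exact coefficients: the cross terms $2\beta\nabla\varphi$ and the $\frac12$ in the $\beta$-component must combine with the factor $4$ in the metric to give exact cancellation. If the direct computation fails, we would instead derive $\mathbf D_t^{\HK}$ by Koszul's formula from the metric and the weak equation, and check that it agrees with the stated operator up to normal components, which $\Pi_{\mu_t}$ removes.
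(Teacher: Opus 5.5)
The paper does not prove this proposition. It imports it from \citet{clancy2021interpolating}, so there is no in-paper argument to match. Your direct verification against the weak continuity--reaction equation is the natural route, and its computations do close.

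In the metric-compatibility step no integration by parts is needed. With $v=\nabla\beta$ and $w=\nabla\gamma$, the cross terms $2\beta\langle\nabla\varphi,w\rangle+2\gamma\langle\nabla\varphi,v\rangle$ supply exactly the missing $2\gamma\langle\nabla\beta,\nabla\varphi\rangle+2\beta\langle\nabla\gamma,\nabla\varphi\rangle$. So the identity already holds pointwise, before integrating against $\mu_t$.

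In the torsion step your description of the commutation identity is off. The symmetric terms ($\langle\nabla\varphi,\nabla\psi\rangle$, $\varphi\psi$, $\varphi\nabla\psi+\psi\nabla\varphi$) cancel. What equality of mixed partials actually yields, with $\chi\triangleq\partial_t\psi-\partial_s\varphi$, is
\[
\int\big(\langle\nabla\chi,\nabla f\rangle+4\chi f\big)\,d\mu
=\int\big\langle\nabla f,\;\nabla^2\varphi\,\nabla\psi-\nabla^2\psi\,\nabla\varphi\big\rangle\,d\mu
\qquad\text{for all } f\in C_c^\infty(U).
\]
Since $\mathbf D_t^{\HK}(\nabla\psi,\psi)-\mathbf D_s^{\HK}(\nabla\varphi,\varphi)=\big(\nabla\chi+\nabla^2\psi\,\nabla\varphi-\nabla^2\varphi\,\nabla\psi,\;\chi\big)$, its $\HK$ pairing with every $(\nabla f,f)$ vanishes, which is torsion-freeness. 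Alternatively, test $\nabla_XY-\nabla_YX=[X,Y]$ on $F_f(\mu)=\int f\,d\mu$ with fixed potentials $\varphi,\psi$. This gives the same computation without presupposing a two-parameter family $\mu_{t,s}$.

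\textbf{Drop the density step.} Do not close with an appeal to $L^2(\mu_t)$ density. $\mathbf D_t^{\HK}$ involves $\partial_t$, $\nabla v_t$ and $\nabla\beta_t$, none of which is $L^2(\mu_t)$-continuous. Moreover, the cancellation above uses $v=\nabla\beta$ pointwise, and that relation need not survive in the closure: on an atom of $\mu_t$ the two components decouple. ``Sufficiently regular'' should be read as fields $(\nabla\psi_t,\psi_t)$ with smooth potentials. This is also how the paper uses the result in \Cref{prop: pullback connection}.

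\textbf{An alternative route via the cone.} \Cref{prop: pullback connection} shows $\nabla^{\HK}=\Pi_{\mu_t}\circ\mathcal P_t\circ\mathbf D_t^{W_2,\mathcal U}\circ\mathcal L_t$, and the proof of \Cref{orthog proj} shows $\mathcal P_t=\mathcal L_t^*$. Hence $\frac{d}{dt}\langle \mathbf u_t,\mathbf w_t\rangle_{\mu_t}=\frac{d}{dt}\langle\mathcal L_t\mathbf u_t,\mathcal L_t\mathbf w_t\rangle_{\lambda_t}$ can be expanded using metric compatibility of the cone Levi-Civita connection along the Lagrangian flow.

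This also explains the odd-looking coefficients as cone Christoffel symbols from \Cref{corollary: connection on cone}. The terms $2\varphi v$ and $2\beta\nabla\varphi$ come from $\nabla^{\mathfrak C}_{\partial_r}X=\nabla^{\mathfrak C}_X\partial_r=X/r$. The $\tfrac12$ is $(2r-r)/(2r)$ in the radial slot.

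That route explains your bookkeeping. However, it needs the admissible class and a deterministic lift, and it does not by itself give torsion-freeness. Your computation works for any curve in $\mathfrak M_+^\Gamma(\Omega)$ and covers all three properties.
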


\noindent \textbf{The Logarithmic Entropy Transport Functional.}
To compute the Hellinger-Kantorovich distance in practice, we will use an equivalent characterization that has been studied in the literature. Let $\pi_j = \Pi^j_\#\pi$ be the marginals of the joint measure $\pi \in \mathfrak{M}_+(\Omega \times \Omega)$ and define the Hellinger-Kantorovich entropy-transport functional as
\begin{equation} \label{eq: let functional}
    E(\pi; \mu_0, \mu_1) = \int_{\Omega}F\left(\frac{d\pi_0}{d\mu_0}\right)d\mu_0 + \int_{\Omega}F\left(\frac{d\pi_1}{d\mu_1}\right)d\mu_1 + \int_{\Omega \times \Omega}c(\|x_0 - x_1\|_2)d\pi
\end{equation}
where
\begin{equation*}
    c(L) = \begin{cases}
        -2\log(\cos(L)) & L < \pi/2 \\
        \infty & L \geq \pi/2
    \end{cases}
\end{equation*}
and $F(\rho) = \rho\log\rho - \rho + 1.$ \citet{liero2016optimal} show that 
\begin{align} \label{eq: let hk equivalence}
    \HK(\mu_0, \mu_1) &= \inf\left\{E(\pi;\mu_0, \mu_1)^{1/2} \, |\, \pi  \in \mathfrak{M}_+(\Omega \times \Omega), \pi_j  \ll \mu_j\right\}.
\end{align}
As we will see in future sections, this characterization of Hellinger-Kantorovich geometry will enable tractable procedures for computing distances and other objects of interest. 

\begin{rmk}[Diameter bound] \label{diameter bound}
    Throughout the paper we assume that all measures $\mu_i$ are supported inside a set $\Omega$ with diameter strictly bounded by $\pi/2$ -- in practice, this condition can be met simply by rescaling. This assumption guarantees that we are in the \textit{reaction-transport} regime only, and $\mu_i \ll \pi_i^*$ from \Cref{eq: let functional}. Furthermore, by admissibility of $\pi^*$ we also necessarily have mutual absolute continuity $\pi_i \sim \mu_i$.  
\end{rmk}
\begin{proof}
    The property $\mu_i \ll \pi_i^*$  follows directly from the optimality conditions characterized in \citet[Theorem 6.3.b]{liero2018optimal}. 
\end{proof}

\noindent \textbf{The Exponential and Logarithmic Maps.} For $\mathcal{P}_{2}(\man)$, the exponential map is defined to be $\mathbf{exp}_{\mu}(u) \triangleq (\exp(u))_\#\mu$ where $\exp(\cdot)$ is the exponential map of $\man$ and $u \in L^2(\mu)$. In the case of $\man = \mathbb{R}^d$, this operation is trivial -- we have
\begin{equation}\label{eq: wasserstein euclidean expmap}
    \mathbf{exp}_\mu(u) = (\operatorname{id} + u)_\# \mu
\end{equation}
where $\operatorname{id}$ is the identity map, $x \mapsto x$. In the case of the logarithmic map, things are analogous. We define $(\mathbf{log}_\mu\nu) (x) \triangleq \log_x(T_{\mu \rightarrow \nu}(x))$ where $T_{\mu \rightarrow \nu}$ is the Brenier map from $\mu$ to $\nu$. Note that we will denote $\overline{\log}$ as the normalized vector (with respect to the metric tensor). Again, in the Euclidean case this reduces to
\begin{equation}\label{eq: wasserstein euclidean logmap}
    (\mathbf{log}_\mu\nu )(x) = (T_{\mu \rightarrow \nu} - \operatorname{id})(x).
\end{equation}
In the case of Hellinger-Kantorovich, the exponential map and logarithmic maps are more challenging to describe and compute. When one assumes that $\mu_i \ll \pi_i$ where $\pi$ is the optimized coupling in \Cref{eq: let functional} (which means we are in the reaction-transport only regime), they admit a closed form as derived by \citet{cai2022linearized}. 

\begin{prop}[Corollary of Proposition 4.8, \citet{cai2022linearized}]
\label{HK exponential map closed form}
    Let $\mu_0, \mu_1 \in \mathfrak{M}_+(\Omega)$ and let $s_0 = (v_0, \beta_0)$ be a tangent field at $\mu_0$. Suppose further that for the optimal transport plan $\pi$ of the logarithmic entropy transport functional problem between $\mu_0$ and $\mu_1$, we have that $\mu_i \ll \pi_i$. Set $a_t = t\|v_0\|_2$, $b_t = 1 + 2t\beta_0$, $S_t = \{x \in \Omega \, | \, (a_t(x), b_t(x)) = (0, 0) \}$, $q_t = \sqrt{a_t^2 + b_t^2}$ and $\varphi_t = \text{atan2}(a_t, b_t).$ Finally, let 
    \[T_t ( x) = x + \begin{cases}
        \frac{v_0(x)}{\|v_0(x)\|_2}\cdot \varphi_t(x) & \text{if } v_0(x) \neq 0 \\
        0 & \text{o.w.}
    \end{cases}.\]
    Then it holds that the curve given by $\mu_t = \mathbf{exp}_{\mu_0}(ts_0)$ where 
    \[\mathbf{exp}_{\mu_0}(ts_0) \triangleq (T_t)_\#(q_t(x)^2 \mu_0|_{\Omega \setminus S_t})\]
    is a constant speed HK geodesic between $\mu_0$ and $\mu_1$.  

\end{prop}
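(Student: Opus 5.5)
We will derive the statement by verifying directly that the proposed curve is the HK geodesic with initial tangent $s_0$, going through the cone lift. The idea is to treat each point $x$ of $\Omega$ as a ray in the metric cone $\mathfrak{C}_\Omega$. Mass $m$ at $x$ is encoded by the cone point $[x,r]$ with $r^2=m$. In polar coordinates the Euclidean cone geodesic starting at $[x,1]$ with initial velocity $(v_0(x),2\beta_0(x))$ is a straight line in $\mathbb{R}^2$, taken with respect to the pair (angle, radius). Its radial coordinate after time $t$ is
\[
q_t=\sqrt{(t\|v_0\|)^2+(1+2t\beta_0)^2}=\sqrt{a_t^2+b_t^2},
\]
and its angular displacement is $\varphi_t=\operatorname{atan2}(a_t,b_t)$. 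The angular displacement is taken along the unit direction $v_0/\|v_0\|$, because $\Omega$ is flat with diameter less than $\pi/2$ (\Cref{diameter bound}), so the cone over it is locally Euclidean in these coordinates. Pushing forward $q_t^2\mu_0$ under $T_t$ is then exactly the projection to $\mathfrak{M}_+(\Omega)$ of the lifted measure $(\Phi_t)_\#(\mu_0\otimes\delta_1)$, where $\Phi_t$ denotes the pointwise cone geodesic flow. Points with $(a_t,b_t)=(0,0)$ have hit the apex and carry no mass, which explains the restriction to $\Omega\setminus S_t$.

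The key steps, in order, are as follows.

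First, we will show that the lifted curve is a $W_{\mathfrak{C}}$ geodesic. Because $s_0$ is the logarithm of $\mu_1$ (a tangent of gradient form $(\nabla\phi,\phi)$), the cone map $x\mapsto\Phi_1(x)$ is the optimal map for the cone cost. This rests on the Liero–Mielke–Savaré correspondence, under which optimal LET plans with $\mu_i\ll\pi_i$ lift to optimal cone couplings. Displacement interpolation along the cone geodesics then gives a Wasserstein geodesic on the cone (Brenier–McCann, \Cref{brenier-mcann}).

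Second, we will invoke the result of \citet{liero2016optimal} that the projection of a cone Wasserstein geodesic, with the homogeneous marginal normalization, is an HK geodesic with the same length. This yields both the geodesic property and constant speed, since the cone geodesic has constant speed and projection preserves the metric derivative.

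Third, we will check the initial velocity by differentiating at $t=0$. Testing against $\varphi\in C_c^\infty(U)$ gives
\[
\frac{d}{dt}\Big|_{t=0}\int\varphi\,d\mu_t=\int\bigl(\langle\nabla\varphi,v_0\rangle+4\beta_0\varphi\bigr)\,d\mu_0 .
\]
Indeed, $\partial_t q_t^2\big|_{0}=4\beta_0$ and $\partial_t T_t\big|_0=v_0$, because $\varphi_t\approx t\|v_0\|$. By \Cref{def: weak solns cre}, this identifies the tangent at $t=0$ as $s_0$.

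The main obstacle is the first step: certifying that the pointwise cone geodesics emanating from $\mu_0\otimes\delta_1$ with data $s_0$ actually form an optimal coupling. This does not follow from the geodesic property of each trajectory alone. We will use the hypothesis $\mu_i\ll\pi_i$ together with the LET optimality conditions of \citet[Theorem 6.3]{liero2018optimal}. These express $\pi$ through a potential pair satisfying a $c$-concavity condition for the cost $-2\log\cos$. We then identify $(v_0,\beta_0)$ with the gradient of this potential, so that $T_1$ and $q_1$ agree with the optimal transport and density-ratio maps. The remaining verification is that $\mu_1=(T_1)_\#(q_1^2\mu_0)$, which is a direct algebraic check using $\cos\varphi_1=b_1/q_1$ and the marginal formulas from the optimality conditions.
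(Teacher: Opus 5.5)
The paper does not prove this statement; it quotes it from \citet{cai2022linearized}, so your argument has to stand on its own. You correctly read $s_0$ as $\mathbf{log}_{\mu_0}(\mu_1)$ from \Cref{HK logarithmic map closed form}, which the statement implicitly requires, and the cone-lifting strategy is the right one. However, the step you flag as the main obstacle is not closed. What \citet{liero2016optimal} give you is optimality of $\gamma_\pi$ in \eqref{eq: optimal lift}, which places mass at radii $\sqrt{u_0(x_0)}$ and $\sqrt{u_1(x_1)}$ over $\pi$. Your plan $\tilde\gamma\triangleq(\mathrm{id},\Phi_1)_\#(\mu_0\otimes\delta_1)$ starts at radius $1$ over $\mu_0$. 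It is therefore a different plan between different lifts, and nothing in your outline transfers optimality to it. The detour through the potentials of \citet[Theorem 6.3]{liero2018optimal} does not supply this. Matching $T_1,q_1$ with $T$ and the density ratio, and then checking $\mu_1=(T_1)_\#(q_1^2\mu_0)$, only pins down the endpoint, not the cost. The identification is also not with the gradient of the LET potential. On $\operatorname{supp}\pi$ the optimality conditions read $u_0\,(u_1\circ T)\cos^2\|T-\mathrm{id}\|_2=1$, which turns the formula for $\beta_0$ into $\beta_0=\tfrac12(u_0^{-1}-1)$. This is an exponential function of the potential (which is $\log u_0$ up to sign), and comparing gradients would require differentiability of a $c$-concave function that you have not established.

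None of this machinery is needed. Comparing \Cref{HK logarithmic map closed form} with \eqref{eq: cone log map} at $r_0=1$ gives $(v_0(x),2\beta_0(x))=\log^{\mathfrak C}_{(x,1)}\bigl(T(x),q(x)\bigr)$ with $q\triangleq\sqrt{u_1\circ T/u_0}>0$. Since $\|T(x)-x\|_2<\pi/2<\pi$, the cone exponential inverts this, so $\Phi_t(x,1)=Z\bigl(t;(x,1),(T(x),q(x))\bigr)$ for $t\in[0,1]$, and $S_t$ is $\mu_0$-null there. Using $\mu_0=u_0\pi_0$,
\[
\int d_{\mathfrak C}^2\,d\tilde\gamma=\int\Bigl(u_0+u_1\circ T-2\sqrt{u_0\,(u_1\circ T)}\,\cos\|T-\mathrm{id}\|_2\Bigr)\,d\pi_0=\int d_{\mathfrak C}^2\,d\gamma_\pi=\HK^2(\mu_0,\mu_1).
\]
This is just the $2$-homogeneity of $d_{\mathfrak C}^2$ under the dilation $r_i\mapsto r_i/\sqrt{u_0(x_0)}$ with density $u_0(x_0)$. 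The marginals of $\tilde\gamma$ project to $\mu_0$ and $\mu_1$, so \Cref{HK cone variational prob} forces $\tilde\gamma$ to be optimal and its marginals to be optimal lifts. \Cref{cone geodesics project to hk geodesics} then yields the constant-speed geodesic, with $\mathfrak P\bigl((\Phi_t)_\#(\mu_0\otimes\delta_1)\bigr)=(T_t)_\#(q_t^2\mu_0)$.

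Smaller corrections:
\begin{itemize}
\item Brenier--McCann plays no role: the cone is not a complete smooth manifold and $\mu_0\otimes\delta_1$ is not absolutely continuous. You do not need it, since \Cref{cone geodesics project to hk geodesics} already contains the interpolation step.
\item The cone over $\Omega\subset\mathbb R^d$ is not locally Euclidean for $d\ge 2$. What you use is that each cone geodesic stays in the flat two-dimensional cone over a segment, because $r^2\dot p$ is constant (\Cref{cone geodesic equations via covariant deriv}).
\item ``Projection preserves the metric derivative'' should be replaced by the bound $\HK(\mathfrak P\lambda,\mathfrak P\lambda')\le W_{\mathfrak C}(\lambda,\lambda')$ for equal-mass lifts, from \Cref{HK cone variational prob}, plus the triangle inequality.
\item Your third step is not needed for the statement. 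By itself it identifies $s_0$ only up to fields with the same distributional action.
\end{itemize}
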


\begin{prop}[Proposition 4.1, \citet{cai2022linearized} ]
\label{HK logarithmic map closed form}
    Let $\mu_0, \mu_1 \in \mathfrak{M}_+(\Omega)$ and let $\pi = (\operatorname{id}, T)_\#\pi_0$ be the minimizer in \Cref{eq: let functional} for some measurable $T: \Omega \rightarrow\Omega$ and $\pi_i = \Pi^i_\# \pi$. Consider the Lebesgue decompositions of $\mu_0$ and $\mu_1$ with respect to the marginals of $\pi$, i.e. $\mu_0 = u_0 \pi_0$ and $\mu_1 = u_1 [(T)_\# \pi_0] = u_1\pi_1$. Then 
        \[
        v_0(x) =
        \begin{cases}
        \dfrac{T(x)-x}{\|T(x)-x\|_2}\,
        \sqrt{\dfrac{u_{1}(T(x))}{u_0(x)}}\,
        \sin\bigl(\|T(x)-x\|_2\bigr),
        & T(x)\neq x,\\
        0, & T(x)=x,
        \end{cases}
    \]
    and
    \[
        \beta_0(x)
        =
        \frac{1}{2}\left(
        \sqrt{\dfrac{u_{1}(T(x))}{u_0(x)}}\,
        \cos\bigl(\|T(x)-x\|_2\bigr)-1
        \right).
    \]
    satisfy $\mathbf{exp}_{\mu_0}(v_0, \beta_0) = \mu_1$.
\end{prop}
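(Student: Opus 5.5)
The plan is to verify the claim directly by substituting $(v_0,\beta_0)$ into the closed-form exponential map of \Cref{HK exponential map closed form} at $t=1$. The computation should collapse to the pushforward $T_\#(u_1\circ T\,\pi_0)=u_1\pi_1=\mu_1$.

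First I would fix the hypotheses. By the diameter bound (\Cref{diameter bound}) we have $\mu_i\ll\pi_i$ and $\pi_i\ll\mu_i$. This means $\mu_0=u_0\pi_0$ with no singular part, and $u_0>0$ holds $\pi_0$-a.e., hence also $\mu_0$-a.e. So the ratio $r(x)\triangleq\sqrt{u_1(T(x))/u_0(x)}$ is well defined $\mu_0$-a.e. The same hypothesis is exactly what \Cref{HK exponential map closed form} requires, so that proposition applies. Write $L(x)=\|T(x)-x\|_2$. Since $\Omega$ has diameter strictly less than $\pi/2$, we have $L(x)\in[0,\pi/2)$.

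Next, I would compute the quantities in \Cref{HK exponential map closed form} at $t=1$:
\[
a_1=\|v_0\|_2=r\sin L,\qquad b_1=1+2\beta_0=r\cos L,\qquad q_1=\sqrt{a_1^2+b_1^2}=r .
\]
The set $S_1$ where $(a_1,b_1)=(0,0)$ is exactly $\{r=0\}=\{u_1\circ T=0\}$. Off $S_1$ we have $r>0$ and $L\in[0,\pi/2)$, so $\cos L>0$ and the atan2 branch gives $\varphi_1=L$.

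Then I would identify the map. If $v_0(x)\neq 0$, its unit direction is $(T(x)-x)/L$, so $T_1(x)=x+\frac{T(x)-x}{L}\,L=T(x)$. If $v_0(x)=0$ with $r>0$, then $T(x)=x$, since $\sin L=0$ and $L<\pi/2$ force $L=0$; so again $T_1(x)=x=T(x)$. Hence $T_1=T$ on $\Omega\setminus S_1$.

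Finally, I would compute the measure:
\[
\mathbf{exp}_{\mu_0}(v_0,\beta_0)=T_\#\bigl(r^2\mu_0|_{\Omega\setminus S_1}\bigr)=T_\#\Bigl(\tfrac{u_1\circ T}{u_0}\,u_0\,\pi_0\,\mathbbm{1}_{\{u_1\circ T>0\}}\Bigr)=T_\#\bigl((u_1\circ T)\,\pi_0\bigr).
\]
The indicator can be dropped because the integrand vanishes on $S_1$. By the change-of-variables identity $T_\#((f\circ T)\pi_0)=f\,T_\#\pi_0$, tested against bounded Borel $g$ via $\int g\,f\circ T\,d\pi_0=\int gf\,d(T_\#\pi_0)$, this equals $u_1\,T_\#\pi_0=u_1\pi_1=\mu_1$.

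The main obstacle is the measure-theoretic bookkeeping rather than the algebra. Specifically, one must justify that $u_0>0$ a.e., so that $r$ is defined and $\mu_0=u_0\pi_0$ exactly. One must also show that the atan2 branch returns $L$ and not $L$ modulo $2\pi$ or a value near $\pi$; this uses $\cos L>0$, which follows from the diameter bound. Finally, the zero set $S_1$ and the degenerate case $T(x)=x$ must be handled consistently. I would address each of these using \Cref{diameter bound} as above.
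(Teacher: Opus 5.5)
Your proof is correct. The paper never proves this statement itself; it is imported from \citet{cai2022linearized}, so there is no internal proof to match. Substituting directly into the closed form of \Cref{HK exponential map closed form} gives a complete, self-contained argument.

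The paper's cone framework suggests a more conceptual route to the same fact. By \Cref{cone exponential and log map}, the lifted field $\mathcal{L}(v_0,\beta_0)(x,1)=\bigl(v_0(x),2\beta_0(x)\bigr)$ is exactly $\log^{\mathfrak{C}}_{(x,1)}\bigl(T(x),r(x)\bigr)$. The formula for $\mathbf{exp}_{\mu_0}$ amounts to applying $\mathfrak{P}$ to the pushforward of the radius-one lift $(x\mapsto(x,1))_\#\mu_0$ under the pointwise cone exponential. Points of $S_1$ are sent to the apex, which carries no projected mass. The claim then reduces to $\exp^{\mathfrak{C}}\circ\log^{\mathfrak{C}}=\mathrm{id}$, followed by your pushforward identity. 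Your computation of $q_1=r$ and $\varphi_1=L$ is precisely this inversion done by hand. Doing it by hand has the advantage of not relying on the cone exp/log proposition, whose proof in the paper is only sketched.

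Some minor corrections:
\begin{itemize}
\item The change-of-variables test should read $\int (g\circ T)(f\circ T)\,d\pi_0=\int gf\,d(T_\#\pi_0)$.
\item The $\operatorname{atan2}$ step needs only $r>0$ and $L\in[0,\pi)$; it does not need $\cos L>0$.
\item $L<\pi/2$ holds $\pi_0$-a.e.\ even without the diameter bound. The LET cost is infinite on pairs at distance at least $\pi/2$, and the minimizer has finite energy.
\item Positivity of $u_0$ $\pi_0$-a.e.\ follows from the admissibility constraint $\pi_0\ll\mu_0$. So the diameter bound is really used only to rule out singular parts, which the statement already builds in by writing $\mu_0=u_0\pi_0$.
\end{itemize}
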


These results indicate that if the minimizing coupling $\pi$ from \Cref{eq: let functional} is supported on the assignment $(\operatorname{id}, T)$ for some measureable $T$, we can compute the exponential and logarithmic map explicitly. Fortunately, as we will discuss in \Cref{clancy coupling supported on a map}, this condition is guaranteed by absolute continuity of $\mu_0, \mu_1$ and $\mu_i \ll \pi_i$ (which follows from the diameter bound from \Cref{diameter bound}).  Intuitively, this means that all mass growth and destruction stems from the reaction component of the continuity-reaction equation, and it prevents the creation of mass from nothing (ruling out the Hellinger-only regime). In many applications of interest, like genomics for example, this is a reasonable and an arguably desirable property.    

\section{Hellinger-Kantorovich Geometry via Cone Lifting} \label{sec: HK geometry via cone}

While Hellinger-Kantorovich geometry allows for a richer notion of transport, the tradeoff is that it induces more computational challenges. The tangent space, the covariant derivative and the exponential/logarithmic maps are all more challenging to describe and compute than their Wasserstein counterparts. Remarkably, \citet{liero2016optimal} show that Hellinger-Kantorovich geometry can be captured (in a delicate sense) from $W_2$ geometry on an augmented base domain. In particular, this construction entails lifting compactly supported positive measures from the base space $\Omega \Subset \mathbb{R}^d$ to probability measures on $\mathfrak{C}_{\Omega}$, the metric cone of the base space. Under a particular set of lifts, it turns out that the Wasserstein geometry on the cone recovers Hellinger-Kantorovich geometry on the original space.

That being said, the connection between Hellinger-Kantorovich geometry and Wasserstein geometry on the cone has only been demonstrated at the level of distances and geodesic paths. In this section, we detail a specific lifting procedure that allows one to strengthen this connection through an explicit and tractable isometric map between the Hellinger-Kantorovich tangent space and a subset of the cone Wasserstein tangent space. The lifting procedure is simple to analyze theoretically and computationally tractable; we demonstrate its utility in a later section by showing that the pullback Wasserstein Levi-Civita connection induced by this lift coincides with the Hellinger-Kantorovich Levi-Civita connection. This will allow us to use recently developed tools for Wasserstein geometry to easily approximate Hellinger-Kantorovich parallel transport, avoiding the need to solve a high-dimensional parallel transport PDE.

\subsection{Metric Cones}

We will start by giving a brief description of metric cones. For a more detailed treatment of metric geometry we recommend \citet{burago2001course}. For a more detailed treatment of warped-product manifolds, which are a class of manifolds that are closely related to metric cones, we recommend \citet{o1983semi}.  

\begin{defn}[Metric cone, \citet{burago2001course}]
\label{def: metric cone}
    For a Riemannian manifold $(\man, g)$, the metric cone is defined to be
    \[\mathfrak{C}_\man = \man \times \mathbb{R}_{+} / (M \times \{0\})\]
    where a point in $\mathfrak{C}_\man$ is written as $(x, r)$. In particular, all points $\man \times \{0\}$ are identified with one point called the cone \textit{apex}, denoted $\mathfrak{o}$. The metric tensor on the cone is given by $g_{(x,r)} = dr^2 + r^2g_x$, while the metric (as one would have in a metric space) is given by 
    \begin{equation*}
    d_{\mathfrak{C}}(z_0, z_1)^2 = r_0^2 + r_1^2 - 2r_0r_1\cos(d_{\man}(x_0, x_1) \land \pi)
    \end{equation*}
    for $z_i = (x_i, r_i)$.
\end{defn}

For metric cones of Euclidean space, we are lucky in the sense that geodesics are available in closed form. This is made explicit and precise below.

\begin{prop}[Geodesics on $\mathfrak{C}_{\Omega}$, $\Omega \Subset  \mathbb{R}^d$, \citet{liero2016optimal}]
    Suppose $z_0 = (x_0, r_0)$ and $z_1 = (x_1, r_1)$ satisfy $\|x_1 - x_0\|_2 < \pi/2$ and $r_0, r_1 \in [r_{\min}, r_{\max}]$ for some $0 < r_{\min} \le r_{\max} < \infty$. Then the function $Z(s; \cdot, \cdot ):\mathfrak{C}_\Omega \times \mathfrak{C}_\Omega \rightarrow \mathfrak{C}_\Omega$ given by 
    \begin{equation} \label{eq: cone geodesics}
        Z(s; z_0, z_1) = \left[X\left(s; z_0, z_1\right) , R\left(s; z_0, z_1\right)\right]
    \end{equation}
    where
    \begin{align*}
        R(s; z_0, z_1)^2 &= (1-s)^2r_0^2 + s^2r_1^2 + 2s(1-s)r_0r_1\cos( \|x_0 - x_1\|_2) \\
        \\
        X(s; z_0, z_1) &= (1 - \rho(s; z_0, z_1))x_0 + \rho(s; z_0, z_1)x_1 \\
        \\[0.05ex]\rho(s; z_0, z_1) &= \frac{1}{\|x_0 - x_1\|_2}\arccos\left(\frac{(1-s)r_0 + sr_1\cos(\|x_0 - x_1\|_2)}{R(s; z_0, z_1)}\right)
    \end{align*}
    is a unique constant-speed geodesic between $z_0$ and $z_1$ where $d_{\mathfrak{C}}(\gamma(s), \gamma(t)) = |t-s|d_{\mathfrak{C}}(z_0, z_1)$. 
\end{prop}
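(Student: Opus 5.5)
The natural strategy is to reduce to a two-dimensional Euclidean problem using polar coordinates. Let $L = \|x_1 - x_0\|_2 \in (0, \pi/2)$. If $L = 0$, the statement reduces to the radial segment, which is trivial.

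Let $e = (x_1 - x_0)/L$. Define the map $\Phi$ that sends $(x_0 + \theta e, r)$, for $\theta \in [0, L]$, to the planar point $r(\cos\theta, \sin\theta) \in \mathbb{R}^2$. This map takes the two-dimensional sub-cone over the segment $[x_0, x_1]$ onto a planar sector of opening angle $L < \pi/2 < \pi$. By the cone metric formula of \Cref{def: metric cone} and the law of cosines,
\[
d_{\mathfrak{C}}(z, z')^2 = r^2 + r'^2 - 2rr'\cos(|\theta - \theta'|) = \|\Phi(z) - \Phi(z')\|_2^2 .
\]
So $\Phi$ is an isometry from the sub-cone onto a convex planar sector. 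The candidate geodesic is then the preimage under $\Phi$ of the straight segment
\[
w(s) = (1-s)\,r_0(1, 0) + s\,r_1(\cos L, \sin L).
\]
This segment lies in the sector because the sector is convex, and it stays away from the origin because $r_0, r_1 \ge r_{\min} > 0$ and $L < \pi$.

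Next I check that this preimage matches the formulas in \eqref{eq: cone geodesics}. The squared norm $|w(s)|^2$ is exactly the stated expression for $R^2$. The first coordinate of $w(s)$ is $(1-s)r_0 + s r_1 \cos L$, and the angle of $w(s)$ lies in $[0, L] \subset [0, \pi)$. Hence that angle equals $\arccos$ of the first coordinate divided by $R$. Dividing this angle by $L$ gives $\rho$, and the spatial point is $x_0 + \rho L e = (1-\rho)x_0 + \rho x_1$. Constant speed then follows directly from the isometry: $|w(s) - w(t)| = |s - t|\,|w(1) - w(0)| = |s - t|\, d_{\mathfrak{C}}(z_0, z_1)$.

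The main obstacle is \textbf{uniqueness} in the full cone $\mathfrak{C}_\Omega$, since competing curves could leave the planar sub-cone. I would handle this as follows:
\begin{enumerate}
\item Take any curve from $z_0$ to $z_1$ in the cone. Project it onto the plane by sending $(x, r)$ to $r$ times the unit vector at angle $\|x - x_0\|$ within the plane spanned by $x - x_0$.
\item In Euclidean space, the length of the spatial projection is at least the distance-to-$x_0$ variation, with equality only for radial motion along $e$. Combined with the cone metric $dr^2 + r^2 g$, this shows the projected curve is no longer than the original.
\item Equality in length therefore forces the curve to stay in the sub-cone. There, uniqueness of straight lines in $\mathbb{R}^2$, together with $R > 0$ (so the apex is avoided), gives uniqueness of the geodesic.
\end{enumerate}
The diameter bound $L < \pi/2$ from \Cref{diameter bound} guarantees that the $\wedge \pi$ truncation in the cone metric never activates and that the sector is strictly convex.
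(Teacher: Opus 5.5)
The paper gives no proof of this proposition; it imports it from \citet{liero2016optimal}. Your planar-sector reduction is the standard way such formulas are established: the Euclidean cone over a segment of length $L<\pi$ is isometric to a flat sector, as for Euclidean cones in \citet{burago2001course}. Your existence part is complete and correct. The isometry $\Phi$ holds, $|w(s)|^2$ matches $R^2$, and the $\arccos$ formula for the polar angle is legitimate because the second coordinate $s r_1\sin L$ is nonnegative; the constant-speed identity also checks out. Two small points. You implicitly use convexity of $\Omega$ (stated in the paper's notation table) to get $[x_0,x_1]\subset\Omega$; without it the sub-cone need not lie in $\mathfrak{C}_\Omega$. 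And only $L<\pi$ is used, not $L<\pi/2$.

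The uniqueness step, as written, has a real but easily repaired gap. Equality $\bigl|\tfrac{d}{dt}\|x(t)-x_0\|_2\bigr|=\|\dot x(t)\|_2$ only forces $\dot x(t)\parallel x(t)-x_0$, i.e.\ motion along \emph{some} ray from $x_0$, not along $e$. A curve that runs out from $x_0$ in a direction $e'\neq e$, returns to $x_0$, and then runs out along $e$ also attains equality. So ``equality in length forces the curve into the sub-cone'' does not follow. Moreover, the length formula $\sqrt{\dot r^2+r^2\|\dot x\|_2^2}$ is unavailable for a competitor passing through the apex.

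The fix is to reverse the order in step 3: identify the projected curve first, then deduce confinement. The map $P(x,r)=r(\cos\|x-x_0\|_2,\sin\|x-x_0\|_2)$ is $1$-Lipschitz from $(\mathfrak{C}_\Omega,d_{\mathfrak C})$ into $\mathbb{R}^2$. This follows directly from the cosine formula, since $\bigl|\|x-x_0\|_2-\|x'-x_0\|_2\bigr|\le\|x-x'\|_2$ and $\cos$ is decreasing on $[0,\pi]$, and it covers the apex too. For a constant-speed geodesic $\gamma$ with $D=d_{\mathfrak C}(z_0,z_1)=|w(1)-w(0)|$, this gives $|P\gamma(s)-w(0)|\le sD$ and $|P\gamma(s)-w(1)|\le(1-s)D$, hence $P\gamma(s)=w(s)$. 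In particular $r(s)=|w(s)|>0$, so the apex is avoided. Also $\|x(s)-x_0\|_2$ equals the polar angle $\vartheta(s)$ of $w(s)$; under the paper's standing bound $\operatorname{diam}\Omega<\pi/2$ there is no angular wrap-around. Next, $d_{\mathfrak C}(\gamma(s),z_1)=(1-s)D=|w(s)-w(1)|$ forces $\cos\bigl(\|x(s)-x_1\|_2\wedge\pi\bigr)=\cos(L-\vartheta(s))$. Hence $\|x(s)-x_0\|_2+\|x(s)-x_1\|_2=L$. Strict convexity of the Euclidean norm then puts $x(s)$ on $[x_0,x_1]$ at distance $\vartheta(s)$ from $x_0$, i.e.\ $\gamma=Z(\cdot\,;z_0,z_1)$.

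Equivalently, in your length language: once the projection is known to be $w$, its polar angle is strictly increasing, since $\det(w(s),\dot w(s))=r_0r_1\sin L>0$. So the curve never returns to $x_0$ for $t>0$. The direction $(x(t)-x_0)/\|x(t)-x_0\|_2$ is then constant on $(0,1]$ and must equal $e$.
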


From this closed form expression, we can directly compute the exponential and logarithmic map on $\mathfrak{C}_\Omega$, the proof for which we provide in \Cref{sec: proof of cone exponential and log map}.
\begin{prop}[Exponential and Logarithmic maps on $\mathfrak{C}_\Omega$]
\label{cone exponential and log map}
    Let $z_i = (x_i, r_i) \in \mathfrak{C}_{\Omega}$ and let $(v_x, v_r) \in T_{z_0}\mathfrak{C}_\Omega$ with $\|v_x\|_2 > 0$. Then for $0 < \|x_0 - x_1\|_2 < \pi$ we have
    \begin{equation} \label{eq: cone exp map}
        \exp^{\mathfrak{C}}_{z_0}((v_x, v_r)) = \left(x_1, r_1\right)
    \end{equation}
    with 
    \begin{equation*}
        x_1 = x_0 + \theta\frac{v_x}{\|v_x\|_2}, \quad r_1 = \sqrt{\|v_x\|_2^2r_0^2 + (v_r+r_0)^2}, \quad \theta = \operatorname{atan2}(r_0\|v_x\|_2, v_r + r_0)
    \end{equation*}
    and 
    \begin{equation} \label{eq: cone log map}
        \log_{z_0}^\mathfrak{C}(z_1) = \left[\frac{\sin(\|x_1 - x_0\|_2)r_1(x_1 - x_0)}{\|x_1 - x_0\|_2r_0}, r_1\cos(\|x_1 - x_0\|_2) - r_0\right].
    \end{equation}
\end{prop}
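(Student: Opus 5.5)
The plan is to derive both formulas directly from the closed-form cone geodesic \cref{eq: cone geodesics}. The exponential map $\exp^{\mathfrak{C}}_{z_0}(v)$ is the time-one value of the geodesic leaving $z_0$ with initial velocity $v$. The logarithm is the initial velocity $\frac{d}{ds}Z(s;z_0,z_1)|_{s=0}$ of the geodesic joining $z_0$ and $z_1$. So I will compute this initial velocity first, and then invert the resulting relation between $z_1$ and $v$.

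The cleanest way to organise the computation is to pass to Euclidean-like coordinates. The cone over $\mathbb{R}^d$ restricted to the plane spanned by the ray $x_0 + \theta e$, where $e = (x_1-x_0)/\|x_1-x_0\|_2$, is locally isometric to the flat plane in polar coordinates $(r,\theta)$. The reason is that $dr^2 + r^2 d\theta^2$ is exactly the restriction of $g_{(x,r)} = dr^2 + r^2 g_x$ to such a line, and $\theta<\pi$ keeps us in the region where the cone distance formula holds. Under the map $(x_0+\theta e, r)\mapsto r(\cos\theta,\sin\theta)\in\mathbb{R}^2$, the point $z_0$ goes to $(r_0,0)$ and $z_1$ goes to $r_1(\cos L,\sin L)$, with $L=\|x_1-x_0\|_2$. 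The geodesic formula of the preceding proposition is the straight segment between these two points written in polar form. Indeed, $R(s)$ is the norm of $(1-s)(r_0,0)+s\,r_1(\cos L,\sin L)$, and $\rho(s)L$ is its angle.

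Both formulas then follow from this picture.

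\textbf{Logarithm.} The initial velocity in the flat plane is $r_1(\cos L,\sin L)-(r_0,0)$. Converting back to cone coordinates at $(r_0,\theta=0)$, the radial component is $v_r = r_1\cos L - r_0$. The angular component is $\dot\theta = r_1\sin L/r_0$, so the $x$-velocity is $\dot\theta\, e = \frac{r_1\sin L}{r_0 L}(x_1-x_0)$. This is exactly \cref{eq: cone log map}. As a cross-check I will differentiate $R(s)^2$ and $\rho(s)$ at $s=0$ directly. Since $R(0)=r_0$, we get $2r_0\dot R(0) = -2r_0^2 + 2r_0r_1\cos L$. For the angle, note that the argument of the $\arccos$ equals $1$ at $s=0$, so $\arccos$ is not differentiable there. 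This singular derivative is the main obstacle, and I will handle it through $\sin(\rho L) = s\,r_1\sin L/R(s)$, which gives $\dot\rho(0) L = r_1\sin L/r_0$. This relation comes from the planar picture, or equivalently from $\sin^2 = 1-\cos^2$ together with the expression for $R^2$.

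\textbf{Exponential.} Given $v=(v_x,v_r)$ at $z_0$, set $e=v_x/\|v_x\|_2$. Write the flat-plane velocity as $(v_r, r_0\|v_x\|_2)$, since the angular speed $\|v_x\|_2$ scales to physical speed $r_0\|v_x\|_2$. The time-one point is then $(r_0+v_r,\ r_0\|v_x\|_2)$. Its norm is $r_1=\sqrt{\|v_x\|_2^2 r_0^2+(v_r+r_0)^2}$ and its angle is $\theta=\operatorname{atan2}(r_0\|v_x\|_2, v_r+r_0)$, so $x_1 = x_0+\theta e$. This matches \cref{eq: cone exp map}.

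Finally, I will check consistency by composing the two maps. Applying $\log$ to $\exp(v)$ recovers $v$ because $r_1\cos\theta = r_0+v_r$ and $r_1\sin\theta = r_0\|v_x\|_2$. This also confirms uniqueness on the range $0<\theta<\pi$, where $\operatorname{atan2}$ is the correct branch and the planar segment avoids the apex.
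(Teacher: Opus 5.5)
Your proof is correct. It shares the paper's skeleton: the logarithm is read off as $Z'(0^+;z_0,z_1)$ from the closed-form geodesic, and the exponential is obtained by inverting that relation. The key computation, however, goes through a different mechanism.

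The paper differentiates $\rho(s)=L^{-1}\arccos\bigl(A(s)/\sqrt{B(s)}\bigr)$ by brute force, with $L=\|x_1-x_0\|_2$, $A(s)=(1-s)r_0+sr_1\cos L$ and $B(s)=R(s)^2$. This puts the singular factor $\sqrt{1-A(s)^2/B(s)}$ in the denominator of $\rho'(s)$, and the paper recovers $\rho'(0^+)=r_1\sin L/(Lr_0)$ through a Taylor expansion of that radical. The formulas for $r_1$, $x_1$ and the $\operatorname{atan2}$ angle are then simply asserted as the solution of the resulting system.

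Your identification of $(R(s),L\rho(s))$ as the polar coordinates of the straight segment from $(r_0,0)$ to $r_1(\cos L,\sin L)$ buys several things. It makes the initial velocity immediate. It handles the $\arccos$ singularity cleanly: your identity $\sin(L\rho(s))=sr_1\sin L/R(s)$ amounts to the exact relation $1-A^2/B=s^2r_1^2\sin^2L/B$. So in fact $\rho'(s)=r_0r_1\sin L/(L\,B(s))$ for all $s$, and no expansion is needed. It also explains where the $\operatorname{atan2}$ branch and the hypothesis $\|v_x\|_2>0$ come from.

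The one step the flat picture does not justify on its own is that the cone geodesic with initial velocity $v$ stays in that plane. As written, ``the time-one point is $(r_0+v_r,\,r_0\|v_x\|_2)$'' is therefore only heuristic. Three things together make it rigorous:
\begin{enumerate}
\item your closing check $\log_{z_0}(\exp_{z_0}v)=v$;
\item the fact that $r_0\|v_x\|_2>0$ puts the candidate angle in $(0,\pi)$, where your log formula applies;
\item uniqueness of geodesics with prescribed initial data.
\end{enumerate}
This is the same inversion the paper relies on. Alternatively, the equation $\ddot p=-2(\dot r/r)\dot p$ from the paper's geodesic characterization shows directly that $\dot p$ keeps its direction, so the plane is totally geodesic.
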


Finally, we provide a derivation of the covariant derivative and closed form parallel transport on $\mathfrak{C}_\Omega$ in \Cref{sec: cone parallel transport}. To obtain these objects we appeal to the theory of \textit{warped-product} manifolds \citep{o1983semi}, which are a class of product manifolds to which the cone space $\mathfrak{C}_\Omega$ belongs. 

\subsection{Lifting Measures, Geodesics and Tangent Vectors.} \label{lifting}

Having established the structure of a metric cone, we will now discuss how one lifts and projects measures to and from it. In particular, given a measure $\lambda \in \mathcal{P}_2(\mathfrak{C}_{\Omega})$ we can project it to a measure in $\mathfrak{M}_+(\Omega)$ with the map $\mathfrak{P}$ characterized by
\begin{equation} \label{eq: measure projection}
    \int_{\Omega} \phi(x)d\mathfrak{P}\lambda(x) = \int_{\mathfrak{C}_\Omega} r^2 \phi(x) d\lambda(x, r) \qquad \text{for all test functions }\phi \in C^0(\Omega).
\end{equation}
This characterization elucidates the motivation for the cone construction: the radial coordinate provides an additional degree of freedom that encodes mass variation. In this sense, the projection from cone measures to base measures amounts to averaging over the radial degree of freedom, with the radial coordinate determining how much mass is assigned to each base point.
\begin{rmk}[Choice of notation]
    Throughout the paper, we use the notation $\mathcal{P}_2(\mathfrak{C}_\Omega)$ for the space of finite nonnegative Radon measures on $\mathfrak{C}_\Omega$ with finite second moment with respect to the cone metric. In particular, despite the notation, elements of $\mathcal P_2(\mathfrak C_\Omega)$ are not
    required to have unit total mass. That is,
    \[ \mathcal P_2(\mathfrak C_\Omega) \triangleq \left\{ \lambda \in \mathfrak M_+(\mathfrak C_\Omega) : \int_{\mathfrak C_\Omega} d_{\mathfrak C}(z,\mathfrak o)^2\,d\lambda(z)<\infty \right\}\]
    where $d_{\mathfrak{C}}((x,r), \mathfrak{o}) = r$.
\end{rmk}
We choose this notation to emphasize the fact that the Wasserstein distance $W_{\mathfrak{C}}$ is evaluated only between measures of the same total mass. For each fixed $m > 0$, the space 
\[\mathcal{P}_{2, m}(\mathfrak{C}_\Omega) \triangleq \left\{\lambda \in  \mathcal P_2(\mathfrak C_\Omega)
        : \lambda(\mathfrak C_\Omega)=m\right\}\]
has a geometry equivalent to that of probability measures on the cone under the normalization $\lambda \mapsto \lambda /m$. More precisely, if $\lambda, \eta \in \mathcal P_{2,m}(\mathfrak C_\Omega)$ then $W_{\mathfrak C}^2(\lambda,\eta) = m\,W_{\mathfrak C}^2(\lambda/m,\eta/m)$ and thus the fixed-mass geometry differs from the unit-mass geometry only by a constant rescaling of the Riemannian metric. The differential-geometric objects used in subsequent sections are insensitive to this
constant rescaling. The tangent spaces, viewed as spaces of velocity fields,
are the same after normalization; the weighted Helmholtz projection onto
gradient fields is unchanged; and the Levi-Civita connection and parallel transport are the same for any $m > 0$. Only metric quantities such as squared distances and squared tangent norms, for example, acquire the multiplicative factor $m$. Again, this convention is convenient because the cone lifts of measures in $\mathfrak M_+(\Omega)$
need not themselves be probability measures, even though their projections
satisfy
\[ \int_\Omega \phi(x)\,d\mathfrak P\lambda(x) = \int_{\mathfrak C_\Omega} r^2\phi(x)\,d\lambda(x,r). \]

Now observe that, given a base measure $\mu$, there are infinitely many valid lifts of it to $\mathfrak{C}_\Omega$. \citet{liero2016optimal} show that one can replicate Hellinger-Kantorovich geometry on $\Omega$ through Wasserstein geometry on the cone space $\mathfrak{C}_\Omega$ by minimizing over all possible lifts. 

\begin{thm}[Correspondence of $(\mathcal{P}_2(\mathfrak{C}_\Omega), W_2)$ and $(\mathfrak{M}_+(\Omega), \HK)$, \citet{liero2016optimal} Theorem 3.6]
\label{HK cone variational prob}
    For any measures $\mu_0, \mu_1 \in \mathfrak{M}_+(\Omega)$ we have 
    \begin{equation} \label{eq: cone wasserstein and hk equivalence}
    \HK(\mu_0, \mu_1) = \min\left\{W_{\mathfrak{C}}(\lambda_0, \lambda_1) \,\Big|\, \lambda_i \in \mathcal{P}_2(\mathfrak{C}_\Omega),  \mathfrak{P}\lambda_0 = \mu_0, \mathfrak{P}\lambda_1 = \mu_1\right\}
\end{equation}
where $W_\mathfrak{C}$ is the 2-Wasserstein distance with respect to the cone metric $d_\mathfrak{C}$, which takes the value $\infty$ if $\lambda_0(\mathfrak{C}_\Omega) \neq \lambda_1(\mathfrak{C}_\Omega).$
\end{thm}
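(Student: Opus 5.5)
We prove the two inequalities in \eqref{eq: cone wasserstein and hk equivalence} separately, using the logarithmic entropy-transport characterization \eqref{eq: let hk equivalence} as the bridge between the base space and the cone.

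\textbf{Lower bound: $W_{\mathfrak C}(\lambda_0,\lambda_1)\ge \HK(\mu_0,\mu_1)$ for every admissible pair of lifts.} Let $\lambda_0,\lambda_1$ be lifts with $\mathfrak P\lambda_i=\mu_i$ and equal total mass, and let $\gamma$ be an optimal cone coupling. The cost is
\[
\int \Big(r_0^2+r_1^2-2r_0r_1\cos(\|x_0-x_1\|_2\wedge\pi)\Big)\,d\gamma .
\]
The two squared-radius terms integrate to $\mu_0(\Omega)+\mu_1(\Omega)$ by \eqref{eq: measure projection}. The main step is to show that the cross term is no larger than the corresponding cross term for a suitable plan on $\Omega\times\Omega$:
\begin{itemize}
\item Define $\pi$ as the pushforward of $r_0 r_1\,\gamma$ to $\Omega\times\Omega$, restricted to $\{\|x_0-x_1\|_2<\pi/2\}$.
\item Disintegrating in $x$ and applying Cauchy--Schwarz gives $\pi_j\le$ (something controlled by) $\mu_j$, since $r_0 r_1\le \tfrac12(r_0^2+r_1^2)$ fiberwise.
\item Sharpen this with the pointwise identity
\[
\inf_{\theta>0}\big(\theta\rho_0 + \theta^{-1}\rho_1\big)=2\sqrt{\rho_0\rho_1}.
\]
\end{itemize}
This reduces the claim to the pointwise inequality between the cone cost and the optimal-entropy integrand $F(\rho_0)+F(\rho_1)+\rho c$, namely
\[
\min_{\rho}\Big\{F(\rho_0/\rho)\rho+F(\rho_1/\rho)\rho+\rho c(L)\Big\}=\rho_0+\rho_1-2\sqrt{\rho_0\rho_1}\cos L .
\]
This is the standard reverse-entropy reformulation of \citet{liero2018optimal}, and we prove it by explicit minimization in $\rho$. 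Combining these pieces yields $W_{\mathfrak C}^2(\lambda_0,\lambda_1)\ge E(\pi;\mu_0,\mu_1)\ge \HK^2(\mu_0,\mu_1)$.

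\textbf{Upper bound and attainment.} Take an optimal $\pi$ for \eqref{eq: let functional}, with densities $\pi_j=\sigma_j\mu_j$. Build the coupling
\[
\gamma=(x_0,\sigma_0(x_0)^{-1/2},\,x_1,\sigma_1(x_1)^{-1/2})_\#\pi ,
\]
so that its cone marginals $\lambda_j$ project to $\mu_j$ wherever $\sigma_j>0$. Any leftover mass of $\mu_j$ (the part not charged by $\pi_j$) is placed on singular lifts coupled with the apex $\mathfrak o$; these cost exactly $r^2$. We then adjust masses by adding apex mass so that $\lambda_0(\mathfrak C_\Omega)=\lambda_1(\mathfrak C_\Omega)$, which is harmless since apex mass projects to zero. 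The transport cost of $\gamma$ evaluates, via the same pointwise identity, to $E(\pi;\mu_0,\mu_1)$, using the first-order optimality relation $\sigma_0(x_0)\sigma_1(x_1)=\cos^2\|x_0-x_1\|_2$ on $\operatorname{supp}\pi$ from \citet[Theorem 6.3]{liero2018optimal}. This gives equality with a specific pair of lifts, so the infimum is a minimum.

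The main obstacle is the pointwise cost identity together with the careful handling of the apex and the mass-balancing. In particular, one must verify that the pushforward plan $\pi$ in the lower bound is admissible ($\pi_j\ll\mu_j$) and that truncating at angle $\pi/2$ only lowers cost. This uses $\cos L\le 0$ for $L\ge\pi/2$, so that such pairs are better routed through $\mathfrak o$.
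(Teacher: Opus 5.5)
Your upper bound is sound. The coupling $(x_0,\sigma_0^{-1/2},x_1,\sigma_1^{-1/2})_\#\pi$, with the singular parts of $\mu_j$ paired to the apex, is exactly the lift \eqref{eq: optimal lift} that the paper takes from \citet{liero2016optimal}; the paper itself gives no proof of this theorem. With $\sigma_0(x_0)\sigma_1(x_1)=\cos^2\|x_0-x_1\|_2$ holding $\pi$-a.e., its cost is $\mu_0(\Omega)+\mu_1(\Omega)-2\pi(\Omega\times\Omega)=E(\pi;\mu_0,\mu_1)$.

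The lower bound, however, has a genuine gap. First, the displayed identity is false as written: minimizing $\rho F(\rho_0/\rho)+\rho F(\rho_1/\rho)+\rho\,c(L)$ over $\rho$ gives $\rho_0\log\rho_0+\rho_1\log\rho_1-(\rho_0+\rho_1)\log\frac{\rho_0+\rho_1}{2+c(L)}$. For $\rho_0=4$, $\rho_1=1$, $L=0$ this is about $0.964$ rather than $1$. You have $F$ where the reverse entropy $R(s)=sF(1/s)$ belongs; the correct marginal-perspective identity is
\[
\min_{\theta\ge 0}\Big\{\rho_0F(\theta/\rho_0)+\rho_1F(\theta/\rho_1)+\theta\,c(L)\Big\}=\rho_0+\rho_1-2\sqrt{\rho_0\rho_1}\cos\big(L\wedge\tfrac{\pi}{2}\big),\qquad \theta^*=\sqrt{\rho_0\rho_1}\cos\big(L\wedge\tfrac{\pi}{2}\big).
\]
More seriously, the cone cost is a \emph{minimum} over $\theta$. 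Plugging any other $\theta$ into the entropy integrand therefore bounds the cone cost from above, which is the wrong direction for a lower bound. Your plan $\pi=(x_0,x_1)_\#(r_0r_1\gamma)$ corresponds to $\theta=r_0r_1$, and with it $W_{\mathfrak C}^2\ge E(\pi)$ is false. Take $\lambda_j=\delta_{(x_j,1)}$ with $0<L=\|x_0-x_1\|_2<\pi/2$. Then $\pi_j=\mu_j$, so $E(\pi)=c(L)=-2\log\cos L$, which is strictly larger than $2-2\cos L=W_{\mathfrak C}^2(\lambda_0,\lambda_1)$. The Cauchy--Schwarz and AM--GM steps do not address this.

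What is missing is the correct weight together with a fiberwise Jensen inequality.
\begin{enumerate}
\item Put $\theta^*\triangleq r_0r_1\cos(\|x_0-x_1\|_2\wedge\pi/2)$, which vanishes at the apex, and set $\pi\triangleq(x_0,x_1)_\#(\theta^*\gamma)$. Pointwise, $d_{\mathfrak C}^2\ge r_0^2F(\theta^*/r_0^2)+r_1^2F(\theta^*/r_1^2)+\theta^*c$.
\item The $x_0$-marginal of $r_0^2\gamma$ is $\mu_0$ by \eqref{eq: measure projection}, so you can disintegrate $r_0^2\gamma=\int\gamma^{x_0}\,d\mu_0(x_0)$.
\item The fiber averages $\int(\theta^*/r_0^2)\,d\gamma^{x_0}$ are exactly $d\pi_0/d\mu_0$. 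This also gives $\pi_0\ll\mu_0$ for free.
\item Convexity of $F$ then yields $\int r_0^2F(\theta^*/r_0^2)\,d\gamma\ge\int F(d\pi_0/d\mu_0)\,d\mu_0$, and likewise for $j=1$.
\item Integrating the pointwise bound gives $\int d_{\mathfrak C}^2\,d\gamma\ge E(\pi;\mu_0,\mu_1)\ge\HK^2(\mu_0,\mu_1)$.
\end{enumerate}
This Jensen step is the real content of the lower bound. It is what handles lifts whose conditional radial law is not deterministic, and so reduces the minimization over all lifts to the deterministic lifts of your upper bound. This is essentially the route of \citet{liero2018optimal} through their homogeneous marginal-perspective formulation.

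A side remark on the upper bound: the corrected identity, used as an inequality with $\theta\equiv 1$ per unit of $\pi$, shows its cost is at most $E(\pi)$ for every admissible $\pi$. So the optimality relation from Theorem 6.3 is not actually needed.
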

This remarkable result allows us to characterize the local metric structure of the Hellinger-Kantorovich space through Wasserstein geometry under a suitable cone lift. However, minimizing over all admissible lifts in \cref{eq: cone wasserstein and hk equivalence} is not computationally tractable. Fortunately, one can leverage \textit{another} equivalent formulation of the Hellinger-Kantorovich distance to obtain explicit lifts that are optimal in the sense of \cref{eq: cone wasserstein and hk equivalence}. In particular, \citet{liero2016optimal} show that the logarithmic entropy transport functional gives us a way to compute optimal lifts in the sense of \cref{eq: cone wasserstein and hk equivalence}. In particular, assume that $\pi \in \mathfrak{M}_+(\Omega \times \Omega)$ is a minimizer of $E(\cdot\,; \mu_0, \mu_1)$ from \Cref{eq: let functional} and for the marginals $\pi_i$ consider the Lebesgue decomposition $\mu_i = u_i\pi_i + \mu_i^\perp$. Then the transport plan $\gamma_{\pi} \in \mathfrak{M}_+(\mathfrak{C}_{\Omega} \times \mathfrak{C}_{\Omega})$ defined by
\begin{multline}\label{eq: optimal lift}
    \gamma_{\pi}(dz_0, dz_1) = \delta_{\sqrt{u_0(x_0)}}(dr_0)\delta_{\sqrt{u_1(x_1)}}(dr_1)\pi(dx_0, dx_1) \\
    + \delta_1(dr_0)\mu_0^\perp(dx_0)\delta_{\mathfrak{o}}(dz_1) + \delta_1(dr_1)\mu_1^\perp(dx_1)\delta_{\mathfrak{o}}(dz_0) 
\end{multline}
and the associated lifts $\lambda_i = \Pi_{\#}^i\gamma_\pi$ are optimal for \cref{eq: cone wasserstein and hk equivalence}. This means we can solve the logarithmic-entropy-transport functional approach and use the optimal plan to construct appropriate lifts of the original measures onto the cone, described in detail in \Cref{alg:let-lift}. This is important, as \Cref{eq: let hk equivalence} is implementable with standard unbalanced optimal transport solvers. We also have the following characterization of the optimal coupling $\pi$ and $\gamma_{\pi}$ under the assumption that we are in the \textit{reaction-transport} regime. 

\begin{rmk}[$\gamma_\pi$ Supported on a Map, \citet{clancy2022wasserstein}]
\label{clancy coupling supported on a map}
    Let $\pi$ be the optimal coupling from \Cref{alg:let-lift}, and suppose that $\mu_0, \mu_1$ are absolutely continuous with respect to Lebesgue and are supported in a set of diameter strictly less than $\pi/2$ (and thus, by \Cref{diameter bound} we know that $\mu_i^\perp = 0$). It then holds that $\pi$ is supported on a map $T$ and the optimal coupling $\gamma_\pi$ of the optimal lifts $\lambda_0$ and $\lambda_1$ is supported on the assignment
    \[(x_0, r_0(x_0)) \mapsto (T(x_0), r_1(T(x_0))) \]
    implying that the radial conditional laws of $\lambda_0$ and $\lambda_1$ are deterministic.
\end{rmk}

\begin{algorithm}[H]
    \begin{algorithmic}[1]
        \caption{Lift measures from  $(\mathfrak{M}_+(\Omega), \HK)$ to $(\mathcal{P}_{2}(\mathfrak{C}_\Omega), W_2)$}
        \label{alg:let-lift}
        \Require Measures $\mu_0, \mu_1 \in \mathfrak{M}_+(\Omega)$.  
        \State Solve logarithmic entropy transport functional minimization,
        \begin{align*} 
            \HK(\mu, \nu) &= \inf\left\{E(\pi;\mu, \nu)^{1/2} \, |\, \pi  \in \mathfrak{M}_+(\Omega \times \Omega), \,\Pi^1_{\#}\pi  \ll \mu, \, \Pi^2_{\#}\pi \ll \nu\right\}
        \end{align*}
        for $E$ defined in \cref{eq: let functional}, and let $\pi^*$ denote the optimal coupling for the problem above. 
        \State Compute the Lebesgue decompositions $\mu_i = u_i\pi_i^* + \mu_i^\perp$.
        \State Lift $\mu_i$ to $\lambda_i$ via $\lambda_i = \Pi^i_{\#}\gamma_{\pi^*}$ with $\gamma_{\pi^*}$ defined in \cref{eq: optimal lift}.
        \State \Return $\lambda_0, \lambda_1  \in \mathcal{P}_{2}(\mathfrak{C}_{\Omega})$. 
    \end{algorithmic}
\end{algorithm}

\noindent \textbf{Geodesic Projections.} Note that earlier indicate that optimal lifts are dependent on the source and target measures $\mu_0, \mu_1 \in \mathfrak{M}_+(\Omega)$ -- this indicates that we need to use caution when constructing algorithms on $\mathfrak{C}_\Omega$ to emulate Hellinger-Kantorovich geometry on $\Omega$. Fortunately, it holds that for a pair $\mu_0, \mu_1$, the $W_\mathfrak{C}$ geodesic between optimal lifts $\lambda_0, \lambda_1$ of $\mu_0, \mu_1$ projects back to a Hellinger-Kantorovich geodesic on $\Omega$. 

\begin{prop}[\citet{liero2016optimal}, Corollary 4.4]
\label{cone geodesics project to hk geodesics}
    For any optimal lift $\lambda_0, \lambda_1$ of $\mu_0, \mu_1$, an optimal plan $\gamma \in \mathcal{P}_2(\mathfrak{C}_\Omega \times \mathfrak{C}_\Omega)$ in the sense of $W_\mathfrak{C}$ induces a geodesic path in $(\mathfrak{M}_+(\Omega), \HK)$ connecting $\mu_0$ and $\mu_1$ given by
    \[\mu_t = \mathfrak{P}\lambda_t \qquad \text{with} \qquad \lambda_t = Z(t; \cdot, \cdot)_\#\gamma\]
    where $Z(\cdot; z_0, z_1)$ is the geodesic interpolator described in \Cref{eq: cone geodesics}.
\end{prop}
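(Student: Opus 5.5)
The plan is to show two things about $\mu_t \triangleq \mathfrak{P}\lambda_t$ with $\lambda_t = Z(t;\cdot,\cdot)_\#\gamma$: first that $t\mapsto\mu_t$ has the right endpoints, and then that it is a constant-speed $\HK$ geodesic. Both the geodesic property of $\lambda_t$ on the cone and the optimality of the lifts will be used, together with the fact that $\HK$ is a lower envelope of $W_{\mathfrak{C}}$ over lifts, given by \Cref{HK cone variational prob}.

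\textbf{Endpoints.} Since $Z(0;z_0,z_1)=z_0$ and $Z(1;z_0,z_1)=z_1$, we have $\lambda_0=\Pi^0_\#\gamma$ and $\lambda_1=\Pi^1_\#\gamma$. Projecting gives $\mathfrak{P}\lambda_0=\mu_0$ and $\mathfrak{P}\lambda_1=\mu_1$ by the lift assumption.

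\textbf{Cone geodesic and the upper bound.} We first verify that $(\lambda_t)$ is a constant-speed $W_{\mathfrak{C}}$ geodesic. For $s<t$, the plan $(Z(s;\cdot,\cdot),Z(t;\cdot,\cdot))_\#\gamma$ couples $\lambda_s$ and $\lambda_t$. By the constant-speed property of $Z$ in \Cref{eq: cone geodesics},
\[
W_{\mathfrak{C}}^2(\lambda_s,\lambda_t)\le \int d_{\mathfrak{C}}\bigl(Z(s;z_0,z_1),Z(t;z_0,z_1)\bigr)^2\,d\gamma=(t-s)^2\,W_{\mathfrak{C}}^2(\lambda_0,\lambda_1).
\]
Here we use optimality of $\gamma$. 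We also need the geodesic formula for pairs where the base distance is at least $\pi/2$, or where a point is the apex. In those cases we use the general metric-cone geodesic: the straight segment through the apex, or the comparison construction in \citet{burago2001course}. This still satisfies $d_{\mathfrak{C}}(Z(s),Z(t))=|t-s|\,d_{\mathfrak{C}}(z_0,z_1)$. Since $\lambda_s,\lambda_t$ are lifts of $\mu_s,\mu_t$, \Cref{HK cone variational prob} gives
\[
\HK(\mu_s,\mu_t)\le W_{\mathfrak{C}}(\lambda_s,\lambda_t)\le (t-s)\,W_{\mathfrak{C}}(\lambda_0,\lambda_1)=(t-s)\,\HK(\mu_0,\mu_1),
\]
where the last equality is the optimality of the lifts $\lambda_0,\lambda_1$.

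\textbf{Reverse inequality.} We conclude with the triangle inequality in $(\mathfrak{M}_+(\Omega),\HK)$, which holds because $\HK$ is a metric. For $0\le s<t\le 1$,
\[
\HK(\mu_0,\mu_1)\le \HK(\mu_0,\mu_s)+\HK(\mu_s,\mu_t)+\HK(\mu_t,\mu_1)\le \bigl(s+(t-s)+(1-t)\bigr)\HK(\mu_0,\mu_1).
\]
Equality therefore holds throughout, so $\HK(\mu_s,\mu_t)=(t-s)\,\HK(\mu_0,\mu_1)$. This is the claimed constant-speed geodesic.

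\textbf{Main obstacle.} The expected difficulty is not the squeeze argument but the measure-theoretic bookkeeping. We must check that $Z(t;\cdot,\cdot)$ is a Borel map defined $\gamma$-a.e., including at the apex and in the region where the base distance is at least $\pi/2$, where \Cref{eq: cone geodesics} is not stated. We must also check that $\lambda_t\in\mathcal{P}_2(\mathfrak{C}_\Omega)$ with the same total mass, so that $W_{\mathfrak{C}}$ is finite. For the first point, I would use a measurable selection of geodesics and note that in the regime of \Cref{diameter bound} the apex and far-pair terms are $\gamma$-null. For the second, the bound $d_{\mathfrak{C}}(Z(t),\mathfrak{o})\le \max(r_0,r_1)$ gives the second-moment control.
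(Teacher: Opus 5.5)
Your argument is correct. The paper gives no proof of its own here; it imports the result from \citet{liero2016optimal}. Your route is the standard derivation of that result. Pushing an optimal plan forward by geodesic interpolation gives $W_{\mathfrak{C}}(\lambda_s,\lambda_t)\le (t-s)\,W_{\mathfrak{C}}(\lambda_0,\lambda_1)$. \Cref{HK cone variational prob} makes $\mathfrak{P}$ $1$-Lipschitz from $W_{\mathfrak{C}}$ to $\HK$. Optimality of the lifts makes the endpoint inequality tight, and the triangle inequality then forces equality on every subinterval.

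Two small corrections to your bookkeeping paragraph:
\begin{itemize}
\item Apex pairs are not $\gamma$-null for an arbitrary optimal lift, even under the diameter bound. Adding the same mass $m$ at $\mathfrak{o}$ to both $\lambda_0$ and $\lambda_1$ leaves their projections unchanged. The plan $\gamma+m\,\delta_{(\mathfrak{o},\mathfrak{o})}$ still attains $\HK(\mu_0,\mu_1)$, so it is an optimal plan charging an apex pair. This is harmless, because you already treat such pairs with the radial segment $t\mapsto(x_0,(1-t)r_0)$, or the constant curve at $\mathfrak{o}$. Both have constant speed, so the claim that these terms vanish should simply be dropped.
\item To ensure $\mu_t\in\mathfrak{M}_+(\Omega)$, you should note that $X(t;z_0,z_1)$ stays in $\Omega$. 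This holds because $X$ is a convex combination of $x_0,x_1$ with $\rho\in[0,1]$ and $\Omega$ is convex.
\end{itemize}
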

This result indicates that we can interpolate along a Hellinger-Kantorovich geodesic by lifting the source and target measures to $\mathfrak{C}_\Omega$, solving for the optimal plan, performing Wasserstein interpolation on $\mathfrak{C}_\Omega$, and projecting back via $\mathfrak{P}$. We provide a example of this interpolation procedure applied to two Gaussian measures of differing total mass in \Cref{fig:interpolation}.
\medskip

\begin{figure}[htbp]
     \centering
     \begin{subfigure}{\textwidth}
         \centering
         \includegraphics[width=\linewidth]{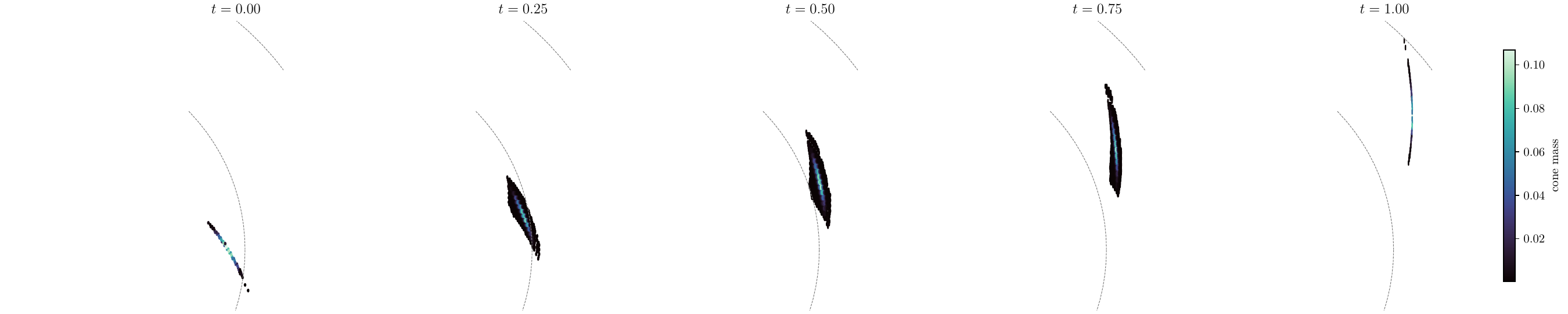}
         \caption{Wasserstein interpolation of $(\lambda_0, \lambda_1) = \operatorname{LETLift}(\mu_0, \mu_1)$ (\Cref{alg:let-lift}) on $\mathfrak{C}_\Omega$  plotted in polar coordinates, where the grey curves denote the radial coordinate lines. Observe that $\lambda_t(r\,|\,x)$ is not deterministic, which prevents the tangent projection operator $\mathcal{P}_{\lambda_t}$ from being an isometry.}
     \end{subfigure}
     
     \vspace{1em} 

     \begin{subfigure}{\textwidth}
         \centering
         \includegraphics[width=\linewidth]{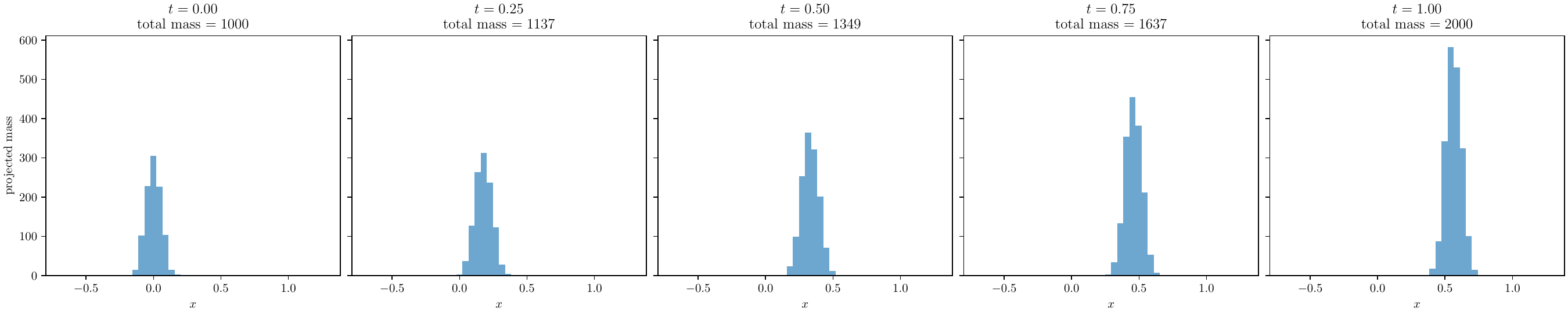}
         \caption{Hellinger-Kantorovich geodesic between $\mu_0, \mu_1$ obtained by projecting $\lambda_t$ from panel (a).}
     \end{subfigure}
     \caption{Visualization of Hellinger-Kantorovich interpolation using the lifting procedure described in \Cref{alg:let-lift}, where grey lines denote radial coordinate lines. The measures $\mu_0, \mu_1$ consist of $n_0 = 1000$ and $n_1 = 2000$ draws from Gaussian probability measures with offset means. Note that we do \textit{not} normalize the empirical measures, and thus $\mu_0(\mathbb{R})\neq \mu_1(\mathbb{R})$.}
     \label{fig:interpolation}
\end{figure}

\noindent \textbf{Lifting Tangent Vector Fields.} Fortunately, lifting tangent vectors in a manner that is faithful to the dynamical formulation of Hellinger-Kantorovich geometry is tractable too. We'll define the vector field lifting operator as follows, 
\begin{align*}
    \mathcal{L}_{\mu, \lambda}: T_{\mu}\mathfrak{M}_+^\Gamma \rightarrow S_{\lambda} \subset T_{\lambda}\mathcal{P}_2(\mathfrak{C}_{\Omega}) \quad \text{given by} \quad \mathcal{L}_{\mu, \lambda}[(v, \beta)](x,r) = (v(x), 2\beta(x)r)
\end{align*}
where $\mathfrak{P}\lambda = \mu$ and $S_{\lambda}$ is a subset of the Wasserstein tangent space at the measure $\lambda$. For the rest of the paper, we may opt to drop the operator subscripts as they do not effect the functional form of the projection -- they merely denote the $L^2$ space in which the lifted objects live. One can see that $\mathcal{L}_{\mu, \lambda}(T_{\mu}\mathfrak{M}_+^\Gamma) \subset T_{\lambda} \mathcal{P}_2(\mathfrak{C}_\Omega)$ by observing that any Hellinger-Kantorovich tangent of the form $(\nabla \varphi, \varphi)$ with $\varphi \in C^\infty_c(U)$ is lifted to a tangent that is a cone gradient field, $(\nabla \varphi(x), 2\varphi(x)r) = \nabla_{\mathfrak{C}}(r^2 \varphi)$, a fact that we formalize in \Cref{lifted image}.
\begin{lemma} \label{lifted image}
    Suppose the support of $\lambda$ is radially supported in the range $[r_{\min}, r_{\max}]$ for some $0 < r_{\min} \leq r_{\max} < \infty$. Then the image of the lifting operator $\mathcal{L}_{\mu, \lambda}$ is a subset of $T_\lambda \mathcal{P}_2(\mathfrak{C}_\Omega)$, i.e. $\mathcal{L}_{\mu, \lambda}(T_\mu \mathfrak{M}_+^\Gamma) \subset T_\lambda \mathcal{P}_2(\mathfrak{C}_\Omega)$. 
\end{lemma}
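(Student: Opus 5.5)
The argument has two steps. First we check that the generators of $T_\mu\mathfrak{M}_+^\Gamma$ are sent to cone gradients. Then we show that $\mathcal{L}_{\mu,\lambda}$ is a bounded linear map from $L^2(\mu)\times L^2(\mu)$ into $L^2(\lambda)$, so it carries closures into closures.

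\textbf{Generators.} Fix $\varphi\in C^\infty_c(U)$ and set $\Phi(x,r)\triangleq r^2\varphi(x)$. The cone metric is $g_{(x,r)}=dr^2+r^2 g_x$, so the cone gradient of a function $f(x,r)$ is $(r^{-2}\nabla_x f,\partial_r f)$. Applied to $\Phi$ this gives
\[
\nabla_{\mathfrak C}\Phi=(\nabla\varphi(x),\,2r\varphi(x))=\mathcal{L}_{\mu,\lambda}[(\nabla\varphi,\varphi)].
\]
The function $\Phi$ is not compactly supported in $r$, and it is only smooth away from the apex. We fix this with a cutoff: choose $\chi\in C^\infty_c((0,\infty))$ with $\chi\equiv1$ on $[r_{\min},r_{\max}]$, and put $\widetilde\Phi=\chi(r)r^2\varphi(x)$. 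Then $\widetilde\Phi\in C^\infty_c$ of the open smooth extension of $\mathfrak C_\Omega$ (away from the apex, and away from $\partial\Omega$ because $U\Subset\Omega^\circ$). On $\operatorname{supp}\lambda$ we have $\nabla_{\mathfrak C}\widetilde\Phi=\nabla_{\mathfrak C}\Phi$ $\lambda$-a.e. Hence $\mathcal{L}[(\nabla\varphi,\varphi)]$ coincides in $L^2(\lambda)$ with the gradient of a test function, and so lies in $T_\lambda\mathcal{P}_2(\mathfrak{C}_\Omega)$. This is the step where the radial bound $r\ge r_{\min}>0$ is needed: it keeps us away from the apex, where the cone is not a smooth manifold.

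\textbf{Closure.} Let $s=(v,\beta)$ be a general element of $T_\mu\mathfrak{M}_+^\Gamma$. Pick $\varphi_n\in C^\infty_c(U)$ with $(\nabla\varphi_n,\varphi_n)\to(v,\beta)$ in $L^2(\mu)\times L^2(\mu)$. We compute the lifted norm using the cone metric and the projection identity \eqref{eq: measure projection}. For a pair $(w,\alpha)$,
\[
\|\mathcal{L}[(w,\alpha)]\|^2_{L^2(\lambda)}=\int r^2\|w(x)\|^2+4\alpha(x)^2r^2\,d\lambda(x,r)=\int \|w\|^2+4\alpha^2\,d\mu .
\]
The weight $r^2$ on the $x$-component comes from the metric tensor, and the radial component contributes $(2\alpha r)^2$. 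The identity \eqref{eq: measure projection} is stated for continuous $\phi$, while here $\phi=\|w\|^2+4\alpha^2$ is only integrable. We extend it to nonnegative measurable $\phi$ by monotone class or density. This is legitimate because $\mathfrak{P}\lambda=\mu$ means exactly that the $x$-marginal of $r^2\lambda$ equals $\mu$.

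It follows that $\mathcal{L}$ is an isometry, in particular bounded, from $L^2(\mu)\times L^2(\mu)$ into $L^2(\lambda)$. Therefore $\mathcal{L}[(\nabla\varphi_n,\varphi_n)]\to\mathcal{L}[(v,\beta)]$ in $L^2(\lambda)$. The tangent space $T_\lambda\mathcal{P}_2(\mathfrak{C}_\Omega)$ is closed by definition, so $\mathcal{L}[(v,\beta)]\in T_\lambda\mathcal{P}_2(\mathfrak{C}_\Omega)$.

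\textbf{Main obstacle.} The delicate part is not the computation but the setting. We have to make sure the Wasserstein tangent space on the cone is taken over an open smooth manifold, namely the extension $(U'\times(0,\infty), dr^2+r^2g)$, and that the cut-off test function belongs to its admissible class. The hypothesis $r\in[r_{\min},r_{\max}]$ together with $\operatorname{supp}\varphi\subset U$ is exactly what guarantees this.
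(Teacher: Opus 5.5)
Your proposal is correct and follows the paper's proof essentially step for step. The same cutoff potential $\chi(r)\,r^2\varphi(x)$ handles the generators, and the closure step rests on the same norm identity from \eqref{eq: measure projection}, which you phrase as boundedness (indeed isometry) of $\mathcal{L}_{\mu,\lambda}$ rather than the paper's direct estimate of $\|\mathcal{L}_{\mu,\lambda}(v,\beta)-\nabla_{\mathfrak C}\Phi_n\|_{L^2(\lambda)}$; your extension of that identity from continuous to nonnegative measurable integrands is a detail the paper uses only implicitly.
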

\begin{proof}
    Define $S_\lambda \triangleq \mathcal{L}_{\mu, \lambda}(T_\mu \mathfrak{M}_+^\Gamma)$ and $u \triangleq (\nabla \varphi, \varphi)$ where $\varphi \in C^\infty_c(U)$ -- observe that $u$ is necessarily a HK tangent. Then the lifted tangent is given by $\mathcal{L}_{\mu, \lambda} [u] = (\nabla \varphi(x), 2\varphi(x)r).$ Now choose a $C^\infty_c((0,\infty))$ bump function $\chi$ such that $\chi \equiv 1$ on $[r_{\min}, r_{\max}]$, and observe that 
    \[\mathcal{L}_{\mu, \lambda}[u] = \nabla_{\mathfrak{C}}(\chi(r) \cdot  r^2\varphi(x)) \qquad \lambda \text{-almost everywhere,}\] rendering $\mathcal{L}_{\mu, \lambda}[u]$ a valid cone Wasserstein tangent. Now we'll consider HK tangents that are $L^2(\mu; T\Omega) \times L^2(\mu)$ limits of fields of the form $(\nabla \varphi, \varphi)$. In particular, let $(v, \beta) = \lim_{n \rightarrow \infty}(\nabla \varphi_n, \varphi_n)$ in  $L^2(\mu; T\Omega) \times L^2(\mu)$. We will show that the cone potential $\Phi_n(x,r) = \chi(r) r^2 \varphi_n(x)$ satisfies $\nabla_{\mathfrak{C}}\Phi_n(x,r) \rightarrow (v(x), 2\beta(x)r)$ in $L^2(\lambda)$. To see this, observe that 
    \begin{align*}
        \nabla_{\mathfrak{C}}\Phi_n(x,r) &= \left(\chi(r) \nabla \varphi_n(x), \chi'(r) r^2 \varphi_n(x) + 2\varphi_n(x) r \chi(r)\right)
    \end{align*}
    and thus
    \begin{align*}
        \left\|\mathcal{L}_{\mu, \lambda}(v,\beta) - \nabla_{\mathfrak{C}}\Phi_n\right\|_{L^2(\lambda)}^2 &= \int_{\mathfrak{C}_\Omega}r^2\|\chi(r) \nabla \varphi_n(x) - v(x)\|_2^2 + \left|\chi'(r) r^2 \varphi_n(x) + 2\varphi_n(x) r \chi(r) - 2\beta(x)r\right|^2 \, d\lambda(x,r) \\
        &= \int_{\mathfrak{C}_\Omega}r^2\|\nabla \varphi_n(x) - v(x)\|_2^2 + 4r^2\left|\varphi_n(x) - \beta(x)\right|^2 \, d\lambda(x,r) \\
        &= \int_\Omega \|\nabla \varphi_n - v\|_2^2 + 4\left|\varphi_n - \beta\right|^2 \, d\mu
        \rightarrow 0.
    \end{align*}
\end{proof}

We note that, as we will show in subsequent sections, the radial support condition required by \Cref{lifted image} follows directly from standard assumptions on the curve of measures under consideration (see \Cref{admissible class} for the exact conditions). With this result in mind, we are now in position to define a complementary vector field \textit{projection} operator that takes cone Wasserstein tangent fields to HK tangent fields. Let $\lambda \in \mathcal{P}_2(\mathfrak{C}_\Omega)$ be a measure with support bounded away from the cone apex $\mathfrak{o}$ and let $u(x,r) = a(x,r) + b(x,r)\partial_r \in L^2(\lambda; \mathfrak{C}_\Omega)$ be vector field over the cone. We'll define the map  
\[\mathcal{P}_{\lambda}: T_{\lambda}\mathcal{P}_2(\mathfrak{C}_\Omega) \rightarrow F_{\mathfrak{P}\lambda} \subset L^2(\mathfrak{P}\lambda; \Omega) \times L^2(\mathfrak{P}\lambda) \]
where $T_{\mathfrak{P}\lambda}\mathfrak{M}_+^\Gamma \subset F_{\mathfrak{P}\lambda}$ as follows. Let 
\[v(x) = \frac{\int_{0}^\infty r^2a(x,r) \, d\lambda(r\,|\,x)}{\int_0^\infty r^2 \,d\lambda(r\,| \,x)} \quad \text{and} \quad \beta(x) = \frac{\int_0^\infty rb(x,r) \,d\lambda(r\, | \, x)}{2\int_0^\infty r^2 \,d\lambda(r\,| \,x)}\]
and define
\[\mathcal{P}_\lambda[u](x) = (v(x), \beta(x)).\]
With this definition in place, we are ready to present the following result relating the lifting operator $\mathcal{L}$ and the projection operator $\mathcal{P}.$
\begin{thm}[Isometry of lifting and projection]
\label{isometry}
Let $\lambda \in \mathcal{P}_2(\mathfrak{C}_\Omega)$, set $\mu \triangleq \mathfrak{P}\lambda$ and suppose the support of $\lambda$ is radially supported in the range $[r_{\min}, r_{\max}]$ for some $0 < r_{\min} \leq r_{\max} < \infty$. Then the lifting operator
\[
\mathcal{L}_{\mu,\lambda} : T_{\mu}\mathfrak{M}_+^\Gamma
\to S_\lambda \subset T_\lambda \mathcal{P}_2(\mathfrak{C}_\Omega),
\qquad
\mathcal{L}_{\mu,\lambda}(v,\beta)(x,r) = (v(x),2\beta(x)r),
\]
is an isometry, where $T_{\mu}\mathfrak{M}_+^\Gamma$ is equipped with the
Hellinger--Kantorovich metric tensor and $S_\lambda \subset T_{\lambda}\mathcal{P}_2(\mathfrak{C}_\Omega)$ is equipped with the $W_2(\mathfrak{C}_\Omega)$ metric tensor. Suppose in addition that the conditional radial law of $\lambda$ given $x$ is deterministic and supported in $(0,\infty)$; equivalently, suppose there exist a Borel measure $\eta$ on $\Omega$ and a measurable map $r:\Omega\to(0,\infty)$ such that $\lambda = (x\mapsto (x,r(x)))_\#\eta.$ Then, for every $u=a+b\,\partial_r \in L^2(\lambda;T\mathfrak{C}_\Omega)$, the projection operator satisfies
\[
\mathcal{P}_\lambda[u](x)
=
\left(a(x,r(x)),\frac{b(x,r(x))}{2r(x)}\right)
\qquad \text{for }\mu\text{-a.e. }x.
\]
Moreover, $\mathcal{P}_\lambda$ is the isometric inverse of $\mathcal{L}_{\mu,\lambda}$, i.e.
\[
\mathcal{P}_\lambda\circ \mathcal{L}_{\mu,\lambda}\Big|_{T_\mu \mathfrak{M}_+^\Gamma}
=
\mathrm{Id}
\quad \text{on } T_\mu \mathfrak{M}_+^\Gamma,
\quad 
\text{and}
\quad
\mathcal{L}_{\mu,\lambda}\circ \mathcal{P}_\lambda \Big|_{S_{\lambda}}
=
\mathrm{Id}
\quad  \lambda\text{-a.e.}
\]
\end{thm}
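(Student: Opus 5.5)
The argument splits into three parts: the isometry of $\mathcal{L}_{\mu,\lambda}$, the explicit form of $\mathcal{P}_\lambda$ when the radial law is deterministic, and the two inverse identities. All three rest on the identity
\[
\int_{\mathfrak{C}_\Omega} r^2 \phi(x)\,d\lambda(x,r) = \int_\Omega \phi\,d\mu ,
\]
which is \Cref{eq: measure projection} with $\mu=\mathfrak{P}\lambda$, extended from $C^0(\Omega)$ to nonnegative measurable and then to $L^1(\mu)$ integrands by monotone class and monotone convergence. The radial bounds $r\in[r_{\min},r_{\max}]$ make every weight $r^2$ or $r$ bounded above and below on $\operatorname{supp}\lambda$, so integrability passes between $\lambda$ and $\mu$ without issue.

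\textbf{Isometry.} By \Cref{lifted image}, $\mathcal{L}_{\mu,\lambda}$ maps $T_\mu\mathfrak{M}_+^\Gamma$ into $T_\lambda\mathcal{P}_2(\mathfrak{C}_\Omega)$. The cone metric at $(x,r)$ is $dr^2+r^2 g_x$, where $g_x$ is Euclidean. Take $s_i=(v_i,\beta_i)$. The lifts are $(v_i(x),2\beta_i(x)r)$, so
\[
\langle \mathcal{L}s_1,\mathcal{L}s_2\rangle_{\lambda}=\int r^2\langle v_1,v_2\rangle + 4\beta_1\beta_2 r^2\,d\lambda .
\]
Applying the projection identity with $\phi=\langle v_1,v_2\rangle+4\beta_1\beta_2\in L^1(\mu)$ gives $\langle s_1,s_2\rangle_\mu$. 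Linearity of $\mathcal{L}$ is obvious, so it is an isometry onto its image $S_\lambda$.

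\textbf{Formula for $\mathcal{P}_\lambda$.} Under the assumption $\lambda=(x\mapsto(x,r(x)))_\#\eta$, disintegrating $\lambda$ with respect to its $x$-marginal $\eta$ gives $\lambda(dr\,|\,x)=\delta_{r(x)}$ for $\eta$-a.e. $x$. Then $\mu=r(x)^2\eta$, and since $r>0$, $\mu$ and $\eta$ are mutually absolutely continuous, so ``$\eta$-a.e.'' and ``$\mu$-a.e.'' agree. Substituting $\delta_{r(x)}$ into the definitions, the numerator and denominator of $v$ become $r(x)^2a(x,r(x))$ and $r(x)^2$, giving $v(x)=a(x,r(x))$. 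Likewise $\beta(x)=r(x)b(x,r(x))/(2r(x)^2)=b(x,r(x))/(2r(x))$. One should also check that $\mathcal{P}_\lambda[u]\in L^2(\mu)\times L^2(\mu)$. This follows from $\int|a(x,r(x))|^2 r(x)^2\,d\eta = \int r^2|a|^2\,d\lambda$, and similarly for $b$, which is exactly the $L^2(\lambda)$ norm of $u$ in the cone metric. So $\mathcal{P}_\lambda$ is bounded, which justifies its use as a map into $F_\mu$.

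\textbf{Inverse identities.} For $(v,\beta)\in T_\mu\mathfrak{M}_+^\Gamma$, set $a=v(x)$ and $b=2\beta(x)r$. The formula above then returns $(v(x),2\beta(x)r(x)/(2r(x)))=(v,\beta)$ $\mu$-a.e., so $\mathcal{P}_\lambda\circ\mathcal{L}=\mathrm{Id}$. For $u\in S_\lambda$, write $u=\mathcal{L}(v,\beta)$; then $\mathcal{L}\mathcal{P}_\lambda u=\mathcal{L}(v,\beta)=u$ $\lambda$-a.e. Note that this direction is only claimed on $S_\lambda$, not on all of $T_\lambda\mathcal{P}_2$.

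The main subtlety is measure-theoretic rather than computational. Elements of $L^2$ are equivalence classes, so we must check that $\lambda$-null sets correspond to $\mu$-null sets. Under the graph structure this holds because $r(x)>0$, which gives $\mu\sim\eta$ and the equality $\lambda(N)=\eta(\{x:(x,r(x))\in N\})$. It also requires care that evaluating $a(\cdot,r(\cdot))$ is well defined on classes, which follows from the same correspondence.
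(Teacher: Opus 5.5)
Your proposal is correct and follows essentially the same route as the paper. The isometry comes from pairing the lifts in the cone metric and applying the projection identity $\int r^2\phi(x)\,d\lambda=\int\phi\,d\mu$; the formula for $\mathcal{P}_\lambda$ comes from substituting the Dirac conditional law $\delta_{r(x)}$ with $\mu=r^2\eta$; and the inverse identities are checked by substitution. Where you differ, you are more careful than the paper about null sets and about extending the projection identity beyond $C^0(\Omega)$, and you deduce $\mathcal{L}_{\mu,\lambda}\circ\mathcal{P}_\lambda=\mathrm{Id}$ on $S_\lambda$ from $\mathcal{P}_\lambda\circ\mathcal{L}_{\mu,\lambda}=\mathrm{Id}$, whereas the paper verifies it by direct substitution.
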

We provide the proof of \Cref{isometry} in \Cref{sec: proof of isometry}. This result illustrates the following important fact: if we can construct lifts of geodesics where the conditional radial law of the measures are always \textit{deterministic}, then we have an explicit isometry between $T_{\mu_t}\mathfrak{M}_+^\Gamma$ and a subset of $T_{\lambda_t}\mathcal{P}_2(\mathfrak{C}_\Omega)$. In the next section we will describe a general lifting procedure that is optimal in the sense of \Cref{HK cone variational prob} \textit{and} satisfies this deterministic conditional radial law property.

\subsubsection{Method of Characteristics}

In this section we will characterize curves of measures by using the method of characteristics to solve for the Lagrangian paths and masses of individual particles. This will give us the foundation for producing cone lifts that have the deterministic conditional radial law property described above. \Cref{lifting by characteristics one} describes and proves the validity of a lifting procedure based on this principle. 

\begin{prop}[Lifting by Characteristics]
\label{lifting by characteristics one}
    Suppose that the continuity-reaction equation driven by $(v_t, \beta_t)_{t\in[0,1]}$ is uniquely solved by an weakly continuous HK geodesic $(\mu_t)_{t \in [0,1]}$ in $\mathfrak{M}_+^\Gamma(\Omega)$. Moreover, assume that $v_t$ is uniformly bounded on $[0,1] \times \Omega$ and admits a $\mu_0$-a.e. injective flow $X_t$ for all $t$, i.e.
    \[\partial_t X_t(x) = v_t(X_t(x)), \quad X_0(x) = x,\]
    and assume that the map $s \mapsto \beta_s(X_s(x)) $ is in $ L^1(0,1)$ for $\mu_0$ a.e. $x$ and $\beta_s \leq \beta_{\max} < \infty$. Finally, define  define $\eta_t \triangleq (X_t)_\# \mu_0$ and assume that for each $t$ there exists a measureable map $r_t:\Omega \rightarrow (0, \infty)$ unique $\eta_t$-a.e. such that
    \[r_t(X_t(x)) \triangleq \exp\left(2 \int_0^t \beta_s(X_s(x))\,ds\right) \quad \text{$\mu_0$-a.e.}\]
    Then for any Borel $A \subset \Omega$
    \[\mu_t(A) = \int_A r_t^2 \, d\eta_t\]
    and $\lambda_t \triangleq (x \mapsto (x, r_t(x)))_\#\eta_t$ is a valid lift of $\mu_t$, i.e. $\mu_t = \mathfrak{P}\lambda_t$. 
\end{prop}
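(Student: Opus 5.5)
The plan is to construct an explicit candidate solution of the continuity-reaction equation by transporting weighted mass along characteristics. We then identify it with $\mu_t$ through the assumed uniqueness. The lifting claim follows immediately from the definition of $\mathfrak{P}$.

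Define the candidate curve $\tilde\mu_t \triangleq (X_t)_\#(m_t\,\mu_0)$, where the Lagrangian mass factor is
\[
m_t(x) \triangleq \exp\left(4\int_0^t \beta_s(X_s(x))\,ds\right) = r_t(X_t(x))^2 .
\]
This is well defined for $\mu_0$-a.e.\ $x$ because $s\mapsto\beta_s(X_s(x))$ is in $L^1(0,1)$. Since $\beta_s\le\beta_{\max}$, we have $m_t\le e^{4\beta_{\max}}$, so $\tilde\mu_t$ has finite mass. The curve is weakly continuous in $t$ by continuity of $X_t$ in $t$ and dominated convergence.

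Next, check that $(\tilde\mu_t,v_t,\beta_t)$ solves the continuity-reaction equation in the sense of \Cref{def: weak solns cre}. Take a test function $\varphi\in C^1_c((0,1)\times\Omega)$. For $\mu_0$-a.e.\ $x$ the map $t\mapsto m_t(x)\varphi_t(X_t(x))$ is absolutely continuous. By the chain rule, using $\partial_t X_t = v_t(X_t)$ and $\partial_t m_t = 4\beta_t(X_t)m_t$, its derivative is
\[
m_t(x)\bigl[\partial_t\varphi_t + \langle\nabla\varphi_t, v_t\rangle + 4\beta_t\varphi_t\bigr](X_t(x)).
\]
Integrate over $t\in(0,1)$; the result vanishes because $\varphi$ has compact support in time. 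Then integrate against $\mu_0$ and exchange the order of integration by Fubini. The bounds that justify the exchange are: $v$ uniformly bounded, $m_t$ bounded, $\varphi$ and its derivatives bounded, and the $\beta$ term controlled by the $L^1$ assumption. This gives exactly the weak identity. By the assumed uniqueness, $\tilde\mu_t=\mu_t$.

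We expect the main obstacle to be justifying this Fubini/integrability step. The assumptions give only an upper bound on $\beta$ and pathwise $L^1$ integrability, so $\int_0^1\int |\beta_t\varphi_t|\,m_t\,d\mu_0\,dt<\infty$ needs care. The first approach would be to bound $|\beta_t| m_t \le e^{4\beta_{\max}}\,|\beta_t(X_t)|$ and appeal to the regularity implicit in $(v_t,\beta_t)$ being the HK tangent of a geodesic, namely $\beta_t\in L^2(\mu_t)$ with $\int_0^1\|\beta_t\|^2_{L^2(\mu_t)}dt<\infty$. Combined with $\tilde\mu_t$ being the pushforward, this gives $\int_0^1\int|\beta_t|\,d\tilde\mu_t\,dt<\infty$. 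If that fails, one can instead work with the negative and positive parts of $\beta$ separately and use monotone convergence.

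Finally, compute $\mu_t(A)$ for Borel $A$:
\[
\mu_t(A)=\int \mathbf 1_A(X_t(x))\,r_t(X_t(x))^2\,d\mu_0(x)=\int_A r_t^2\,d\eta_t .
\]
The last equality is the change of variables for $\eta_t=(X_t)_\#\mu_0$. Here we use that $r_t$ is well defined $\eta_t$-a.e., which is where injectivity of the flow enters. For the lift, take $\lambda_t=(x\mapsto(x,r_t(x)))_\#\eta_t$ and any $\phi\in C^0(\Omega)$. By \eqref{eq: measure projection},
\[
\int \phi\,d\mathfrak P\lambda_t=\int r^2\phi(x)\,d\lambda_t=\int r_t(x)^2\phi(x)\,d\eta_t(x)=\int\phi\,d\mu_t .
\]
Hence $\mathfrak P\lambda_t=\mu_t$. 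Finiteness of the second moment, $\int r^2\,d\lambda_t=\mu_t(\Omega)<\infty$, places $\lambda_t$ in $\mathcal P_2(\mathfrak C_\Omega)$.
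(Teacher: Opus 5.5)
Your argument follows the paper's proof. The paper sets $d\nu_t = r_t^2\,d\eta_t$, which is your $\tilde\mu_t$ since $r_t(X_t(x))^2 = m_t(x)$. It then verifies the weak continuity--reaction identity along characteristics, proves weak continuity by dominated convergence, invokes uniqueness, and checks $\mathfrak P\lambda_t=\mu_t$ exactly as you do. Your pathwise fundamental theorem of calculus for the absolutely continuous map $t\mapsto m_t(x)\varphi_t(X_t(x))$, followed by Fubini, is a little more careful than the paper's differentiation under the integral sign. The reason is that $m_t(x)$ is in general only differentiable for a.e.\ $t$.

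The step you flag does need repair, and your first suggestion does not supply it. The bound $\int_0^1\|\beta_t\|^2_{L^2(\mu_t)}\,dt<\infty$ is not a hypothesis here. Even if granted, it controls $\beta_t$ against $\mu_t$, while you need control against $\tilde\mu_t$. Your estimate $|\beta_t|m_t\le e^{4\beta_{\max}}|\beta_t(X_t)|$ drops the weight and lands on $\eta_t$ instead. Identifying $\tilde\mu_t$ with $\mu_t$ is exactly what uniqueness gives \emph{after} the weak identity is established, so the argument is circular.

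Your fallback of splitting $\beta$ into its positive and negative parts $\beta^\pm$ does work once you use the ODE for the weight. Write $\beta_{\max}^+\triangleq\max\{\beta_{\max},0\}$. Then $0<m_t\le e^{4\beta^+_{\max}}$, $m_0=1$, and $\partial_t m_t = 4\beta_t(X_t)\,m_t$ for a.e.\ $t$, so
\[
\int_0^1 \beta_t^-(X_t(x))\,m_t(x)\,dt
=\int_0^1 \beta_t^+(X_t(x))\,m_t(x)\,dt-\tfrac14\bigl(m_1(x)-1\bigr)
\le \beta_{\max}^+e^{4\beta_{\max}^+}+\tfrac14 .
\]
Hence $\int_0^1|\beta_t(X_t(x))|\,m_t(x)\,dt\le 2\beta^+_{\max}e^{4\beta^+_{\max}}+\tfrac14$ uniformly in $x$. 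So the reaction term is integrable against $dt\otimes\mu_0$, Fubini applies, and the weak formulation for $\tilde\mu_t$ is meaningful in the first place.

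This estimate also closes a gap in the paper's own proof. Its dominating function bounds $|\beta_t|$ by $|\beta_{\max}|$, which tacitly uses a lower bound on $\beta$ that this proposition does not assume. A minor point: the uniform bound on $m_t$ is $e^{4\beta^+_{\max}}$, not $e^{4\beta_{\max}}$, when $\beta_{\max}<0$.
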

We provide the proof of \Cref{lifting by characteristics one} in \Cref{sec: proof of lifting by characteristics one}. The result indicates that we can obtain a valid lift of a curve of measures $\mu_t$ by integrating the reaction component $\beta_t$ along the particle-wise Lagrangian paths; this accumulated reaction will encode the radial coordinate of the location that a particle gets mapped to on the cone. Importantly, this lift satisfies the hypotheses of \Cref{isometry} part (b), which guarantee that the vector field projection and lifting operators are \textit{isometric inverses} (when the domain is restricted appropriately). We describe this lifting procedure in \Cref{alg:isometric-lift}, and \Cref{lifting by characteristics two} proves that the proposed lifting procedure is indeed optimal in the sense of \Cref{HK cone variational prob}. To gain intuition for the lifting procedure, consider the particle-wise
interpretation of unbalanced transport. The lift tracks two coupled quantities.
First, it tracks a transported reference measure $\eta_t$, which records the
locations of the particles and is obtained by pushing forward the initial
reference measure $\eta_0$ along the flow generated by the spatial velocity
field $v_t$. Second, it tracks a radial factor $r_t$, which records how much
mass is attached to each transported particle. More precisely, the HK
interpolant satisfies $\mu_t = r_t^2$, or equivalently $d\mu_t/d\eta_t=r_t^2.$ Thus $\eta_t$ describes where the particles move, while $r_t^2$ describes how their masses grow or decay along the flow.

\begin{algorithm}[h]
\caption{Isometric lifting and interpolation procedure}
\label{alg:isometric-lift}
\begin{algorithmic}[1]
\Require Endpoints $\mu_0,\mu_1 \in \mathfrak{M}_+^\Gamma(\Omega)$, discretization level $N$.
\Ensure Discrete lifted path $(\lambda_i)_{i=0}^N$ on $\mathfrak{C}_{\Omega}$ and lifted tangent fields $(V_i)_{i=0}^{N-1}$.

\State Set $t_i=i/N$ for $i\in\{0,\dots,N\}$ and $\Delta t=1/N$.
\State Compute a discrete HK geodesic $(\mu_i)_{i=0}^N$ between $\mu_0$ and $\mu_1$ using \Cref{alg:let-lift}.
\State Initialize $\eta_0 = \mu_0$ and $r_0(y)\equiv 1$.
\State Set $\lambda_0 = (y\mapsto (y,r_0(y)))_\# \eta_0$.

\For{$i=0$ to $N-1$}
    \State Solve the local HK / LET problem between $\mu_i$ and $\mu_{i+1}$ to obtain optimal coupling $\pi$.
    \State Compute Lebesgue decompositions $\mu_{i+j} = u_{i+j} \pi_j$ for $j \in \{0,1\}$ where $\pi_j = \Pi^j_\# \pi$.
    \State Let $T_i$ be the Monge map (i.e. $\pi_1=(T_i)_\#\pi_0$) and write
    $\mu_i=u_i\,\pi_0$, $\mu_{i+1}= u_{i+1} \pi_1.$
    \State Define the local HK logarithmic components
    \[
        v_i(y)=
        \begin{cases}
        \dfrac{1}{\Delta t}\dfrac{T_i(y)-y}{\|T_i(y)-y\|_2}\,
        \sqrt{\dfrac{u_{i+1}(T_i(y))}{u_i(y)}}\,
        \sin\bigl(\|T_i(y)-y\|_2\bigr),
        & T_i(y)\neq y,\\[2ex]
        0, & T_i(y)=y,
        \end{cases}
    \]
    \[
        \beta_i(y)
        =
        \frac{1}{2\Delta t}\left(
        \sqrt{\dfrac{u_{i+1}(T_i(y))}{u_i(y)}}\,
        \cos\bigl(\|T_i(y)-y\|_2\bigr)-1
        \right).
    \]
    \State Define the local radial multiplier (\Cref{radial update})
    \[
        q_i(y)= \sqrt{\frac{u_{i+1}(T_i(y))}{u_i(y)}}.
    \]
    \State Update the transported reference measure
    with $\eta_{i+1}= (T_i)_\#\eta_i.$
    \State Update the radial function on current positions by $r_{i+1}(z)= q_i(T_i^{-1}(z))\,r_i(T_i^{-1}(z)).$ 
    \State Set $\lambda_{i+1}= (z\mapsto (z,r_{i+1}(z)))_\#\eta_{i+1}.$
    \State Define the lifted tangent field $V_i= \mathcal{L}_{\mu_i,\lambda_i}(v_i,\beta_i).$
\EndFor

\State \Return $(\lambda_i)_{i=0}^N$ and $(V_i)_{i=0}^{N-1}$.
\end{algorithmic}
\end{algorithm}

\begin{figure}[htbp]
     \centering
     \begin{subfigure}{\textwidth}
         \centering
         \includegraphics[width=\linewidth]{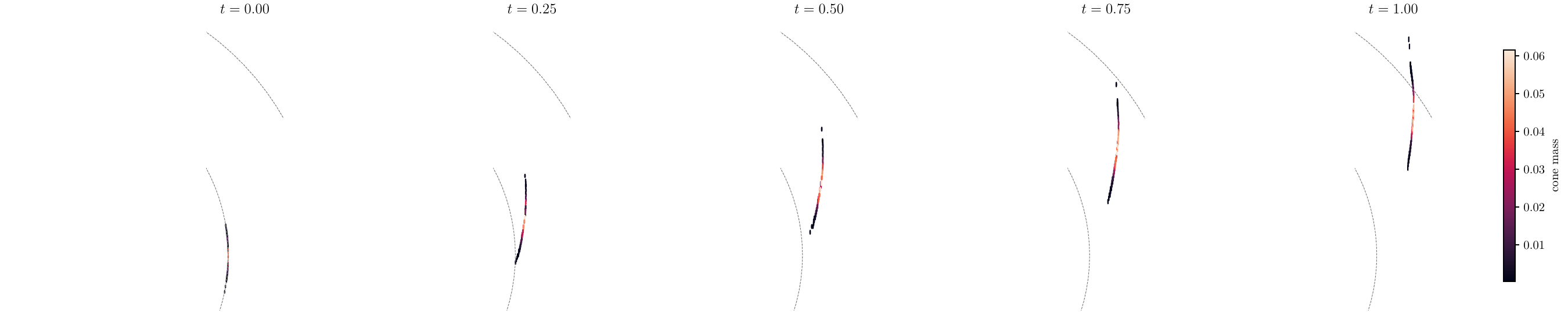}
         \caption{Wasserstein interpolation of $(\lambda_0, \lambda_1) = \operatorname{IsometricLift}(\mu_0, \mu_1)$ (\Cref{alg:isometric-lift}) on $\mathfrak{C}_\Omega$  plotted in polar coordinates. Observe that, unlike in panel (a) of \Cref{fig:interpolation}, the law associated with $\lambda_t(r\,|\,x)$ is deterministic and therefore $\mathcal{P}_{\lambda_t}$ is an isometry between $T_{\lambda_t}\mathcal{P}_2(\mathfrak{C}_\Omega)$ and $S_{\lambda_t} \subset T_{\mu_t}\mathfrak{M}_+^\Gamma$ (\Cref{isometry}).}
     \end{subfigure}
     
     \vspace{1em} 

     \begin{subfigure}{\textwidth}
         \centering
         \includegraphics[width=\linewidth]{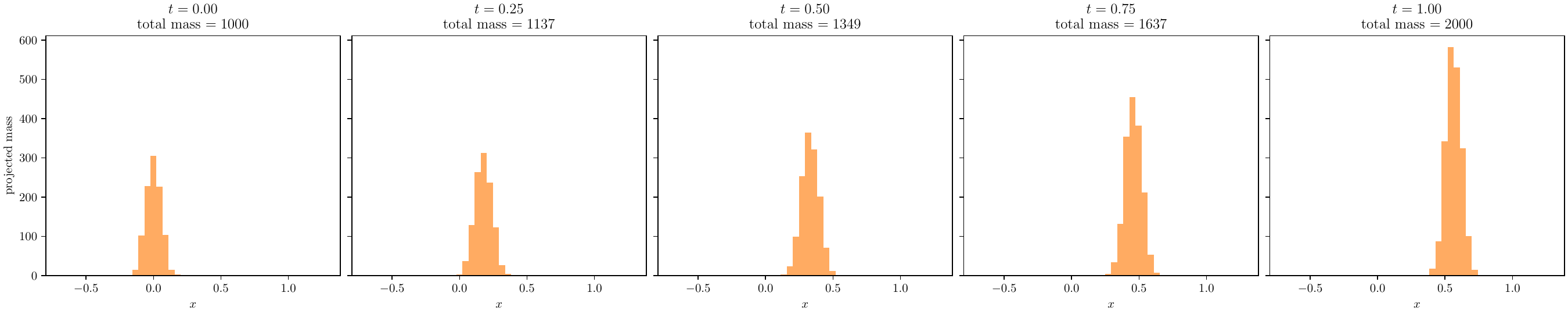}
         \caption{Hellinger-Kantorovich geodesic between $\mu_0, \mu_1$ obtained by projecting $\lambda_t$ from panel (a).}
     \end{subfigure}
     \caption{Visualization of Hellinger-Kantorovich interpolation using the lifting procedure described in \Cref{alg:isometric-lift} under the same settings as in \Cref{fig:cone_pt}. Observe that, unlike the LET lifted interpolant in \Cref{fig:cone_pt}, the conditional radial law of the measures $\lambda_t$ are deterministic for all $t$.}
     \label{fig:isometric_interpolation}
\end{figure}
Fortunately, in practice one does not need to approximate the integral to obtain the radial function $r_t$ along characteristics. Instead, one can simply leverage the following fact related to the logarithmic entropy transport functional problem. 

\begin{prop}[Radial update]
\label{radial update}
    Fix a step $i$ of \Cref{alg:isometric-lift}. Let $\pi$ be the local
    LET minimizer between $\mu_i$ and $\mu_{i+1}$, and write $\pi_j = \Pi^j_\#\pi$ for $j\in\{0,1\}$. Assume that $\pi$ is induced by an $\eta_i$-a.e. injective map $T_i$,
    so that
    \[
        \pi=(\operatorname{id},T_i)_\#\pi_0,
        \qquad
        \pi_1=(T_i)_\#\pi_0.
    \]
    Also suppose that we are in the reaction--transport regime, so that
    \[
        \mu_i=u_i\pi_0,
        \qquad
        \mu_{i+1}=u_{i+1}\pi_1,
    \]
    with $0<u_i<\infty$ on the relevant support. Then the radial coordinate in
    \Cref{alg:isometric-lift} satisfies
    \[
        \frac{r_{i+1}(T_i(x))}{r_i(x)}
        =
        \sqrt{\frac{u_{i+1}(T_i(x))}{u_i(x)}}
        \qquad \text{for $\eta_i$-a.e. $x$}.
    \]
\end{prop}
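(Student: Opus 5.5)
The identity is essentially built into the update rule of \Cref{alg:isometric-lift}. The substantive part is checking that this rule is consistent, i.e.\ that it reproduces the lift of \Cref{lifting by characteristics one}. I would proceed in three steps.

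\textbf{Step 1: the identity itself.} By construction $r_{i+1}(z)=q_i(T_i^{-1}(z))\,r_i(T_i^{-1}(z))$ on the support of $\eta_{i+1}=(T_i)_\#\eta_i$. Since $T_i$ is $\eta_i$-a.e.\ injective, $T_i^{-1}(T_i(x))=x$ for $\eta_i$-a.e.\ $x$. Substituting $z=T_i(x)$ therefore gives
\[
r_{i+1}(T_i(x))=q_i(x)\,r_i(x)=\sqrt{u_{i+1}(T_i(x))/u_i(x)}\;r_i(x).
\]
Dividing by $r_i(x)>0$ yields the claim. The hypothesis $0<u_i<\infty$, together with positivity of $r_i$ (inductively, starting from $r_0\equiv 1$), makes the quotient well defined.

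\textbf{Step 2: the update produces a valid lift.} Assume inductively that $\mu_i=r_i^2\eta_i$, which holds at $i=0$. Comparing with $\mu_i=u_i\pi_0$ shows $\eta_i$ and $\pi_0$ are mutually absolutely continuous, with $d\eta_i=(u_i/r_i^2)\,d\pi_0$. For a test function $\phi\in C^0(\Omega)$, I would change variables along $T_i$ and insert Step 1:
\begin{align*}
\int\phi\, r_{i+1}^2\,d\eta_{i+1}
&=\int \phi(T_i x)\,\frac{u_{i+1}(T_ix)}{u_i(x)}\,r_i(x)^2\,d\eta_i(x)
=\int \phi(T_i x)\,\frac{u_{i+1}(T_ix)}{u_i(x)}\,d\mu_i(x)\\
&=\int \phi(T_i x)\,u_{i+1}(T_ix)\,d\pi_0(x)
=\int\phi\,u_{i+1}\,d\pi_1=\int\phi\,d\mu_{i+1}.
\end{align*}
Hence $\mu_{i+1}=\mathfrak{P}\lambda_{i+1}$, and the induction closes.

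\textbf{Step 3: agreement with characteristics.} This is the step I expect to be the main obstacle. One must show the multiplier $q_i$ coincides with the factor $\exp\bigl(2\int\beta_s(X_s)\,ds\bigr)$ from \Cref{lifting by characteristics one} over $[t_i,t_{i+1}]$. I would use \Cref{HK exponential map closed form}: along $\mathbf{exp}_{\mu_i}(t s_i)$ each particle carries the mass factor
\[
q_t^2=(1+2t\beta_i)^2+t^2\|v_i\|_2^2 .
\]
Inserting the logarithmic components of \Cref{HK logarithmic map closed form} (the unscaled $v_0,\beta_0$ of that proposition, which equal $\Delta t\, v_i$, $\Delta t\,\beta_i$) at the endpoint $t=1$ gives
\[
q^2=\frac{u_{i+1}(T_i x)}{u_i(x)}\bigl(\cos^2\|T_ix-x\|_2+\sin^2\|T_ix-x\|_2\bigr)=\frac{u_{i+1}(T_ix)}{u_i(x)} .
\]
This matches $q_i^2$. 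It remains to identify this factor with the exponential of the integrated reaction along the exact geodesic's characteristics. For that I would argue that the curve $\bigl((T_t)_\#\eta_i,\ q_t^2\bigr)$ solves the continuity-reaction equation, and then invoke the uniqueness assumed in \Cref{lifting by characteristics one}. I expect the delicate points to be regularity and injectivity of the interpolating maps $T_t$.
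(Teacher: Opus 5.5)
Steps 1 and 2 are correct. Step 2 is the paper's computation run in the opposite direction. The paper starts from $r_{i+1}^2 = d\mu_{i+1}/d\eta_{i+1}$, which \Cref{lifting by characteristics one} gives for the characteristic radius, with $\eta_{i+1}=(T_i)_\#\eta_i$. It then evaluates this Radon--Nikodym derivative using injectivity of $T_i$ and $d\mu_i = u_i\,d\pi_0 = r_i^2\,d\eta_i$. You instead start from the update rule and verify $\mu_{i+1} = r_{i+1}^2\,\eta_{i+1}$. You are also right that Step 1 alone is circular, since the algorithm cites this proposition to justify its update.

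Your proposal stays open at Step 3, which you leave as a sketch, but that step is unnecessary. Radon--Nikodym derivatives are unique. \Cref{lifting by characteristics one} already says the characteristic radius satisfies $\mu_{i+1} = r_{t_{i+1}}^2\,\eta_{i+1}$, and your Step 2 shows the update-rule radius satisfies the same identity with the same $\eta_{i+1}$. Both radii are positive, so they agree $\eta_{i+1}$-a.e., and the ratio identity passes to the characteristic radius at once. Both your argument and the paper's need one further input: the characteristic reference measure at $t_{i+1}$ equals $(T_i)_\#\eta_i$, i.e.\ the flow of $v_t$ over $[t_i,t_{i+1}]$ is $T_i$. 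The paper uses this implicitly through the algorithm's definition of $\eta_{i+1}$.

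You can therefore drop the exponential-map route entirely. That includes the continuity--reaction uniqueness argument for $\bigl((T_t)_\#\eta_i,\,q_t^2\bigr)$ and the regularity and injectivity of the intermediate maps $T_t$ that you flag. That work would be substantial, and none of it is required. Your endpoint computation $q^2 = u_{i+1}(T_i x)/u_i(x)$ from the exponential and logarithmic map formulas is correct, but it serves only as a consistency check, not as part of the proof.
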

We provide a proof of \Cref{radial update} in \Cref{sec: proof of radial update}. With this fact, we can now provide an \textit{exact} algorithmic lifting procedure that avoids numerical integration. We provide the algorithm description in  \Cref{alg:isometric-lift}.

\begin{rmk}
    As alluded to, the procedure for obtaining the lifted measures and tangents described in \Cref{alg:isometric-lift} incurs no approximation error on the grid points; in particular, $\lambda_i$, the characteristic values $X_i, r_i$ and the lifted tangents $V_i$ are all exact given population measures $\mu_0, \mu_1$. 
\end{rmk}

Now we will show that this lifting procedure is useful in a very powerful sense. In particular, as is clear from the construction, the lifted measures have the property that the conditional radial law is \emph{deterministic}. As indicated by \Cref{isometry}, we know this ensures that the maps $\mathcal{P}_{\lambda_t}: T_{\lambda_t}\mathcal{P}_2(\mathfrak{C}_\Omega) \rightarrow T_{\mu_t}\mathfrak{M}_+^\Gamma$ and $\mathcal{L}_{\mu_t, \lambda_t}: T_{\mu_t}\mathfrak{M}_+^\Gamma \rightarrow S_{\lambda_t} \subset T_{\lambda_t}\mathcal{P}_2(\mathfrak{C}_\Omega)$ are isometric inverses.

\begin{thm}[Isometry and Optimality]
\label{lifting by characteristics two}
    Suppose the assumptions and definitions of \Cref{lifting by characteristics one} hold, the assumptions of \Cref{isometry} hold for all $t$, and further assume that $r_t(x)$ is nonzero for all $t$ and all $x$. Then the tangent lifting and projection maps \[\mathcal{L}_{\mu_t, \lambda_t}: T_{\mu_t}\mathfrak{M}_+^\Gamma  \rightarrow S_{\lambda_t}\subset T_{\lambda_t}\mathcal{P}_2(\mathfrak{C}_\Omega) \qquad \text{and} \qquad \mathcal{P}_{\lambda_t}: S_{\lambda_t} \rightarrow T_{\mu_t}\mathfrak{M}_+^\Gamma\] are isometric inverses. Moreover, the lifted tangent field $V_t \triangleq \mathcal{L}_{\mu_t, \lambda_t}(v_t, \beta_t)$ and the curve of measures $(\lambda_t)_{t \in [0,1]}$ satisfy the cone continuity equation, where $\lambda_t$ is both a $W_2$ geodesic on $\mathfrak{C}_\Omega$ and is an optimal lift, i.e.
    \[W_\mathfrak{C}(\lambda_0, \lambda_1) = \HK(\mu_0, \mu_1).\] 
\end{thm}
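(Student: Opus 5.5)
The argument has three parts, in order: (i) the isometric-inverse statement, (ii) the cone continuity equation for $(\lambda_t, V_t)$, and (iii) geodesicity and optimality of $\lambda_t$.

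For (i), I invoke \Cref{isometry} pointwise in $t$. By \Cref{lifting by characteristics one}, $\lambda_t = (x\mapsto(x,r_t(x)))_\#\eta_t$ with $r_t>0$, so the conditional radial law is deterministic. By hypothesis it is supported in some $[r_{\min},r_{\max}]$; this also follows from $\beta_s\le\beta_{\max}$ and the $L^1$ bound via the exponential formula. \Cref{isometry} then gives that $\mathcal{L}_{\mu_t,\lambda_t}$ is an isometry onto $S_{\lambda_t}$, and that $\mathcal{P}_{\lambda_t}$ is its inverse there. This is immediate.

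For (ii), I test the cone continuity equation against $\Psi\in C^1_c((0,1)\times\mathfrak{C}_\Omega)$. By the pushforward structure,
\[\int\Psi_t\,d\lambda_t=\int\Psi_t\bigl(X_t(x),r_t(X_t(x))\bigr)\,d\mu_0(x).\]
Write $R_t(x)=r_t(X_t(x))=\exp\bigl(2\int_0^t\beta_s(X_s)\,ds\bigr)$. Along characteristics, $\dot X_t=v_t(X_t)$ and $\dot R_t=2\beta_t(X_t)R_t$. The chain rule, valid for a.e. $t$ by absolute continuity in $t$, gives
\[\frac{d}{dt}\Psi_t(X_t,R_t)=\partial_t\Psi_t+\langle\nabla_x\Psi_t,v_t\rangle+\partial_r\Psi_t\,2\beta_tR_t.\]
Since $V_t(x,r)=(v_t(x),2\beta_t(x)r)$ and the cone metric is $dr^2+r^2g$, this integrand equals $\partial_t\Psi_t+\langle\nabla_{\mathfrak{C}}\Psi_t,V_t\rangle_{\mathfrak{C}}$: the $r^{-2}$ in the $x$-block of the cone gradient cancels the $r^2$ weight in the metric. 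Integrating in $t$ and using compact support in time yields the weak cone continuity equation. The uniform boundedness of $v_t$ together with $\beta\le\beta_{\max}$ justifies dominated convergence.

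For (iii), the route is the Benamou–Brenier upper bound plus a matching lower bound. First,
\[W_{\mathfrak{C}}^2(\lambda_0,\lambda_1)\le\int_0^1\|V_t\|^2_{L^2(\lambda_t)}\,dt=\int_0^1\bigl(\|v_t\|^2_{L^2(\mu_t)}+4\|\beta_t\|^2_{L^2(\mu_t)}\bigr)\,dt,\]
where the equality is the isometry from (i). Because $\mu_t$ is a constant-speed HK geodesic driven by its minimal-norm tangent $(v_t,\beta_t)$, the right side equals $\HK^2(\mu_0,\mu_1)$. For the lower bound, $\mathfrak{P}\lambda_i=\mu_i$ and \Cref{HK cone variational prob} give $\HK(\mu_0,\mu_1)\le W_{\mathfrak{C}}(\lambda_0,\lambda_1)$. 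Hence equality holds, so $(\lambda_0,\lambda_1)$ is an optimal lift.

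For geodesicity, the same sandwich applied on every subinterval $[s,t]$ gives
\[W_{\mathfrak{C}}(\lambda_s,\lambda_t)\le\int_s^t\|V\|\,d\tau=(t-s)\HK(\mu_0,\mu_1).\]
Combined with the triangle inequality and $W_{\mathfrak{C}}(\lambda_0,\lambda_1)=\HK(\mu_0,\mu_1)$, this forces $W_{\mathfrak{C}}(\lambda_s,\lambda_t)=(t-s)W_{\mathfrak{C}}(\lambda_0,\lambda_1)$, so $\lambda_t$ is a constant-speed $W_2$ geodesic.

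I expect the main obstacle to be justifying the Benamou–Brenier inequality on the cone, which is not a smooth complete boundaryless manifold (it has an apex and boundary over $\partial\Omega$), for measures that need not have unit mass. I would handle this by noting that $\lambda_t$ stays in the compact region $\Gamma\times[r_{\min},r_{\max}]$. On that region the cone is a smooth Riemannian manifold that can be extended smoothly, so the standard result applies after normalizing by the constant total mass $m=\mu_0(\Omega)$, which is preserved since $\eta_t=(X_t)_\#\mu_0$. A second point to check is that the identification of $\int_0^1\|(v_t,\beta_t)\|^2\,dt$ with $\HK^2$ uses the minimal-norm tangent. If the given driving pair is not minimal, I would replace it by its projection onto $T_{\mu_t}\mathfrak{M}_+^\Gamma$; alternatively, the uniqueness assumption in \Cref{lifting by characteristics one} can be read as supplying this.
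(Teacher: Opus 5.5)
Your proposal is correct and follows the paper's proof step for step: \Cref{isometry} applied at each $t$ for the isometric inverses, the chain rule along $t\mapsto (X_t(x), r_t(X_t(x)))$ for the weak cone continuity equation, and the Benamou--Brenier upper bound matched against the lower bound from \Cref{HK cone variational prob}. Your subinterval and triangle-inequality argument for geodesicity just makes explicit the paper's remark that $(\lambda_t,V_t)$ attains the Benamou--Brenier minimum; the one ingredient the paper includes that you should add is a short check that $t\mapsto\lambda_t$ is weakly continuous (dominated convergence along the continuous characteristics), since that is what makes $(\lambda_t,V_t)$ admissible in the Benamou--Brenier bound.
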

We note that the proof that $\mathcal{L}_{\mu_t, \lambda_t}$ and $\mathcal{P}_{\lambda_t}$ are isometric inverses is fairly straightforward and is a simple corollary of \Cref{isometry}. The proofs of the remaining statements are more challenging, and we therefore defer the proof to \Cref{sec: proof of lifting by characteristics two}. To the best of our knowledge, this result is the first to rigorously establish the existence and a explicit construction of an isometry between the Hellinger-Kantorovich tangent space and a subset of the cone Wasserstein tangent space along lifted Wasserstein geodesics. As we shall see in later sections, this richer connection between the spaces enables the use of tools from Wasserstein geometry when solving for or computing Hellinger-Kantorovich objects. As an example, this connection will allow us to compute parallel transport on the Hellinger-Kantorovich space by computing Wasserstein parallel transport on the cone using the procedure described in \citet{saidi2026wassersteinparalleltransportpredicting} and projecting back. Before instantiating that example, we will demonstrate that the lifted geodesics inherit regularity properties directly from the Hellinger-Kantorovich interpolating geodesic.

\subsection{Lifting Regularity}

In this section we will demonstrate that key regularity properties of the isometrically lifted procedure in \Cref{alg:isometric-lift} are inherited directly from standard assumptions on the HK geodesic $(\mu_t)$ on the base space. To the best of our knowledge, the same cannot be said for the lifted geodesic one obtains by taking the Wasserstein interpolation of the LET lifted endpoints in \Cref{alg:let-lift}. This inheritance of regularity is key, as it ensures that many differential objects and operators exist for lifted geodesics. An example that we will explore is parallel transport, which only exists along sufficiently regular Wasserstein geodesics. 

\begin{asmpt}[Admissible class]
\label{admissible class}
    Let $\Gamma \Subset U \Subset  \Omega^\circ$ with $U$ open and $ \Omega\Subset \mathbb{R}^d$ where $\operatorname{diam}(\Omega)< \pi/2$, and suppose that all measures are in a so-called admissible class $\mathcal{C}\subset \mathfrak{M}_+^\Gamma(\Omega)$, where all measures admit a Lebesgue density. Moreover, assume that all pairs $\mu, \mu' \in \mathcal C$
    admit a $\HK$ geodesic interpolant $\mu_t$ with tangent velocity $(v_t, \beta_t) = (\nabla \varphi_t, \varphi_t)$ such that the following conditions hold:
    \begin{enumerate}[label=\textbf{(A.\arabic*)}, leftmargin=*, align=left]
        \item for every $0 \leq t_1 < t_2 \leq 1$ the Monge map $T$ inducing the LET minimizing coupling $\pi^{t_1, t_2}$ of $(\mu_{t_1},\mu_{t_2})$ is $\Pi^0_\#\pi^{t_1,t_2}$-a.e. injective. 
        \medskip
        \item the tangent velocity family $(v_t, \beta_t)$ satisfies 
        \medskip
        \begin{enumerate}
        \item uniform boundedness of $\beta_t:$  \[-\infty < \beta_{\min} \leq \beta_t \leq \beta_{\max} < \infty \quad  \text{for all $t$.}\]
        \item uniform regularity of $(v_t, \beta_t):$  \begin{align*}\sup_{t \in [0,1]} \left(\|v_t\|_{W^{1,\infty}(U)} + \|\beta_t\|_{W^{1, \infty}(U)}\right) \leq M \quad \text{for some universal $M < \infty.$}
        \end{align*}
        \item square integrability:
        \[\int_0^1\|v_t\|_{L^2(\mu_t)}^2 \, dt < \infty \quad \text{and} \quad \int_0^1 \|\beta_t\|_{L^2(\mu_t)}^2 \, dt < \infty.\]
    \end{enumerate}
    \end{enumerate}
    
\end{asmpt}

The regularity described in \Cref{admissible class} is fairly standard, and we will show that it gives rise to lifted geodesics that have desirable properties. In particular, the diameter bound and absolute continuity with respect to Lebesgue immediately guarantee that $\mu_i^\perp = 0$ and $\pi$ is supported on a measurable map $T$, due to \Cref{clancy coupling supported on a map}. Moreover, we will also show that the boundedness of the reaction potentials $\beta_t$ guarantee that the lifted measures stay uniformly bounded away from the cone apex $\mathfrak{o}$, which implies $\mathfrak{C}_\Omega$ is a smooth Riemannian manifold with boundary.

\begin{prop}[Uniformly Bounded Lifts]
\label{uniformly bounded lifts}
    The isometric lift of the geodesic interpolant $\mu_t$ of any $\mu_0, \mu_1 \in \mathcal{C}$ is uniformly bounded away from the cone apex $\mathfrak{o}$. In particular, for some $r_{\min}, r_{\max}$ independent of $t$ we have $0 < r_{\min} \leq r_t \leq r_{\max} < \infty$ for all $t$. 
\end{prop}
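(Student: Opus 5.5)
The plan is to work directly from the characteristic representation of the radial coordinate in \Cref{lifting by characteristics one}. The isometric lift of \Cref{alg:isometric-lift} is built from $\eta_0 = \mu_0$ and $r_0 \equiv 1$. By \Cref{radial update}, its discrete radial updates agree with the continuous-time characteristic formula. Hence along the flow $X_t$ generated by $v_t$ we have
\[
r_t(X_t(x)) = \exp\left(2\int_0^t \beta_s(X_s(x))\,ds\right) \qquad \text{for $\mu_0$-a.e. } x.
\]
The first step is to check that the hypotheses of \Cref{lifting by characteristics one} hold under \Cref{admissible class}.
\begin{itemize}
\item By (A.2)(b), $v_t$ is uniformly Lipschitz and bounded on $U$, so Cauchy--Lipschitz gives a unique flow $X_t$. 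This flow is injective for every $t$, since it is the time-$t$ map of a Lipschitz ODE and is invertible by running the flow backward.
\item The flow stays in $U$, because $\mu_t$ is supported in $\Gamma$.
\item By (A.2)(a), the map $s \mapsto \beta_s(X_s(x))$ is bounded, hence in $L^1(0,1)$.
\item The function $r_t$ is well defined $\eta_t$-a.e. because $X_t$ is injective, which is also consistent with (A.1).
\end{itemize}

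The second step is the bound itself. By (A.2)(a), $\beta_{\min} \le \beta_s(X_s(x)) \le \beta_{\max}$ for all $s$ and $\mu_0$-a.e. $x$. Integrating over $s \in [0,t]$ with $t \in [0,1]$ gives
\[
2t\beta_{\min} \le 2\int_0^t \beta_s(X_s(x))\,ds \le 2t\beta_{\max}.
\]
Exponentiating, and noting that the extreme values over $t \in [0,1]$ occur at $t=0$ or $t=1$, we obtain
\[
r_{\min} \triangleq \exp\bigl(2\min(\beta_{\min},0)\bigr) \le r_t(X_t(x)) \le \exp\bigl(2\max(\beta_{\max},0)\bigr) \triangleq r_{\max}.
\]
Both constants are independent of $t$ and $x$, strictly positive and finite. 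Since $\eta_t = (X_t)_\#\mu_0$, every point of $\operatorname{supp}\eta_t$ is, up to null sets, of the form $X_t(x)$. So $r_{\min} \le r_t \le r_{\max}$ holds $\eta_t$-a.e. Consequently $\lambda_t = (y \mapsto (y, r_t(y)))_\#\eta_t$ is supported in $\Gamma \times [r_{\min}, r_{\max}]$, and $d_{\mathfrak{C}}(z, \mathfrak{o}) = r \ge r_{\min} > 0$ on $\operatorname{supp}\lambda_t$.

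The main obstacle is to justify rigorously that the radial function produced by the lift coincides with the characteristic exponential. I would handle the two pictures separately.
\begin{itemize}
\item \emph{Continuous-time lift.} Here the formula holds by definition in \Cref{lifting by characteristics one}, with $\beta_t$ the HK tangent reaction field of the geodesic.
\item \emph{Discrete lift of \Cref{alg:isometric-lift}.} Here I would bound the factors from \Cref{radial update} directly. The factor is $q_i = \sqrt{u_{i+1}\circ T_i / u_i}$, and by \Cref{HK logarithmic map closed form},
\[
q_i \cos\|T_i - \operatorname{id}\| = 1 + 2\Delta t\,\beta_i.
\]
Together with $\sin$ applied to the displacement, this gives $q_i^2 = \bigl(\Delta t \|v_i\|\bigr)^2 + (1 + 2\Delta t\,\beta_i)^2$. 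Multiplying these factors along the chain and comparing with $\exp\bigl(2\Delta t \sum_i \beta_i\bigr)$ should again give bounds in terms of $\beta_{\min}$, $\beta_{\max}$ and $M$.
\end{itemize}
One point needs care: the discrete $\beta_i$ must be identified with $\beta_{t_i}$ along the geodesic, using the geodesic property and the consistency of the exponential map in \Cref{HK exponential map closed form}. I expect this identification to be the most delicate part. I would resolve it by showing that the discrete radii equal the continuous $r_{t_i}$ exactly on the grid, so that the continuous-time bound applies.
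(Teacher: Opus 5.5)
Your proposal is correct and follows the same route as the paper. Both use the characteristic formula $r_t(X_t(x))=\exp\bigl(2\int_0^t\beta_s(X_s(x))\,ds\bigr)$, bound the integral between $t\beta_{\min}$ and $t\beta_{\max}$ via (A.2)(a), and exponentiate to get $r_{\min}=\exp(2\min\{\beta_{\min},0\})$ and $r_{\max}=\exp(2\max\{\beta_{\max},0\})$; the paper takes that formula as given, so your checks of the flow hypotheses and your discussion of the discrete grid lift go beyond what this statement needs.
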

We provide the proof of \Cref{uniformly bounded lifts} in \Cref{sec: proof of uniformly bounded lifts}. This property that the lifted measures stay uniformly bounded away from the cone apex ensures the existence of an open set on the cone where the metric is smooth and the set contains the supports of all lifted measures in the admissible class. This smoothness will be important for importing results from \citet{gigli2012second} regarding Wasserstein geometry on smooth Riemannian manifolds. 
We also have the following result, which guarantees that the lifted tangent velocity fields are spatially regular and integrable. 

\begin{prop}[Uniformly Regular Lifts]
\label{uniformly regular lifts}
    Let $\mu_0, \mu_1 \in \mathcal{C}$ admit a geodesic interpolant $\mu_t$ with tangent velocity $(v_t, \beta_t)$, and let $\lambda_t, V_t$ denote the isometrically lifted geodesic interpolant and its lifted velocity field. Then the lifted geodesic $\lambda_t$ is spatially regular in the sense that 
    \[  \int_0^1 \|V_t\|_{L^2(\lambda_t)}^2 \, dt < \infty  \quad \text{and} \quad\sup_{t\in[0,1]}\operatorname{Lip}_{\mathfrak{C}}(V_t) \leq L\]
    for some universal constant $L$.
\end{prop}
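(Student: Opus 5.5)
The proof has two parts. The integrability bound follows from the isometry; the Lipschitz bound is a direct computation in cone coordinates, using the radial bounds from \Cref{uniformly bounded lifts}.

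\emph{Integrability.} By \Cref{isometry}, applied at each $t$ (the radial support hypothesis holds by \Cref{uniformly bounded lifts}), $\mathcal{L}_{\mu_t,\lambda_t}$ is an isometry. Hence
\[
\|V_t\|_{L^2(\lambda_t)}^2 = \|v_t\|_{L^2(\mu_t)}^2 + 4\|\beta_t\|_{L^2(\mu_t)}^2 .
\]
Integrating in $t$ and using the square integrability in \Cref{admissible class} (A.2)(c) gives $\int_0^1\|V_t\|^2_{L^2(\lambda_t)}\,dt<\infty$.

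\emph{Lipschitz bound.} I will work on the region $K=\{(x,r): x\in U,\ r\in[r_{\min},r_{\max}]\}$, which contains every $\operatorname{supp}\lambda_t$ and on which the cone metric $dr^2+r^2\|dx\|^2$ is smooth and uniformly equivalent to the Euclidean product metric, with constants depending only on $r_{\min}$ and $r_{\max}$. I will bound $\operatorname{Lip}_{\mathfrak C}(V_t)$ through the norm of the covariant derivative $\nabla^{\mathfrak C}V_t$, which controls the Lipschitz constant measured by parallel transport along cone geodesics within the region. In the orthonormal frame $e_i = r^{-1}\partial_{x_i}$, $e_r=\partial_r$, the lifted field $V_t(x,r)=(v_t(x),2\beta_t(x)r)$ has components $r\,v_t^i(x)$ and $2\beta_t(x) r$. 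I will compute $\nabla^{\mathfrak C}V_t$ with the warped-product Christoffel symbols from \citet{o1983semi}:
\[
\nabla_{\partial_r}\partial_{x_i}=\nabla_{\partial_{x_i}}\partial_r=r^{-1}\partial_{x_i},\qquad \nabla_{\partial_{x_i}}\partial_{x_j}=-r\,\delta_{ij}\,\partial_r,\qquad \nabla_{\partial_r}\partial_r=0 .
\]
Every resulting term is a product of $\nabla v_t$, $v_t$, $\nabla\beta_t$ or $\beta_t$ with a power of $r$ in $[-1,1]$. For example, $\nabla_{\partial_r}V_t = r^{-1}v_t + 2\beta_t\partial_r$ and $\nabla_{\partial_{x_i}}V_t=\partial_i v_t + r^{-1}v_t^i\partial_r\cdot r^2(\ldots)+2r\partial_i\beta_t\,\partial_r + r\,v_t^i\ldots$. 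In the orthonormal frame, the operator norm is therefore bounded by $C(r_{\min},r_{\max})\bigl(\|v_t\|_{W^{1,\infty}(U)}+\|\beta_t\|_{W^{1,\infty}(U)}\bigr)\le C M$, using (A.2)(b).

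\emph{Main obstacle.} The delicate step is passing from a pointwise covariant-derivative bound to a global $\operatorname{Lip}_{\mathfrak C}$ bound on $K$. I plan to do this in one of two ways. The first is to note that, since $\operatorname{diam}(\Omega)<\pi/2$ and the radii are bounded, cone geodesics between points of $K$ stay in a slightly enlarged compact region away from the apex. I would then extend $v_t$ and $\beta_t$ boundedly in $W^{1,\infty}$, using the cutoff $\chi(r)$ as in \Cref{lifted image}, and integrate $\nabla^{\mathfrak C}V_t$ along the geodesic. The alternative is to define $\operatorname{Lip}_{\mathfrak C}$ via the Euclidean product coordinates and to use the bi-Lipschitz equivalence on $K$. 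In either route, the constant $L$ depends only on $M$, $r_{\min}$ and $r_{\max}$, all of which are independent of $t$, giving $\sup_t\operatorname{Lip}_{\mathfrak C}(V_t)\le L$.
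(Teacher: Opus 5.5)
Your proposal is correct and follows the paper's proof: integrability comes from the isometry of $\mathcal{L}_{\mu_t,\lambda_t}$ together with (A.2)(c), and the Lipschitz bound comes from computing $\nabla^{\mathfrak C}_U V_t$ with the warped-product connection, bounding it by the $W^{1,\infty}$ norms of $v_t,\beta_t$, and converting this into a Lipschitz bound through the covariant-derivative characterization, which the paper simply cites as \citet[Proposition 10.46]{boumal2023introduction} (your route (a) is essentially a proof of that proposition). One refinement appears in the paper's computation: the powers of $r$ cancel exactly in the cone norm, giving the bound $\bigl(\|\nabla v_t\|_{\operatorname{op}} + 2\|\nabla\beta_t\|_2 + 2\|v_t\|_2 + 4|\beta_t|\bigr)\|U\|_{\mathfrak C}$ at every $r>0$, so $L$ depends only on $M$ and the radial part of your ``main obstacle'' disappears.
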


The proof of \Cref{uniformly regular lifts} is provided in \Cref{sec: proof of uniformly regular lifts}. As with uniform boundedness, this uniform spatial regularity of the lifted geodesic tangents will enable us to import results from Gigli's second order analysis on the Wasserstein space. In particular, we will see that this regularity guarantees the existence of parallel transport.

\section{Example: Hellinger-Kantorovich Parallel Transport} \label{sec: HK parallel transport}

As a concrete example of the richer connection between $(\mathfrak{M}_+^\Gamma(\Omega), \HK)$ and $(\mathfrak{C}_\Omega, W_2)$, we will use it to compute Hellinger-Kantorovich parallel transport; in particular, we will use recently developed tools for approximating \textit{Wasserstein} parallel transport, and then project back to the Hellinger-Kantorovich space. Since our lifting procedure yields an isometry of $T_{\mu}\mathfrak{M}_+^\Gamma$ and $S_{\lambda} \subset T_{\lambda}\mathcal{P}_2(\mathfrak{C}_\Omega)$, Wasserstein parallel transport along our lifted curves can be used to approximate Hellinger-Kantorovich parallel transport.  In the first part of this section we will describe the characterization of parallel transport via the covariant derivative. In the latter parts of this section, we will discuss the approximation scheme proposed by \citet{saidi2026wassersteinparalleltransportpredicting} and our proposed instantiation of it to approximate Hellinger-Kantorovich parallel transport.   

\subsection{Exact Parallel Transport via the Covariant Derivative.} 
Intuitively, a vector field along a curve is \say{unchanging} loosely-speaking if its derivative is zero. Thus, in the context of abstract manifolds, a vector field along a curve is the parallel transport of a source vector if its covariant derivative along the curve is zero.
\begin{defn}[\citet{lee2018introduction}]
\label{parallel transport on general manifolds}
    Let $\man$ be a smooth Riemannian manifold. A smooth vector field $X$ along a smooth curve $\gamma$ is said to be parallel along $\gamma$ with respect to the Levi-Civita connection if $\nabla_{\dot \gamma}X \equiv 0$.
\end{defn}
This characterization now allows us to define parallel transport on $(\mathcal{P}_2(\man), W_2)$ and $(\mathfrak{M}_+^\Gamma(\Omega), \HK)$ using the covariant derivative. Consider a smooth curve $\mu_t$ through $\mathcal{P}_2(\man)$ indexed by $t \in (0, 1)$ with the tangent field $\nabla \varphi_t$ driving its dynamics. 

\begin{prop}[Wasserstein Parallel Transport PDE, \citep{gigli2012second, saidi2026wassersteinparalleltransportpredicting}]
\label{Wasserstein parallel transport pde}
    An absolutely continuous (in the sense of \citet{gigli2012second}) tangent vector field $v_t$ along $\mu_t$ is parallel along a regular curve of measures $\mu_t$ (\Cref{strong regularity}) with respect to $\nabla_{(\nabla\varphi_t)}^{W_2}$ if and only if
    \begin{equation*}
        \operatorname{div}_g \left(\mu_t\left(\partial_t v_t + \nabla v_t \cdot \nabla \varphi_t\right)\right) = 0 \qquad \text{for a.e. $t \in (0,1)$}
    \end{equation*}
    in the sense of distributions on $\man$. 
\end{prop}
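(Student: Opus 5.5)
The plan is to start from \Cref{Wasserstein covariant deriv}, which gives $\nabla^{W_2}_{(\nabla\varphi_t)} v_t = \Pi_{\mu_t}(\mathbf{D}^{W_2}_t v_t)$ with $\mathbf{D}^{W_2}_t v_t = \partial_t v_t + \nabla^{\man}_{(\nabla\varphi_t)} v_t$. By \Cref{parallel transport on general manifolds}, $v_t$ is parallel if and only if this projection vanishes for a.e.\ $t$. So the statement reduces to characterizing the kernel of the orthogonal projection $\Pi_{\mu_t}$ onto $T_{\mu_t}\mathcal{P}_2(\man)$. In coordinates, $\nabla^{\man}_{(\nabla\varphi_t)} v_t$ is the expression written $\nabla v_t\cdot\nabla\varphi_t$ in the statement (ordinary Jacobian plus Christoffel terms on a general $\man$). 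I will use that notation throughout.

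The key step is the Helmholtz-type decomposition of $L^2(\mu_t;T\man)$. Since $T_{\mu}\mathcal{P}_2(\man)$ is by definition the $L^2(\mu)$-closure of $\{\nabla\psi:\psi\in C_c^\infty(\man)\}$, a field $w\in L^2(\mu;T\man)$ satisfies $\Pi_\mu w=0$ exactly when $w$ is orthogonal to every $\nabla\psi$. That is,
\[
\int_\man \langle \nabla\psi, w\rangle_g\, d\mu = 0 \qquad \text{for all } \psi\in C_c^\infty(\man).
\]
By density this extends from the generating set to its closure. This condition is precisely the distributional statement $\operatorname{div}_g(\mu w)=0$. Applying it with $w=\mathbf{D}^{W_2}_t v_t$ at each fixed $t$ gives the equivalence pointwise in $t$.

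What remains is technical bookkeeping, which I expect to be the main obstacle: making sure $\mathbf{D}^{W_2}_t v_t$ is a well-defined element of $L^2(\mu_t;T\man)$ for a.e.\ $t$. Two points need care here.
\begin{itemize}
\item The time derivative $\partial_t v_t$ of a field living in varying $L^2(\mu_t)$ spaces must be interpreted in Gigli's sense. It is obtained by pulling back via the flow maps of $\nabla\varphi_t$ and differentiating in the fixed space $L^2(\mu_0)$.
\item The total derivative should be identified with $\frac{d}{dh}\big|_{h=0}$ of the parallel-transported (along $\man$-geodesics) value of $v_{t+h}\circ X_{t\to t+h}$.
\end{itemize}
For this I would invoke the regularity of the curve (\Cref{strong regularity}), namely Lipschitz velocity fields with uniform bounds, together with the absolute continuity assumption on $v_t$, as in \citet{gigli2012second}. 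These guarantee that the flow is well defined and that the difference quotients converge in $L^2$ for a.e.\ $t$. Once the total derivative is known to exist in $L^2(\mu_t)$ for a.e.\ $t$, the Helmholtz argument above applies verbatim.

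Both directions of the ``if and only if'' then follow from the single equivalence $\Pi_{\mu_t} w = 0 \iff \operatorname{div}_g(\mu_t w)=0$, applied on a full-measure set of times.
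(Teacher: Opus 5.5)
Your proposal is correct and follows the paper's proof. Both arguments reduce parallelism to $\Pi_{\mu_t}(\mathbf{D}^{W_2}_t v_t)=0$ via the covariant derivative formula, then identify the kernel of $\Pi_{\mu_t}$ with the fields $w$ satisfying $\operatorname{div}_g(\mu_t w)=0$ distributionally. The only difference is that you derive this orthogonal-complement characterization from the closure definition of $T_{\mu_t}\mathcal{P}_2(\man)$, whereas the paper cites it from Definition 1.29 of Gigli and leaves the well-definedness of the total derivative implicit in the absolute-continuity and regularity hypotheses.
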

\noindent \textit{Proof.} Due to \Cref{parallel transport on general manifolds} and \Cref{Wasserstein covariant deriv} we know that $v_t$ is a parallel vector field along the curve $\mu_t$ if \[\Pi_{\mu_t}\left(\partial_tv_t + \nabla^M_{(\nabla \varphi_t)} v_t\right) = 0 \quad \text{ for almost every $t \in (0,1).$}\] This is tantamount to the requirement that $v_t$ solves $\operatorname{div}_g \left(\mu_t(\partial_tv_t + \nabla^M_{(\nabla \varphi_t)} v_t)\right) = 0$ distributionally, since  
\[T^\perp_{\mu_t}\mathcal{P}_2(\man) = \left\{w \in L^2(\mu_t) 
\,\big | \, \operatorname{div}_g(w\mu_t) = 0\right\}\]
as stated in Definition 1.29 of \citet{gigli2012second}.  \qed{}

We instantiate the same idea for the Hellinger-Kantorovich case. For a smooth curve (in some appropriate sense) $\mu_t$ parameterized by $t \in (0,1)$ with velocity field $(\nabla\varphi_t, \varphi_t)$ we show that the following PDE arises from the covariant derivative definition of parallel transport -- we provide the proof in \Cref{sec: proof of HK PDE}.

\begin{prop}[Hellinger-Kantorovich Parallel Transport PDE]
\label{HK parallel transport pde}
    Suppose $\mu_t$ is a curve of measures in the admissible class \Cref{admissible class}, and suppose that $(v_t, \beta_t)$ is a tangent vector field along $\mu_t$ whose isometric lift is absolutely continuous (in the sense of \citet{gigli2012second}). Then $(v_t, \beta_t)$ is parallel along $\mu_t$ with respect to $\nabla_{(\nabla\varphi_t, \varphi_t)}^{\HK}$ if and only if
    \begin{align*}
        -\nabla \cdot \left(\mu_t (\partial_tv_t + \nabla v_t \cdot \nabla \varphi_t + 2\varphi_t v_t + 2\beta_t \nabla\varphi_t)\right)  + 4\left( 
        \partial_t\beta_t + \frac{1}{2}\langle\nabla \beta_t, \nabla\varphi_t\rangle + 2\varphi_t \beta_t
        \right)\mu_t = 0
    \end{align*}
    for a.e. $t \in (0,1)$ in the sense of distributions.
\end{prop}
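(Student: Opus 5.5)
The proof will mirror the one given for \Cref{Wasserstein parallel transport pde}: identify the orthogonal complement of $T_{\mu_t}\mathfrak{M}_+^\Gamma$ inside $L^2(\mu_t;\mathbb{R}^d)\times L^2(\mu_t)$ with respect to the HK metric tensor. Then the condition $\Pi_{\mu_t}(\mathbf{D}_t^{\HK}(v_t,\beta_t))=0$ becomes the stated distributional PDE. By \Cref{parallel transport on general manifolds} and \Cref{HK covariant deriv}, $(v_t,\beta_t)$ is parallel if and only if
\[
\Pi_{\mu_t}\big(w_t,\alpha_t\big)=0, \qquad (w_t,\alpha_t)\triangleq \mathbf{D}_t^{\HK}(v_t,\beta_t),
\]
for a.e. $t$. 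Since $T_{\mu_t}\mathfrak{M}_+^\Gamma$ is a closed subspace, this is equivalent to $(w_t,\alpha_t)\perp T_{\mu_t}\mathfrak{M}_+^\Gamma$. By density of $\{(\nabla\varphi,\varphi):\varphi\in C_c^\infty(U)\}$, it suffices to test against these generators:
\[
\int_U \big(\langle w_t,\nabla\varphi\rangle + 4\alpha_t\varphi\big)\,d\mu_t = 0 \qquad \text{for all } \varphi\in C_c^\infty(U).
\]
This is precisely the weak formulation of $-\nabla\cdot(\mu_t w_t)+4\alpha_t\mu_t=0$ in $\mathcal{D}'(U)$. Since $\operatorname{supp}\mu_t\subset\Gamma\Subset U$, no boundary terms arise, in the same way as the no-flux discussion after \Cref{def: weak solns cre}. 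Substituting the components of $\mathbf{D}_t^{\HK}$ then gives the displayed equation.

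The real work is making sense of $(w_t,\alpha_t)$ as an element of $L^2(\mu_t)\times L^2(\mu_t)$ for a.e. $t$, so that the projection and the weak formulation are meaningful. This is where the hypotheses enter. I would lift $(v_t,\beta_t)$ via \Cref{lifting by characteristics two} to $U_t\triangleq\mathcal{L}_{\mu_t,\lambda_t}(v_t,\beta_t)$, which is absolutely continuous along $\lambda_t$ by assumption. \Cref{uniformly bounded lifts} and \Cref{uniformly regular lifts} place $\lambda_t$ in a smooth region of the cone with Lipschitz velocity. Gigli's theory then gives that the total derivative of $U_t$ exists in $L^2(\lambda_t)$ for a.e. $t$. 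Projecting back with $\mathcal{P}_{\lambda_t}$, which is an isometry because the radial law is deterministic (\Cref{isometry}), yields an $L^2(\mu_t)$ object.

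I expect the main obstacle to be identifying this projected object with $\mathbf{D}_t^{\HK}(v_t,\beta_t)$, that is, checking that the cone total derivative $\partial_t U_t+\nabla^{\mathfrak{C}}_{V_t}U_t$ projects exactly to the HK total derivative. I would first compute this by hand using the warped-product Christoffel symbols for $dr^2+r^2g$:
\[
\nabla_{\partial_r}X=\nabla_X\partial_r=\tfrac1r X, \qquad \nabla_XY=\nabla^{\Omega}_XY - r\langle X,Y\rangle\partial_r .
\]
I would then evaluate along $r=r_t(x)$, using $\partial_t r_t + \nabla r_t\cdot v_t = 2\beta_t r_t$ along characteristics.

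The intermediate identity I would aim for is that the $x$-component yields $\partial_tv_t+\nabla v_t\cdot\nabla\varphi_t+2\varphi_tv_t+2\beta_t\nabla\varphi_t$ and the $r$-component divided by $2r$ yields $\partial_t\beta_t+\tfrac12\langle\nabla\beta_t,\nabla\varphi_t\rangle+2\varphi_t\beta_t$, up to the normalization conventions of the metric. If matching these constants proves delicate, the fallback is weaker. In that case I would only use the lift to justify the $L^2$ regularity of $(w_t,\alpha_t)$, and take the formula for $\mathbf{D}_t^{\HK}$ directly from \Cref{HK covariant deriv}; the orthogonality argument above then finishes the proof.
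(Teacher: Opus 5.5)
Your core argument is exactly the paper's proof: parallel iff $\mathbf{D}_t^{\HK}(v_t,\beta_t)\perp T_{\mu_t}\mathfrak{M}_+^\Gamma$, reduce by density to the generators $(\nabla\psi,\psi)$ with $\psi\in C_c^\infty(U)$, and read $\int_U(\langle A_t,\nabla\psi\rangle+4B_t\psi)\,d\mu_t=0$ as $-\nabla\cdot(\mu_tA_t)+4B_t\mu_t=0$ in the sense of distributions. The paper never attempts your lift-based justification that $\mathbf{D}_t^{\HK}(v_t,\beta_t)$ is a well-defined $L^2(\mu_t)$ field; it takes this for granted and never uses the absolute-continuity hypothesis, so that part is optional. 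If you do pursue it, note that the cone total derivative of $(v_t,2r\beta_t)$ needs no characteristic identity, since the field is linear in $r$. Also, its $\partial_r$-component divided by $2r$ differs from the HK reaction component by $\tfrac12\langle\nabla\varphi_t,\nabla\beta_t-v_t\rangle$, so the match holds only for gradient-form fields $v_t=\nabla\beta_t$, as in the paper's pullback-connection proposition.
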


While \Cref{Wasserstein parallel transport pde} and \Cref{HK parallel transport pde} describe parallel transport along any curve, solving these PDEs in practice may be challenging, especially in high-dimensional settings. To this end, we will describe an alternative approximate approach for computing parallel transport along Hellinger-Kantorovich geodesics. To do this, we will lift the Hellinger-Kantorovich geodesic to the cone and use approximate Wasserstein parallel transport developed by \citet{saidi2026wassersteinparalleltransportpredicting}. The approach from this paper leverages the connection between parallel transport on a smooth, boundaryless and complete Riemannian manifold $\man$ and parallel transport on $\mathcal{P}_{2}(\man)$ -- a connection established by \citet{gigli2012second} -- to enable parallel transport on $\mathcal{P}_2(\man)$.

\subsection{Approximate Hellinger-Kantorovich Parallel Transport}

As alluded to, we will demonstrate the utility of the isometric lifting procedure in \Cref{alg:isometric-lift} by using it to compute approximate Hellinger-Kantorovich parallel transport along geodesics. Our procedure, described in \Cref{alg:HK-parallel-transport}, combines the lifting procedure with approximate Wasserstein parallel transport. As we will show, the fact that the lifting procedure gives an explicit isometry of the Wasserstein and Hellinger-Kantorovich tangent spaces along the entire geodesic, this procedure is indeed valid. 

To formalize the setting, we need to ensure enough regularity on the lifted geodesic of measures for parallel transport to even exist. A sufficient condition is \textit{strong regularity}, defined in \Cref{strong regularity}. 

\begin{defn}[Regular Curves, \citet{gigli2012second}]
\label{strong regularity}
    Let $(\mu_t)_{t \in [0,1]}$ be an absolutely continuous curve of measures in $\mathcal{P}_2(\man)$ where $\man$ is a $C^\infty$, boundaryless, connected and complete manifold. We say that $(\mu_t)$ is regular if its velocity vector field $(v_t)$ satisfies 
    \[\int_0^1\|v_t\|_{L^2(\mu_t)}^2\,dt < \infty \quad \text{and} \quad \int_0^1\operatorname{Lip}^\man(v_t)\, dt < \infty\]
    where $\operatorname{Lip}^\man(v_t)$ denotes the spatial Lipschitz constant of the field $v_t$. Moreover, we say that $(\mu_t)$ is strongly regular if it is regular and 
    \[\sup_{t\in [0,1]}\operatorname{Lip}^\man(v_t) < \infty.\]
\end{defn}

Fortunately, as we showed in \Cref{uniformly regular lifts}, the lifting procedure described in \Cref{alg:isometric-lift} yields a strongly regular geodesic when the endpoint measures $\mu_0$ and $\mu_1$ belong to the admissible class $\mathcal{C}$ defined in \Cref{admissible class}. One more detail to resolve, however, is the fact that the definition and results of \citet{gigli2012second} require $\man$ to be $C^\infty$, boundaryless, connected and complete. The domain $\mathfrak{C}_\Omega$ when restricted to radial coordinates in the range $[r_{\min}, r_{\max}]$ coming from \Cref{uniformly bounded lifts} is indeed a smooth and complete manifold, but it has a topological boundary. Fortunately, we can circumvent this through the following result.  

\begin{thm}[Localized ambient setting for lifted geodesics]
\label{cone ambient manifold}
Let $(\mu_t)_{t\in[0,1]}$ be an admissible HK geodesic with velocity $(v_t, \beta_t)$ and let $(\lambda_t,V_t)$ denote its deterministic isometric lift.
Then there exists an open manifold $\mathcal U$ and a complete smooth metric $\widetilde g_{\mathfrak C}$ on $\mathcal U$
such that:
\begin{enumerate}
    \item $\bigcup_{t\in[0,1]} \operatorname{supp}\lambda_t \Subset \mathcal U$.
    \item $\widetilde g_{\mathfrak C}$ agrees with the cone metric $g_{\mathfrak C}$ on a neighborhood of
    $\bigcup_t \operatorname{supp}\lambda_t$.
    \item the lifted curve $(\lambda_t)$ is a regular curve in $\mathcal P_2(\mathcal U,\widetilde g_{\mathfrak C})$, with velocity field $V_t$.
\end{enumerate}
Consequently, Gigli's parallel transport theory applies to $(\lambda_t)$, and the resulting objects depend only on the original cone geometry along the lifted curve.
\end{thm}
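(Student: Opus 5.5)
We build $\mathcal U$ as an open product region of the cone, choose $\widetilde g_{\mathfrak C}$ as a conformal modification of the cone metric that blows up near the edges of that region, and then transfer the regularity from \Cref{uniformly regular lifts}. Throughout we use \Cref{uniformly bounded lifts}, which gives $0<r_{\min}\le r_t\le r_{\max}<\infty$ uniformly in $t$, and the admissible-class fact that $\operatorname{supp}\mu_t\subset\Gamma\Subset U\Subset\Omega^\circ$.

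\textbf{Step 1: the region and the supports.} The characteristic flow $X_t$ of \Cref{lifting by characteristics one} keeps $\eta_t$ supported in $\operatorname{supp}\mu_t\subset\Gamma$, since $d\mu_t/d\eta_t=r_t^2>0$. Hence
\[
\textstyle\bigcup_t\operatorname{supp}\lambda_t\subset K\triangleq\Gamma\times[r_{\min},r_{\max}],
\]
and $K$ is compact. Pick $U'$ open with $\Gamma\Subset U'\Subset U$, and set
\[
\mathcal U=U\times(r_{\min}/2,\,2r_{\max}).
\]
This is an open subset of $\mathbb R^{d+1}$ that avoids the apex. On it the cone metric $g_{\mathfrak C}=dr^2+r^2\,dx^2$ is smooth, because $r$ is bounded away from $0$. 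This gives item (1).

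\textbf{Step 2: the complete metric.} $(\mathcal U,g_{\mathfrak C})$ is incomplete, so we conformally change it. Let $\psi:\mathcal U\to[0,\infty)$ be smooth, with $\psi\equiv0$ on a neighborhood $\mathcal V=U'\times(r_{\min}/1.5,\,1.5\,r_{\max})$ of $K$, and $\psi(z)\to\infty$ as $z\to\partial\mathcal U$. One choice is $\psi=h(\rho)$, where $\rho$ is a regularized inverse of the distance to $\partial\mathcal U$ and $h$ vanishes on small values. Define
\[
\widetilde g_{\mathfrak C}=e^{2\psi}g_{\mathfrak C}.
\]
Completeness is the standard argument that if $\int e^{\psi}\,ds$ diverges along every curve leaving every compact set, the metric is complete. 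Concretely, take $\psi\ge\log(1/\operatorname{dist}(z,\partial\mathcal U))$ near $\partial\mathcal U$, so that lengths of escaping curves are at least $\int dt/t=\infty$, and then apply Hopf–Rinow. Since $\psi\equiv0$ on $\mathcal V$, item (2) holds.

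\textbf{Step 3: regularity and locality.} By \Cref{lifting by characteristics two}, $(\lambda_t,V_t)$ solves the cone continuity equation. The test functions in \Cref{def: weak solutions} only interact with $\lambda_t$ on $K$. The two metrics agree on $\mathcal V$, so the gradient, divergence and inner products coincide there. Thus the continuity equation holds on $(\mathcal U,\widetilde g_{\mathfrak C})$, and $\|V_t\|_{L^2(\lambda_t)}$ is unchanged.

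The main obstacle is the Lipschitz bound. $\operatorname{Lip}^{\mathcal U}(V_t)$ is measured with respect to the new metric, and $V_t$ is only defined up to $\lambda_t$-a.e. equivalence. We handle this as follows:
\begin{itemize}
\item Replace $V_t$ by the representative $\chi\,\nabla_{\mathfrak C}(r^2\varphi_t)$, as in \Cref{lifted image}, where $\chi$ is a cutoff equal to $1$ on a neighborhood of $K$ and supported in $\mathcal V$.
\item This field agrees with $V_t$ $\lambda_t$-a.e. and is compactly supported in $\mathcal V$.
\item On $\mathcal V$ its $\widetilde g$-Lipschitz constant equals its cone Lipschitz constant. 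This is bounded by \Cref{uniformly regular lifts} together with the $W^{1,\infty}$ bound (A.2)(b) and the bounds $r\le2r_{\max}$ and $|\chi'|\le C$.
\end{itemize}
A globally defined, compactly supported field that is Lipschitz on $\mathcal V$ and vanishes outside it is Lipschitz on all of $\mathcal U$. This is where completeness matters: distances between points inside and outside $\mathcal V$ are realized by curves, so the bound carries over.

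Then $\sup_t\operatorname{Lip}(V_t)<\infty$ and $\int_0^1\|V_t\|^2\,dt<\infty$. Hence the curve is (strongly) regular in the sense of \Cref{strong regularity}, which is item (3).

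Finally, Gigli's covariant derivative and parallel transport are built from the Levi-Civita connection of $\widetilde g_{\mathfrak C}$ evaluated on $\operatorname{supp}\lambda_t$, together with $L^2(\lambda_t)$ projections. These are local on $\mathcal V$, where $\widetilde g_{\mathfrak C}=g_{\mathfrak C}$, so the resulting objects do not depend on the choice of $\psi$.
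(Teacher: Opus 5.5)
Your proof is correct. Apart from one step it follows the paper: the same region $\mathcal U=U\times(r_{\min}/2,2r_{\max})$, the same locality argument for the continuity equation and the $L^2$ norms, and the same device of replacing $V_t$ by a cut-off, compactly supported representative with bounded covariant derivative.

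The genuine difference is how you produce the complete metric. The paper takes an abstract complete metric $h$ on $\mathcal U$ (Nomizu--Ozeki), sets $\widetilde g_{\mathfrak C}=\chi g_{\mathfrak C}+(1-\chi)h$, and gets completeness in one line from $\widetilde g_{\mathfrak C}\ge c_0 h$. You instead build it by hand as a conformal blow-up $e^{2\psi}g_{\mathfrak C}$, which is essentially the Nomizu--Ozeki argument made explicit. This costs you a regularized distance function and the divergent-curve criterion, but it is self-contained.

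Your choice also buys something the paper's does not. Since $\psi\ge 0$, you have $\widetilde g_{\mathfrak C}\ge g_{\mathfrak C}$ on all of $\mathcal U$, so $\widetilde g_{\mathfrak C}$-distances dominate $d_{\mathfrak C}$. At the same time, the characteristic paths $t\mapsto (X_t(x),r_t(X_t(x)))$ stay in $K$, where the two metrics agree. Combine this with $\int_0^1\|V_t\|_{L^2(\lambda_t)}^2\,dt=W_{\mathfrak C}(\lambda_0,\lambda_1)^2$ from \Cref{lifting by characteristics two}. The result is that $(\lambda_t)$ remains a constant-speed $W_2$ geodesic in $\mathcal P_2(\mathcal U,\widetilde g_{\mathfrak C})$. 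The theorem does not assert this, but the later appeal to \Cref{approximation scheme}, which is stated for geodesic interpolants, needs it. With an arbitrary $h$ this need not hold: an $h$ that is small off $W$ can create shortcuts between points of $K$.

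One minor point: \Cref{strong regularity} is stated for connected manifolds, so you should take $U$ connected, as the paper does. This is harmless, since after your cutoff the bounds on $(v_t,\beta_t)$ are only used near $K$.
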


\begin{rmk}
    Strictly speaking, the application of Gigli's theory is made to the normalized curve \(\bar\lambda_t=\lambda_t/m\in\mathcal P_2(\mathcal U)\) where $\lambda_t(\mathcal{U}) \equiv m$. As mentioned in \Cref{lifting}, the relevant tangent spaces, projections, covariant derivative, and parallel transport equation are invariant under this constant rescaling. We therefore write the construction directly for \(\lambda_t\) without changing the resulting transported fields.
\end{rmk}

We provide the proof of \Cref{cone ambient manifold} in \Cref{sec: proof of cone ambient manifold}. This result explicitly constructs a complete boundaryless Riemannian manifold with an interior compact subset whose geometry coincides exactly with that of the cone restricted to the support of all lifted measures in the admissible class. The construction is simple: take an open subset of $\mathfrak{C}_\Omega$ that subsumes the support the entire lifted interpolant and choose a \textit{complete} Riemannian metric that coincides with the $\mathfrak{C}_\Omega$ metric on its interior (which provably exists). Thus, we will consider the geometry of $\mathcal{P}_2(\mathcal{U}, \tilde g_{\mathfrak{C}})$ in the following results to ensure compatibility with Gigli's theory. We now recall the following approximation result, which establishes that one can approximate $\mathcal{P}_2(\man)$ parallel transport by parallel transporting the tangent field along the Lagrangian paths on $\man$.

\begin{corollary}[\citet{gigli2012second}]
\label{approximation scheme}
    Let $(\man, g)$ be a smooth, complete and boundaryless Riemannian manifold, and suppose $
    \mu_0, \mu_1 \in  \mathcal{P}_{2}(\man)$ are probability measures that admit a strongly regular geodesic interpolant $\mu_t$ with a tangent velocity field $V_t$. For any $v \in T_{\mu_0}\mathcal{P}_2(\man)$ it holds that 
    \[\left\|\Pi_{\mu_t}(\PT^\man_{\mu_0 \rightarrow \mu_t}(v)) - {\PT}_{\mu_0 \rightarrow \mu_t}^{W_2, \man}(v)\right\|_{L^2(\mu_t)} \leq  \left(e^{\int_0^1{\operatorname{Lip}}(V_s)\,ds} - 1\right)^2\|v\|_{L^2(\mu_0)}\left(\int_0^t{\operatorname{Lip}}(V_s)\,ds\right)^2\]
    where $\PT^\man_{\mu_0 \rightarrow \mu_t}(v)$ denotes the pointwise parallel transport (with respect to $\man$) of $v$ along the Lagrangian path defined by the $\mathcal{P}_2(\man)$ geodesic between $\mu_0$ and $\mu_1$. Moreover, the strong regularity of $\mu_t$ guarantees that $\Pi_{\mu_t}(\PT^\man_{\mu_0 \rightarrow \mu_t}(v))$ approximates ${\PT}_{\mu_0 \rightarrow \mu_t}^{W_2, \man}(v)$, the Wasserstein parallel transport of $v$ along the geodesic $\mu_t$, in the sense that
    \[\left\|\Pi_{\mu_t}(\PT^\man_{\mu_0 \rightarrow \mu_t}(v)) - {\PT}_{\mu_0 \rightarrow \mu_t}^{W_2, \man}(v)\right\|_{L^2(\mu_t)} \leq Ct^2 \quad \text{for some $C>0$ independent of $t$.}\]
\end{corollary}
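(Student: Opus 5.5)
The plan is to compare the two fields along the Lagrangian flow and to control their difference through the normal (non-tangent) component that the pointwise transport picks up. Strong regularity gives $\sup_t \operatorname{Lip}^\man(V_t) \le L < \infty$, so the flow $\partial_t X_t = V_t(X_t)$, $X_0=\operatorname{id}$, is well defined and bi-Lipschitz, and $\mu_t = (X_t)_\#\mu_0$.

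Define $u_t \in L^2(\mu_t)$ by
\[
u_t(X_t(x)) \triangleq \PT^\man_{x\to X_t(x)}\bigl(v(x)\bigr).
\]
This is the pointwise $\man$-parallel transport along each Lagrangian path, so it is absolutely continuous in Gigli's sense. Two facts follow directly:
\begin{itemize}
\item the total derivative vanishes, $\mathbf{D}^{W_2}_t u_t = \partial_t u_t + \nabla^\man_{V_t} u_t = 0$;
\item pointwise transport is an isometry on each fibre, so $\|u_t\|_{L^2(\mu_t)} = \|v\|_{L^2(\mu_0)}$.
\end{itemize}
Write $u_t = w_t + n_t$ with $w_t \triangleq \Pi_{\mu_t}u_t$ and $n_t \in T^\perp_{\mu_t}\mathcal{P}_2(\man)$. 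Since $v$ is tangent, $n_0=0$. Let $P_t \triangleq \PT^{W_2,\man}_{\mu_0\to\mu_t}(v)$, which is parallel with respect to $\nabla^{W_2}$ (\Cref{Wasserstein covariant deriv}) and satisfies $P_0 = w_0 = v$.

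\textbf{Step 1: tangential part of derivatives of normal fields.} For any absolutely continuous normal field $n_t$ along $\mu_t$, the goal is the estimate
\[
\|\Pi_{\mu_t}(\mathbf{D}_t n_t)\|_{L^2(\mu_t)} \le \operatorname{Lip}(V_t)\,\|n_t\|_{L^2(\mu_t)}.
\]
To prove it, transport a test gradient $\nabla\psi$ along the flow and differentiate the identity $\langle n_t, \nabla\psi_t\rangle_{\mu_t} = 0$ in $t$; this uses transport of $\mu_t$ and metric compatibility of $\nabla^\man$. The result is $\langle \mathbf{D}_t n_t, \nabla\psi_t\rangle_{\mu_t} = -\langle n_t, \mathbf{D}_t\nabla\psi_t\rangle_{\mu_t}$. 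Choosing $\psi_t$ so that $\nabla\psi_t$ is the Lie-dragged gradient, $\mathbf{D}_t\nabla\psi_t$ differs from a gradient field only by a term $(\nabla V_t)^{\!\top}\nabla\psi_t$. The gradient part pairs to zero with $n_t$, so Cauchy--Schwarz and density of gradients give the bound above. This is where Gigli's second-order calculus enters, and I expect it to be the main obstacle: one has to make the differentiation rigorous with only Lipschitz $V_t$ and work on the closure of gradients.

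\textbf{Step 2: growth of the normal part.} From $\mathbf{D}_t u_t = 0$ we get $\mathbf{D}_t n_t = -\mathbf{D}_t w_t$. Applying the dual version of Step 1 (tangential fields acquire a normal derivative bounded by $\operatorname{Lip}(V_t)$ times their norm) yields a Grönwall-type inequality
\[
\tfrac{d}{dt}\|n_t\| \le \operatorname{Lip}(V_t)\bigl(\|w_t\| + \|n_t\|\bigr), \qquad \|w_t\| \le \|v\|, \quad n_0 = 0,
\]
and hence $\|n_s\|_{L^2(\mu_s)} \le \bigl(e^{\int_0^s \operatorname{Lip}(V_r)\,dr}-1\bigr)\|v\|_{L^2(\mu_0)}$.

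\textbf{Step 3: comparison with the parallel field.} By metric compatibility and parallelism of $P_t$,
\[
\tfrac{d}{dt}\|w_t - P_t\| \le \|\nabla^{W_2}_{V_t} w_t\|.
\]
Because $\mathbf{D}_t u_t = 0$, we have $\nabla^{W_2}_{V_t} w_t = \Pi_{\mu_t}(\mathbf{D}_t w_t) = -\Pi_{\mu_t}(\mathbf{D}_t n_t)$, which Step 1 bounds by $\operatorname{Lip}(V_t)\|n_t\|$. Integrating from $0$ (where $w_0 = P_0$) and inserting Step 2 gives
\[
\|w_t - P_t\| \le \int_0^t \operatorname{Lip}(V_s)\bigl(e^{\int_0^s \operatorname{Lip}}-1\bigr)\|v\|\,ds.
\]
Using $e^{a}-1 \le a\,e^{a}$ and careful bookkeeping, this should reproduce the stated bound, quadratic in $\int_0^t\operatorname{Lip}(V_s)\,ds$ up to exponential factors. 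I have not checked that the constants match exactly; the order in $t$ is the essential content.

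Under strong regularity $\int_0^t \operatorname{Lip}(V_s)\,ds \le Lt$, so the right-hand side is at most $C t^2$ with $C$ depending only on $L$ and $\|v\|_{L^2(\mu_0)}$.
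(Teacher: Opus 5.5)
The paper gives no proof of this statement; it quotes it from Gigli. Gigli's argument builds the transport from translations along the flow followed by tangent projections, and controls the normal leakage through Lipschitz bounds on the flow maps. Your Steps 1--3 are the differential (Eulerian) form of the same mechanism, and they are correct modulo two technical inputs. The first is absolute continuity of $w_t=\Pi_{\mu_t}u_t$ and $n_t$, which the product rule in Steps 2--3 needs. The second is a rigorous version of Step 1: your formula for $\mathbf{D}_t\nabla\psi_t$ involves $\nabla V_t$, which need not be defined $\mu_t$-a.e. when $V_t$ is only Lipschitz and $\mu_t$ is singular. This is what Gigli's map-based estimates sidestep.

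Write $\ell(s)=\operatorname{Lip}(V_s)$ and $\Lambda(t)=\int_0^t\ell(s)\,ds$. Step 3 then integrates exactly:
\[
\|w_t-P_t\|_{L^2(\mu_t)}\le\|v\|_{L^2(\mu_0)}\int_0^t\ell(s)\bigl(e^{\Lambda(s)}-1\bigr)\,ds=\|v\|_{L^2(\mu_0)}\bigl(e^{\Lambda(t)}-1-\Lambda(t)\bigr)\le\tfrac12\Lambda(t)^2e^{\Lambda(t)}\|v\|_{L^2(\mu_0)}.
\]
This gives the $Ct^2$ claim with $C$ linear in $\|v\|_{L^2(\mu_0)}$, which is the form used in the proof of \Cref{parallel transport approx entire geodesic}. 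The $\|n_t\|$ term in Step 2 is unnecessary: $\frac{d}{dt}\frac12\|n_t\|^2=-\langle n_t,(\operatorname{id}-\Pi_{\mu_t})\mathbf{D}_tw_t\rangle_{\mu_t}$ already gives $\|n_t\|\le\Lambda(t)\|v\|$, and hence a final bound of $\frac12\Lambda(t)^2\|v\|$.

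The genuine gap is your closing claim that careful bookkeeping reproduces the first display. It cannot. As $\Lambda(1)\to0$ your bound has size $\frac12\Lambda(t)^2\|v\|$, while the display has size $\Lambda(1)^2\Lambda(t)^2\|v\|$, two orders higher in the Lipschitz scale. No argument can close this, because the display is false as written. Here is a counterexample.
\begin{itemize}
\item Take $\man=\mathbb{R}^2$, $\mu_0=N(0,I)$, and $X_t=\operatorname{diag}(1+t\epsilon,1-t\epsilon)$ with $0<\epsilon<1$. This is a geodesic, since $X_1$ is the gradient of a convex quadratic.
\item Then $V_t(y)=AX_t^{-1}y$ with $A=\epsilon\operatorname{diag}(1,-1)$, so $\Lambda(1)=-\log(1-\epsilon)$.
\item Take $v(x)=Bx$ with $B=\bigl(\begin{smallmatrix}0&1\\1&0\end{smallmatrix}\bigr)$.
\item Linear fields $Cy$ split $L^2(\mu_t)$-orthogonally as $Sy+\Sigma_tWy$, with $S$ symmetric, $W$ antisymmetric and $\Sigma_t=X_t^2$. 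The second piece is $\mu_t$-divergence-free.
\item Using this split, $\Pi_{\mu_t}(v\circ X_t^{-1})=(1+t^2\epsilon^2)^{-1}By$, while $(1+t^2\epsilon^2)^{-1/2}By$ solves the parallel transport equation of \Cref{Wasserstein parallel transport pde}.
\item The error at $t=1$ is therefore $\bigl(1-(1+\epsilon^2)^{-1/2}\bigr)\|v\|\approx\frac{\epsilon^2}{2}\|v\|$. The display's right-hand side is $\frac{\epsilon^2}{(1-\epsilon)^2}\log^2(1-\epsilon)\,\|v\|\approx\epsilon^4\|v\|$. For $\epsilon=0.1$ these are about $4.96\cdot10^{-3}$ versus $1.37\cdot10^{-4}$, in units of $\|v\|$.
\end{itemize}
What your estimate does give follows from $e^{a}-1-a\le(e^{a}-1)^2$ and convexity of $a\mapsto e^a-1$:
\[
\bigl\|\Pi_{\mu_t}(\PT^\man_{\mu_0\to\mu_t}(v))-\PT^{W_2,\man}_{\mu_0\to\mu_t}(v)\bigr\|_{L^2(\mu_t)}\le\bigl(e^{\Lambda(t)}-1\bigr)^2\|v\|_{L^2(\mu_0)}\le\frac{(e^{\Lambda(1)}-1)^2}{\Lambda(1)^2}\,\Lambda(t)^2\,\|v\|_{L^2(\mu_0)}.
\]
This is the printed display with a missing factor $\Lambda(1)^{-2}$; the printed version holds only when $\Lambda(1)\ge1$. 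You should state and prove this corrected first inequality rather than try to match the printed one. The $Ct^2$ conclusion, which is all the paper uses downstream, is unaffected.
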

\begin{rmk}
    Note that if $\mu_0(\man)= \mu_1(\man) = m\neq 1$ then the constant $C$ picks up a dependence on $m$, but of course the statement is the same. 
\end{rmk}

\citet{saidi2026wassersteinparalleltransportpredicting} leverage this result to devise an approximation scheme that circumvents solving the parallel transport PDE. In particular, given an approximation resolution $N$, the approximate achieves an $O(N^{-1})$ error in an $L^2$ sense. 
\begin{corollary}[\citet{saidi2026wassersteinparalleltransportpredicting}]
\label{approximation scheme full geodesic}
    Let $v \in T_{\mu_0}\mathcal{P}_2(\man)$, let $N \in \mathbb{N}$, define $s = 1/N$ and define the maps \[\widehat \PT_k \triangleq \left(u\mapsto  \Pi_{\mu_{ks}}(\PT^\man_{\mu_{(k-1)s} \rightarrow \mu_{ks}}(u))\right)\quad \text{with $u \in T_{\mu_{(k-1)s}}\mathcal{P}_2(\man)$ for $k \in \{1, \dots, N\}$}.\]
    Then we have the following approximation bound,
    \[\left\|\left(\widehat \PT_N \circ \cdots \circ \widehat \PT_1\right)(v) - \PT_{\mu_0 \rightarrow \mu_1}^{W_2, \man}(v)\right\|_{L^2(\mu_1)} = O(N^{-1}).\]
\end{corollary}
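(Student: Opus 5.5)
The plan is a telescoping argument in which each step's local error from \Cref{approximation scheme} is carried forward by an exact, norm-preserving map. Write $\mathcal{T}_k \triangleq \PT^{W_2,\man}_{\mu_{(k-1)s}\rightarrow\mu_{ks}}$ for the exact Wasserstein parallel transport over the $k$-th subinterval. Since $\mu_t$ is a constant-speed geodesic, its restriction to $[(k-1)s,ks]$ is again a strongly regular geodesic after reparametrising to unit time. Parallel transport along a curve composes, so
\[
\PT^{W_2,\man}_{\mu_0\rightarrow\mu_1}=\mathcal{T}_N\circ\cdots\circ\mathcal{T}_1 .
\]
Metric compatibility (\Cref{Wasserstein covariant deriv}) gives that each $\mathcal{T}_k$ is a linear isometry between the corresponding tangent spaces.

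Set $u_k\triangleq \widehat\PT_k\circ\cdots\circ\widehat\PT_1(v)$ and $w_k\triangleq \mathcal{T}_k\circ\cdots\circ\mathcal{T}_1(v)$. The error $e_k\triangleq u_k-w_k$ then satisfies
\[
e_k=\bigl(\widehat\PT_k-\mathcal{T}_k\bigr)(u_{k-1})+\mathcal{T}_k(e_{k-1}).
\]
Because $\mathcal{T}_k$ is an isometry, this gives
\[
\|e_k\|_{L^2(\mu_{ks})}\le \|(\widehat\PT_k-\mathcal{T}_k)(u_{k-1})\|_{L^2(\mu_{ks})}+\|e_{k-1}\|_{L^2(\mu_{(k-1)s})}.
\]
Summing over $k$ with $e_0=0$ bounds $\|e_N\|$ by the sum of the local one-step errors.

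To bound each local error, apply the first bound of \Cref{approximation scheme} on the $k$-th subinterval. The Lagrangian velocity over that time window has Lipschitz constant at most $\sup_t \operatorname{Lip}(V_t)\triangleq L<\infty$ per unit of original time. Over a window of length $s$ we have $\int \operatorname{Lip}\le Ls$, so
\[
\|(\widehat\PT_k-\mathcal{T}_k)(u)\|\le (e^{Ls}-1)^2(Ls)^2\|u\|,
\]
which is $O(s^2)\|u\|$ and in fact much better. Alternatively, if one prefers to avoid the reparametrisation, the local error is simply $C s^2\|u\|$ with $C$ uniform in $k$ by strong regularity.

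The remaining ingredient is a uniform bound on $\|u_{k-1}\|$. The projection $\Pi_{\mu_{ks}}$ is an $L^2$-contraction. Pointwise parallel transport on $\man$ preserves $|\cdot|_g$ along each particle path, and the Lagrangian flow pushes $\mu_{(k-1)s}$ to $\mu_{ks}$, so $\PT^\man$ is an isometry $L^2(\mu_{(k-1)s})\to L^2(\mu_{ks})$. Hence $\|u_k\|\le\|v\|_{L^2(\mu_0)}$ for all $k$. Summing the $N$ local errors then yields $\|e_N\|\le N\cdot Cs^2\|v\|=C\|v\|N^{-1}$.

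The main obstacle is justifying that the constant in \Cref{approximation scheme} is uniform across subintervals and applies to the initial vector $u_{k-1}$. This vector is a general element of $T_{\mu_{(k-1)s}}\mathcal{P}_2(\man)$ rather than the original $v$, so the uniformity cannot be read off from the stated bound for a single fixed vector. I would handle this through the explicit first bound of \Cref{approximation scheme}, whose constant depends only on $\int\operatorname{Lip}(V_s)\,ds$ over the subinterval and on $\|u\|$. I would combine it with the observation that a sub-arc of a strongly regular geodesic is strongly regular, with time-rescaled velocity $sV$ and Lipschitz bound $sL$. With this, the constant depends only on $L$ and not on $k$.
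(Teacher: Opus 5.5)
Your telescoping argument is correct, and it is essentially the argument the paper runs for the HK analogue in the proof of \Cref{parallel transport approx entire geodesic}; the paper itself only quotes this corollary from \citet{saidi2026wassersteinparalleltransportpredicting} and does not reprove it. The shared ingredients are a one-step error of order $N^{-2}$ from \Cref{approximation scheme}, propagation by isometric Wasserstein parallel transport (you push the accumulated error forward with $\mathcal{T}_k$, whereas the paper pulls everything back to the tangent space at the initial measure), the iterate bound $\|u_k\|\le\|v\|_{L^2(\mu_0)}$ exactly as in \Cref{norm control iterate}, and a sum of $N$ such terms.

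One caution: drop the ``in fact much better'' aside. Applying the displayed first bound of \Cref{approximation scheme} to a short sub-arc would force a fourth-order one-step error, and that is not true in general. For example, take $\mu_0=N(0,I_2)$, $T(x)=(I+E)x$ with $E=\mathrm{diag}(a,b)$ and $a\neq b$, and $v=\nabla(x_1x_2)$; the one-step error up to time $t$ is $\tfrac{\sqrt2}{8}(a-b)^2t^2+O(t^3)$, so that display cannot hold as written. What your argument really uses is a bound of the form $C\|u\|\bigl(\int_{\mathrm{window}}\operatorname{Lip}(V_r)\,dr\bigr)^2$, which is what the underlying estimate gives, and that is exactly what makes your local constant independent of $k$.
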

Thus, one can circumvent solving the Wasserstein parallel transport PDE in \Cref{Wasserstein parallel transport pde} through this approach, which instead requires computation of parallel transport on $\man$. This is typically much easier, as it usually amounts to solving an ODE -- in some cases, including ours, parallel transport is available in closed form. With this Wasserstein parallel transport approximation result, it seems natural to compute Hellinger-Kantorovich parallel transport by lifting to $\mathfrak{C}_\Omega$ and doing Wasserstein parallel transport there. The validity of this idea is formalized below in \Cref{parallel transport approx entire geodesic}. But before that, we need the following result.

\begin{prop}[Pullback connection along a deterministic lift]
\label{prop: pullback connection}
Let $(\mu_t)_{t\in[0,1]}$ be an admissible HK geodesic with tangent field
$(\nabla\varphi_t,\varphi_t)$, and let $(\lambda_t)_{t\in[0,1]}$ be its
deterministic isometric lift. Write
\[
\mathcal L_t \equiv \mathcal L_{\mu_t,\lambda_t},
\qquad
\mathcal P_t \equiv \mathcal P_{\lambda_t},
\qquad
V_t \equiv \mathcal L_t(\nabla\varphi_t,\varphi_t).
\]
For any  HK tangent field $\mathbf u_t = (\nabla \psi_t, \psi_t)$ along $(\mu_t)$ such that $\psi_t \in C^{2,1}(\Omega \times (0,1))$,
define the pullback connection
\[
\widetilde\nabla_{(\nabla\varphi_t,\varphi_t)}\mathbf u_t
\;\triangleq\;
\left(\Pi_{\mu_t} \circ \mathcal{P}_t \right)\!\left(
\mathbf{D}_t^{W_2, \mathcal{U}}\big(\mathcal L_t\mathbf u_t\big)
\right)
\]
where $\mathbf{D}_t^{W_2, \mathcal{U}}$ is the Wasserstein \textit{total} derivative from \Cref{Wasserstein covariant deriv}. Then $\widetilde\nabla_{(\nabla \varphi_t, \varphi_t)} \mathbf{u}_t = \nabla ^{\HK}_{(\nabla \varphi_t, \varphi_t)}\mathbf{u}_t$.
\end{prop}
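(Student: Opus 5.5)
The plan is a direct computation. I will write the lifted field $\mathcal L_t\mathbf u_t$ and the lifted velocity $V_t$ in the coordinates $(x,r)$ on $\mathfrak C_\Omega$, and evaluate the Wasserstein total derivative $\mathbf D_t^{W_2,\mathcal U}$ pointwise using the Levi-Civita connection of the cone metric $g_{\mathfrak C}=dr^2+r^2 g_{\mathrm{Euc}}$. I will then apply the explicit form of $\mathcal P_t$ for deterministic lifts from \Cref{isometry}. The claim is that the result coincides componentwise with $\mathbf D_t^{\HK}\mathbf u_t$ from \Cref{HK covariant deriv}, so applying $\Pi_{\mu_t}$ to both sides gives the identity.

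\textbf{Reduction to the cone metric.} By \Cref{cone ambient manifold}, $\widetilde g_{\mathfrak C}$ agrees with $g_{\mathfrak C}$ on a neighborhood of $\bigcup_t\operatorname{supp}\lambda_t$. By \Cref{uniformly bounded lifts}, this support lies in $\{r_{\min}\le r\le r_{\max}\}$, away from the apex. The total derivative only involves first derivatives of the fields and the connection at points of $\operatorname{supp}\lambda_t$, so it suffices to compute with $g_{\mathfrak C}$ there.

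\textbf{Christoffel symbols.} The standard warped-product formulas for $dr^2+r^2 g$ with $g$ Euclidean give
\[
\Gamma^r_{ij}=-r\delta_{ij},\qquad \Gamma^i_{rj}=\Gamma^i_{jr}=\tfrac1r\delta^i_j,
\]
with all other symbols zero. Here a tangent vector is written $a+b\,\partial_r$ with $a$ in $x$-coordinates, consistent with the norm $r^2\|a\|^2+b^2$ used in \Cref{lifted image}.

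\textbf{Computing the total derivative.} Write
\[
U_t\triangleq\mathcal L_t\mathbf u_t=(\nabla\psi_t(x),\,2\psi_t(x)r),\qquad V_t=(\nabla\varphi_t(x),\,2\varphi_t(x)r).
\]
Both are defined on all of $\mathcal U$, with $r$ an independent coordinate, so $\partial_t U_t=(\partial_t\nabla\psi_t,\,2r\,\partial_t\psi_t)$ is well defined. The assumption $\psi_t\in C^{2,1}$ makes all terms classical.

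The horizontal component of $\partial_tU_t+\nabla^{\mathfrak C}_{V_t}U_t$ is
\[
\partial_t\nabla\psi_t+\nabla^2\psi_t\nabla\varphi_t+\tfrac{2\varphi_t r}{r}\nabla\psi_t+\tfrac{2\psi_t r}{r}\nabla\varphi_t ,
\]
which is exactly the first component of $\mathbf D^{\HK}_t(\nabla\psi_t,\psi_t)$.

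The radial component is
\[
2r\partial_t\psi_t+\langle\nabla\varphi_t,2r\nabla\psi_t\rangle+2\varphi_t r\cdot 2\psi_t-r\langle\nabla\varphi_t,\nabla\psi_t\rangle
=2r\Big(\partial_t\psi_t+\tfrac12\langle\nabla\psi_t,\nabla\varphi_t\rangle+2\varphi_t\psi_t\Big).
\]

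\textbf{Projection and conclusion.} Since $\lambda_t$ has deterministic radial law $r=r_t(x)>0$, \Cref{isometry} gives
\[
\mathcal P_t[a+b\partial_r](x)=\big(a(x,r_t(x)),\,b(x,r_t(x))/(2r_t(x))\big).
\]
The factor $2r$ in the radial component cancels, so $\mathcal P_t(\mathbf D_t^{W_2,\mathcal U}U_t)=\mathbf D_t^{\HK}\mathbf u_t$ holds $\mu_t$-a.e. Applying $\Pi_{\mu_t}$ then gives the claim, by the definition of $\nabla^{\HK}$ in \Cref{HK covariant deriv}.

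\textbf{Main obstacle.} The delicate point is not the algebra but justifying that Gigli's total derivative along $(\lambda_t)$ in $\mathcal P_2(\mathcal U,\widetilde g_{\mathfrak C})$ is the pointwise expression $\partial_tU_t+\nabla^{\mathfrak C}_{V_t}U_t$ for this smooth ambient field. I would argue this via the Lagrangian characterization: differentiate $U_t$ along the flow of $V_t$, which exists and is Lipschitz by \Cref{uniformly regular lifts}, and use the chain rule for covariant derivatives along the characteristic curves $t\mapsto(X_t(x),r_t(X_t(x)))$. This flow is exactly the one used to build the lift in \Cref{lifting by characteristics one}.

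A related check is that $\mathcal P_t$ is applied to $\mathbf D_t^{W_2,\mathcal U}U_t$ itself, which need not lie in $S_{\lambda_t}$. The explicit formula from \Cref{isometry} holds for every $u\in L^2(\lambda_t;T\mathfrak C_\Omega)$, so no tangency of $\mathbf D_t^{W_2,\mathcal U}U_t$ is needed.
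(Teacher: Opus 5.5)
Your proposal is correct and is essentially the paper's proof. Both evaluate $\partial_t U_t+\nabla^{\mathfrak C}_{V_t}U_t$ directly on the lifted fields $(\nabla\psi_t,2r\psi_t)$ and $(\nabla\varphi_t,2r\varphi_t)$: you use the Christoffel symbols of $dr^2+r^2g_{\mathrm{Euc}}$, the paper uses the equivalent warped-product formulas of \Cref{corollary: connection on cone}, and both get the same horizontal component and radial component $2r\bigl(\partial_t\psi_t+\tfrac12\langle\nabla\psi_t,\nabla\varphi_t\rangle+2\varphi_t\psi_t\bigr)$, after which the deterministic-lift formula for $\mathcal P_t$ cancels the $2r$ and $\Pi_{\mu_t}$ is applied. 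Your remarks on replacing $\widetilde g_{\mathfrak C}$ by $g_{\mathfrak C}$ near $\bigcup_t\operatorname{supp}\lambda_t$ and on the pointwise meaning of the total derivative are extra care that the paper leaves implicit, not a different route.
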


We provide the proof of \Cref{prop: pullback connection} in \Cref{sec: pullback connection equals HK}. With this result, we are now in a position to establish the validity of our proposed procedure, which we explicitly write out in \Cref{alg:HK-parallel-transport}. Moreover, we describe the approximation error bound in \Cref{parallel transport approx entire geodesic}. We remark that Step 6 of \Cref{alg:HK-parallel-transport} differs from the
approximation scheme of \citet{saidi2026wassersteinparalleltransportpredicting}
only in that the Wasserstein tangent projection
$\Pi_{\lambda_{t_{k+1}}}$ is not applied explicitly. This omission does not
affect the resulting update: as shown in \Cref{supplemental equivalence}, the
subsequent projection onto $S_{\lambda_{t_{k+1}}}$ satisfies
\[
    \Pi^S_{t_{k+1}}
    =
    \Pi^S_{t_{k+1}}\circ \Pi_{\lambda_{t_{k+1}}},
\]
and therefore already removes any component orthogonal to the Wasserstein
tangent space.

\begin{algorithm}[h]
\caption{Approximate HK parallel transport via cone transport}
\label{alg:HK-parallel-transport}
\begin{algorithmic}[1]
\Require Endpoints $\mu_0, \mu_1 \in \mathfrak{M}_+^\Gamma(\Omega)$, source tangent
$\mathbf u_0 \in T_{\mu_0}\mathfrak{M}_+^\Gamma$, approximation resolution $N$.
\Ensure Approximation
$\widehat{\mathbf u}_{1,N}$ of
$\PT^{\mathrm{HK}}_{\mu_0\rightarrow\mu_1}(\mathbf u_0)$.
\State Set $\Delta t = \frac{1}{N}$ and $
t_k = ks$ for $k=0,\dots, N.$
\State Compute lifts $\left((\lambda_i)_{i = 0}^{N-1}, (V_i)_{i = 0}^{N-1}\right) = \operatorname{IsometricLift}(\mu_0, \mu_1, N)$ via \Cref{alg:isometric-lift}.
\State Lift the source tangent:
\[
U_{0} = \mathcal{L}_{\mu_0,\lambda_0}[\mathbf u_0]
\in T_{\lambda_0}\mathcal P_2(\mathcal U).
\]
\For{$k=0$ to $N-1$}
    \State Let $T_k \triangleq (z \mapsto \exp^\mathfrak{C}_z(\Delta t V_k))$ be the lifted map sending
    $\lambda_{t_{k}}$ to $\lambda_{t_{k+1}}$.
    \State Form local approximation described in \Cref{approximation scheme} ,
    \[ \widetilde U_{k+1} = \left((x,r) \mapsto \PT^{\mathcal{U}}_{(x,r) \rightarrow T_k(x,r)}\left(\widehat  U_{k}\right) \right)\]
    \hspace{\algorithmicindent}using cone parallel transport (see \Cref{sec: cone parallel transport}).
    \State Project onto the lifted subspace $S_{\lambda_{t_{k+1}}} = \mathcal{L}_{\mu_{t_{k+1}}, \lambda_{t_{k+1}}}(T_{\mu_{t_{k+1}}}\mathfrak{M}_+^\Gamma)$ with 
    \[\widehat U_{k+1} = \left(\mathcal{L}_{\mu_{t_{k+1}}, \lambda_{t_{k+1}}} \circ \Pi_{\mu_{t_{k+1}}} \circ\mathcal{P}_{\lambda_{t_{k+1}}}\right)(\widetilde U_{k+1}).\]
\EndFor

\State Map back to the HK tangent space with $\widehat{\mathbf u}_{1, N}
=
\mathcal{P}_{\lambda_1}\big(\widehat U_{N}\big).$

\State \Return $\widehat{\mathbf u}_{1, N}$.
\end{algorithmic}
\end{algorithm}

\begin{thm}[Approximation of HK parallel transport by cone transport]
\label{parallel transport approx entire geodesic}
    Let $(\mu_t)_{t\in[0,1]}$ be an admissible HK geodesic with tangent velocity $\mathbf{v}_t = (\nabla \varphi_t, \varphi_t)$ and let
    $(\lambda_t,V_t)$ denote its deterministic isometric lift. Let $\mathbf u_0 \in T_{\mu_0}\mathfrak{M}_+^\Gamma$ and let
    $\widehat{\mathbf u}_{1, N}$ be the output of
    \Cref{alg:HK-parallel-transport} with input $\mathbf u_0$, and assume that the map
    \[t \mapsto \left(\PT^{W_2, \mathcal{U}}_{\lambda_t \rightarrow \lambda_0} \circ \,\mathcal{L}_{t} \circ \Pi_{\mu_t} \circ \mathcal{P}_t \circ \PT^{W_2, \mathcal{U}}_{\lambda_0 \rightarrow \lambda_t}\right) \quad \text{ is $C^2$ in operator norm.}\]
    Then
    \[
    \left\|
    \widehat{\mathbf u}_{1, N}
    -
    \PT^{\mathrm{HK}}_{\mu_0\rightarrow\mu_1}(\mathbf u_0)
    \right\|_{T_{\mu_1}\mathfrak{M}_+^\Gamma}
    = O(N^{-1}).
    \]
\end{thm}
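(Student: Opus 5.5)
The plan is to move the whole problem to the cone, on the fixed Hilbert space $H_0 \triangleq T_{\lambda_0}\mathcal P_2(\mathcal U)$. There, HK parallel transport becomes a projected (subbundle) parallel transport. \Cref{alg:HK-parallel-transport} then reads as a projected one-step scheme for it, and the global bound follows from a local $O(N^{-2})$ consistency estimate plus stability.

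\textbf{Step 1: the exact object.} Write $\Pi^S_t \triangleq \mathcal L_t\circ\Pi_{\mu_t}\circ\mathcal P_t$. By \Cref{lifting by characteristics two} and \Cref{isometry}, $\Pi^S_t$ is the orthogonal projection of $T_{\lambda_t}\mathcal P_2(\mathcal U)$ onto $S_{\lambda_t}$. Set $\mathbf u_t \triangleq \PT^{\HK}_{\mu_0\to\mu_t}(\mathbf u_0)$ and $U_t \triangleq \mathcal L_t\mathbf u_t$. By \Cref{prop: pullback connection}, $\nabla^{\HK}\mathbf u_t = 0$ is equivalent to $\Pi^S_t\,\mathbf D^{W_2,\mathcal U}_t U_t = 0$. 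Here I would use $\Pi^S_t = \Pi^S_t\circ\Pi_{\lambda_t}$ (\Cref{supplemental equivalence}), so this also equals $\Pi^S_t\nabla^{W_2}U_t$. This needs enough regularity of $\mathbf u_t$ to apply the proposition; I would obtain it by density of $C^{2,1}$ potentials and the closedness of the connection.

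Now transport everything to time $0$ by Wasserstein parallel transport, which exists by \Cref{cone ambient manifold} and \Cref{uniformly regular lifts}. Set $w_t \triangleq \PT^{W_2,\mathcal U}_{\lambda_t\to\lambda_0}U_t \in H_0$ and $A_t \triangleq \PT^{W_2}_{\lambda_t\to\lambda_0}\Pi^S_t\PT^{W_2}_{\lambda_0\to\lambda_t}$. Then $A_t$ is a family of orthogonal projections on $H_0$, and it is $C^2$ by hypothesis. The constrained equation becomes $w_t = A_tw_t$ and $A_t\dot w_t = 0$. Differentiating $w = Aw$ gives the linear ODE
\[
\dot w_t = \dot A_t w_t .
\]
Its propagator $E(t,s)$ is isometric on $\operatorname{ran}A_s$, which is consistent with metric compatibility of $\nabla^{\HK}$. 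In particular $\|w_t\| = \|w_0\|$.

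\textbf{Step 2: the scheme in the same frame.} Let $\Delta t = 1/N$ and $\hat w_k \triangleq \PT^{W_2}_{\lambda_{t_k}\to\lambda_0}\widehat U_k$. One iteration of the algorithm reads
\[
\hat w_{k+1} = A_{t_{k+1}}R_k\hat w_k,
\qquad
R_k \triangleq \PT^{W_2}_{t_{k+1}\to0}\,\Pi_{\lambda_{t_{k+1}}}\PT^{\mathcal U}_{\text{pointwise}}\,\PT^{W_2}_{0\to t_k}.
\]
Inserting $\Pi_{\lambda_{t_{k+1}}}$ is free by \Cref{supplemental equivalence}. \Cref{approximation scheme}, applied on the segment $[t_k,t_{k+1}]$, gives $\|R_k - \mathrm{Id}\| \le C\Delta t^2$ in operator norm. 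All maps involved are contractions: the projections, and pointwise parallel transport, which is an $L^2$ isometry because it follows the flow that pushes $\lambda_{t_k}$ to $\lambda_{t_{k+1}}$.

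\textbf{Step 3: local error.} For $w = A_{t_k}w$, a Taylor expansion using $A\in C^2$ gives
\[
E(t_{k+1},t_k)w = w + \Delta t\,\dot A_{t_k}w + O(\Delta t^2)\|w\|,
\qquad
A_{t_{k+1}}w = A_{t_k}w + \Delta t\,\dot A_{t_k}w + O(\Delta t^2)\|w\|.
\]
So the two agree up to $O(\Delta t^2)$. Combining this with Step 2, the one-step defect is $\|A_{t_{k+1}}R_kw - E(t_{k+1},t_k)w\| \le C'\Delta t^2\|w\|$.

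\textbf{Step 4: global error.} Telescope the difference between the scheme and the exact flow, using that the exact propagators are isometries and the scheme maps are contractions. This gives
\[
\|\hat w_N - w_1\| \le \sum_k C'\Delta t^2\|\mathbf u_0\| = O(N^{-1}).
\]
Undo the frame change with the isometry $\PT^{W_2}_{\lambda_0\to\lambda_1}$. Then apply $\mathcal P_{\lambda_1}$, which is an isometric inverse of $\mathcal L_1$ on $S_{\lambda_1}$ by \Cref{isometry}; both $\widehat U_N$ and $U_1$ lie in $S_{\lambda_1}$. This yields the claimed $O(N^{-1})$ bound in $T_{\mu_1}\mathfrak M_+^\Gamma$.

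\textbf{Main obstacle.} I expect the hardest part to be Step 1 together with the uniformity in Step 2. First, the identification of exact HK transport with the projected flow $\dot w = \dot A w$ must be justified for merely $L^2$-closure tangents. Second, the constant in \Cref{approximation scheme} must be uniform over all $N$ segments. For the latter I would rely on the uniform Lipschitz bound of \Cref{uniformly regular lifts}: the constant there depends only on $\sup_t\operatorname{Lip}(V_t)$ and on the conserved mass $m$.
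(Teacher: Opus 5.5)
Your proposal is correct and follows essentially the same route as the paper. It pulls everything back to $T_{\lambda_0}$ by Wasserstein parallel transport, characterizes exact HK transport by $\overline U=\overline{\Pi}_t^S\overline U$ and $\dot{\overline U}=\dot{\overline{\Pi}}_t^S\overline U$, obtains an $O(\Delta t^2)$ one-step defect from the $C^2$ Taylor expansions combined with \Cref{approximation scheme}, and sums using non-expansiveness.

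The only difference is bookkeeping. The paper applies the one-step Gigli error to the scheme iterates and bounds their norms via \Cref{norm control iterate}. You apply it to the exact solution, using norm conservation of $\dot w=\dot A w$ on $\operatorname{ran}A$, and propagate the error through the contractive full step $A_{t_{k+1}}R_k$. Both versions rest on the same pushforward identity $(T_k)_\#\lambda_{t_k}=\lambda_{t_{k+1}}$.
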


We provide a proof of \Cref{parallel transport approx entire geodesic} in \Cref{proof of parallel transport approx entire geodesic}. We remark that the procedure requires one to compute parallel transport on the cone space $\mathfrak{C}_\Omega$. In \Cref{sec: cone parallel transport} we derive the covariant derivative on $\mathfrak{C}_\Omega$ and we use it to compute closed form parallel transport equations. Our derivation utilizes the theory of \textit{warped-product} manifolds -- product manifolds whose metric tensor consists of a function of the metric tensors of the factors -- where the geometric objects of the product can be readily obtained from the individual factors.  

\subsection{Simulations}

We will now show some simulations to demonstrate the properties of parallel curves of measures in the Hellinger-Kantorovich space. In particular, we will see that parallel curves of measures have intuitive properties when it comes to the evolution of their total mass and their central moments, but they have an unintuitive dependence on positional changes. We provide details regarding our numerical simulations and implementation in \Cref{sec: implementation details}. 
\\

\par{\textbf{Parallel Reaction.}} Our experimental setup is as follows. We consider three measures $\mu_1, \mu_2, \mu_3 \in \mathfrak{M}_+(\Omega)$ where $M_i \triangleq \mu_i(\Omega)$, $\text{diam}(\Omega) < \pi/2$ and we compute the tangent $\mathbf{u} \triangleq (v, \beta)$ that pushes $\mu_2$ to $\mu_3$, i.e. $\mu_3 = \mathbf{exp}_{\mu_2}(\mathbf{u}).$ Then, we compute $\mathbf{u}^* = \PT^{\HK}_{\mu_2 \rightarrow \mu_1}(\mathbf{u})$, the approximate geodesic parallel transport of $\mathbf{u}$ from $\mu_2$ to $\mu_1$, and push $\mu_1$ in the direction $\mathbf{u}^*$, defining the destination measure $\mu_4$. From a geometric perspective, we are tracing out two geodesic curves in $(\mathfrak{M}_+, \HK)$ with parallel initial velocities and studying the properties of their destination measures. In these experiments we sample $M_i$ samples from the normalized measures $\bar\mu_i = \mu_i/M_i$ (which are regarded as probability measures) and assign each sample a mass of $1$. We remark that the experiments that follow incur discretization error, and therefore act as a proxy for the true phenomenona underlying the geometry. Having said that, we believe that the approximation error is small enough for the experiments to provide intuition.

\begin{figure}[h]
     \centering
     \begin{subfigure}{\textwidth}
         \centering
         \includegraphics[width=\linewidth]{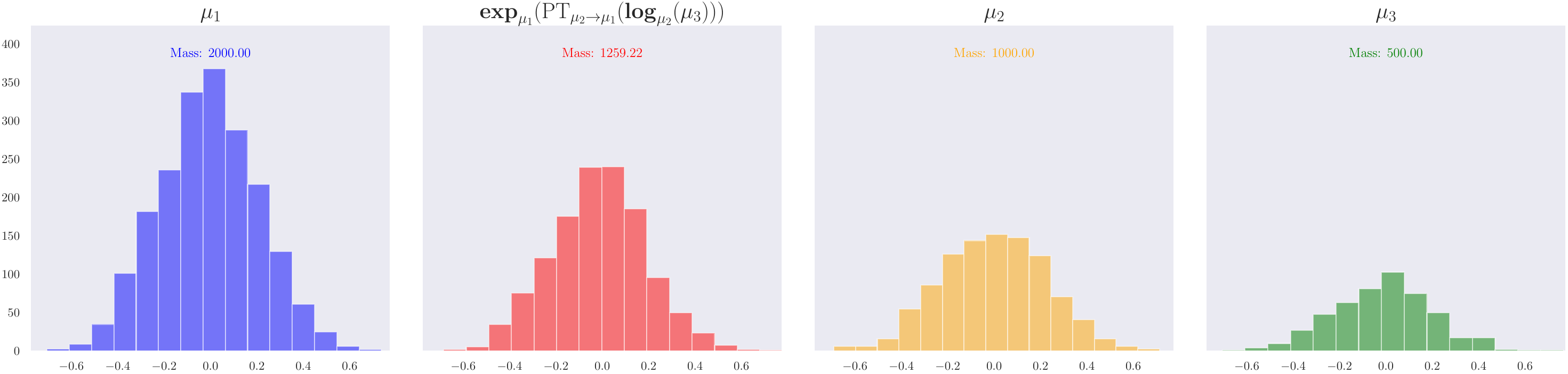}
         \caption{The measures $\mu_i = \sum_{i = 1}^{M_i}\delta_{x_i}$ where $x_i\sim \bar \mu_i = N(0, 2)$ (rescaled to have diameter $< \pi/2$), where $M_1 = 2000, M_2 = 1000, M_3 = 500$. }
     \end{subfigure}
     
     \vspace{1em} 

     \begin{subfigure}{\textwidth}
         \centering
         \includegraphics[width=\linewidth]{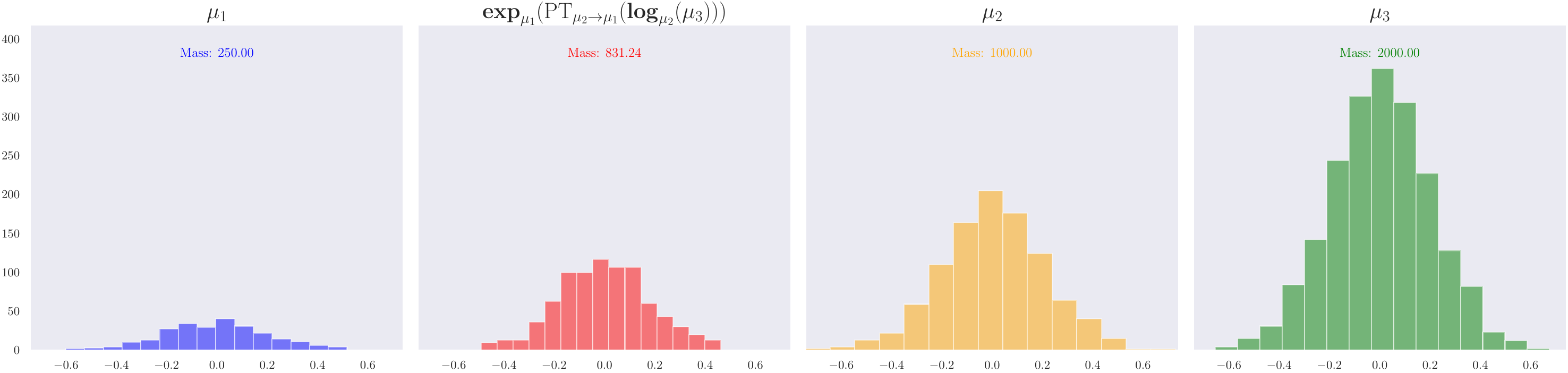}
         \caption{The measures $\mu_i = \sum_{i = 1}^{M_i}\delta_{x_i}$ where $x_i\sim \bar \mu_i = N(0, 2)$ (rescaled to have diameter $< \pi/2$), where $M_1 = 250, M_2 = 1000, M_3 = 2000$. }
     \end{subfigure}
     \caption{Visualization of parallel HK geodesics where $\mu_1, \mu_2, \mu_3$ consist of a variable number of samples from $N(0,2)$. We see that geodesics in $(\mathfrak{M}_+, \HK)$ with parallel initial velocities experience mass change in the same direction (i.e. growth or shrinkage). }
     \label{fig:mass_growth_death}
\end{figure}

In \Cref{fig:mass_growth_death} we see that Hellinger-Kantorovich geodesics with parallel initial velocities indeed experience analogous mass changes, as one might have guessed. However, the mass does not scale proportionally, and a heuristic calculation supports this. In particular, in the example in \Cref{fig:mass_growth_death} one might expect the tangent pushing $\mu_2$ to $\mu_3$ to be of the form $\mathbf{log}_{\mu_2}(\mu_3) = (0, \beta_{23})$ for some constant $\beta_{23}$ in the limit. The results in the experiment also suggest that the parallel transported vector $\PT_{\mu_2 \rightarrow \mu_1}^{\HK}(\mathbf{log}_{\mu_2}(\mu_3))$ takes the same form $(0, \beta^{\PT})$ for some constant $\beta_{\PT}$. By the isometry of parallel transport the norms of these two tangents should coincide, which implies the equation
\[\beta_{\PT} = \beta_{23}\sqrt{\frac{M_2}{M_1}}.\]
Using \Cref{HK exponential map closed form}, one would find that panel (a) of \Cref{fig:mass_growth_death} should have 
\[\beta_{23} = \frac{1}{2}\left(\sqrt{\frac{1}{2}} - 1\right) \implies \beta_{\PT} = \frac{1}{2\sqrt{2}}\left(\sqrt{\frac{1}{2}} - 1\right) \implies \mu_4(\Omega) = 2000\left(\frac{2}{2\sqrt{2}}\left(\sqrt{\frac{1}{2}} - 1\right) + 1\right)^2 \approx 1257.4\]
which is almost exactly what we see. The same calculation for panel (b) reveals that
\[\beta_{23} = \frac{1}{2}\left(\sqrt{2} - 1\right) \implies \beta_{\PT} = \left(\sqrt{2} - 1\right) \implies \mu_4(\Omega) = 250\left(2\sqrt{2} - 1\right)^2 \approx 835.8\]
yielding the same conclusion. Thus, parallel curves of measures that experience only changes in \textit{mass} change in the same direction, but not with the same magnitude. \\

\par{\textbf{Parallel Moments.}} We now present simulations illustrating the behavior of low-order moments along parallel measure-valued evolutions. The picture is somewhat less intuitive here, especially when the measures have nonstationary means. This is illustrated in \Cref{fig:mean_shift}, where a shift in location from $\mu_1$ to $\mu_2$ is transported in parallel from $\mu_1$ to $\mu_4$, producing a transformation that involves (1) a change in location, (2) a change in total mass, and (3) a change in the second central moment. As a sanity check, one may verify that $\HK(\mu_1,\mu_4)=\HK(\mu_2,\mu_3)$, which must hold due to the isometry of parallel transport. Fortunately, this strange behavior becomes much more natural when viewed through the geometry of the lifted measures on the cone, which clarifies the parallel geodesics in \Cref{fig:mean_shift}. \Cref{fig:cone-topdown-pt} illustrates the mechanism using a single particle in $\mathbb{R}$ lifted to the cone. The figure shows that a displacement vector $u_p$ that is purely spatial can be parallel transported to a vector at the destination that has both spatial and radial components. In other words, a tangent representing only spatial displacement may transport to a tangent of the same norm whose effect combines spatial displacement with mass variation. This is precisely the phenomenon underlying \Cref{fig:mean_shift}.  

\begin{figure}[h]
    \centering
    \includegraphics[width=0.6\linewidth]{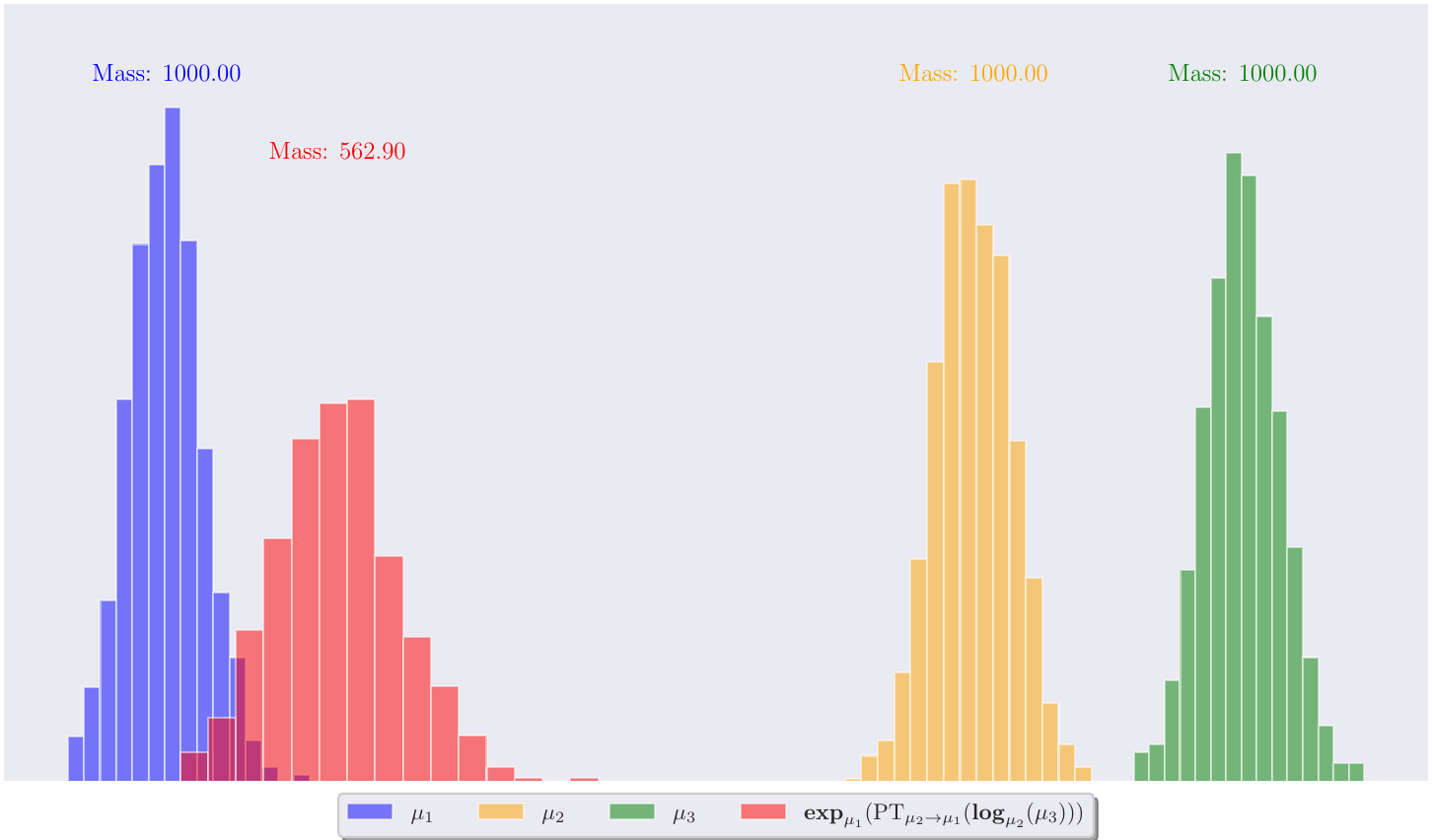}
    \caption{Visualization of parallel HK geodesics where $\mu_1, \mu_2, \mu_3$ consist of $1000$ samples from Gaussian distributions with a fixed variance and varying means. We see that parallel geodesics in $(\mathfrak{M}_+, \HK)$ with parallel initial velocities might \textit{not} experience similar changes in location.}
    \label{fig:mean_shift}
\end{figure}

\begin{figure}[h]
    \centering
    \includegraphics[width=\textwidth]{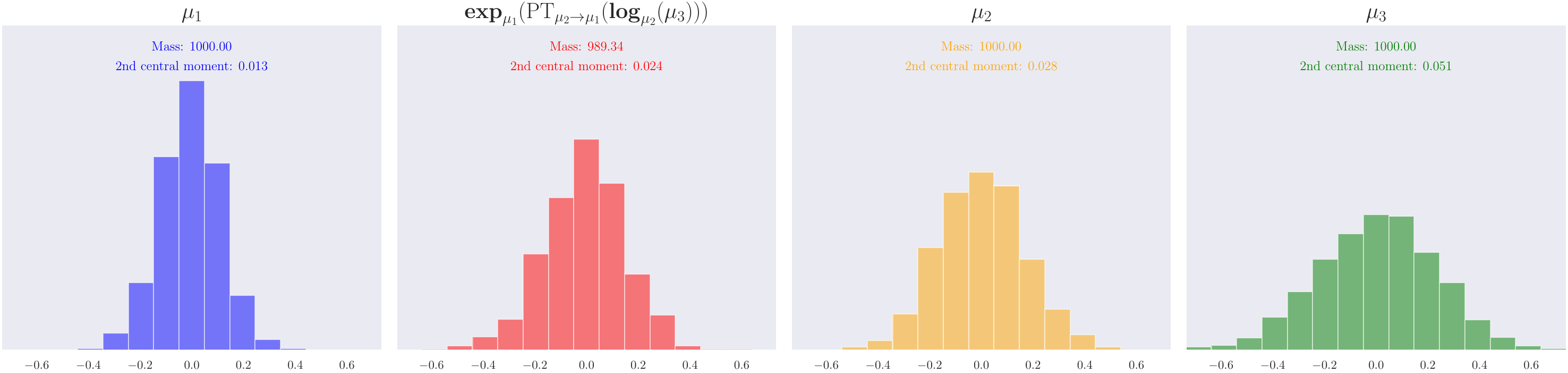}
    \caption{Visualization of parallel HK geodesics where $\mu_1, \mu_2, \mu_3$ consist of $1000$ samples from Gaussian distributions with a fixed mean and differing variances. Parallel geodesics in $(\mathfrak{M}_+, \HK)$ with parallel initial velocities experience similar changes in their second central moment.}
    \label{fig:cov_change}
\end{figure}

\begin{figure}[h]
    \centering
    \begin{tikzpicture}[
        >=Latex,
        line cap=round,
        line join=round,
        point/.style={circle,fill=black,inner sep=1.2pt},
        spatial/.style={->,thick,gray},
        radial/.style={->,thick,gray},
        total/.style={->,very thick,black},
        geodesic/.style={orange!85!black,very thick},
        helper/.style={densely dotted,gray},
        ring/.style={gray!55,densely dashed}
    ]

    \def\R{3.2}            
    \def\rcommon{2.0}      
    \def\thetap{170}       
    \def\thetaq{45}        

    \def\Ltot{0.95}        

    \pgfmathsetmacro{\phiPT}{\thetap - 90}

    \pgfmathsetmacro{\RadComp}{\Ltot*cos(\phiPT-\thetaq)}
    \pgfmathsetmacro{\SpaComp}{\Ltot*cos(\phiPT-(\thetaq+90))}

    \coordinate (O) at (0,0);

    \path[use as bounding box] (-\R,0) rectangle (\R,\R);

    \draw[thick] (-\R,0) arc[start angle=180,end angle=0,radius=\R];
    \draw[thick] (-\R,0) -- (\R,0);

    \draw[ring] (-1.0,0) arc[start angle=180,end angle=0,radius=1.0];
    \draw[ring] (-2.0,0) arc[start angle=180,end angle=0,radius=2.0];

    \node[right] at (\R,0) {$r$};

    \coordinate (P) at (\thetap:\rcommon);
    \coordinate (Q) at (\thetaq:\rcommon);

    \node[point,label={[above left=-1pt]$p$}] at (P) {};
    \node[point,label={[below right=-1pt]$q$}] at (Q) {};

    \draw[helper] (O) -- (P);
    \draw[helper] (O) -- (Q);

    \draw[geodesic] (P) -- (Q)
        node[midway,above] {$\gamma$};

    \coordinate (Psp) at ($(P)+({\Ltot*cos(\phiPT)},{\Ltot*sin(\phiPT)})$);
    \draw[->, very thick, black] (P) -- (Psp)
        node[
            overlay,
            above left=4pt,
            line width=0.4pt,
            draw=black,
            fill=white,
            inner sep=2pt
        ] {$u_p \in \mathrm{span}\{\partial_\theta\}$};

    \coordinate (Qtot) at ($(Q)+({\Ltot*cos(\phiPT)},{\Ltot*sin(\phiPT)})$);

    \coordinate (Qrad) at ($(Q)+({\RadComp*cos(\thetaq)},{\RadComp*sin(\thetaq)})$);
    \coordinate (Qsp)  at ($(Q)+({\SpaComp*cos(\thetaq+90)},{\SpaComp*sin(\thetaq+90)})$);

    \draw[radial] (Q) -- (Qrad)
        node[midway,right=4pt] {$b_q\,\partial_r$};

    \draw[spatial] (Q) -- (Qsp)
        node[midway,above=2pt,left=4pt] {$a_q$};

    \draw[->, very thick, black] (Q) -- (Qtot)
        node[
            overlay,
            above right=2pt,
            line width=0.4pt,
            draw=black,
            fill=white,
            inner sep=2pt
        ] {$\mathrm{PT}^{\mathfrak{C}}_{p \rightarrow q}(u_p)$};

    \draw[helper] (Qrad) -- (Qtot);
    \draw[helper] (Qsp) -- (Qtot);

    \def\rx{1.5}
    \def\thetaxstart{0}
    \def\thetaxend{45}
    
    \draw[->, thick, black]
        (\thetaxstart:\rx) arc[start angle=\thetaxstart, end angle=\thetaxend, radius=\rx]
        node[midway, right=3pt, fill=white, inner sep=1pt] {$x$};

    \end{tikzpicture}
    \caption{Parallel transport on the cone visualized in polar coordinates, where $x=\theta$. A purely spatial tangent vector $u_p$ at $p$ is parallel transported to a vector at $q$ with both spatial and radial components. Thus, a purely spatial displacement may transport to a displacement that combines motion in space with mass variation.}
    \label{fig:cone-topdown-pt}
\end{figure}

As a final experiment, we isolate the behavior of the generalized variance (that is, the second central moment) along geodesic curves of measures with parallel initial velocities; see \Cref{fig:cov_change}. As expected, a tangent vector that approximately doubles the second central moment of $\mu_2$ has a similar effect when parallel transported to $\mu_1$. At the same time, we observe a small change in total mass that we do not believe is due to discretization error. Rather, we attribute it to the same mechanism seen in \Cref{fig:mean_shift}: when spatial variation is parallel transported along a spatially varying geodesic, part of the tangent may be converted from spatial to radial variation. \Cref{fig:cone-topdown-pt} provides intuition for this effect. In \Cref{fig:cov_change}, the induced mass variation is small because the underlying spatial displacement is also small.

\section{Conclusion}

In this work, we further developed the theory and tractability of Hellinger--Kantorovich geometry by strengthening its connection with Wasserstein geometry on the associated metric cone. In particular, we gave an explicit construction of a lifted Wasserstein geodesic along which the Hellinger--Kantorovich Riemannian geometry is preserved. We then used this construction to compute parallel transport in Hellinger--Kantorovich space via recently developed tools from Wasserstein geometry. In the course of this development, we also derived the covariant derivative and closed-form parallel transport equations on the metric cone $\mathfrak{C}_\Omega$. While these results further clarify the structure of Hellinger--Kantorovich geometry, many important open problems remain. We hope that the framework developed here helps enable statistical and data-analytic methods for studying time-varying stochastic systems with nonconstant total mass. More broadly, tractable computational tools for Hellinger--Kantorovich geometry may support the development of statistical models and inferential procedures for distribution-level dynamics. One promising application area is genomics, where unbalanced optimal transport has already been used to infer gene-level trajectories of cellular populations during development \citep{schiebinger2019optimal}. Finally, we believe this work opens an interesting direction for future research on gradient flows and optimization on spaces of measures, including possible Hellinger--Kantorovich analogues of Riemannian optimization methods that rely on parallel transport.

\section*{Acknowledgments}
The authors would like to thank Larry Wasserman and Dejan Slep\v{c}ev for their helpful feedback and suggestions on the manuscript and theorem statements. The authors would also like to thank Maxfield Parson-Scherban for helpful discussions and suggestions regarding early versions of the work.
\clearpage
\appendix

\section{Notation} \label{sec:notation}

\begin{center}
\small
\renewcommand{\arraystretch}{1.18}
\setlength{\tabcolsep}{4pt}

\begin{longtable}{%
>{\centering\arraybackslash}p{0.30\textwidth}
>{\raggedright\arraybackslash}p{0.64\textwidth}
}
\caption{Notation used throughout the paper.}
\label{tab:notation}\\

\toprule
\textbf{Symbol} & \textbf{Definition} \\
\midrule
\endfirsthead

\toprule
\textbf{Symbol} & \textbf{Definition} \\
\midrule
\endhead

\midrule
\multicolumn{2}{r}{\emph{continued on next page}}\\
\endfoot

\bottomrule
\endlastfoot

\multicolumn{2}{l}{\textbf{Ambient spaces, measures, and metrics}}\\
\midrule

$\man$
& A smooth Riemannian manifold used for the abstract Wasserstein background. \\

$g$, $d_{\man}$, $\operatorname{vol}_g$
& The Riemannian metric tensor, induced distance, and Riemannian volume measure on $\man$. \\

$\Omega$, $\Omega^\circ$, $\partial\Omega$
& Compact convex subset of $\mathbb R^d$, its interior, and its boundary. \\

$\Gamma \Subset U \Subset \Omega^\circ$
& Localized support region $\Gamma$ and an open neighborhood $U$ compactly contained in $\Omega^\circ$. \\

$\mathcal P_2(\man)$
& Probability measures on $\man$ with finite second moment. \\

$\mathcal P_2(\mathfrak C_\Omega)$
& In this paper, finite nonnegative Radon measures on the cone with finite second moment; not necessarily probability measures. \\

$\mathcal P_{2,m}(\mathfrak C_\Omega)$
& Cone measures of fixed total mass $m>0$. \\

$\mathfrak M_+(\Omega)$
& Finite nonnegative Radon measures on $\Omega$. \\

$\mathfrak M_+^\Gamma(\Omega)$
& Measures in $\mathfrak M_+(\Omega)$ supported in $\Gamma$. \\

$\mu,\nu,\mu_t$
& Base-space measures, usually elements of $\mathfrak M_+(\Omega)$ or $\mathcal P_2(\man)$ depending on context. \\

$\lambda,\lambda_t$
& Cone lifts of base measures; typically $\mathfrak P\lambda_t=\mu_t$. \\

$\eta_t$
& Transported reference measure in the characteristic lift, usually $\eta_t=(X_t)_\#\mu_0$. \\

$W_2$
& Quadratic Wasserstein distance on a base manifold. \\

$W_{\mathfrak C}$
& Quadratic Wasserstein distance on the cone using the cone metric $d_{\mathfrak C}$. \\

$\HK$
& Hellinger--Kantorovich distance. \\

$\Gamma_{\mu,\nu}$
& Set of couplings between $\mu$ and $\nu$. \\

$T_\#\mu$
& Pushforward of $\mu$ by a measurable map $T$. \\

$\Pi^j_\#\pi$, $\pi_j$
& The $j$th marginal of a transport plan $\pi$. \\

$\delta_x$
& Dirac mass at $x$. \\

$\ll$, $\perp$
& Absolute continuity and singularity of measures. \\

\midrule
\multicolumn{2}{l}{\textbf{Dynamic formulations and tangent spaces}}\\
\midrule

$v_t$
& Wasserstein velocity field along a curve $\mu_t$; in HK, the transport component of a tangent field. \\

$\beta_t$
& HK reaction component along a curve $\mu_t$. \\

$(v_t,\beta_t)$
& General HK velocity--reaction field. \\

$(\nabla\varphi_t,\varphi_t)$
& Potential representation of an HK tangent field. \\

$|\dot\mu_t|$
& Metric derivative of a curve of measures. \\

$T_\mu\mathcal P_2(\man)$
& Wasserstein tangent space at $\mu$, defined as the $L^2(\mu)$-closure of smooth gradient fields. \\

$T_\mu\mathfrak M_+^\Gamma$
& HK tangent space at $\mu$, defined as the $L^2(\mu)\times L^2(\mu)$-closure of fields $(\nabla\varphi,\varphi)$ with $\varphi\in C_c^\infty(U)$. \\

$\langle\cdot,\cdot\rangle_\mu$
& Riemannian metric tensor on the relevant measure tangent space. For HK, $\langle s_1,s_2\rangle_\mu=\int(\langle v_1,v_2\rangle+4\beta_1\beta_2)\,d\mu$. \\

$\Pi_{\mu_t}$
& Orthogonal projection onto the tangent space at $\mu_t$; context determines whether this is the Wasserstein or HK tangent projection. \\

$\mathbf D_t^{W_2}$
& Wasserstein total derivative along a measure curve. \\

$\mathbf D_t^{\HK}$
& HK total derivative along a measure curve. \\

$\nabla_{(\nabla\varphi_t)}^{W_2}$
& Wasserstein covariant derivative along a curve driven by $\nabla\varphi_t$. \\

$\nabla_{(\nabla\varphi_t,\varphi_t)}^{\HK}$
& HK covariant derivative along a curve driven by $(\nabla\varphi_t,\varphi_t)$. \\

$\operatorname{div}_g$, $\nabla\cdot$
& Riemannian divergence and Euclidean divergence. \\

$\nabla^\man$
& Levi-Civita connection on $\man$. \\

\midrule
\multicolumn{2}{l}{\textbf{Static, dynamic, and logarithmic HK objects}}\\
\midrule
$\pi$, $\pi^*$
& LET transport plan and an optimal LET transport plan. \\

$\mu_i=u_i\pi_i+\mu_i^\perp$
& Lebesgue decomposition of $\mu_i$ with respect to the LET marginal $\pi_i$; here $u_i$ is the Radon--Nikodym factor. \\

$\gamma_\pi$
& Cone transport plan induced by a LET optimizer $\pi$. \\

$\lambda_i=\Pi^i_\#\gamma_\pi$
& Endpoint cone lifts induced by $\gamma_\pi$. \\

$\mu_i^\perp$
& Singular part of $\mu_i$ relative to the LET marginal $\pi_i$. \\

$T$
& Monge map supporting an optimal LET coupling in the reaction--transport regime. \\

\midrule
\multicolumn{2}{l}{\textbf{Exponential and logarithmic maps}}\\
\midrule

$\mathbf{exp}_\mu$, $\mathbf{log}_\mu$
& Measure-space exponential and logarithmic maps. \\

$\exp_x$, $\log_x$
& Base-space Riemannian exponential and logarithmic maps at $x$. \\

$T_{\mu\to\nu}$
& Brenier or Brenier--McCann map from $\mu$ to $\nu$. \\

$\operatorname{id}$
& Identity map on $\mathbb R^d$. \\

$\overline{\log}$
& Normalized logarithmic vector. \\

\midrule
\multicolumn{2}{l}{\textbf{Cone geometry}}\\
\midrule

$\mathfrak C_\Omega$
& Metric cone over $\Omega$. \\

$\mathfrak o$
& Cone apex. \\

$z=(x,r)$
& Cone point with base coordinate $x$ and radial coordinate $r$. \\

$d_{\mathfrak C}$
& Cone distance. \\

$g_{\mathfrak C}$
& Cone metric tensor. \\

$\widetilde g_{\mathfrak C}$
& Complete smooth ambient metric agreeing with the cone metric near the lifted supports. \\

$\mathcal U$
& Smooth ambient manifold used to realize the cone geometry near the lifted curve. \\

$\partial_r$
& Canonical radial vector field on the cone. \\

$\nabla_{\mathfrak C}$
& Cone gradient. \\

$\exp^{\mathfrak C}_{z_0}$, $\log^{\mathfrak C}_{z_0}$
& Cone exponential and logarithmic maps at $z_0$. \\

$r_{\min},r_{\max}$
& Uniform lower and upper radial bounds for lifted measures. \\

\midrule
\multicolumn{2}{l}{\textbf{Measure lifts, tangent lifts, and projections}}\\
\midrule

$\mathfrak P$
& Measure projection from cone space to base space, defined by integrating $r^2\phi(x)$ against the cone measure. \\

$\mathcal L_{\mu,\lambda}$
& Tangent lifting operator from HK space to cone Wasserstein space. \\

$\mathcal L_{\mu,\lambda}(v,\beta)(x,r)=(v(x),2\beta(x)r)$
& Explicit formula for lifting an HK tangent field to the cone. \\

$\mathcal P_\lambda$
& Tangent projection operator from cone vector fields back to base HK fields. \\

$S_\lambda$
& Lifted tangent subspace $\mathcal L_{\mu,\lambda}(T_\mu\mathfrak M_+) \subset T_\lambda\mathcal P_2(\mathfrak C_\Omega)$. \\

$F_{\mathfrak P\lambda}$
& Space containing the image of the tangent projection $\mathcal P_\lambda$. \\

$u=a+b\partial_r$
& Generic cone vector field, with base component $a$ and radial component $b$. \\

$\lambda(r\,|\,x)$
& Conditional radial law of a cone measure given the base coordinate $x$. \\

\midrule
\multicolumn{2}{l}{\textbf{Parallel transport and approximation}}\\
\midrule

$\PT$
& Parallel transport operator. \\

$\PT^\man_{\mu_0\to\mu_t}$
& Pointwise parallel transport on the base manifold along Lagrangian paths. \\

$\PT^{W_2,\man}_{\mu_0\to\mu_t}$
& Wasserstein parallel transport along a curve in $\mathcal P_2(\man)$. \\

$\PT^{\HK}_{\mu_0\to\mu_1}$
& HK parallel transport along a curve in $\mathfrak M_+(\Omega)$. \\

$\operatorname{Lip}^\man(v_t)$
& Spatial Lipschitz constant of a vector field on $\man$. \\

$\operatorname{Lip}_{\mathfrak C}(V_t)$
& Spatial Lipschitz constant of a vector field on the cone. \\

$\widehat\PT_k$
& One-step projected ambient-transport approximation operator in the Wasserstein approximation scheme. \\

$\mathcal L_t$, $\mathcal P_t$
& Shorthand for $\mathcal L_{\mu_t,\lambda_t}$ and $\mathcal P_{\lambda_t}$. \\

$\mathbf u_t=(\nabla\psi_t,\psi_t)$
& HK tangent field along $\mu_t$ used in the pullback-connection argument. \\

$U_t=\mathcal L_t\mathbf u_t$
& Lift of the HK tangent field $\mathbf u_t$ to the cone. \\

$\widetilde\nabla$
& Pullback connection obtained by lifting to the cone, taking a Wasserstein total derivative, projecting back, and applying $\Pi_{\mu_t}$. \\

$\Pi_t^S$
& Projection onto the lifted subspace, defined by $\Pi_t^S=\mathcal L_t\circ\Pi_{\mu_t}\circ\mathcal P_t$. \\

$\overline U(t)$
& Pullback of a lifted tangent to the fixed tangent space at $\lambda_0$. \\

$\overline\Pi_t^S$
& Pulled-back lifted-subspace projection at time $t$. \\

$\widehat U_k$, $\widetilde U_{k+1}$
& Approximate lifted tangent iterate and its unprojected one-step transported version. \\

$\widehat{\mathbf u}_{1,N}$
& Final approximate HK parallel transport output after $N$ steps. \\

$T_k$
& Lifted map in the HK parallel-transport algorithm, usually $z\mapsto\exp_z^{\mathfrak C}(\Delta t\,V_k)$. \\

$T_{t\to t+h}$
& Lifted short-time map sending $\lambda_t$ to $\lambda_{t+h}$. \\

\midrule
\multicolumn{2}{l}{\textbf{Closed-form cone parallel transport}}\\
\midrule

$I\times_\rho \man$
& Warped product with warping function $\rho$. \\

$\mathfrak L(\man)$, $\mathfrak L(I)$
& Vertical and horizontal lifted vector fields in the warped-product language. 

\end{longtable}
\end{center}

\section{Parallel Transport on the Cone} \label{sec: cone parallel transport}

\subsection{The Covariant Derivative on the Cone}

In order to obtain a closed form expression for the parallel transport on $\mathfrak{C}_\Omega$, we first need to derive the covariant derivative. To obtain this object, we will appeal to the theory of \textit{warped-product} manifolds. In particular, consider the nonempty interior $\Omega^{\circ}$ of $\Omega$. We will
take $\Omega^{\circ}$ to be the so-called fiber of our cone construction, as the theory of warped product manifolds typically assumes no topological boundary. We will treat cone tangent vectors as $(a, b) \in T_{(x,r)}\mathfrak{C}_{\Omega^{\circ}}$ where $a \in T_x\Omega^{\circ} \cong \mathbb{R}^d$ and $b \in \mathbb{R}$. Equivalently, we can write a tangent vector as $a + b\partial_r$ where $\partial_r$ is the canonical radial vector field. 
 
\begin{defn}[Warped product, \citet{petersen2006riemannian}]
    Given a Riemannian metric $(\man, g)$, a warped product (over $I$) is defined as a metric on $I \times \man$, where $I \subset \mathbb{R}$ is an open interval with metric 
    \[g = dr^2 + \rho^2(r)g\]
    where $\rho > 0$ on $I$. This is sometimes denoted $I \times_{\rho} \man$, where $I$ is called the base space and $\man$ is called the fiber.
\end{defn}
Having established this definition, it becomes clear that we can view $\mathfrak{C}_{\Omega^{\circ}} \triangleq \Omega^{\circ} \times (r_{\min} - \varepsilon, r_{\max} +\varepsilon)$ as a warped product with $I = (r_{\min} - \varepsilon, r_{\max} +\varepsilon)$ for some $0 < r_{\min} < r_{\max} <\infty $ and some $\varepsilon > 0$ sufficiently small as the base space and $(\Omega^{\circ}, g_{\mathbb{R}^d})$ as the fiber. It also follows from \Cref{def: metric cone} that we should take $\rho(r) = r$. With this in mind, we can used the theory of warped products to study the cone metric and needed geometric objects on it. But before moving on, we need to establish the \say{lift} of a vector field on a manifold to a vector field on a product manifold. Note that we will henceforth denote $T\man$ as the space of all vector fields over the manifold $\man$. 

\begin{defn}[Lifting vector fields, \citet{o1983semi} Definition 7.33]
    Let $\pi: \mathcal{N} \times \man\rightarrow \mathcal{N}$ be the projection onto the first factor $\pi(p, q) = p$. 
    \begin{enumerate}
        \item If $x \in T_p\mathcal{N}$ and $q \in \mathcal{M}$ then the lift $\tilde{x}$ of $x$ to $(p,q)$ is the unique vector in $T_{(p,q)}(\mathcal{N} \times q)$ such that $d\pi(\tilde{x}) = x$. We denote the set of all such horizontal lifts $\mathfrak{L}(\mathcal{N}).$
        \item If $A \in T\mathcal{N}$, then the lift of $A$ to $\mathcal{N} \times \man$ is the vector field $B$ whose values at each $(p,q)$ is the lift of $A_p$ to $(p,q).$
    \end{enumerate}
    Vertical lifts are defined in the same way but using the projection onto the second factor $\sigma: \mathcal{N} \times \man \rightarrow \man$. The set of all vertical lifts are denoted $\mathfrak{L}(\man).$ 
\end{defn}

Now we are in a position to appeal to key results in the theory of warped product manifolds. In particular, said results will allow us to express important geometric objects on the warped product as augmented versions of that of the fiber and the base space. 

\begin{prop}[\citet{o1983semi} Proposition 7.35]
    Let $\man$ and $I$ be Riemannian manifolds with connections denoted $\nabla^{\man}$ and $\nabla^{I}$ respectively. On $I \times_\rho \man$, if $X, Y \in \mathfrak{L}(\man)$ and $V, W \in \mathfrak{L}(I)$ then
    \begin{enumerate}
        \item $\nabla_VW \in \mathfrak{L}(I)$ is the horizontal lift of $\nabla^{I}_VW$ on $I$.
        \item $\nabla_VX = \nabla_XV = \left(\frac{V\rho}{\rho}\right)X$ where $V\rho$ denotes the derivative of $\rho$ in the direction of $V$.
        \item $\nabla_XY = \nabla_X^{\man}Y - \left(\frac{\langle X, Y\rangle}{\rho}\right)\nabla\rho$ where $\nabla \rho$ is the Riemannian gradient of $\rho$.
    \end{enumerate}
\end{prop}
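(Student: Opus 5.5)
We prove the three Levi-Civita identities for the warped product $I\times_\rho\man$ with metric $\bar g = dr^2+\rho(r)^2 g$. The tool is the Koszul formula
\[
2\bar g(\nabla_A B, C) = A\bar g(B,C)+B\bar g(A,C)-C\bar g(A,B)+\bar g([A,B],C)-\bar g([A,C],B)-\bar g([B,C],A),
\]
applied only to lifted fields. Lifts span each tangent space pointwise, so an identity checked against all lifts $C$ determines $\nabla_AB$; the general case then follows from the tensorial and Leibniz properties of an affine connection. Before using Koszul we record three elementary facts about lifts. First, Lie brackets are compatible with lifting: $[V,X]=0$ for $V\in\mathfrak L(I)$ and $X\in\mathfrak L(\man)$, while $[V,W]$ and $[X,Y]$ are the lifts of the corresponding brackets on $I$ and on $\man$. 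Second, $\bar g(V,X)=0$. Third, $\bar g(V,W)$ is the lift of $g_I(V,W)$, so it is constant along fibers and $X\bar g(V,W)=0$. By contrast, $\bar g(X,Y)=(\rho\circ\pi)^2\,(g(X,Y)\circ\sigma)$, which gives $V\bar g(X,Y)=2\rho(V\rho)\,g(X,Y)$.

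For item (1), take $C=X\in\mathfrak L(\man)$ in Koszul. Every term vanishes by the facts above, so $\nabla_VW$ is horizontal. Taking $C=U\in\mathfrak L(I)$ instead, every term is the lift of the corresponding term of the Koszul formula on $I$, which identifies $\nabla_VW$ with the lift of $\nabla^I_VW$.

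For item (2), torsion-freeness together with $[V,X]=0$ gives $\nabla_VX=\nabla_XV$. Pairing with $C=W\in\mathfrak L(I)$, all six terms vanish, so $\nabla_VX$ is vertical. Pairing with $C=Y\in\mathfrak L(\man)$, the surviving terms are $V\bar g(X,Y)$ together with bracket terms that cancel using $[V,X]=[V,Y]=0$. This gives
\[
2\bar g(\nabla_VX,Y)=2\rho(V\rho)g(X,Y)=2\,\tfrac{V\rho}{\rho}\,\bar g(X,Y),
\]
hence $\nabla_VX=(V\rho/\rho)X$.

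For item (3), we split $\nabla_XY$ into its vertical and horizontal parts. For the vertical part, pair with $Z\in\mathfrak L(\man)$. The Koszul terms are $\rho^2$ times the lifts of those for $\nabla^\man$, since $\rho$ is constant along $\man$-directions. So the vertical part is the lift of $\nabla^\man_XY$. For the horizontal part, pair with $V\in\mathfrak L(I)$. The only surviving term is
\[
-V\bar g(X,Y)=-2\rho(V\rho)g(X,Y).
\]
Therefore $\bar g(\nabla_XY,V)=-\frac{\bar g(X,Y)}{\rho}\bar g(\nabla\rho,V)$, where $\nabla\rho$ is the horizontal gradient (on $I$, $V\rho=g_I(\nabla\rho,V)$). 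Combining the two parts gives the stated formula.

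The main obstacle is bookkeeping rather than ideas: keeping track of which functions are constant along which factor, and verifying that the bracket terms in Koszul cancel or lift correctly. The subtlest point is the vertical part of item (3). There one must use that the base factor $\rho(r)^2$ commutes with vertical differentiation, so that the Koszul formula on $\man$ appears exactly, up to the overall scale $\rho^2$.

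For the cone $\mathfrak C_{\Omega^\circ}$, where $\rho(r)=r$ and $\nabla\rho=\partial_r$, these identities become $\nabla_{\partial_r}X=X/r$ and $\nabla_XY=\nabla^{\mathbb R^d}_XY - r\langle X,Y\rangle_{\mathbb R^d}\partial_r$.
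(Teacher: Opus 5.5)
Your proof is correct and is the standard one. The paper does not prove this proposition itself but cites O'Neill, whose argument is the same Koszul-formula computation on lifted fields (with metric compatibility used to shortcut a few of the pairings); the only slip is in item (2), where the bracket terms vanish individually rather than cancel, and $\bar g([X,Y],V)$ vanishes because $[X,Y]$ is vertical, not because $[V,X]=[V,Y]=0$.
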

\noindent Now we can directly apply this proposition to compute the connection on $\mathfrak{C}_\Omega \setminus \mathfrak{o}.$ 

\begin{corollary}[The Covariant Derivative on $\mathfrak{C}_{\Omega^{\circ}}$]
\label{corollary: connection on cone}
    Let $X, Y \in \mathfrak{L}(\Omega^{\circ})$ and let $V, W \in \mathfrak{L}((r_{\min} - \varepsilon, r_{\max} + \varepsilon))$. In particular, let $X(x) = \sum_{i = 1}^d X^i(x)\partial_{x_i}$ and $Y(x) = \sum_{i = 1}^d Y^i(x)\partial_{x_i}$ be the expression of $X$ and $Y$ in standard coordinates and let $V = v(r)\partial_r$ and $W = w(r)\partial_r$. Also let $\nabla^{\mathfrak{C}}$ denote the covariant derivative on $\mathfrak{C}_{\Omega^{\circ}}$ and let $(x,r) \in \mathfrak{C}_{\Omega^{\circ}}$. It holds that 
    \begin{enumerate}
        \item $\nabla_V^{\mathfrak{C}}W$ is given by 
        \[\nabla_V^{\mathfrak{C}}W(x,r) = v(r)w'(r)\partial_r.\]
        \item $\nabla_V^{\mathfrak{C}}X(x,r) = \nabla_X^{\mathfrak{C}}V(x,r) = \frac{v(r)}{r}X(x)$.
        \item Finally,
        \[\nabla_X^{\mathfrak{C}}Y(x,r) = \sum_{i = 1}^dX^i(x) \partial_{x_i}Y(x)  - r\langle X(x), Y(x) \rangle\partial_r \]
    \end{enumerate}
    where $\langle\cdot, \cdot\rangle$ is taken to be the Euclidean inner product. 
\end{corollary}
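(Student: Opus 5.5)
This follows by applying the O'Neill warped-product formulas (Proposition 7.35) with base $I=(r_{\min}-\varepsilon, r_{\max}+\varepsilon)$, fiber $(\Omega^{\circ}, g_{\mathbb{R}^d})$, and warping function $\rho(r)=r$. Each of the three items reduces to evaluating one clause of that proposition in the coordinates $(x,r)$. The only geometric inputs we need are the Levi-Civita connections of the factors and the gradient of $\rho$ with respect to the warped metric $g_{\mathfrak C}=dr^2+r^2 g_{\mathbb{R}^d}$.

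For item (1), the base $I$ is a one-dimensional Euclidean interval with metric $dr^2$. Its Christoffel symbol vanishes, so $\nabla^I_V W = v(r)\,\partial_r(w(r))\,\partial_r = v w'\partial_r$. By clause (1) of the proposition, $\nabla^{\mathfrak C}_V W$ is the horizontal lift of this field, which in coordinates is the same expression evaluated at $(x,r)$.

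For item (2), clause (2) gives $\nabla_V X=\nabla_X V=(V\rho/\rho)X$. Here $V\rho = v(r)\partial_r r = v(r)$ and $\rho=r$, so $\nabla_V X = \tfrac{v(r)}{r}X$. We should also check that the lift of $X$ is just $X(x)$, with no $\partial_r$ component and no $r$-dependence.

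Item (3) is the only place requiring care, and it is the step where errors are most likely. Clause (3) gives $\nabla_XY=\nabla^{\Omega^{\circ}}_X Y - \frac{\langle X,Y\rangle}{\rho}\nabla\rho$. Three conventions must be pinned down.

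First, $\nabla^{\Omega^{\circ}}$ is the flat Euclidean connection, so $\nabla^{\Omega^{\circ}}_XY=\sum_i X^i\partial_{x_i}Y$.

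Second, in O'Neill the inner product $\langle X,Y\rangle$ in this clause is taken in the warped metric, so $\langle X,Y\rangle_{\mathfrak C}=r^2\langle X,Y\rangle_{\mathbb{R}^d}$.

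Third, $\nabla\rho$ is the gradient on the warped product, which for $\rho(r)=r$ is $\partial_r$, since $g_{rr}=1$ and there are no cross terms.

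Combining these gives
\[
-\frac{r^2\langle X,Y\rangle}{r}\,\partial_r = -r\langle X,Y\rangle\,\partial_r ,
\]
which matches the claimed formula.

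As a sanity check on the factor of $r$, I would verify item (3) independently with Christoffel symbols of $dr^2+r^2\sum dx_i^2$. These are $\Gamma^r_{ij}=-r\delta_{ij}$ and $\Gamma^i_{rj}=\delta_{ij}/r$, and they reproduce all three items directly. This direct computation also confirms item (2) and guards against misreading the convention for $\langle X,Y\rangle$ in the cited proposition.
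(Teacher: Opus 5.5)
Your proposal is correct and matches the paper's proof: both apply O'Neill's warped-product formulas with $\rho(r)=r$, and use $V\rho=v(r)$, $\nabla\rho=\partial_r$ and $\langle X,Y\rangle_{\mathfrak C}=r^2\langle X,Y\rangle$ to obtain the term $-r\langle X,Y\rangle\partial_r$ in item (3). Your Christoffel-symbol cross-check ($\Gamma^r_{ij}=-r\delta_{ij}$, $\Gamma^i_{rj}=\delta_{ij}/r$, all others zero) is also correct; it is a useful independent confirmation of all three items that the paper does not include.
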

\noindent \textit{Proof.} The proof of (1) is trivial as it follows from the definition of the directional derivative and the horizontal lift. For (2) observe that we have
\[V\rho = v(r)\frac{d}{d r}r = v(r)\]
which implies the result. For (3), the fiber component is simply the Euclidean directional derivative of $Y$ in the direction of $X$, while the base component expression follows from the fact that $\nabla\rho = \partial_r$ and $\langle X, Y \rangle_{\mathfrak{C}} = r^2\langle X, Y\rangle$.  \qed{}

With this definition of the covariant derivative, we can now revisit the geodesic equations described in \cref{eq: cone geodesics}. In particular, we can use the covariant derivative to obtain an important geodesic invariant that we will leverage in our efforts to compute parallel transport along cone geodesics. 

\begin{prop}[Geodesic characterizations]
\label{cone geodesic equations via covariant deriv}
    Let $\gamma: [0, 1] \rightarrow \mathfrak{C}_{\Omega^{\circ}}$ be a constant speed geodesic, denoted $\gamma(t) = \left(p(t), r(t)\right)$.
    Then the geodesic equations on the cone are 
    \[ \ddot p + 2\frac{\dot r \dot p}{r} = 0 \quad \text{and}\quad \ddot r  - r\|\dot p\|_2^2 = 0. \]
    Moreover, for some constant vector $q$ independent of $t$ we have that $r^2(t)\dot p(t) = q$ for all $t \in [0, 1]$.
\end{prop}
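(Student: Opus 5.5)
The plan is to write the geodesic condition $\nabla^{\mathfrak{C}}_{\dot\gamma}\dot\gamma = 0$ in the frame $\{\partial_{x_i}\}\cup\{\partial_r\}$ and expand it with \Cref{corollary: connection on cone}. Write $\dot\gamma(t) = \dot p(t) + \dot r(t)\partial_r$. Extend $\dot p$ locally to a fiber field $X\in\mathfrak{L}(\Omega^\circ)$ and $\dot r\,\partial_r$ to a base field $V\in\mathfrak{L}(I)$; this is legitimate because the covariant derivative along a curve depends only on the values along the curve. Equivalently, we can work directly with the induced connection along $\gamma$ and use the Leibniz rule, treating $\dot p^i(t)$ and $\dot r(t)$ as functions of $t$.

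\textbf{Expansion.} Bilinearity splits $\nabla_{\dot\gamma}\dot\gamma$ into four pieces:
\begin{itemize}
\item the fiber--fiber term $\nabla_{\dot p}\dot p$ contributes $\ddot p - r\|\dot p\|_2^2\,\partial_r$, by item (3) and the Leibniz rule in $t$;
\item each mixed term $\nabla_{\dot r\partial_r}\dot p$ and $\nabla_{\dot p}(\dot r\partial_r)$ contributes $\frac{\dot r}{r}\dot p$, by item (2) (here $\dot p\,\partial_{x_i}\dot r$ vanishes along the curve, since $\dot r$ is a function of $t$ only);
\item the base--base term gives $\ddot r\,\partial_r$, by item (1).
\end{itemize}
Summing, the fiber component of the geodesic condition is $\ddot p + 2\frac{\dot r\dot p}{r}=0$, and the radial component is $\ddot r - r\|\dot p\|_2^2 = 0$.

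\textbf{Conserved quantity.} Differentiate $r^2\dot p$ directly:
\[
\frac{d}{dt}(r^2\dot p) = r^2\ddot p + 2r\dot r\dot p = r^2\Bigl(\ddot p + 2\tfrac{\dot r}{r}\dot p\Bigr)=0 .
\]
This uses that $r>0$ on $\mathfrak{C}_{\Omega^\circ}$, since $r\in(r_{\min}-\varepsilon, r_{\max}+\varepsilon)$ with $\varepsilon$ small. Hence $r^2\dot p\equiv q$ is constant on $[0,1]$.

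\textbf{Main obstacle.} The only delicate point is the bookkeeping that justifies applying the field-level formulas of \Cref{corollary: connection on cone} to velocity fields along a curve. In particular, the mixed terms must be counted twice, once from each ordering, which produces the factor $2$. I would address this through the standard local-coordinate formula for $\nabla_{\dot\gamma}$. As a cross-check, the Christoffel symbols of $dr^2 + r^2\sum dx_i^2$ are $\Gamma^{r}_{ij}=-r\delta_{ij}$ and $\Gamma^{i}_{rj}=\Gamma^i_{jr}=\delta_{ij}/r$. These reproduce both equations immediately, and this computation could serve as an alternative self-contained proof.
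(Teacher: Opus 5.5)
Your proposal is correct and follows essentially the same route as the paper. The paper also splits $\nabla^{\mathfrak{C}}_{\dot\gamma}\dot\gamma$ into the four fiber/base pieces and evaluates each one with the three items of \Cref{corollary: connection on cone}, with the two mixed terms each contributing $\dot r\dot p/r$; it then obtains the conserved quantity by differentiating $r^2\dot p$. Your Christoffel-symbol cross-check ($\Gamma^r_{ij}=-r\delta_{ij}$, $\Gamma^i_{rj}=\delta_{ij}/r$) makes rigorous the extension bookkeeping that the paper leaves implicit.
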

\noindent \textit{Proof.} We can write $\dot \gamma$ as $\dot\gamma(t) =  v(t) + \dot r(t) \partial_r$ where $v(t) \triangleq \dot p(t) = \sum_{i = 1}^n\dot p_i(t) \partial_{x_i}$ By the definition of geodesics, we know that $\nabla^{\mathfrak{C}}_{\dot \gamma}\dot \gamma = 0$. Expanding this, we see that the equivalence is tantamount to $\nabla^{\mathfrak{C}}_{v + \dot r\partial_r}(v + \dot r \partial_r) = 0.$ Expanding further yields
\begin{align*}
    \nabla^{\mathfrak{C}}_{v + \dot r\partial_r}(v + \dot r \partial_r) &= \nabla^{\mathfrak{C}}_vv + \nabla^{\mathfrak{C}}_v \dot r \partial_r + \dot r \nabla^{\mathfrak{C}}_{\partial_r}v +  \nabla^{\mathfrak{C}}_{\dot r\partial_r}(\dot r \partial_r).
\end{align*}
Now we will apply \Cref{corollary: connection on cone} to obtain expressions for each of the terms. In particular, applying result 3 yields
\[\nabla_{v}^{\mathfrak{C}}v = \sum_{i = 1}^d\dot p_i \partial_{x_i}\dot p_i - r\|v\|_2^2 \partial_r = \dot v - r\|v\|_2^2\partial_r.\]
For the second term, product rule and result 2 indicate that 
\[\nabla_v^{\mathfrak{C}} (\dot r\partial_r) = v(\dot r) + \dot r \nabla_v^{\mathfrak{C}}\partial_r = 0 + \dot rv/r.\]
Similarly, for the third term we have $\dot r \nabla^{\mathfrak{C}}_{\partial_{r}}v = \dot rv/r$. Finally, for the last term, the Leibniz rule and result 1 imply 
\[\nabla^{\mathfrak{C}}_{\dot r\partial_r}(\dot r\partial_r) = [(\dot r\partial_r)(\dot r )]\partial_r +\dot r \nabla^{\mathfrak{C}}_{\dot r\partial _r}\partial_r = \ddot r \partial_r + \dot r^2\nabla^{\mathfrak{C}} _{\partial_r}\partial_r = \ddot r\partial_r.\]
Combining everything results in the system of equations $\dot v + 2\frac{\dot rv}{r} = 0$ and $\ddot r  - r\|v\|_2^2 = 0$ which completes the proof of the first result. The second result follows from differentiating $r^2v$ yielding $2r\dot rv + r^2\dot v.$ Solving for $\dot v$ above and plugging in implies that $\frac{d}{dt}r^2v = 0$. \qed{}

\subsection{Parallel Transport}

Now let $u_t = a_t + b_t\partial_r$ be a vector field along $\gamma: [0,1] \rightarrow \Omega^\circ$, where
$a_t \in T_{p(t)}\Omega^\circ \simeq \mathbb{R}^d$ and $b_t \in \mathbb{R}$. Writing
$v_t \triangleq \dot p(t)$, we seek a field satisfying
\[
\nabla^\mathfrak{C}_{\dot \gamma}u_t = 0.
\]
Since $a_t$ and $b_t$ are only defined along $\gamma$, one can use arbitrary smooth extensions and then restrict back to the curve; the resulting expression is independent of the chosen extensions. Applying \Cref{corollary: connection on cone} gives
\begin{align*}
    \nabla^\mathfrak{C}_{v_t + \dot r_t\partial_r}(a_t + b_t\partial_r)
    &= \nabla^\mathfrak{C}_{v_t}a_t
       + \dot r_t\nabla^\mathfrak{C}_{\partial_r}a_t
       + \nabla^\mathfrak{C}_{v_t}(b_t\partial_r)
       + \dot r_t\nabla^\mathfrak{C}_{\partial_r}(b_t\partial_r) \\
    &= \Big(\dot a_t - r_t\langle a_t, v_t\rangle \partial_r\Big)
       + \frac{\dot r_t}{r_t}a_t
       + \dot b_t\,\partial_r
       + \frac{b_t}{r_t}v_t.
\end{align*}
Hence
\[
\nabla^\mathfrak{C}_{\dot \gamma}u_t
=
\left(\dot a_t + \frac{\dot r_t}{r_t}a_t + \frac{b_t}{r_t}v_t\right)
+
\left(\dot b_t - r_t\langle a_t, v_t\rangle\right)\partial_r.
\]
Therefore $u_t$ is parallel along $\gamma$ if and only if
\begin{equation}\label{eq: cone PT system}
    \dot a_t + \frac{\dot r_t}{r_t}a_t + \frac{b_t}{r_t}v_t = 0,
    \qquad
    \dot b_t - r_t\langle a_t, v_t\rangle = 0.
\end{equation}
We now solve this system explicitly in \Cref{cone PT explicit}, and we provide an example of parallel transport on $\mathfrak{C}_{\Omega^\circ}$ in \Cref{fig:cone_pt}. The figure highlights the fact that when $\Omega^\circ \subset \mathbb{R}$ then the cone geometry in polar coordinates coincides with Euclidean geometry. 

\begin{prop}[Explicit parallel transport along cone geodesics]
\label{cone PT explicit}
    Let $\gamma(t) = (p(t), r(t))$ be a constant speed geodesic on $\mathfrak{C}_{\Omega^\circ}$ with speed $s$, and let
    $u_0 = a_0 + b_0\partial_r$ be an initial tangent vector at $\gamma(0)$.
    Then the unique parallel transport $u_t = a_t + b_t\partial_r$ along $\gamma$ is given as follows. Note that \Cref{cone geodesic equations via covariant deriv} indicates that the quantity $q \triangleq r_t^2 v_t$ is independent of $t$, so set $c \triangleq \|q\|_2$.
    \begin{enumerate}
        \item If $q = 0$ (equivalently, $v_t \equiv 0$), then
        \[
        a_t = \frac{r_0}{r_t}a_0,
        \qquad
        b_t = b_0.
        \]
        \item If $q \neq 0$, let $e \triangleq q/\|q\|_2$, decompose $a_0 = a_0^\perp + \alpha_0 e,$ $\alpha_0 \triangleq \langle a_0, e\rangle,$ and $a_0^\perp \perp e,$ 
    and define
    \[\theta(t) \triangleq \arctan\left(\frac{s^2t + r_0\dot r_0}{c}\right)
    - \arctan\left(\frac{r_0\dot r_0}{c}\right).
    \]
    Then
    \begin{align}
        a_t &= \frac{r_0}{r_t}a_0^\perp
        + \frac{r_0\alpha_0\cos\theta(t) - b_0\sin\theta(t)}{r_t}\,e, \quad \text{and} \quad 
        b_t= r_0\alpha_0\sin\theta(t) + b_0\cos\theta(t).
        \label{eq: cone PT explicit a}
    \end{align}
    \end{enumerate}
\end{prop}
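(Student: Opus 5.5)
The plan is to solve the linear system \eqref{eq: cone PT system} directly, using the conserved quantity $q = r_t^2 v_t$ from \Cref{cone geodesic equations via covariant deriv}. Uniqueness is immediate. \eqref{eq: cone PT system} is a linear ODE in $(a_t,b_t)$ whose coefficients $\dot r_t/r_t$, $v_t/r_t$ and $r_t v_t$ are smooth on $[0,1]$, because $r_t$ stays bounded away from $0$ on $\mathfrak{C}_{\Omega^\circ}$. So it suffices to show that the stated formulas satisfy the system with the correct initial data.

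\textbf{Case $q=0$.} Then $v_t\equiv 0$ and the system reduces to $\dot b_t=0$ and $\dot a_t + (\dot r_t/r_t)a_t = 0$. The second equation says $\frac{d}{dt}(r_t a_t)=0$, which gives $a_t = r_0 a_0/r_t$ and $b_t=b_0$.

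\textbf{Case $q\neq 0$.} Substitute $v_t = c\,e/r_t^2$ into the system and split $a_t = a_t^\perp + \alpha_t e$. Since $v_t$ points along the fixed unit vector $e$, the orthogonal component satisfies $\frac{d}{dt}(r_t a_t^\perp)=0$, so $a_t^\perp = r_0 a_0^\perp/r_t$. For the $e$-component, set $A_t \triangleq r_t\alpha_t$ and $B_t \triangleq b_t$. Multiplying the first equation by $r_t$ and using $\langle a_t, v_t\rangle = c\,\alpha_t/r_t^2$ in the second gives
\[
\dot A_t = -\omega_t B_t,\qquad \dot B_t = \omega_t A_t,\qquad \omega_t \triangleq \frac{c}{r_t^2}.
\]
This is a time-dependent rotation in the plane. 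Hence $(A_t,B_t)$ is the initial pair $(r_0\alpha_0,\, b_0)$ rotated by the angle $\theta(t)=\int_0^t \omega_\tau\,d\tau$. Reading off $a_t = a_t^\perp + (A_t/r_t)e$ and $b_t = B_t$ reproduces \eqref{eq: cone PT explicit a}, provided the integral equals the stated arctan expression.

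\textbf{Main step: evaluating $\int_0^t c/r_\tau^2\,d\tau$ in closed form.} First I will show $r_t^2$ is an explicit quadratic in $t$. Using the geodesic equation $\ddot r = r\|\dot p\|_2^2$ and constant speed $s^2 = \dot r^2 + r^2\|\dot p\|_2^2$,
\[
\frac{d^2}{dt^2} r_t^2 = 2(\dot r_t^2 + r_t\ddot r_t) = 2s^2 .
\]
Therefore $r_t^2 = r_0^2 + 2r_0\dot r_0\, t + s^2t^2$. Next I complete the square. The discriminant term is
\[
s^2 r_0^2 - (r_0\dot r_0)^2 = r_0^2(s^2-\dot r_0^2) = r_0^4\|v_0\|_2^2 = c^2,
\]
so $s^2 r_t^2 = (s^2 t + r_0\dot r_0)^2 + c^2$. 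Then $\int_0^t c/r_\tau^2\,d\tau$ is a standard arctangent integral and yields exactly $\theta(t)$.

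Finally, I will check that the formulas reduce to $(a_0,b_0)$ at $t=0$, which is immediate since $\theta(0)=0$. The only genuinely delicate point is the identification of the discriminant with $c^2$. Once it is handled, the remaining steps are routine verification.
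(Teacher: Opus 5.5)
Your proposal is correct and follows the paper's proof essentially step for step. Both arguments substitute $v_t = c\,e/r_t^2$ into \eqref{eq: cone PT system}, split $a_t$ into its $e^\perp$ and $e$ components, and rescale $r_t\alpha_t$ so the coupled part becomes a planar rotation with angular velocity $c/r_t^2$; both then evaluate $\theta(t)$ using $\frac{d^2}{dt^2}r_t^2 = 2s^2$ together with $c^2 = r_0^2(s^2-\dot r_0^2)$. Your uniqueness remark and explicit completion of the square $s^2 r_t^2 = (s^2t + r_0\dot r_0)^2 + c^2$ fill in steps the paper leaves implicit, and your quadratic $r_t^2 = r_0^2 + 2r_0\dot r_0 t + s^2t^2$ is the correct one (the paper's display omits the factor $s^2$ on $t^2$).
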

\noindent \textit{Proof.}
If $q=0$, then $v_t \equiv 0$, so \Cref{eq: cone PT system} reduces to
\[
\dot a_t + \frac{\dot r_t}{r_t}a_t = 0,
\qquad
\dot b_t = 0,
\]
which yields
\[
a_t = \frac{r_0}{r_t}a_0,
\qquad
b_t = b_0.
\]
Now assume $q \neq 0$, and write
\[
v_t = \frac{q}{r_t^2} = \frac{c}{r_t^2}e.
\]
Decompose $a_t = \alpha_t e + w_t $ with $w_t \perp e.$ Substituting this into \Cref{eq: cone PT system} and separating the $e$ and $e^\perp$ components gives
\begin{align}
    \dot w_t + \frac{\dot r_t}{r_t}w_t &= 0, \label{eq: cone PT w}\\
    \dot\alpha_t + \frac{\dot r_t}{r_t}\alpha_t + \frac{c}{r_t^3}b_t &= 0, \label{eq: cone PT alpha}\\
    \dot b_t - \frac{c}{r_t}\alpha_t &= 0. \label{eq: cone PT b}
\end{align}
Equation \eqref{eq: cone PT w} immediately yields
\[
w_t = \frac{r_0}{r_t}a_0^\perp.
\]
For the coupled system, define
\[
x_t \triangleq r_t\alpha_t.
\]
Then
\[
\dot x_t
=
\dot r_t\alpha_t + r_t\dot\alpha_t
=
-\frac{c}{r_t^2}b_t,
\qquad
\dot b_t = \frac{c}{r_t^2}x_t.
\]
Thus
\[
\frac{d}{dt}
\begin{bmatrix}
x_t \\ b_t
\end{bmatrix}
=
\frac{c}{r_t^2}
\begin{bmatrix}
0 & -1 \\
1 & 0
\end{bmatrix}
\begin{bmatrix}
x_t \\ b_t
\end{bmatrix}.
\]
This is a planar rotation with angular velocity $c/r_t^2$, so if
\[
\theta(t) = \int_0^t \frac{c}{r_s^2}\,ds,
\]
then
\[
x_t = x_0\cos\theta(t) - b_0\sin\theta(t),
\qquad
b_t = x_0\sin\theta(t) + b_0\cos\theta(t),
\]
where $x_0 = r_0\alpha_0$. Dividing by $r_t$ gives
\[
\alpha_t = \frac{r_0\alpha_0\cos\theta(t) - b_0\sin\theta(t)}{r_t},
\]
and hence
\[
a_t = w_t + \alpha_t e
= \frac{r_0}{r_t}a_0^\perp
+ \frac{r_0\alpha_0\cos\theta(t) - b_0\sin\theta(t)}{r_t}e.
\]
This proves \eqref{eq: cone PT explicit a}. Finally, since $\gamma$ has constant speed $s$,
\[
\dot r_t^2 + r_t^2\|v_t\|_2^2 = s^2.
\]
Using the radial geodesic equation $\ddot r_t - r_t\|v_t\|_2^2 = 0$, we obtain
\[
\frac{d^2}{dt^2}(r_t^2) = 2\dot r_t^2 + 2r_t\ddot r_t = 2s^2 \quad \implies \quad 
r_t^2 = t^2 + 2r_0\dot r_0\,t + r_0^2.
\]
Therefore
\[
\theta(t)
=
\int_0^t \frac{c}{s^2 + 2r_0\dot r_0\,s + r_0^2}\,ds
=
\arctan\left(\frac{s^2t + r_0\dot r_0}{c}\right)
-
\arctan\left(\frac{r_0\dot r_0}{c}\right),
\]
where we used the identity $c^2 = r_0^2(s^2 - \dot r_0^{\,2})$ coming from the constant speed condition at $t=0$. \qed{}

\begin{figure}
    \centering
    \includegraphics[width=\linewidth]{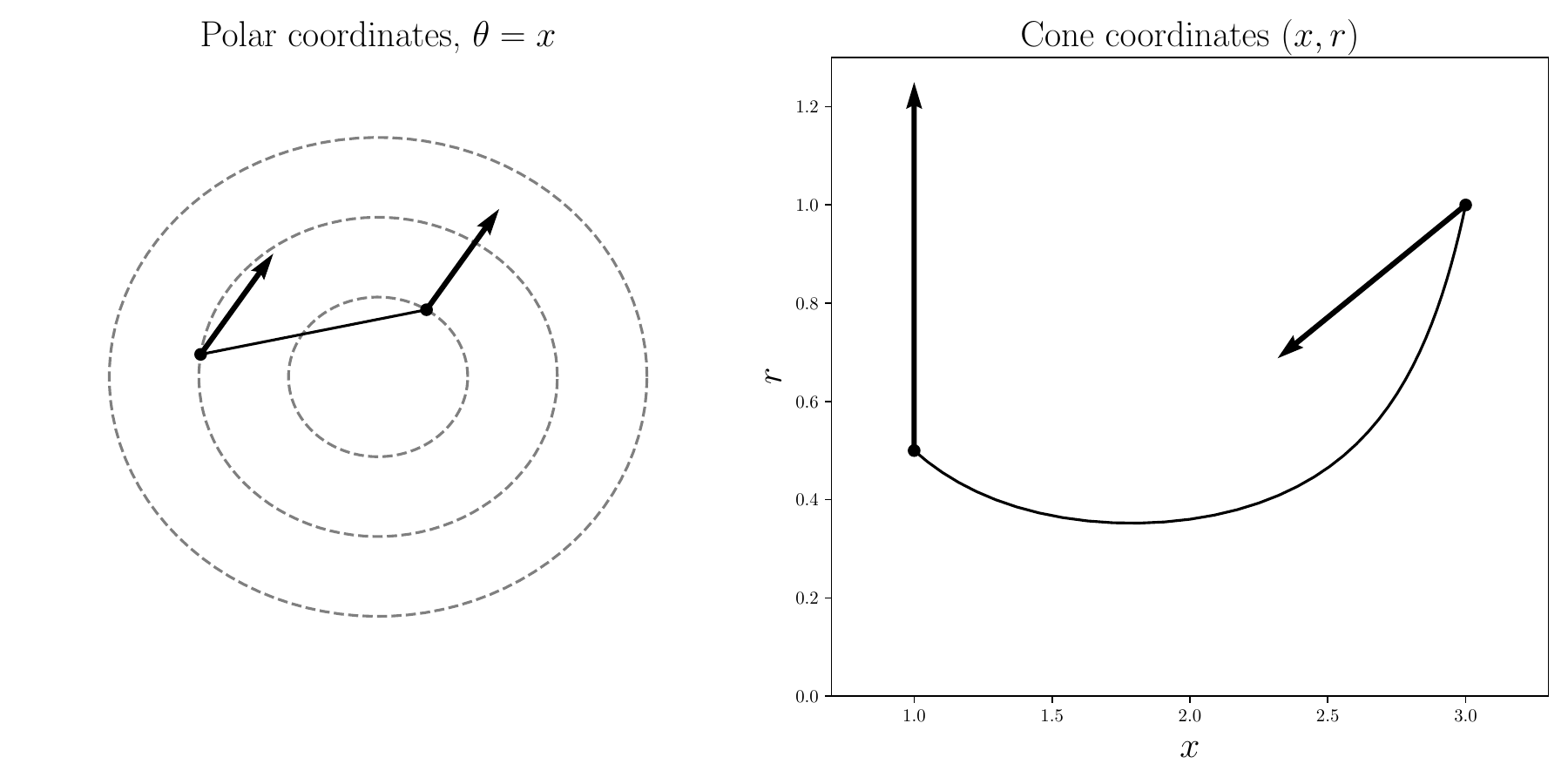}
    \caption{Visualization of geodesic parallel transport on $\mathfrak{C}_\Omega$ in polar coordinates (left) and cone coordinates (right). The visual consists of the parallel transport of $v = 0.75\partial_r$ from $(x_0, r_0) = (1.0, 0.5)$ to $(x_1, r_1) = (3.0, 1.0).$}
    \label{fig:cone_pt}
\end{figure}

\section{Proofs} \label{sec: proof of main results}

\subsection{Proof of \Cref{cone exponential and log map}.} \label{sec: proof of cone exponential and log map}

We desire a $z_1 \in \mathfrak{C}_\Omega$ such that $Z'(0^+; z_0, z_1)$, the right derivative at $0$, is equal to $(v_x, v_r).$ Letting $\theta = \|x_1 - x_0\|_2$, one can verify that 
\begin{align*}
    \rho'(s; z_0, z_1) = \frac{\sin^2(\theta)r_0r_1^2s}{\theta B(s)^{3/2}\sqrt{1 - \frac{A(s)^2}{B(s)}}}
\end{align*}
with 
\begin{equation*}
    A(s) = r_1s\cos(\theta) + r_0(1-s) \qquad B(s) = r_1^2s^2 + 2s(1-s)r_0r_1\cos(\theta) + r_0^2(1-s)^2.
\end{equation*}
We can directly evaluate $\lim_{s \rightarrow 0^+}B(s) = r_0^2$. For the radical,
\begin{align*}
    1 - \frac{A(s)^2}{B(s)} = \frac{\left(\frac{r_1s}{r_0(1-s)}\right)^2\sin^2(\theta)}{1 + 2\frac{r_1s}{r_0(1-s)}\cos(\theta) + \left(\frac{r_1s}{r_0(1-s)}\right)^2} = \left(\frac{r_1s}{r_0(1-s)}\right)^2\sin^2(\theta)\left(1 + u - u^2 + O(u^3)\right)
\end{align*}
where $u = 2\frac{r_1s}{r_0(1-s)}\cos(\theta) + \left(\frac{r_1s}{r_0(1-s)}\right)^2$. Plugging back in and canceling terms yields
\begin{align*}
    \lim_{s \rightarrow 0^+}\rho'(s; z_0, z_1) = \lim_{s\rightarrow 0^+}\left(\frac{\sin(\theta)r_1r_0^2(1-s)}{\theta B(s)^{3/2}\sqrt{1  + u - u^2 + O(u^3)}}\right) = \frac{\sin(\theta)r_1}{\theta r_0}
\end{align*}
since $\lim_{s \rightarrow 0^+}u = 0$, which implies $X'(0^+; z_0, z_1) = \frac{\sin(\theta)r_1(x_1 - x_0)}{\theta r_0}$. One can also easily verify that $R'(0^+; z_0, z_1) = r_1\cos(\theta) - r_0$. Now we have a system of equations 
\begin{equation*}
    v_x = \frac{\sin(\|x_1 - x_0\|_2)r_1(x_1 - x_0)}{\|x_1 - x_0\|_2 r_0} \quad \text{and} \quad  v_r = r_1\cos(\|x_1 - x_0\|_2) - r_0
\end{equation*}
for which we know $(x_0, r_0)$ and we want to solve for $(x_1, r_1)$. One can verify that the solution is $r_1 = \sqrt{\|v_x\|_2^2r_0^2 + (v_r+r_0)^2}$ and $x_1 = x_0 + \theta\frac{v_x}{\|v_x\|_2}$ with $\theta = \operatorname{atan2}(r_0\|v_x\|_2, v_r + r_0) \in [0, \pi)$. This completes the exponential map. The logarithmic map is given by
$\log^{\mathfrak{C}}_{z_0}(z_1) = Z'(0^+; z_0, z_1)$, the expressions for which we have already derived in computing the exponential map.
\qed{}

\subsection{Proof of \Cref{isometry}.} \label{sec: proof of isometry}
We will start by showing that 
\[\mathcal{L}_{\mathfrak{P}\lambda, \lambda}: T_{\mathfrak{P}\lambda}\mathfrak{M}_+^\Gamma \rightarrow S_\lambda \subset T_{\lambda}\mathcal{P}_2(\mathfrak{C}_\Omega) \quad \text{given by} \quad \mathcal{L}_{\mu, \lambda}[(v, \beta)](x,r) = (v(x), 2\beta(x)r)\]
is an isometry. Let $s_1 = (v_1, \beta_1)$ and $s_2 = (v_2, \beta_2)$ be tangent vectors in $T_{\mathfrak{P}\lambda}\mathfrak{M}_+^\Gamma$. Their inner product in the domain Hilbert space is given by 
\[\langle s_1, s_2\rangle_{\mathfrak{P}\lambda} = \int_{\Omega}\left( \langle v_1(x), v_2(x)\rangle + 4\beta_1(x)\beta_2(x)\right)d\mathfrak{P}\lambda(x) \]
where the inner product $\langle\cdot, \cdot\rangle$ is the standard Euclidean inner product on $\mathbb{R}^d$. For the lifted field, the inner product in the image is given by
\begin{align*}\langle \mathcal{L}[s_1], \mathcal{L}[s_2]\rangle_{\lambda} = \int_{\mathfrak{C}_\Omega} \langle \mathcal{L}[s_1], \mathcal{L}[s_2]\rangle_{\mathfrak{C}} d\lambda &= \int_{\mathfrak{C}_\Omega}r^2\left(4\beta_1(x)\beta_2(x) + \langle v_1(x), v_2(x) \rangle\right)d\lambda(x,r) \\
&= \int_\Omega \left(4\beta_1(x)\beta_2(x) + \langle v_1(x), v_2(x) \rangle\right)d\mathfrak{P}\lambda(x)
\end{align*}
by \Cref{eq: measure projection}. Thus, $\langle \mathcal{L}[s_1], \mathcal{L}[s_2]\rangle_{\lambda} = \langle s_1, s_2\rangle_{\mathfrak{P}\lambda}$ proving (a). Now we will prove (b). Let $u, v \in T_{\lambda}\mathcal{P}_2(\mathfrak{C}_\Omega)$ where $d\mathfrak{P}\lambda = r^2d\mu$,  $u = a + b\partial_r$ and $v = c + d\partial_r$. Their inner product in the domain Hilbert space is given by
\begin{align*}
    \langle u, v\rangle_{\lambda} &= \int_{\mathfrak{C}} \langle u, v \rangle_{\mathfrak{C}}\,d\lambda = \int_{\mathfrak{C}} \left(b(x,r)d(x,r) + r^2\langle a(x,r), c(x,r)\rangle \right)\,d\lambda(x,r)
\end{align*}
where the inner product on the right is the standard Euclidean inner product on $\mathbb{R}^d$. By the hypothesis, the radial law of $\lambda$ is deterministic given $x$. Thus, the projections of $u$ and $v$ are given by $\mathcal{P}_{\lambda}[u](x) = (a(x,r(x)), b(x,r(x))/2r(x))$ and $\mathcal{P}_{\lambda}[v](x) = (c(x,r(x)), d(x,r(x))/2r(x))$ and their inner product is
\begin{align*}
    \langle \mathcal{P}_{\lambda}[u], \mathcal{P}_{\lambda}[v]\rangle_{\mathfrak{P}\lambda} &= \int_{\Omega} \left(\frac{4b(x, r(x))d(x, r(x))}{4r^2(x)} +  \langle a(x, r(x)), c(x,r(x))\rangle\right)d\mathfrak{P}\lambda(x) \\
    &= \int_{\Omega}\left(\frac{b(x, r(x))d(x, r(x))}{r^2(x)} +  \langle a(x, r(x)), c(x,r(x))\rangle\right)d\mathfrak{P}\lambda(x) \\
    &= \int_{\mathfrak{C}_\Omega}\left(b(x, r)d(x, r) +  r^2\langle a(x, r), c(x,r)\rangle\right)d\lambda(x,r)
\end{align*}
which is equivalent to $\langle u, v \rangle_\lambda$ since $r = r(x)$ $\lambda$-almost everywhere. Finally, all that remains is to check the condition $\mathcal{P}_\lambda(\mathcal{L}_{\mathfrak{P}\lambda, \lambda}(v,\beta)) = (v, \beta)$ $\mathfrak{P}\lambda$-a.e. and $\mathcal{L}_{\mathfrak{P}\lambda}(\mathcal{P}_\lambda[u]) = u$ $\lambda$-almost everywhere. For the first condition, we have 
\begin{align*}
    \mathcal{P}_\lambda(\mathcal{L}_{\mathfrak{P}\lambda, \lambda}(v,\beta)) &= \mathcal{P}_\lambda \left[\left(v(x), 2\beta(x)r\right)\right] = \left(v(x), \frac{2\beta(x)r(x)}{2r(x)}\right) = (v(x), \beta(x))
\end{align*}
for $\mathfrak{P}\lambda$-a.e. $x.$ For the second condition, by the condional radial degeneracy of $\lambda$ we have 
\begin{align*}
    \mathcal{L}_{\mathfrak{P}\lambda, \lambda}\left(\mathcal{P_{\lambda}}[u]\right) = \mathcal{L}_{\mathfrak{P}\lambda, \lambda}\left(\left(a(x, r(x)), \frac{b(x, r(x))}{2r(x)}\right)\right) = (a, b).
\end{align*}
\qed{}

\subsection{Proof of \Cref{lifting by characteristics one}} \label{sec: proof of lifting by characteristics one}

Define $\nu_t$ such that $d\nu_t = r_t^2 d\eta_t$.  We will show that $(\nu_t)_{t\in[0,1]}$ solves the continuity-reaction equation with initial condition $\mu_0$. By uniqueness, this will imply $\nu_t=\mu_t$ for all $t$. Let $\psi\in C_c^1((0,1)\times \Omega)$. Then
\begin{align*}
    \int_\Omega \psi(t,y)\,d\nu_t(y)
    &= \int_\Omega \psi(t,y)r_t(y)^2\,d\eta_t(y) \\
    &= \int_\Omega \psi(t,X_t(x))r_t(X_t(x))^2\,d\mu_0(x) \\
    &= \int_\Omega \psi(t,X_t(x))
    \exp\left(4\int_0^t \beta_s(X_s(x))\,ds\right)\,d\mu_0(x).
\end{align*}
Differentiating in $t$ and using the chain rule yields
\begin{align*}
    \frac{d}{dt}\int_\Omega \psi(t,y)\,d\nu_t(y)
    &=
    \frac{d}{dt}\int_\Omega \psi(t,X_t(x))
    \exp\left(4\int_0^t \beta_s(X_s(x))\,ds\right)\,d\mu_0(x)
\end{align*}
Now observe that the function 
\[g(x) \equiv \sup_{(t,y) \in \operatorname{supp}(\psi)}\left(|\partial_t\psi(t,y)|e^{4|\beta_{\max}|} + \|\nabla \psi(t, y)\|_2\|v_t(y)\|_2 \cdot e^{4|\beta_{\max}|} + 4|\beta_{\max}e^{4|\beta_{\max}|}\psi(t,y)|\right) \triangleq C_\psi < \infty\]
dominates the time-derivative of the integrand of the right hand side above, and is integrable since (1) $\psi$ is $C^1$ and the supremum is taken over a compact set (and thus the supremum is finite), and (2) because $v_t$ is assumed to be uniformly bounded on $[0,1]\times \Omega$. Thus, by the dominated convergence theorem we can differentiate under the integral to say,
\begin{align*}
    \frac{d}{dt}\int_\Omega \psi(t,y)\,d\nu_t(y)
    &=
    \int_\Omega \frac{d}{dt}\left[
        \psi(t,X_t(x))
        \exp\left(4\int_0^t \beta_s(X_s(x))\,ds\right)
    \right]d\mu_0(x) \\
    &=
    \int_\Omega \Bigl(
        \partial_t\psi(t,X_t(x))
        + \nabla \psi(t,X_t(x))\cdot v_t(X_t(x))
    \Bigr)
    \exp\left(4\int_0^t \beta_s(X_s(x))\,ds\right)\,d\mu_0(x) \\
    &\hspace{31mm}
    +4\int_\Omega \beta_t(X_t(x))\psi(t,X_t(x))
    \exp\left(4\int_0^t \beta_s(X_s(x))\,ds\right)\,d\mu_0(x).
\end{align*}
Rewriting these terms in terms of $\nu_t$, we obtain
\begin{align*}
    \frac{d}{dt}\int_\Omega \psi(t,y)\,d\nu_t(y)
    &=
    \int_\Omega \partial_t\psi(t,y)\,d\nu_t(y)
    +\int_\Omega \nabla\psi(t,y)\cdot v_t(y)\,d\nu_t(y) 
    +4\int_\Omega \beta_t(y)\psi(t,y)\,d\nu_t(y).
\end{align*}
Integrating over $t\in[0,1]$, the fact that $\psi$ has compact support in $(0,1)\times\Omega$ implies that the left hand side of the panel above is zero, giving
\begin{align*}
    0 &=
    \int_0^1\int_\Omega \partial_t\psi(t,y)\,d\nu_t(y)\,dt
    +\int_0^1\int_\Omega \nabla\psi(t,y)\cdot v_t(y)\,d\nu_t(y)\,dt +4\int_0^1\int_\Omega \beta_t(y)\psi(t,y)\,d\nu_t(y)\,dt.
\end{align*}
Thus $(\nu_t)_{t\in[0,1]}$ is indeed a weak solution of the continuity-reaction equation with $v_t, \beta_t$. By the following result, we also know that $\nu_t$ is weakly continuous in $t$. 

\begin{lemma} \label{nu_weak_continuity}
    The curve of measures $(\nu_t)_{t \in [0,1]}$ is weakly continuous.
\end{lemma}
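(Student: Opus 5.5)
We show directly that for every bounded continuous $f$ on $\Omega$ the map $t\mapsto \int_\Omega f\,d\nu_t$ is continuous on $[0,1]$. The approach is to pass to the Lagrangian representation already used above:
\[
\int_\Omega f\,d\nu_t \;=\; \int_\Omega f(X_t(x))\,\exp\!\Big(4\int_0^t \beta_s(X_s(x))\,ds\Big)\,d\mu_0(x).
\]
This holds because $d\nu_t = r_t^2\,d\eta_t$, $\eta_t=(X_t)_\#\mu_0$, and $r_t(X_t(x))^2=\exp(4\int_0^t\beta_s(X_s(x))ds)$ for $\mu_0$-a.e. $x$. Continuity in $t$ then reduces to a dominated convergence argument on a fixed measure space $(\Omega,\mu_0)$.

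\textbf{Step 1: pointwise continuity of the integrand.} Fix $t_n\to t$ and a point $x$ in the full-$\mu_0$-measure set where the flow and the $L^1$ hypothesis hold.
\begin{itemize}
\item The trajectory $s\mapsto X_s(x)$ is absolutely continuous, indeed Lipschitz, since $\partial_s X_s(x)=v_s(X_s(x))$ and $v$ is uniformly bounded by some $\|v\|_\infty$. Hence $X_{t_n}(x)\to X_t(x)$, and by continuity of $f$, $f(X_{t_n}(x))\to f(X_t(x))$.
\item The map $s\mapsto\beta_s(X_s(x))$ lies in $L^1(0,1)$ by hypothesis. Its primitive $t\mapsto\int_0^t\beta_s(X_s(x))\,ds$ is therefore continuous, and so is the exponential of it.
\end{itemize}
So the integrand converges pointwise $\mu_0$-a.e.

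\textbf{Step 2: a uniform integrable bound.} We need a bound that does not depend on $n$. The upper bound $\beta_s\le\beta_{\max}$ gives, for $t\in[0,1]$,
\[
\exp\!\Big(4\int_0^t\beta_s(X_s(x))\,ds\Big)\le e^{4\max(\beta_{\max},0)}.
\]
Only the upper bound on $\beta$ is needed here, because the exponential stays positive. Consequently the integrand is bounded by $\|f\|_\infty e^{4\max(\beta_{\max},0)}$. This constant is $\mu_0$-integrable, since $\mu_0$ is a finite measure (it lies in $\mathfrak M_+^\Gamma(\Omega)$). Dominated convergence then gives $\int f\,d\nu_{t_n}\to\int f\,d\nu_t$.

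\textbf{Main obstacle: the endpoints and measure-theoretic bookkeeping, not any estimate.}
\begin{itemize}
\item Continuity is required on the closed interval $[0,1]$. At $t=0$ we need $\nu_0=\mu_0$. This holds because $X_0=\mathrm{id}$ and $r_0\equiv1$, so the representation above recovers $\mu_0$ exactly.
\item The $\mu_0$-null exceptional sets where the flow or the $L^1$ condition might fail must be chosen independently of $t$, so that a single null set serves for all $t_n$. The hypotheses are stated this way ($\mu_0$-a.e. $x$, for all $t$), so one fixed exceptional set suffices.
\item If one prefers narrow continuity against $C_b$ on all of $\mathbb R^d$ rather than $C^0(\Omega)$, nothing changes. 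All $\nu_t$ are carried by $X_t(\operatorname{supp}\mu_0)\subset\Omega$, so the same argument applies verbatim.
\end{itemize}
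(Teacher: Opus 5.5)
Your proof is correct and matches the paper's argument: the Lagrangian representation over $\mu_0$, pointwise continuity of $t\mapsto f(X_t(x))\exp\bigl(4\int_0^t\beta_s(X_s(x))\,ds\bigr)$ from continuity of the flow and of the primitive of an $L^1$ function, and dominated convergence with a constant bound on a finite measure. Your dominating constant $\|f\|_\infty e^{4\max(\beta_{\max},0)}$ is slightly more careful than the paper's $e^{4\beta_{\max}}$. The paper's constant fails for small $t$ when $\beta_{\max}<0$, for instance at $t=0$, where the integrand's exponential factor equals $1$.
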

\begin{proof}
    To show that $\nu_t$ is weakly continuous, we need to show that
    $t \mapsto \int f\,d\nu_t$ is a continuous function of $t$ for all bounded
    and continuous test functions. In particular, for any bounded and continuous
    $f$, we have
    \[
        \int_{\Omega} f(y)\,d\nu_t(y)
        =
        \int_{\Omega} f(X_t(x))R_t(x)^2\,d\mu_0(x),
    \]
    where
    \[
        R_t(x) = r_t(X_t(x))
        =
        \exp\left(2\int_0^t \beta_s(X_s(x))\,ds\right).
    \]
    Since $X_t(x)$ is the characteristic ODE, it is continuous in $t$ for
    $\mu_0$-a.e. $x$. Similarly, since
    $s\mapsto \beta_s(X_s(x))$ belongs to $L^1(0,1)$ for $\mu_0$-a.e. $x$, the map
    \[
        t \mapsto \int_0^t \beta_s(X_s(x))\,ds
    \]
    is absolutely continuous, and hence continuous. Thus $R_t(x)$ is continuous
    in $t$ for $\mu_0$-a.e. $x$. Therefore, for every sequence $t\to t^*$ we have
    \[
        f(X_t(x))R_t(x)^2
        \to
        f(X_{t^*}(x))R_{t^*}(x)^2
    \]
    for $\mu_0$-a.e. $x$. Moreover, since $\beta_s \leq \beta_{\max}$,
    \[
        R_t(x)^2
        =
        \exp\left(4\int_0^t \beta_s(X_s(x))\,ds\right)
        \leq e^{4\beta_{\max}}.
    \]
    Hence
    \[
        |f(X_t(x))R_t(x)^2|
        \leq
        \|f\|_\infty e^{4\beta_{\max}},
    \]
    and this upper bound is $\mu_0$-integrable since $\mu_0(\Omega)<\infty$.
    Therefore, by the dominated convergence theorem,
    \[
        \lim_{t\to t^*}
        \int_{\Omega} f(y)\,d\nu_t(y)
        =
        \int_{\Omega} f(X_{t^*}(x))R_{t^*}(x)^2\,d\mu_0(x)
        =
        \int_{\Omega} f(y)\,d\nu_{t^*}(y).
    \]
    Thus, the curve of measures $\nu_t$ is weakly continuous.
\end{proof}

By the assumed uniqueness of weakly continuous distributional solutions with initial condition $\mu_0$, it follows that $\nu_t=\mu_t$ for all $t \in [0,1]$. To verify the lifting property, observe that for any continuous $\phi$
\begin{align*}
    \int_\Omega \phi \,d\mathfrak{P}\lambda_t = \int_{\mathfrak{C}_\Omega} r^2 \phi(x) \, d\lambda_t(x,r) = \int_\Omega r_t^2(x)\phi(x) \, d\eta_t(x) = \int_\Omega \phi \,d\mu_t
\end{align*}
which completes the proof.
\qed{}

\subsection{Proof of \Cref{radial update}} \label{sec: proof of radial update}
By \Cref{lifting by characteristics one}, we have that
\begin{align*}
r_{i+1}^2 &= \frac{d\mu_{i+1}}{d\eta_{i+1}}  = \frac{d(u_{i+1}((T_i)_\#\pi_0))}{d((T_{i})_\#\eta_i)}.
\end{align*}
Since $T_i$ is $\eta_i$-a.e. injective, we have
\begin{align*}
    r_{i+1}^2 = u_{i+1}\frac{d\pi_0}{d\eta_i} \circ T_i^{-1} = u_{i+1}\left(\frac{d\pi_0}{d\mu_i}\frac{d\mu_i}{d\eta_i}\right)\circ T_{i}^{-1} \qquad \eta_{i+1}\text{-a.e}.
\end{align*}
Since $\mu_i = u_i \pi_0$, we have
\begin{align*}
    r_{i+1}^2(x) = u_{i+1}(x)\left(\frac{r_i^2(T_i^{-1}(x))}{u_i(T_i^{-1}(x))}\right) \quad \text{and thus} \quad \frac{r_{i+1}^2(x)}{r_i^2(T_i^{-1}(x))} = \frac{u_{i+1}(x)}{u_i(T_i^{-1}(x))}.
\end{align*}
Applying the change of variables $y = T_i^{-1}(x)$ and taking the square root yields
\[\frac{r_{i+1}(T_i(y))}{r_i(y)} = \sqrt{\frac{u_{i+1}(T_i(y))}{u_i(y)}}\]
which completes the proof.
\qed{}

\subsection{Proof of \Cref{lifting by characteristics two}} \label{sec: proof of lifting by characteristics two}

Recall, $\lambda_t = (x \mapsto (x, r_t(x)))_\# \eta_t$. By construction, the conditional radial law of $\lambda_t$ is deterministic. By \Cref{isometry}, we know that $\mathcal{L}_{\mu_t, \lambda_t}$ and $\mathcal{P}_{\lambda_t}$ are isometric inverses from $T_{\mu_t}\mathfrak{M}_+^\Gamma(\Omega)$ to $S_{\lambda_t} \subset T_{\lambda_t}\mathcal{P}_2(\mathfrak{C}_\Omega)$. Now we will show that the lifted tangent field $V_t = \mathcal{L}_{\mu_t, \lambda_t}(v_t, \beta_t)$ and the lifted curve of measures $\lambda_t$ satisfy the continuity equation on $\mathfrak{C}_\Omega$ weakly. 
Let $\Psi \in C_c^1\bigl((0,1)\times \mathfrak{C}_\Omega\bigr).$ Define
\[
F_t(x)\triangleq \bigl(X_t(x),\, r_t(X_t(x))\bigr)\in \mathfrak{C}_\Omega.
\]
Then $\lambda_t=(F_t)_\#\eta_0$, and therefore
\[
\int_{\mathfrak{C}_\Omega}\Psi(t,z)\,d\lambda_t(z)
=
\int_\Omega \Psi\bigl(t,F_t(x)\bigr)\,d\eta_0(x).
\]
Differentiating in $t$ and using the chain rule gives
\begin{align*}
\frac{d}{dt}\int_{\mathfrak{C}_\Omega}\Psi(t,z)\,d\lambda_t(z)
&=
\int_\Omega \frac{d}{dt}\Psi\bigl(t,F_t(x)\bigr)\,d\eta_0(x) \\
&=
\int_\Omega \left[
\partial_t\Psi\bigl(t,F_t(x)\bigr)
+
\left\langle
\nabla_{\mathfrak{C}}\Psi\bigl(t,F_t(x)\bigr),
\frac{d}{dt}F_t(x)
\right\rangle_{\mathfrak{C}}
\right] d\eta_0(x).
\end{align*}
Since
\[
\frac{d}{dt}X_t(x)=v_t(X_t(x))
\qquad\text{and}\qquad
\frac{d}{dt}\bigl[r_t(X_t(x))\bigr]
=
2\,\beta_t(X_t(x))\,r_t(X_t(x)),
\]
we have
\[
\frac{d}{dt}F_t(x)
=
\Bigl(v_t(X_t(x)),\,2\,\beta_t(X_t(x))\,r_t(X_t(x))\Bigr)
=
V_t(F_t(x)).
\]
Hence
\begin{align*}
\frac{d}{dt}\int_{\mathfrak{C}_\Omega}\Psi(t,z)\,d\lambda_t(z)
&=
\int_\Omega
\left[
\partial_t\Psi\bigl(t,F_t(x)\bigr)
+
\left\langle
\nabla_{\mathfrak{C}}\Psi\bigl(t,F_t(x)\bigr),
V_t(F_t(x))
\right\rangle_{\mathfrak{C}}
\right]
d\eta_0(x) \\
&=
\int_{\mathfrak{C}_\Omega}
\left[
\partial_t\Psi(t,z)
+
\langle \nabla_{\mathfrak{C}}\Psi(t,z),V_t(z)\rangle_{\mathfrak{C}}
\right]
d\lambda_t(z).
\end{align*}
Integrating over $t\in(0,1)$, we obtain
\begin{align*}
\int_0^1 \frac{d}{dt}\left(\int_{\mathfrak{C}_\Omega}\Psi(t,z)\,d\lambda_t(z)\right)dt
&=
\int_0^1\int_{\mathfrak{C}_\Omega}
\left[
\partial_t\Psi(t,z)
+
\langle \nabla_{\mathfrak{C}}\Psi(t,z),V_t(z)\rangle_{\mathfrak{C}}
\right]
d\lambda_t(z)\,dt.
\end{align*}
Since $\Psi$ has compact support in time, the left-hand side is zero. Therefore
\[
\int_0^1\int_{\mathfrak{C}_\Omega}
\left[
\partial_t\Psi(t,z)
+
\langle \nabla_{\mathfrak{C}}\Psi(t,z),V_t(z)\rangle_{\mathfrak{C}}
\right]
d\lambda_t(z)\,dt
=0.
\]
This is exactly the weak form of the continuity equation on $\mathfrak{C}_\Omega$. To show that the lift is optimal, consider the following. We know $(\lambda_t, V_t)$ satisfy the continuity equation, so in order for them to be admissible we need the following result. 

\begin{lemma}
    The curve of measures $(\lambda_t)_{t \in [0,1]}$ is weakly continuous.
\end{lemma}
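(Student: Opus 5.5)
The plan mirrors the proof of \Cref{nu_weak_continuity}, now run on the cone. Fix a bounded continuous test function $f$ on $\mathfrak{C}_\Omega$. Since $\lambda_t = (F_t)_\#\eta_0$ with $F_t(x) = (X_t(x), r_t(X_t(x)))$ and $\eta_0 = \mu_0$, we can write
\[
\int_{\mathfrak{C}_\Omega} f\,d\lambda_t = \int_\Omega f\bigl(X_t(x), R_t(x)\bigr)\,d\mu_0(x),
\qquad
R_t(x) = \exp\left(2\int_0^t \beta_s(X_s(x))\,ds\right).
\]
It then suffices to show that $t\mapsto f(F_t(x))$ is continuous for $\mu_0$-a.e.\ $x$ and is dominated by a $\mu_0$-integrable function. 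Dominated convergence then gives continuity of $t\mapsto\int f\,d\lambda_t$.

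\emph{Pointwise continuity.} The characteristic $t\mapsto X_t(x)$ solves an ODE with bounded velocity, so it is (Lipschitz) continuous. Since $s\mapsto\beta_s(X_s(x))\in L^1(0,1)$, the integral $t\mapsto\int_0^t\beta_s(X_s(x))\,ds$ is absolutely continuous, hence $R_t(x)$ is continuous in $t$. Consequently $F_t(x)$ is continuous in $t$ as a map into $\Omega\times(0,\infty)$. The only point needing care is continuity in the cone topology. Because $R_t(x)\ge e^{2\beta_{\min}}>0$ (by \Cref{uniformly bounded lifts}, or directly from the lower bound on $\beta$ in \Cref{admissible class}), the curve stays away from the apex $\mathfrak{o}$. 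There the cone metric is locally equivalent to the product metric, as is clear from the formula for $d_{\mathfrak{C}}$ in \Cref{def: metric cone}. Hence $F_t(x)$ is $d_{\mathfrak{C}}$-continuous, and so is $f(F_t(x))$.

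\emph{Domination.} We have $|f(F_t(x))|\le\|f\|_\infty$, which is $\mu_0$-integrable because $\mu_0(\Omega)<\infty$. This completes the argument. If the class of test functions is meant to include functions growing like $d_{\mathfrak{C}}(z,\mathfrak{o})^2 = r^2$, the same argument still works, since $R_t^2\le e^{4\beta_{\max}}$ supplies the needed bound.

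The main (mild) obstacle is the topological point in the pointwise-continuity step: continuity in the coordinates $(x,r)$ must transfer to the quotient cone topology. This is exactly where the radial lower bound is used. Everything else is a direct copy of \Cref{nu_weak_continuity}.
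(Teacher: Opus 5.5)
Your proposal is correct and is essentially the paper's proof: you write $\int f\,d\lambda_t=\int_\Omega f(X_t(x),R_t(x))\,d\mu_0(x)$, get pointwise continuity in $t$ from the characteristic ODE and the absolute continuity of $t\mapsto\int_0^t\beta_s(X_s(x))\,ds$, and conclude by dominated convergence using $\|f\|_\infty$ and $\mu_0(\Omega)<\infty$. Your added cone-topology remark is harmless but unnecessary, because the formula for $d_{\mathfrak{C}}$ shows that the quotient map $\Omega\times[0,\infty)\to\mathfrak{C}_\Omega$ is continuous everywhere, including at $r=0$; also, as a small correction, since $R_0=1$ the radial bounds should read $e^{2\min\{\beta_{\min},0\}}\le R_t(x)\le e^{2\max\{\beta_{\max},0\}}$.
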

\begin{proof}
    The proof is nearly identical to that of \Cref{nu_weak_continuity}. To show that $\lambda_t$ is weakly continuous, we need to show that $t \mapsto \int f\, d\lambda_t$ is a continuous function of $t$ for all bounded and continuous test functions. In particular, for any bounded and continuous $f$, we have
    \[\int_{\mathfrak{C}_\Omega} f(x,r)\, d\lambda_t(x,r) = \int_{\Omega}f(X_t(x), R_t(x))\, d\mu_0(x)\]
    where $R_t(x) = r_t(X_t(x))$. Since $X_t(x)$ is the characteristic ODE, it is continuous in $t$. Similarly, by the definition of $r_t(X_t(x))$ and the fact that $\beta_t$ is bounded (and thus $\beta_s$ is integrable along the flow), we know that it too is continuous in $t$. Thus, for every sequence $t \rightarrow t^*$ we have 
    \[f(X_{t}(x), R_{t}(x)) \rightarrow f(X_{t^*}(x), R_{t^*}(x))\]
    for $\mu_0$-a.e. $x$. Thus, since $\|f\|_{\infty} < \infty$ and $\mu_0(\Omega) < \infty$, we can apply the dominated convergence theorem to say 
    \[\lim_{t \rightarrow t^*} \int_{\Omega}f(X_t(x), R_t(x))\, d\mu_0(x) \rightarrow \int_{\Omega} f(X_{t^*}(x), R_{t^*}(x))\, d\mu_{0}(x).\]
    Thus, the curve of measures $\lambda_t$ is indeed weakly continuous.
\end{proof}

Since $\lambda_t$ is indeed weakly continuous, we can apply the Benamou-Brenier theorem to say 
\[W_\mathfrak{C}(\lambda_0, \lambda_1)^2 \leq \int_0^1\|V_t\|_{L^2(\lambda_t)}^2 \, dt.\]
By the isometry of $\mathcal{P}_{\lambda_t}$, we have
\begin{align*}
    W_{\mathfrak{C}}(\lambda_0, \lambda_1)^2 \leq \int_0^1 \|(v_t, \beta_t)\|_{\mu_t}^2 \, dt &= \HK(\mu_0, \mu_1)^2
\end{align*}
since $\mu_t$ is a geodesic and $(v_t, \beta_t, \mu_t)$ solve the continuity-reaction equation. For the reverse inequality, the characterization given in \Cref{eq: cone wasserstein and hk equivalence} indicates that $W_\mathfrak{C}(\lambda_0, \lambda_1)^2 \geq \HK(\mu_0, \mu_1)^2$. Thus, $W_\mathfrak{C}(\lambda_0, \lambda_1) = \HK(\mu_0, \mu_1)$ and the lifts $\lambda_0, \lambda_1$ are optimal. Moreover, 
\[\int_0^1\|V_t\|_{L^2(\lambda_t)}^2 \, dt = W_{\mathfrak{C}}(\lambda_0, \lambda_1)^2\]
implying that $(\lambda_t, V_t)$ attains the Benamou-Brenier minimum --
it follows that $\lambda_t$ is a $W_\mathfrak{C}$ geodesic.
\qed{}

\subsection{Proof of \Cref{uniformly bounded lifts}} \label{sec: proof of uniformly bounded lifts}

Recall that $r_t$ along the characteristic flow $X_t$ is given by
\[r_t(X_t(x)) = \exp\left(2\int_0^t \beta_s(X_s(x))\, ds\right).\]
Thus, since $\beta_t$ satisfies
$\beta_t \geq \beta_{\min}$ and $\beta_t \leq \beta_{\max}$
for some $\beta_{\min} > -\infty, \beta_{\max} < \infty$ independent of $t$, then 
\[  \beta_{\max}t\geq \int_0^t\beta_s(X_s(x)) \, ds \geq \beta_{\min}t\]
and $r_{\max}\triangleq \exp(2\max\{\beta_{\max}, 0\})\geq r_t(X_t(x)) \geq \exp\left(2 \min\{\beta_{\min}, 0\}\right) \triangleq r_{\min}$.
\qed{}

\subsection{Proof of \Cref{uniformly regular lifts}} \label{sec: proof of uniformly regular lifts}

Recall the definition $V_t(x,r) = (v_t(x), 2\beta_t(x)r)$. We will start by proving the integrability result. By the isometry of the lifting procedure (\Cref{lifting by characteristics two}), we have
\begin{align*}
    \int_0^1 \|V_t\|_{L^2(\lambda_t)}^2 \, dt &= \int_0^1  \|(v_t, \beta_t)\|_{L^2(\mu_t)}^2 \, dt \\
    &= \int_0^1 \|v_t\|_{L^2(\mu_t)}^2\, dt + 4\int_0^1\|\beta_t\|^2_{L^2(\mu_t)}\, dt \\
    &< \infty 
\end{align*}
by \Cref{admissible class}. For the second result, note that we can bound the spatial Lipschitz constant of a vector field on a manifold using the covariant derivative \citet[Proposition 10.46]{boumal2023introduction}. In particular, $V_t$ is $L$-Lipschitz continuous if and only if $\|\nabla^{\mathfrak{C}}_UV_t\|_{\mathfrak{C}} \leq L\|U\|_{\mathfrak{C}}$ for all $U\in T\mathfrak{C}_\Omega$. Let $U = a + b\partial_r \in T\mathfrak{C}_\Omega$ be a tangent field and write $V_t = c_t + d_t \partial_r$ where $c_t(x,r) = v_t(x)$ and $d_t(x,r) = 2\beta_t(x)r$. The covariant derivative of $V_t$ in the direction $U$ is given by 
\begin{align*}
    \nabla^{\mathfrak{C}}_{U}V_t &= \nabla_a^{\mathfrak{C}}V_t + b \nabla ^{\mathfrak{C}}_{\partial_r}V_t \\
    &= \nabla_a^{\mathfrak{C}}c_t + \nabla_a^{\mathfrak{C}}(d_t\partial_r) + b\nabla ^{\mathfrak{C}}_{\partial_r}c_t + b\nabla_{\partial_r}^{\mathfrak{C}}(d_t\partial_r).
\end{align*}
Now we will apply \Cref{corollary: connection on cone}. The first term is
\begin{align*}
    \nabla_a^{\mathfrak{C}}c_t &= \nabla v_t^\top a - r\langle v_t, a\rangle \partial_r.
\end{align*}
The second term can be simplified by applying the Leibniz rule, which gives
\begin{align*}
    \nabla _a^\mathfrak{C}(d_t\partial_r) &= a(d_t)\partial_r + d_t \nabla_a^\mathfrak{C}\partial_r \\
    &= 2ra(\beta_t) \partial_r + 2\beta_ta.
\end{align*}
For the third term, we have $b\nabla_{\partial_r}^\mathfrak{C}c_t = bv_t/r$, and for the fourth term we have
\begin{align*}
    b\nabla_{\partial_r}^{\mathfrak{C}}d_t\partial_r &= b\left(\partial_r(d_t) + d_t\nabla^\mathfrak{C}_{\partial_r}\partial_r\right)\partial_r \\
    &= 2b \beta_t\partial_r.
\end{align*}
Thus,
\[\nabla^{\mathfrak{C}}_{U}V_t = \nabla v_t^\top a + 2 \beta_t a+ \frac{bv_t}{r} + \left(2ra(\beta_t) + 2b\beta_t - r\langle v_t, a\rangle\right)\partial_r.\]
Taking the norm and applying the triangle inequality yields
\begin{align}
    \left\| \nabla^\mathfrak{C}_U V_t\right\|_{\mathfrak{C}} &\le  r\left\|\nabla v_t \right\|_{\operatorname{op}}\|a\|_2 + 2r|\beta_t|\|a\|_2 + |b|\|v_t\|_2 + 2r|a(\beta_t)| + 2|b||\beta_t| + r\|v_t\|_2\|a\|_2. \label{cov deriv upper bound}
\end{align}
We'll continue to upper bound these terms until we arrive at something that is a constant multiple of $\|U\|_\mathfrak{C}$. Since $a$ has no radial component, we have
\begin{align*}
    |a(\beta_t)| &= |\langle \nabla^\mathfrak{C}\beta_t, a \rangle_{\mathfrak{C}}| = r^2|\langle r^{-2}\nabla \beta_t, a\rangle | \leq \|\nabla \beta_t\|_2\|a\|_2.
\end{align*}
Also observe that we have the following bounds $r\|a\|_2 \leq \|U\|_{\mathfrak{C}}$ and $|b| \leq \|U\|_{\mathfrak{C}}$; when applied to \cref{cov deriv upper bound} this yields
\begin{align*}
    \left\| \nabla^\mathfrak{C}_U V_t\right\|_{\mathfrak{C}} &\le  \left(\left\|\nabla v_t \right\|_{\operatorname{op}} + 2\|\nabla \beta_t\|_2 + 2\|v_t\|_2  + 4|\beta_t|\right)\|U\|_{\mathfrak{C}}
\end{align*}
and taking the supremum over $(x,r)$ yields
\begin{align*}
    \sup_{(x,r)\in \Gamma \times [r_{\min}, r_{\max}]}\left\| \nabla^\mathfrak{C}_U V_t\right\|_{\mathfrak{C}} &\le  \left(\left\|\nabla v_t \right\|_{L^\infty} + 2\|\nabla \beta_t\|_{L^\infty} + 2\|v_t\|_{L^\infty}  + 4\|\beta_t\|_{L^\infty}\right)\|U\|_{\mathfrak{C}}.
\end{align*}
The properties of the measures in the admissible class (\Cref{admissible class}) guarantee that the quantity in the parentheses is bounded by some $L < \infty$ for all $t$, completing the proof.
\qed{}

\subsection{Proof of \Cref{HK parallel transport pde}} \label{sec: proof of HK PDE}
By definition of the Hellinger--Kantorovich covariant derivative, if we write
\begin{align*}
    A_t 
    &\triangleq \partial_t v_t + \nabla v_t \cdot \nabla \varphi_t + 2\varphi_tv_t+ 2\beta_t \nabla \varphi_t 
    , \quad \text{and} \quad B_t
    \triangleq \partial_t \beta_t + \frac12 \langle \nabla \beta_t, \nabla \varphi_t \rangle + 2\varphi_t \beta_t,
\end{align*}
then the parallel transport condition is equivalent to
\begin{align*}
    \Pi_{\mu_t}\binom{A_t}{B_t} = 0.
\end{align*}
Since $\Pi_{\mu_t}$ is the orthogonal projection onto \(T_{\mu_t}\mathfrak{M}_+^\Gamma\), we have
\begin{align*}
    \Pi_{\mu_t}\binom{A_t}{B_t}=0
    \quad\Longleftrightarrow\quad
    \binom{A_t}{B_t}\perp T_{\mu_t}\mathfrak{M}_+^\Gamma.
\end{align*}
By density of $\left\{(\nabla\psi,\psi):\psi\in C_c^\infty(U)\right\}$
in $T_{\mu_t}\mathfrak{M}_+^\Gamma$, this is equivalent to
\begin{align*}
    \int_U
    \left(
        \langle A_t,\nabla \psi\rangle + 4 B_t \psi
    \right)\,d\mu_t
    = 0
    \qquad
    \forall \psi \in C_c^\infty(U).
\end{align*}
Substituting the definitions of \(A_t\) and \(B_t\), we conclude that the parallel condition is equivalent to
\begin{align*}
    \int_U
    \Bigg\langle
        \partial_t v_t
        + \nabla v_t \cdot \nabla \varphi_t
        + 2\varphi_tv_t+ 2\beta_t \nabla \varphi_t,
        \nabla \psi
    \Bigg\rangle
    \,d\mu_t
    +
    4\int_U
    \left(
        \partial_t \beta_t
        + \frac12 \langle \nabla \beta_t,\nabla \varphi_t\rangle
        + 2\varphi_t \beta_t
    \right)\psi
    \,d\mu_t
    = 0
\end{align*}
for every \(\psi \in C_c^\infty(U)\). Equivalently, interpreting the first term in the distributional sense,
\begin{align*}
    \int_U \langle A_t,\nabla\psi\rangle\,d\mu_t
    =
    -\left\langle \nabla\cdot(\mu_t A_t),\psi \right\rangle,
\end{align*}
so the above weak formulation may be written as $-\nabla\cdot(\mu_t A_t) + 4 B_t \mu_t = 0$. That is,
\begin{align*}
    -\nabla\cdot\left(
        \mu_t
        \left(
            \partial_t v_t
            + \nabla v_t \cdot \nabla \varphi_t
            + 2\varphi_tv_t+ 2\beta_t \nabla \varphi_t
        \right)
    \right)
    +
    4
    \left(
        \partial_t \beta_t
        + \frac12 \langle \nabla \beta_t,\nabla \varphi_t\rangle
        + 2\varphi_t \beta_t
    \right)\mu_t
    = 0
\end{align*}
in the sense of distributions.
\qed{}

\subsection{Proof of \Cref{cone ambient manifold}} \label{sec: proof of cone ambient manifold}

We will proceed with a proof by construction. By \Cref{uniformly bounded lifts} there exist constants $r_{\min}, r_{\max} > 0$ such that $\operatorname{supp} \lambda_t \subset \Gamma \times[r_{\min}, r_{\max}] \triangleq K$ for all $t \in [0,1]$. Let $U$ be an open connected neighborhood of $\Gamma$ with $\Gamma \Subset U \Subset \Omega^\circ$ and define 
\[\mathcal{U} \triangleq U \times (r_{\min}/2,2 r_{\max}) \subset \mathfrak{C}_\Omega.\]
Since $\mathcal{U}$ is contained in the smooth part of the cone and stays uniformly bounded away from the apex, it is a smooth, open (and thus boundaryless) manifold endowed with the cone metric $g_\mathfrak{C}$. Now, choose an open set $W$ such that $K \Subset W \Subset \mathcal U$. Since $\mathcal U$ is a smooth manifold without boundary, there exists a smooth complete
Riemannian metric $h$ on $\mathcal U$ \citep{nomizu1961existence}. Let $\chi \in C_c^\infty(\mathcal U)$ satisfy
\[
0 \leq \chi \leq 1, \qquad \chi \equiv 1 \text{ on } W.
\]
Define the augmented metric tensor
\[
\widetilde g_{\mathfrak C} \triangleq \chi g_{\mathfrak C} + (1-\chi)h.
\]
Since $g_{\mathfrak C}$ and $h$ are smooth positive-definite bilinear forms, so is
$\widetilde g_{\mathfrak C}$; hence $\widetilde g_{\mathfrak C}$ is a smooth Riemannian metric
on $\mathcal U$. Moreover, $\widetilde g_\mathfrak{C}$ coincides with $g_\mathfrak{C}$ on  a neighborhood of
$\bigcup_{t\in[0,1]}\operatorname{supp}\lambda_t \subset K$. We next show that $\widetilde g_{\mathfrak C}$ is complete. On the compact set
$\operatorname{supp}\chi$, the smooth metrics $g_{\mathfrak C}$ and $h$ are uniformly equivalent (due to positive definiteness),
so there exists $c>0$ such that
$g_{\mathfrak C} \geq c h$ on $\operatorname{supp}\chi.$ Therefore, on $\operatorname{supp}\chi$,
\[
\widetilde g_{\mathfrak C}
= \chi g_{\mathfrak C} + (1-\chi)h
\geq \chi c\, h + (1-\chi)h
\geq \min\{1,c\}\, h.
\]
Outside $\operatorname{supp}\chi$ we have $\widetilde g_{\mathfrak C}=h$. Hence, we have the global bound $\widetilde g_{\mathfrak C} \geq c_0 h$ for some constant $c_0 > 0$. This bound in combination with the completeness of $h$ guarantees that $\tilde g_\mathfrak{C}$ is complete. This proves (1) and (2). It remains to verify (3). By construction,
\[
\operatorname{supp}\lambda_t \subset K \Subset W
\qquad \forall t\in[0,1],
\]
and $\widetilde g_{\mathfrak C}=g_{\mathfrak C}$ on $W$. Hence all geometric quantities computed
along the lifted curve $(\lambda_t,V_t)$ agree whether they are evaluated using $g_{\mathfrak C}$
or $\widetilde g_{\mathfrak C}$. In particular, by \Cref{uniformly regular lifts},
$(\lambda_t)$ satisfies the continuity equation on $\mathcal U$ with velocity field $V_t$, and
\[
\int_0^1 \|V_t\|_{L^2(\lambda_t)}^2\,dt < \infty,
\]
where $\widetilde V_t \triangleq \zeta V_t$ and $\zeta \in C^\infty_c(\mathcal U)$, $\zeta \equiv 1$ on a neighborhood of $K$, and $\zeta$ has a bounded $\widetilde g_{\mathfrak{C}}$-Lipschitz constant. By definition, $\widetilde V_t = V_t$ on $\operatorname{supp} \lambda_t$. It remains to verify the Lipschitz condition of $\widetilde V_t$ on $\mathcal{U}$. To do so, observe that the uniform $W^{1,\infty}(U)$ control on $(v_t, \beta_t)$ implies uniform $W^{1, \infty}(\mathcal{U})$ control of $V_t(x,r) = v_t(x) + 2r\beta_t(x)\partial_r$. To see this, write $d_x$ as the differential, and observe that since \(r\in[r_{\min}/2,2r_{\max}]\) on \(\mathcal U\), we have that
\[
d_x(2r\beta_t(x))=2rd_x\beta_t(x)
\qquad \text{and} \qquad 
\partial_r(2r\beta_t(x))=2\beta_t(x),
\]
and thus the uniform \(W^{1,\infty}(U)\) bounds on \(v_t\) and \(\beta_t\) imply a
uniform $W^{1,\infty}(\mathcal U)$ bound on $V_t$. Denoting $\nabla^{\mathcal{U}}$ as the Levi-Civita connection on $\mathcal{U}$ and applying the Leibniz rule yields
\[\nabla^{\mathcal{U}}_{X}\widetilde V_t = X(\zeta)V_t + \zeta \nabla_X^{\mathcal{U}}V_t \quad \text{for every tangent field $X$ on $\mathcal{U}$}.\]
Thus
\[\|\nabla^{\mathcal{U}}_{X}\widetilde V_t\|_{\widetilde{g_{\mathfrak{C}}}} \leq \left(\|d\zeta\|_{\operatorname{op}}\|V_t\|_{\widetilde{g_{\mathfrak{C}}}} + \|\nabla^{\mathcal{U}}_XV_t\|_{\widetilde{g_{\mathfrak{C}}}}\right)\|X\|_{\widetilde{g_{\mathfrak{C}}}}\]
which, by \citet[Proposition 10.46]{boumal2023introduction} implies
\[\sup_{t \in [0,1]} \operatorname{Lip}^{\widetilde g_{\mathfrak C}}(\widetilde V_t) < \infty,\]
and $(\lambda_t)$ is indeed a regular curve in
$\mathcal P_2(\mathcal U,\widetilde g_{\mathfrak C})$. Finally, since $(\mathcal U,\widetilde g_{\mathfrak C})$ is a smooth and complete manifold without
boundary, Gigli's parallel transport theory applies to $(\lambda_t)$. Moreover, because
$\widetilde g_{\mathfrak C}$ agrees with $g_{\mathfrak C}$ on a neighborhood of the lifted curve,
any construction involving only the curve, its velocity field, and pointwise Riemannian geometry along the support is independent of the chosen completion.
\qed{}

\subsection{Proof of \Cref{prop: pullback connection}} \label{sec: pullback connection equals HK} We will prove the result by evaluating the pulled-back covariant derivative and showing that it coincides with the HK covariant derivative. Observe that $ V_t= (\nabla \varphi_t, 2r\varphi_t)$ and $\mathcal{L}_{t} \mathbf{u}_t = (\nabla \psi_t, 2r\psi_t).$ Then the lifted total derivative is given by 
\begin{align*}
    \mathbf{D}^{W_2, \mathcal{U}}_{t}(\mathcal{L}_t \mathbf{u}_t) &= \partial_t (\mathcal{L}_t\mathbf{u}_t) + \nabla^{\mathfrak{C}}_{V_t}(\mathcal{L}_t \mathbf{u}_t) \\
    &= \partial_t\left(\nabla \psi_t, 2r\psi_t\right) + \nabla^{\mathfrak{C}}_{(\nabla \varphi_t, 2r\varphi_t)}\left(\nabla \psi_t, 2r\psi_t\right).
\end{align*}
By linearity of the connection in its first argument we obtain 
\begin{align*}
    \nabla^{\mathfrak{C}}_{(\nabla \varphi_t, 2r\varphi_t)}\left(\nabla \psi_t, 2r\psi_t\right) &= \nabla^{\mathfrak{C}}_{\nabla \varphi_t + 2r\varphi_t \partial_r}(\nabla \psi_t + 2r\psi_t \partial_r) \\
    &= \nabla^{\mathfrak{C}}_{(\nabla \varphi_t)}\nabla \psi_t + 2r\varphi_t \nabla ^\mathfrak{C}_{(\partial_r)}\nabla \psi_t + \nabla^{\mathfrak{C}}_{(\nabla \varphi_t)}(2r\psi_t\partial_r) + 2r\varphi_t\nabla ^{\mathfrak{C}}_{(\partial_r)}(2r\psi_t \partial_r).
\end{align*}
Now we can apply the cone covariant derivative formulas from \Cref{corollary: connection on cone}. For the first term, we have $\nabla^{\mathfrak{C}}_{(\nabla \varphi_t)}\nabla \psi_t =  \nabla^2\psi_t \cdot \nabla \varphi_t  - r\langle \nabla \psi_t, \nabla \varphi_t \rangle \partial_r$. For the second term, we have $2r\varphi_t \nabla ^\mathfrak{C}_{(\partial_r)}\nabla \psi_t = 2\varphi_t \nabla \psi_t$. For the third term we first apply the Leibniz rule and then apply \Cref{corollary: connection on cone} to obtain
\begin{align*}
    \nabla^{\mathfrak{C}}_{(\nabla \varphi_t)} (2r\psi_t\partial_r) &= \nabla \varphi_t(2r\psi_t)\partial_r + 2r\psi_t\nabla_{(\nabla \varphi_t)}^{\mathfrak{C}}\partial_r \\
    &= 2r\langle \nabla \varphi_t, \nabla \psi_t\rangle \partial_r + 2\psi_t \nabla \varphi_t.
\end{align*}
Similarly, for the fourth term we first apply the Leibniz rule to say
\begin{align*}
    2r\varphi_t\nabla ^{\mathfrak{C}}_{(\partial_r)}(2r\psi_t \partial_r) &= 4r\varphi_t \psi_t\partial_r.
\end{align*}
Combining everything gives 
\begin{align*}
    \mathbf{D}^{W_2, \mathcal{U}}_{t}(\mathcal{L}_t \mathbf{u}_t) = \begin{pmatrix}
        \partial_t \nabla \psi_t  + \nabla^2 \psi_t \cdot \nabla \varphi_t  + 2\varphi_t \nabla \psi_t + 2\psi_t \nabla \varphi_t \\
        2r\left(\partial_t \psi_t +\frac{1}{2}\langle \nabla \psi_t, \nabla \varphi_t\rangle + 2\varphi_t \psi_t\right)
    \end{pmatrix}.
\end{align*}
Applying $\Pi_{\mu_t}\circ \mathcal{P}_t$ yields
\begin{align*}
    (\Pi_{\mu_t} \circ \mathcal{P}_t)\left(\mathbf{D}^{W_2, \mathcal{U}}_{t}(\mathcal{L}_t \mathbf{u}_t) \right)= \Pi_{\mu_t}\begin{pmatrix}
        \partial_t \nabla \psi_t  + \nabla^2 \psi_t \cdot \nabla \varphi_t  + 2\varphi_t \nabla \psi_t + 2\psi_t \nabla \varphi_t \\
        \partial_t \psi_t +\frac{1}{2}\langle \nabla \psi_t, \nabla \varphi_t\rangle + 2\varphi_t \psi_t
    \end{pmatrix}
\end{align*}
which coincides with $\nabla^{\HK}$ as desired. 
\qed{}

\subsection{Proof of \Cref{parallel transport approx entire geodesic}} \label{proof of parallel transport approx entire geodesic}

Let $\mathbf{u}_t$ be a tangent field along $\mu_t$ that is parallel, i.e. $\nabla^{\HK}_{\mathbf{v}_t} \mathbf{u}_t = 0$ for a.e. $t$, and write $U_t = \mathcal{L}_t \mathbf{u}_t$. By \Cref{prop: pullback connection}, we know that this is equivalent to the condition that 
\[\left(\Pi_{\mu_t} \circ \mathcal{P}_t\right)(\mathbf{D}_t^{W_2, \mathcal{U}}(U_t)) = 0 \iff \left(\mathcal{L}_t \circ \Pi_{\mu_t} \circ \mathcal{P}_t\right)(\mathbf{D}_t^{W_2, \mathcal{U}}(U_t)) = 0.\]
Write $\Pi_{t}^S \triangleq \mathcal{L}_t \circ \Pi_{\mu_t}\circ \mathcal{P}_t$. Now we will leverage the following supplemental results. 
\begin{lemma} \label{orthog proj}
    The map $\Pi_t^S: L^2(\lambda_t; T\mathfrak{C}_\Omega) \rightarrow S_{\lambda_t} \subset T_{\lambda_t}\mathcal{P}_2(\mathfrak{C}_\Omega)$ is an orthogonal projection onto $S_{\lambda_t}$ when $\lambda_t$ has a deterministic conditional radial law. 
\end{lemma}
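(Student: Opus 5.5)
To show that $\Pi_t^S = \mathcal{L}_t \circ \Pi_{\mu_t} \circ \mathcal{P}_t$ is the orthogonal projection onto $S_{\lambda_t}$, I will check three properties on the Hilbert space $L^2(\lambda_t; T\mathfrak{C}_\Omega)$: it maps into $S_{\lambda_t}$, it is idempotent with range exactly $S_{\lambda_t}$, and it is self-adjoint. Self-adjointness together with idempotence characterizes orthogonal projections. Throughout I use the deterministic radial law $\lambda_t = (x\mapsto (x,r_t(x)))_\#\eta_t$ with $d\mu_t = r_t^2\,d\eta_t$. For $u = a + b\,\partial_r$ this gives
\[
\mathcal{P}_t[u](x) = \bigl(a(x,r_t(x)),\, b(x,r_t(x))/(2r_t(x))\bigr).
\]

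\textbf{Step 1: $\mathcal{P}_t$ is bounded on all of $L^2(\lambda_t)$ and is the adjoint of $\mathcal{L}_t$.} The computation in the proof of \Cref{isometry}, part (b), works for arbitrary $u \in L^2(\lambda_t; T\mathfrak{C}_\Omega)$, not only tangent fields, because it uses nothing beyond $r = r_t(x)$ holding $\lambda_t$-a.e. It shows $\|\mathcal{P}_t u\|_{\mu_t} = \|u\|_{\lambda_t}$, so $\mathcal{P}_t$ is an isometry from $L^2(\lambda_t; T\mathfrak{C}_\Omega)$ into $L^2(\mu_t)\times L^2(\mu_t)$, where the latter carries the HK weighting $\int \langle v_1,v_2\rangle + 4\beta_1\beta_2\,d\mu_t$. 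Next I show $\mathcal{P}_t = \mathcal{L}_t^*$ when $\mathcal{L}_t$ is extended to act on all of $L^2(\mu_t)\times L^2(\mu_t)$. For $\mathbf{s} = (v,\beta)$ and $u = a + b\,\partial_r$,
\[
\langle \mathcal{L}_t \mathbf{s}, u\rangle_{\lambda_t} = \int \bigl(r^2\langle v, a\rangle + 2\beta r b\bigr)\,d\lambda_t = \int \Bigl(\langle v, a(x,r_t)\rangle + 4\beta\,\frac{b(x,r_t)}{2r_t}\Bigr)\,d\mu_t = \langle \mathbf{s}, \mathcal{P}_t u\rangle_{\mu_t}.
\]
I expect this adjoint identity to be the only delicate point. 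It needs the normalizations to match: the factor $r^2$ in the cone metric against the $r_t^2$ density, and the factor $4$ in the HK metric against the $1/(2r)$ in $\mathcal{P}_t$. It also needs the extension of $\mathcal{L}_t$ off the tangent space to make sense, which is immediate from the explicit formula. The conditioning hypotheses $0<r_{\min}\le r_t\le r_{\max}$ from \Cref{uniformly bounded lifts} guarantee that every quantity involved is finite.

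\textbf{Step 2: self-adjointness.} Since $\Pi_{\mu_t}$ is self-adjoint on $L^2(\mu_t)\times L^2(\mu_t)$,
\[
(\Pi_t^S)^* = \mathcal{P}_t^*\,\Pi_{\mu_t}^*\,\mathcal{L}_t^* = \mathcal{L}_t\,\Pi_{\mu_t}\,\mathcal{P}_t = \Pi_t^S .
\]

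\textbf{Step 3: idempotence and range.} By \Cref{isometry}, $\mathcal{P}_t\mathcal{L}_t = \mathrm{Id}$, and this holds on the whole extended space because the identity is pointwise. Therefore
\[
(\Pi_t^S)^2 = \mathcal{L}_t\,\Pi_{\mu_t}\,(\mathcal{P}_t\mathcal{L}_t)\,\Pi_{\mu_t}\,\mathcal{P}_t = \mathcal{L}_t\,\Pi_{\mu_t}^2\,\mathcal{P}_t = \Pi_t^S .
\]
For the range, the image of $\Pi_t^S$ lies in $\mathcal{L}_t(T_{\mu_t}\mathfrak{M}_+^\Gamma) = S_{\lambda_t}$. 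Conversely, if $w = \mathcal{L}_t\mathbf{s}$ with $\mathbf{s}\in T_{\mu_t}\mathfrak{M}_+^\Gamma$, then $\Pi_t^S w = \mathcal{L}_t\Pi_{\mu_t}\mathbf{s} = w$, so $\Pi_t^S$ fixes $S_{\lambda_t}$. In addition, $S_{\lambda_t}$ is closed, being the isometric image of a closed subspace, and by \Cref{lifted image} it sits inside $T_{\lambda_t}\mathcal{P}_2(\mathfrak{C}_\Omega)$.

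A bounded, idempotent, self-adjoint operator with range $S_{\lambda_t}$ is the orthogonal projection onto $S_{\lambda_t}$, which proves the claim.
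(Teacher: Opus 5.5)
Your proposal is correct and follows the paper's proof essentially step for step: idempotence comes from $\mathcal{P}_t\mathcal{L}_t=\mathrm{Id}$ together with $\Pi_{\mu_t}^2=\Pi_{\mu_t}$, and self-adjointness comes from the adjoint identity $\mathcal{P}_t=\mathcal{L}_t^*$ combined with self-adjointness of $\Pi_{\mu_t}$. Your explicit extension of $\mathcal{L}_t$ to all of $L^2(\mu_t)\times L^2(\mu_t)$ is a small but useful tightening, because the paper checks the adjoint identity only against $\mathbf{u}\in T_{\mu_t}\mathfrak{M}_+^\Gamma$, and strictly that identifies the adjoint of the restricted lift as $\Pi_{\mu_t}\circ\mathcal{P}_t$ rather than $\mathcal{P}_t$.
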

\begin{proof}
    In order to prove this statement, we need to verify three properties: linearity, idempotence, and self-adjointness. Linearity follows trivially from the fact that $\mathcal{L}_t, \Pi_{\mu_t}$ and $\mathcal{P}_t$ are linear, and thus the composition is linear. For idempotence, fix a $V \in L^2(\lambda_t; T\mathfrak{C}_\Omega)$ and observe that 
    \begin{align*}
        \Pi_t^S(\Pi_t^S(V)) = \Pi_t^S(Z)
    \end{align*}    
    for some $Z \in S_{\lambda_t}$. Thus, $\mathcal{L}_t^{-1}(Z) \in T_{\mu_t}\mathfrak{M}_+^\Gamma$ due to \Cref{lifted image}. Thus
    \begin{align*}
        \Pi_t^S (Z) &= \left(\mathcal{L}_t \circ \mathcal{L}_t^{-1}\right)(Z) = Z \quad \implies \quad \Pi_t^S(\Pi_t^S(V)) = Z = \Pi_t^S(V)
    \end{align*}
    for any $V\in L^2(\lambda_t; T\mathfrak{C}_\Omega),$ completing the proof of idempotence. For self-adjointness, observe the following. Fix a $\mathbf{u} \in T_{\mu_t}\mathfrak{M}_+^\Gamma$ and observe that for any $Z = a(x,r) + b(x,r)\partial_r \in L^2(\lambda_t; T\mathfrak{C}_\Omega)$ we have
    \begin{align*}
        \langle\mathbf{u}, \mathcal{P}_t(a + b\partial_r) \rangle_{\mu_t} &= \langle\mathbf{u}, (a(\cdot), b/2r(\cdot)) \rangle_{\mu_t} \tag{deterministic $\lambda(x|r)$}\\ 
        &= \langle \mathcal{L}_t(\mathbf{u}), \mathcal{L}_t(a(\cdot), b/2r(\cdot))\rangle_{\lambda_t} \tag{\Cref{isometry}}\\
        &= \langle \mathcal{L}_t(\mathbf{u}), Z\rangle_{\lambda_t}
    \end{align*}
    indicating that $\mathcal{P}_t$ is the adjoint of $\mathcal{L}_t$, i.e. $\mathcal{P}_t = \mathcal{L}_t^*$ on the relevant $L^2$ spaces. Now consider the adjoint of $\Pi_t^S$,
    \[(\Pi_t^S)^* = \left(\mathcal{L}_t \circ \Pi_{\mu_t}\circ \mathcal{P}_t\right)^* = \mathcal{L}_t\circ (\Pi_{\mu_t})^* \circ \mathcal{P}_t = \left(\mathcal{L}_t \circ \Pi_{\mu_t}\circ \mathcal{P}_t\right)\]
    since $\Pi_{\mu_t}$ is an orthogonal projection. Thus $\Pi_t^S$ is self adjoint, and therefore its an orthogonal projection onto $S_{\lambda_t}$. 
\end{proof}

\begin{lemma} \label{supplemental equivalence}
    Let $Z_t \in T_{\lambda_t}\mathcal{P}_2(\mathfrak{C}_\Omega)$ be an absolutely continuous cone vector field along a curve of measures $\lambda_t$ with tangent velocity $V_t$. Then we have the following equivalence,
    \[\Pi_{t}^S \left( \mathbf{D}_t^{W_2, \mathcal{U}}Z_t\right) = \Pi_{t}^S\left(\nabla^{W_2, \mathcal{U}}_{V_t}Z_t \right).\]
\end{lemma}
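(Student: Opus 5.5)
The approach is to reduce the identity to the factorization $\Pi_t^S = \Pi_t^S\circ \Pi_{\lambda_t}$, where $\Pi_{\lambda_t}$ is the Wasserstein tangent projection onto $T_{\lambda_t}\mathcal{P}_2(\mathcal U)$. By \Cref{Wasserstein covariant deriv} we have $\nabla^{W_2,\mathcal U}_{V_t}Z_t = \Pi_{\lambda_t}(\mathbf{D}_t^{W_2,\mathcal U}Z_t)$. Once the factorization is known, the claim follows immediately:
\[
\Pi_t^S\bigl(\nabla^{W_2,\mathcal U}_{V_t}Z_t\bigr) = \Pi_t^S\Pi_{\lambda_t}\bigl(\mathbf{D}_t^{W_2,\mathcal U}Z_t\bigr) = \Pi_t^S\bigl(\mathbf{D}_t^{W_2,\mathcal U}Z_t\bigr).
\]
Absolute continuity of $Z_t$, in Gigli's sense, is used only to guarantee that $\mathbf{D}_t^{W_2,\mathcal U}Z_t\in L^2(\lambda_t;T\mathcal U)$ for a.e.\ $t$, so that both sides are defined.

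To prove the factorization, I use \Cref{orthog proj}: since $\lambda_t$ has a deterministic conditional radial law, $\Pi_t^S$ is the orthogonal projection of $L^2(\lambda_t;T\mathfrak C_\Omega)$ onto $S_{\lambda_t}$. By \Cref{lifted image}, together with the radial bounds of \Cref{uniformly bounded lifts}, we have $S_{\lambda_t}\subset T_{\lambda_t}\mathcal P_2(\mathcal U)$. This uses the fact that $\widetilde g_{\mathfrak C}=g_{\mathfrak C}$ near $\operatorname{supp}\lambda_t$ (\Cref{cone ambient manifold}), so that the cone potentials $\chi(r)r^2\varphi(x)$ are valid test potentials on $\mathcal U$. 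The key step is then the elementary Hilbert space fact that if $P_A,P_B$ are orthogonal projections onto closed subspaces with $A\subset B$, then $P_AP_B=P_A$. To check it, decompose any $W$ as $W=P_BW+(I-P_B)W$. The second summand lies in $B^\perp\subset A^\perp$, so $P_A$ annihilates it. Applying this with $A=S_{\lambda_t}$ and $B=T_{\lambda_t}\mathcal P_2(\mathcal U)$ gives $\Pi_t^S=\Pi_t^S\circ\Pi_{\lambda_t}$.

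The main obstacle is making sure both projections act on the same Hilbert space with the same inner product. First, $\Pi_{\lambda_t}$ lives on $L^2(\lambda_t;T\mathcal U)$ with the $\widetilde g_{\mathfrak C}$ metric, while $\Pi_t^S$ was built with $g_{\mathfrak C}$. These agree $\lambda_t$-a.e. because the metrics coincide on a neighborhood of the support. Second, \Cref{orthog proj} requires $\mathcal P_t$ to be well defined on all of $L^2(\lambda_t)$, which the deterministic-radial-law formula of \Cref{isometry} provides. Third, $S_{\lambda_t}$ must be closed. This holds because $\mathcal L_t$ is an isometry from the complete space $T_{\mu_t}\mathfrak M_+^\Gamma$, so its image is complete. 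I would verify these three points explicitly and then conclude with the displayed chain of equalities above.
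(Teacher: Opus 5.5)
Your proposal is correct and is essentially the paper's argument. The paper splits $\mathbf{D}_t^{W_2,\mathcal U}Z_t = \nabla^{W_2,\mathcal U}_{V_t}Z_t + (\mathrm{id}-\Pi_{\lambda_t})\mathbf{D}_t^{W_2,\mathcal U}Z_t$ and kills the second term because $\Pi_t^S$ is an orthogonal projection onto $S_{\lambda_t}\subset T_{\lambda_t}\mathcal{P}_2(\mathfrak{C}_\Omega)$, which is exactly your factorization $\Pi_t^S=\Pi_t^S\circ\Pi_{\lambda_t}$; your three extra checks (common inner product $\lambda_t$-a.e., $\mathcal{P}_t$ defined on all of $L^2(\lambda_t)$ under the deterministic radial law, closedness of $S_{\lambda_t}$) make explicit points the paper leaves implicit.
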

\begin{proof}
    Define $\widetilde Z_t = \mathbf{D}_t^{W_2, \mathcal{U}}Z_t$ and observe that
    \begin{align*}
        \widetilde Z_t = \Pi_{\lambda_t}(\widetilde Z_t) + (1 - \Pi_{\lambda_t})(\widetilde Z_t) = \nabla^{W_2, \mathcal{U}}_{V_t}Z_t + (1 - \Pi_{\lambda_t})(\widetilde Z_t).
    \end{align*}
    Plugging this in, we see that 
    \begin{align*}
        \Pi_{t}^S \left( \mathbf{D}_t^{W_2, \mathcal{U}}Z_t\right) &= \Pi_t^S\left(\nabla_{V_t}^{W_2, \mathcal{U}}Z_t\right) + \left(\Pi_t^S \circ (\operatorname{id} - \Pi_{\lambda_t})\right)\left(\mathbf{D}_t^{W_2, \mathcal{U}}Z_t\right).
    \end{align*}
    The second term on the right side of the equation above is $0$ since $(\operatorname{id} - \Pi_{\lambda_t})$ is a projection onto $T_{\lambda_t}\mathcal{P}_2(\mathfrak{C}_\Omega)^{\perp }$, while $\Pi_{t}^S$ is an orthogonal projection onto $S_{\lambda_t} \subset T_{\lambda_t}\mathcal{P}_2(\mathfrak{C}_\Omega)$. This completes the proof. 
\end{proof}
\Cref{supplemental equivalence} allows us to say $\mathbf{u}_t$ is HK-parallel if $\left(\mathcal{L}_t \circ \Pi_{\mu_t} \circ \mathcal{P}_t\right)(\nabla^{W_2, \mathcal{U}}_{V_t}U_t) = 0$. Now we will pull all objects back to the fixed tangent space at the measure $\lambda_0$. Write $\overline U(t) = \PT^{W_2, \mathcal{U}}_{\lambda_t \rightarrow \lambda_0}(U_t)$ and apply the limit definition of the Wasserstein covariant derivative \citep[Definition 5.1]{gigli2012second} to say
\begin{align*}
    \nabla_{V_t}^{W_2, \mathcal{U}}U_t &= \PT^{W_2, \mathcal{U}}_{\lambda_0 \rightarrow \lambda_t}\left(\PT^{W_2, \mathcal{U}}_{\lambda_t \rightarrow \lambda_0} \left(\nabla_{V_t}^{W_2, \mathcal{U}}U_t\right)\right) \\
    &= \PT^{W_2, \mathcal{U}}_{\lambda_0 \rightarrow \lambda_t}\left(\PT^{W_2, \mathcal{U}}_{\lambda_t \rightarrow \lambda_0} \left(\lim_{h \rightarrow 0^+}\frac{\PT^{W_2, \mathcal{U}}_{\lambda_{t + h}\rightarrow \lambda_{t}}U_{t+h} - U_t}{h}\right)\right) \\
    &= \PT^{W_2, \mathcal{U}}_{\lambda_0 \rightarrow \lambda_t} \left(\lim_{h \rightarrow 0^+}\frac{\PT^{W_2, \mathcal{U}}_{\lambda_{t + h}\rightarrow \lambda_{0}}U_{t+h} - \PT _{\lambda_t \rightarrow \lambda_0}U_t}{h}\right) \\
    &= \PT^{W_2, \mathcal{U}}_{\lambda_0 \rightarrow \lambda_t} \left(\dot{\overline{U}}(t)\right).
\end{align*}
Thus, the parallel condition on $\mathbf{u}_t$ is tantamount to 
\begin{align*}
    \Pi_t^S\left(\PT^{W_2, \mathcal{U}}_{\lambda_0 \rightarrow \lambda_t}\left(\dot{\overline{U}}(t)\right)\right) = 0 \iff \underbrace{\left( \PT^{W_2, \mathcal{U}}_{\lambda_t \rightarrow \lambda_0} \circ \,\Pi_t^S  \circ \PT^{W_2, \mathcal{U}}_{\lambda_0 \rightarrow \lambda_t} \right)}_{\triangleq \overline{\Pi}_t^S}\left(\dot{\overline{U}}(t)\right) = 0.
\end{align*}
So $\mathbf{u}_t$ is HK parallel if and only if
\begin{equation}
    \overline{\Pi}_t^S \left(\dot{\overline{U}}(t)\right) = 0, \quad \text{and}\quad U_t \in \operatorname{Im}(\mathcal{L}_t) \label{eq: lift and pullback parallel cond}
\end{equation}
where $\mathbf{u}_t = \mathcal{P}_t(U_t).$ It's quite easy to check that the second condition is the same as requiring \begin{equation}\overline{U}(t) = \overline{\Pi}_t^S(\overline{U}(t)). \label{pullback ode}
\end{equation} Differentiating this condition in time gives
\begin{align}
    \dot{\overline{U}}(t) &= \dot{\overline{\Pi}}_t^S\left(\overline{U}(t)\right) + \overline{\Pi}_t^S \left(\dot{\overline{U}}(t)\right) = \dot{\overline{\Pi}}_t^S\left(\overline{U}(t)\right) \label{pullback ode two}
\end{align}
by \Cref{eq: lift and pullback parallel cond}. Taylor expanding $\overline{\Pi}^S_t$ in time using the $C^2$ regularity of $t \mapsto \overline{\Pi}_t^S$ gives 
\begin{align*}
    \overline{\Pi}^S_{t+h}\left(\overline{U}(t)\right) &= \underbrace{\overline{\Pi}_t^S\left(\overline{U}(t)\right)}_{\mathclap{ \overline{U}(t) \text{ from \Cref{pullback ode}}}} + h \dot{\overline{\Pi}}_t^S\left(\overline{U}(t)\right) + O(h^2) \\
    &= \overline{U}(t) + h \underbrace{\dot{\overline{\Pi}}_t^S\left(\overline{U}(t)\right)}_{\mathclap{\dot{\overline{U}}(t) \text{ from \Cref{pullback ode two}}}} + O(h^2) \\
    &= \overline{U}(t) + h\dot{\overline{U}}(t) + O(h^2)
\end{align*}
where the $O(h^2)$ is in $L^2(\lambda_0)$ and uniform over $t \in [0, 1-h].$ Note that \cref{pullback ode two} guarantees that 
\[\ddot{\overline{U}}(t) = \ddot{\overline{\Pi}}_t^S\left(\overline{U}(t)\right) + \dot{\overline{\Pi}}_t^S \left(\dot{\overline{U}}(t)\right) = \ddot{\overline{\Pi}}_t^S\left(\overline{U}(t)\right) + \dot{\overline{\Pi}}_t^S \left(\dot{\overline{\Pi}}_t^S\left(\overline{U}(t)\right)\right)\]
which guarantees that $\overline{U}(t)$ inherits the same $C^2$ regularity as $\overline{\Pi}_t^S$. Therefore, we can Taylor expand $\overline{U}(t)$ in time to obtain
\begin{align*}
    \overline{U}(t+h) &= \overline{U}(t) + h \dot{\overline{U}}(t) + O(h^2).
\end{align*}
Taking the difference of the last two equations gives
\begin{equation} \label{quadratic bound}
\overline{\Pi}^S_{t+h}\left(\overline{U}(t)\right) - \overline{U}(t+h) = O(h^2).
\end{equation}
Now consider the approximation scheme in \Cref{alg:HK-parallel-transport}, which iterates \[\widehat U_{t+h} = \Pi_{t+h}^S\left(\PT^{\mathcal{U}}_{z \rightarrow T_{t \rightarrow t+h}(z)}(\widehat U_t)\right)\]
where $T_{t \rightarrow t+h}$ is the Brenier map from $\lambda_{t}$ to $\lambda_{t+h}$. By \Cref{orthog proj} we know that this is equivalent to 
\[\widehat U_{t+h} = \Pi_{t+h}^S\left(\PT^{\mathcal{U}}_{z \rightarrow T_{t \rightarrow t+h}(z)}(\widehat U_t)\right) = \Pi_{t+h}^S\left(\Pi_{\lambda_{t+h}}\left(\PT^{\mathcal{U}}_{z \rightarrow T_{t \rightarrow t+h}(z)}(\widehat U_t)\right)\right).\]
Now note that, due to \Cref{approximation scheme}, we have
\[\left\|\Pi_{\lambda_{t+h}}\left(\PT^{\mathcal{U}}_{z \rightarrow T_{t \rightarrow t+h}(z)}(\widehat U_t)\right) -  \PT^{W_2, \man}_{\lambda_t \rightarrow \lambda_{t+h}}(\widehat U_t)\right\|_{L^2(\lambda_{t+h})} \leq C h^2\|\widehat U_t\|_{L^2(\lambda_t)}.\]
\Cref{admissible class} and \Cref{uniformly regular lifts} together guarantee that $\sup_{t \in [0,1]}C_t \leq C$. By non-expansiveness of $\Pi_{t+h}^S$, 
\begin{equation} \label{quadratic approx two}
\left\|\Pi_{t+h}^S\left(\Pi_{\lambda_{t+h}}\left(\PT^{\mathcal{U}}_{z \rightarrow T_{t \rightarrow t+h}(z)}(\widehat U_t)\right) -  \PT^{W_2, \mathcal{U}}_{\lambda_t \rightarrow \lambda_{t+h}}(\widehat U_t)\right)\right\|_{L^2(\lambda_{t+h})} \leq C h^2\|\widehat U_t\|_{L^2(\lambda_t)}.\end{equation}
Now write 
\[R_{t+h} \triangleq \Pi_{t+h}^S\left(\Pi_{\lambda_{t+h}}\left(\PT^{\mathcal{U}}_{z \rightarrow T_{t \rightarrow t+h}(z)}(\widehat U_t)\right) -  \PT^{W_2, \mathcal{U}}_{\lambda_t \rightarrow \lambda_{t+h}}(\widehat U_t)\right)\]
and observe that
\begin{align*}
    \widehat{U}_{t+h} = \Pi_{t+h}^S\left(\PT^{W_2, \mathcal{U}}_{\lambda_t \rightarrow \lambda_{t+h}}(\widehat U_t)\right) + R_{t+h}.
\end{align*}
Pulling back to $t = 0$ and defining $\overline{U}_t = \PT^{W_2, \mathcal{U}}_{\lambda_t \rightarrow \lambda_0}(\widehat U_t)$ gives
\begin{align*}
    \overline{U}_{t+h} &= \PT^{W_2, \mathcal{U}}_{\lambda_{t+h} \rightarrow \lambda_0}\left(\Pi_{t+h}^S\left(\PT^{W_2, \mathcal{U}}_{\lambda_t \rightarrow \lambda_{t+h}}(\widehat U_t)\right)\right) + \PT^{W_2, \mathcal{U}}_{\lambda_{t+h} \rightarrow \lambda_0}\left(R_{t+h}\right)\\
    &= \PT^{W_2, \mathcal{U}}_{\lambda_{t+h} \rightarrow \lambda_0}\left(\Pi_{t+h}^S\left(\PT^{W_2, \mathcal{U}}_{\lambda_0 \rightarrow \lambda_{t+h}}(\overline U_t)\right)\right) + \PT^{W_2, \mathcal{U}}_{\lambda_{t+h} \rightarrow \lambda_0}\left(R_{t+h}\right)
\end{align*}
which can be written like so,
\begin{equation}
    \overline{U}_{t+h} = \overline{\Pi}^S_{t+h}\left(\overline{U}_t\right) + \PT^{W_2, \mathcal{U}}_{\lambda_{t+h} \rightarrow \lambda_0}\left(R_{t+h}\right).
\end{equation}
Subtracting off $\overline{U}(t + h)$ yields
\begin{align*}
    \overline{U}_{t+h} - \overline{U}(t+h) &= \overline{\Pi}^S_{t+h}\left(\overline{U}_t\right) - \overline{U}(t+h) + \PT^{W_2, \mathcal{U}}_{\lambda_{t+h} \rightarrow \lambda_0}\left(R_{t+h}\right) \\
    &=  \overline{\Pi}^S_{t+h}\left(\overline{U}_t - \overline{U}(t)\right) + \left[\overline{\Pi}^S_{t+h} \left(\overline{U}(t)\right) - \overline{U}(t+h)\right] + \PT^{W_2, \mathcal{U}}_{\lambda_{t+h} \rightarrow \lambda_0}\left(R_{t+h}\right).
\end{align*}
Taking the $\|\cdot \|_{L^2(\lambda_0)}$ norm and applying Minkowski's inequality yields
\begin{align*}
    \left\|\overline{U}_{t+h} - \overline{U}(t+h)\right\|_{L^2(\lambda_0)} &\lesssim \left\|\overline{\Pi}^S_{t+h}\left(\overline{U}_t - \overline{U}(t)\right) \right\|_{L^2(\lambda_0)} + \left\|\overline{\Pi}^S_{t+h} \left(\overline{U}(t)\right) - \overline{U}(t+h)\right\|_{L^2(\lambda_0)}\\
    &\hspace{75mm}+ \left\|\PT^{W_2, \mathcal{U}}_{\lambda_{t+h} \rightarrow \lambda_0}\left(R_{t+h}\right)\right\|_{L^2(\lambda_0)} \\
    &= \left\|\overline{\Pi}^S_{t+h}\left(\overline{U}_t - \overline{U}(t)\right) \right\|_{L^2(\lambda_0)}  + O(h^2) + \left\|\PT^{W_2, \mathcal{U}}_{\lambda_{t+h} \rightarrow \lambda_0}\left(R_{t+h}\right)\right\|_{L^2(\lambda_0)} \tag{\Cref{quadratic bound}} \\
    &\leq \left\|\overline{U}_t - \overline{U}(t)\right\|_{L^2(\lambda_0)}  + O(h^2)  + \left\|\PT^{W_2, \mathcal{U}}_{\lambda_{t+h} \rightarrow \lambda_0}\left(R_{t+h}\right)\right\|_{L^2(\lambda_0)}. \tag{non-expansiveness} 
\end{align*}
By the isometry of parallel transport and \Cref{quadratic approx two}, 
\begin{align*}
    \left\|\PT^{W_2, \mathcal{U}}_{\lambda_{t+h} \rightarrow \lambda_0}\left(R_{t+h}\right)\right\|_{L^2(\lambda_0)} \leq Ch^2\|\widehat U_t\|_{L^2(\lambda_t)}.
\end{align*}
To bound this quantity, we need one final supplemental result.
\begin{lemma} \label{norm control iterate}
    The iterates $\widehat U_t$ produced via \Cref{alg:HK-parallel-transport} satisfy $\|\widehat U_t\|_{L^2(\lambda_t)} \leq \|\widehat U_0\|_{L^2(\lambda_0)}$.
\end{lemma}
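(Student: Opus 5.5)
Each iteration of \Cref{alg:HK-parallel-transport} is a composition of two maps, $\widehat U_{k+1} = \Pi^S_{t_{k+1}}\bigl(\widetilde U_{k+1}\bigr)$ with $\widetilde U_{k+1}(x,r) = \PT^{\mathcal U}_{(x,r)\to T_k(x,r)}\bigl(\widehat U_k(x,r)\bigr)$. I will show that neither map increases the $L^2$ norm and then induct over $k$. The claim as stated is for the grid times $t=t_k$; for intermediate $t$ in the step form used in the proof of \Cref{parallel transport approx entire geodesic}, the same argument applies with $t_{k+1}$ replaced by $t+h$.

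\textbf{Step 1: the pointwise transport step preserves the norm.}
Pointwise parallel transport on $(\mathcal U,\widetilde g_{\mathfrak C})$ is a linear isometry $T_z\mathcal U\to T_{T_k(z)}\mathcal U$. Hence $\|\widetilde U_{k+1}(T_k(z))\|_{\widetilde g_{\mathfrak C}} = \|\widehat U_k(z)\|_{\widetilde g_{\mathfrak C}}$ for every $z$. Here $\widetilde U_{k+1}$ is understood as the field on $\operatorname{supp}\lambda_{t_{k+1}}$ defined through the push-forward, i.e.\ its value at $T_k(z)$ is the transport of $\widehat U_k(z)$.

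Since $\lambda_{t_{k+1}} = (T_k)_\#\lambda_{t_k}$, the change of variables formula gives
\[
\|\widetilde U_{k+1}\|_{L^2(\lambda_{t_{k+1}})}^2 = \int \|\widetilde U_{k+1}(T_k(z))\|^2\,d\lambda_{t_k}(z) = \|\widehat U_k\|_{L^2(\lambda_{t_k})}^2 .
\]
To make $\widetilde U_{k+1}$ a well-defined field on $\lambda_{t_{k+1}}$, I need $T_k$ to be $\lambda_{t_k}$-a.e.\ injective. This holds because $T_k$ is induced by the a.e.\ injective LET Monge map of (A.1) together with the deterministic radial lift. If injectivity failed, one would instead define the field by conditional averaging over fibers, which by Jensen's inequality still does not increase the norm. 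All supports lie in the region where $\widetilde g_{\mathfrak C}=g_{\mathfrak C}$ (\Cref{cone ambient manifold}), so the cone parallel transport of \Cref{cone PT explicit} is exactly what the algorithm uses.

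\textbf{Step 2: the projection step is nonexpansive.}
By \Cref{orthog proj}, $\Pi^S_{t_{k+1}}$ is an orthogonal projection in $L^2(\lambda_{t_{k+1}};T\mathfrak C_\Omega)$, because $\lambda_{t_{k+1}}$ has a deterministic conditional radial law by construction. Orthogonal projections have operator norm at most one, so $\|\widehat U_{k+1}\|\le\|\widetilde U_{k+1}\|$. Combining with Step 1 gives $\|\widehat U_{k+1}\|_{L^2(\lambda_{t_{k+1}})}\le \|\widehat U_k\|_{L^2(\lambda_{t_k})}$, and induction on $k$ yields the claim.

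\textbf{Main obstacle.}
The delicate point is the bookkeeping in Step 1. One must identify the transported field as an element of $L^2(\lambda_{t_{k+1}})$, which needs the a.e.\ injectivity of the lifted map; the Jensen fallback covers the case where injectivity fails. One must also check that the geodesics $s\mapsto \exp^{\mathfrak C}_z(s\Delta t V_k)$ stay inside the region where the two metrics agree. The remaining steps are immediate from \Cref{orthog proj}.
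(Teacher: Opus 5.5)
Your decomposition matches the paper's: non-expansiveness of $\Pi^S_{t_{k+1}}$ from \Cref{orthog proj}, norm preservation of the pointwise transport step by change of variables, and iteration. The gap is that you never justify the identity $\lambda_{t_{k+1}} = (T_k)_\#\lambda_{t_k}$, and your whole Step 1 rests on it. In \Cref{alg:HK-parallel-transport} the phrase ``the lifted map sending $\lambda_{t_k}$ to $\lambda_{t_{k+1}}$'' is a claim, not a definition. The map $T_k$ is defined as $z\mapsto \exp^{\mathfrak C}_z(\Delta t\, V_k)$. By contrast, $\lambda_{t_{k+1}}$ is built independently in \Cref{alg:isometric-lift}, from $\eta_{k+1} = (\tau_k)_\#\eta_k$ and the radial update $r_{k+1}(\tau_k(x)) = q_k(x)\, r_k(x)$, where $\tau_k$ is the base Monge map (called $T_i$ there). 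Showing that these two constructions agree is the substantive content of the paper's proof, and your ``main obstacle'' paragraph does not mention it.

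To close the gap, fix $(x,r)$ and set $\theta = \|\tau_k(x)-x\|_2 \in (0,\pi/2)$ and $e = (\tau_k(x)-x)/\theta$.
\begin{itemize}
\item The local logarithmic formulas give $\Delta t\, V_k(x,r) = \bigl(q_k(x)\sin\theta\, e,\; r\,(q_k(x)\cos\theta - 1)\bigr)$.
\item \Cref{cone exponential and log map} then gives angle $\operatorname{atan2}(r q_k\sin\theta,\, r q_k\cos\theta) = \theta$ and radius $\sqrt{r^2q_k^2\sin^2\theta + r^2q_k^2\cos^2\theta} = q_k r$.
\item Hence $\exp^{\mathfrak C}_{(x,r)}(\Delta t\, V_k(x,r)) = (\tau_k(x), q_k(x)\, r)$. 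The case $\tau_k(x)=x$ is the purely radial geodesic.
\item Pushing $\lambda_{t_k} = (x\mapsto (x,r_k(x)))_\#\eta_k$ forward by this map gives $(x\mapsto(\tau_k(x), q_k(x) r_k(x)))_\#\eta_k = \lambda_{t_{k+1}}$.
\end{itemize}
With this in place your argument is complete. Your treatment of injectivity, including the Jensen fallback, is a careful point the paper leaves implicit. The check that the geodesics stay where $\widetilde g_{\mathfrak C} = g_{\mathfrak C}$ is not needed for this lemma. Parallel transport along any curve is an isometry for the metric in which it is computed. Only endpoint norms on $\operatorname{supp}\lambda_{t_k}$ and $\operatorname{supp}\lambda_{t_{k+1}}$ enter, and there the two metrics agree.
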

\begin{proof}
    Recall the iteration scheme, $\widehat U_{t+h} =  \Pi_{t+h}^S\left(\PT^{\mathcal{U}}_{z \rightarrow T_{t \rightarrow t+h}(z)}(\widehat U_t)\right)$. We will first show the pushforward identity $(T_{t \rightarrow t+h})_\# \lambda_t = \lambda_{t+h}$, and then we will use this to prove the lemma statement. We have 
    \begin{align*}
       \lambda_{t+h} &= \left(x \mapsto \left(x, q_{t}(T_t^{-1}(x))\,r_t(T_t^{-1}(x))\right)\right)_\#\eta_{t+h} \\
       &= \left(x \mapsto \left(x, \sqrt{\frac{u_{t+h}(x)}{u_t\left(T_{t\rightarrow t+h}^{-1}(x)\right)}}\,r_t\left(T_{t\rightarrow t+h}^{-1}(x)\right)\right)\right)_\#\eta_{t+h} \tag{\Cref{alg:isometric-lift}} \\
       &= \left(x \mapsto \left(T_{t \rightarrow t+h}(x), \sqrt{\frac{u_{t+h}(T_{t \rightarrow t+h}(x))}{u_t\left(x\right)}}\,r_t\left(x\right)\right)\right)_\#\eta_{t}.
    \end{align*}
    Since $\lambda_t$ almost every $x$ satisfies $r_t(x) = r$, we have 
    \begin{align*}
       \lambda_{t+h}  &= \left((x,r) \mapsto \left(T_{t \rightarrow t+h}(x), \sqrt{\frac{u_{t+h}(T_{t \rightarrow t+h}(x))}{u_t\left(x\right)}}\,r\right)\right)_\#\lambda_{t} \\
       &=\left((x,r) \mapsto \left(T_{t+h}(x), q_t(x)\,r_t(x)\right)\right)_\#\lambda_{t}.
    \end{align*}
    By \Cref{cone exponential and log map} and the definition of $V_t$, we have $\exp^\mathfrak{C}_{(x,r)}(hV_t(x,r)) = (T_{t+h}(x), q_t(x)r_t(x))$, and thus $\lambda_{t+h} = (z \mapsto\exp_{z}^\mathfrak{C}(V_t(x)))_\# \lambda_t = (T_{t\rightarrow t+h})_\# \lambda_t$. Now, by the non-expansiveness of $\Pi_{t+h}^S$,
    \begin{align*}
        \|\widehat U_{t+h}\|_{L^2(\lambda_{t+h})}^2 &\leq \|\PT^{\mathcal{U}}_{z \rightarrow T_{t \rightarrow t+h}(z)}(\widehat U_t)\|_{L^2(\lambda_{t+h})}^2 \\
        &=\int_{\mathcal{U}}\left\|\PT^{\mathcal{U}}_{z \rightarrow T_{t \rightarrow t+h}(z)}(\widehat U_t)\right\|_{\mathfrak{C}}^2\, d\lambda_{t+h} \\
        &= \int_{\mathcal{U}}\|\widehat U_t\|_{\mathfrak{C}}^2 \,d\lambda_t \tag{due to $(T_{t\rightarrow t+h})_\#\lambda_t = \lambda_{t+h}$} \\
        &= \|\widehat U_t\|_{L^2(\lambda_t)}^2.
    \end{align*}
    Iterating gives the desired result.
\end{proof}
\Cref{norm control iterate} therefore allows us to say 
\begin{align*}
    \left\|\overline{U}_{t+h} - \overline{U}(t+h)\right\|_{L^2(\lambda_0)} &\lesssim  \left\|\overline{U}_t - \overline{U}(t)\right\|_{L^2(\lambda_0)}  + O(h^2). 
\end{align*}
Now summing over iterates from \Cref{alg:HK-parallel-transport} and using $h = O(N^{-1})$ yields
\begin{align*}
    \left\|\overline{U}_{1} - \overline{U}(1)\right\|_{L^2(\lambda_0)} &\leq \sum_{i = 1}^NO(N^{-2}) = O(N^{-1}).
\end{align*}
By the isometry parallel transport, we can parallel transport to $\lambda_1$ and obtain the same rate, $\|\widehat U_1 - U_1\|_{L^2(\lambda_1)} = O(N^{-1})$. Moreover, the isometry of $\mathcal{P}_t$ on $S_{\lambda_t}$ allows us to conclude
\[ \|\mathcal{P}_1(\widehat U_1) - \PT_{\mu_0 \rightarrow \mu_1}^{\HK}(\mathbf{u}_0)\|_{T_{\mu_1}\mathfrak{M}_+^\Gamma} = O(N^{-1}).\]
\qed{}

\section{Parallel Transport Implementation Details} \label{sec: implementation details}

The empirical implementation described in this section is available in the public repository
\[
\texttt{\url{https://github.com/TristanSaidi/HKPT}}.
\]
Its numerical structure follows the same general philosophy as that described in the implementation details appendix in \citet{saidi2026wassersteinparalleltransportpredicting}: one first replaces a non-deterministic empirical transport plan by a deterministic map through barycentric projection, and then uses the coupling itself to aggregate transported tangent information whenever several source atoms contribute to a common target atom. For empirical measures we write
\[
\hat\mu_k = \sum_{i=1}^{n_k} a_i^{(k)}\,\delta_{x_i^{(k)}},
\]
and, on each local step of the lifted path, we solve the discrete logarithmic entropy transport problem to obtain a coupling matrix $\pi^{(k)} \in \mathbb{R}_+^{n_k \times n_{k+1}}$. The code then converts this local plan into an empirical HK tangent, lifts that tangent to the cone, transports it on the cone, and finally pushes it back to the base space.
\medskip

\paragraph{\textbf{Barycentric projection for the input tangent.}}
In the parallel-transport implementation, barycentric projection enters only at the initial stage where one constructs the empirical HK tangent to be transported. More precisely, suppose one begins with two empirical measures and wishes to transport the empirical HK logarithmic map from the source to the target. If the corresponding LET optimizer returns a coupling that is not supported on a map, then the implementation replaces this non-deterministic plan by a deterministic source-supported tangent through barycentric averaging. Given a source atom $x_i$, the corresponding row of the coupling is normalized to a conditional distribution and one sets
\[
\bar y_i
=
\frac{\sum_{j = 1}^{n_1} \pi_{ij} y_j}{\sum_{j = 1}^{n_1} \pi_{ij}}.
\]
Thus the mass splitting encoded by $\pi$ is summarized by the conditional barycenter of its target support. In the default approximation used in the code, one does not stop at averaging the target locations. Instead, for each active pair $(x_i,y_j)$ in the support of $\pi$, one computes the corresponding edgewise HK logarithmic-map quantities and then averages these with the conditional weights
\[
P_{ij}
=
\frac{\pi_{ij}}{\sum_{j = 1}^{n_1} \pi_{ij}}.
\]
More precisely, if $\Delta t$ denotes the local time step and
\[
u_i^{\mathrm{src}} = \frac{a_i}{(\pi \mathbf 1)_i},
\qquad
u_j^{\mathrm{tgt}} = \frac{b_j}{(\pi^\top \mathbf 1)_j},
\]
then for each edge one forms the discrete HK scaling factor
\[
q_{ij} = \sqrt{\frac{u_j^{\mathrm{tgt}}}{u_i^{\mathrm{src}}}},
\]
the associated edge velocity $v_{ij}$, and the edge reaction coefficient $\beta_{ij}$ using the closed-form HK logarithmic-map formulas from the cone model. The local tangent field at $x_i$ is then the barycentric average
\[
v_i = \sum_{j = 1}^{n_1} P_{ij} v_{ij},
\qquad
\beta_i = \sum_{j = 1}^{n_1} P_{ij} \beta_{ij}.
\]
This produces an empirical HK tangent $(v_i,\beta_i)$ on the source support, which is the tangent subsequently lifted and transported. By contrast, once the parallel-transport recursion itself is started, the transition couplings along the lifted path are \emph{not} barycentrically projected: they are retained as couplings and used directly to aggregate incoming cone tangents at each step.
\medskip

\paragraph{\textbf{Coupling-based aggregation of parallel transport.}}
The second important implementation point is the aggregation of transported tangents according to the coupling. This is the HK analogue of the weighted aggregation step described in Appendix~D of \citet{saidi2026wassersteinparalleltransportpredicting}. After constructing the lifted path $(\lambda_k)_{k=0}^N$, the initial HK tangent $(v_0,\beta_0)$ is lifted to a cone tangent via
\[
\mathcal L(v_0,\beta_0)(x,r) = \bigl(v_0(x), 2\beta_0(x)r\bigr),
\]
and is then transported recursively on the cone. When the lifted path is driven by the discrete local LET plans, each active edge $(i,j)$ of the local coupling contributes an edgewise parallel transport from the source cone atom $z_i^{(k)}$ to the target cone atom $z_j^{(k+1)}$. The contribution of this transported edge tangent is weighted by the edge reference mass
\[
m_{ij}^{(k)}
=
\lambda_k(i)\,\frac{\pi_{ij}^{(k)}}{\sum_{j' = 1}^{n_{k+1}} \pi_{ij'}^{(k)}}.
\]
Hence the tangent assigned to a target atom is the coupling-weighted average of all incoming parallel-transported edge tangents:
\[
U_{k+1}(j)
=
\frac{1}{\sum_{i = 1}^{n_k} m_{ij}^{(k)}}
\sum_{i = 1}^{n_k} m_{ij}^{(k)}\,\PT^{\mathfrak{C}}_{z_i^{(k)} \to z_j^{(k+1)}}\bigl(U_k(i)\bigr).
\]
This is the precise sense in which the implementation aggregates parallel transport according to the coupling: if several source atoms send mass to the same target atom, one does not select a preimage arbitrarily, but instead averages all incoming transported tangents using the masses induced by the local LET plan. 
\medskip

\paragraph{\textbf{No tangent-space projection step.}}
In the theoretical HK parallel transport algorithm (\Cref{alg:HK-parallel-transport}) the projection operator $\Pi_{\mu_t}$ is used. Our implementation does \emph{not} apply this step for the following reason: given an empirical point cloud $\{x_1, \dots, x_n\} \subset \mathbb{R}^d$ and empirical transport-reaction observations $\{(v_i, \beta_i)\}_{i = 1}^n$, one can construct an HK tangent $(\nabla \phi, \phi)$ with $\phi \in C^\infty_c(\mathbb{R}^d)$ such that $\phi(x_i) = \beta_i$ and $\nabla \phi(x_i) = v_i$. In particular, let $\varepsilon = \min_{x_i \neq x_j} \|x_i - x_j\|/4$ and let $\eta_i \in C^\infty_c(\mathbb{R}^d)$ satisfy
\[ \eta_i = 1 \quad \text{on a neighborhood of $x_i$, and }  \operatorname{supp} \eta_i \subset B(x_i, \varepsilon)\]
where $B(x_i, \varepsilon)$ is the metric ball of radius $\varepsilon$ centered at $x_i$. Define 
\[\phi(x) = \sum_{i = 1}^n \eta_i(x) \left(\beta_i + \langle v_i, x - x_i\rangle\right). \]
Then $(\nabla \phi(x_i), \phi(x_i)) = (v_i, \beta_i)$ for all $i$. We refer to this construction as justification for \textit{not} applying $\Pi_{\mu_t}$ on empirical observations of transport reaction fields, as for any finite $n$ there exists an HK tangent that coincides with the observations. 

\bibliographystyle{abbrvnat}
\bibliography{main}

@article{chizat2018interpolating,
  title={An interpolating distance between optimal transport and Fisher--Rao metrics},
  author={Chizat, Lenaic and Peyr{\'e}, Gabriel and Schmitzer, Bernhard and Vialard, Fran{\c{c}}ois-Xavier},
  journal={Foundations of Computational Mathematics},
  volume={18},
  number={1},
  pages={1--44},
  year={2018},
  publisher={Springer}
}

@book{ambrosio2005gradient,
  title={Gradient flows: in metric spaces and in the space of probability measures},
  author={Ambrosio, Luigi and Gigli, Nicola and Savar{\'e}, Giuseppe},
  year={2005},
  publisher={Springer}
}

@book{petersen2006riemannian,
  title={Riemannian geometry},
  author={Petersen, Peter},
  year={2006},
  publisher={Springer}
}

@phdthesis{clancy2021interpolating,
  author = {Clancy, Julien},
  title = {Interpolating Spline Curves of Measures},
  school = {Massachusetts Institute of Technology},
  year = {2021}
}

@book{lee2018introduction,
  title={Introduction to Riemannian manifolds},
  author={Lee, John M},
  volume={2},
  year={2018},
  publisher={Springer}
}

@book{gigli2012second,
  title={Second Order Analysis on {$\mathscr{P}_2(M)$}},
  author={Gigli, Nicola},
  year={2012},
  publisher={American Mathematical Society}
}

@article{liero2016optimal,
  title={Optimal transport in competition with reaction: The Hellinger--Kantorovich distance and geodesic curves},
  author={Liero, Matthias and Mielke, Alexander and Savar{\'e}, Giuseppe},
  journal={SIAM Journal on Mathematical Analysis},
  volume={48},
  number={4},
  pages={2869--2911},
  year={2016},
  publisher={SIAM}
}

@book{o1983semi,
  title={Semi-Riemannian geometry with applications to relativity},
  author={O'neill, Barrett},
  volume={103},
  year={1983},
  publisher={Academic press}
}

@article{liero2018optimal,
  title={Optimal entropy-transport problems and a new Hellinger--Kantorovich distance between positive measures},
  author={Liero, Matthias and Mielke, Alexander and Savar{\'e}, Giuseppe},
  journal={Inventiones mathematicae},
  volume={211},
  number={3},
  pages={969--1117},
  year={2018},
  publisher={Springer}
}

@misc{chewi2024statisticaloptimaltransport,
      title={Statistical optimal transport}, 
      author={Sinho Chewi and Jonathan Niles-Weed and Philippe Rigollet},
      year={2024},
      eprint={2407.18163},
      archivePrefix={arXiv},
      primaryClass={math.ST},
      url={https://arxiv.org/abs/2407.18163}, 
}

@misc{kondratyev2016newoptimaltransportdistance,
      title={A new optimal transport distance on the space of finite Radon measures}, 
      author={Stanislav Kondratyev and Léonard Monsaingeon and Dmitry Vorotnikov},
      year={2016},
      eprint={1505.07746},
      archivePrefix={arXiv},
      primaryClass={math.AP},
      url={https://arxiv.org/abs/1505.07746}, 
}

@book{burago2001course,
  title={A course in metric geometry},
  author={Burago, Dmitri and Burago, Yuri and Ivanov, Sergei and others},
  volume={33},
  year={2001},
  publisher={American Mathematical Society Providence}
}

@article{cai2022linearized,
  title={The Linearized Hellinger--Kantorovich Distance},
  author={Cai, Tianji and Cheng, Junyi and Schmitzer, Bernhard and Thorpe, Matthew},
  journal={SIAM Journal on Imaging Sciences},
  volume={15},
  number={1},
  pages={45--83},
  year={2022},
  publisher={SIAM}
}

@article{clancy2022wasserstein,
  title={Wasserstein-fisher-rao splines},
  author={Clancy, Julien and Suarez, Felipe},
  journal={arXiv preprint arXiv:2203.15728},
  year={2022}
}

@article{mccann2001polar,
  title={Polar factorization of maps on Riemannian manifolds},
  author={McCann, Robert J},
  journal={Geometric \& Functional Analysis GAFA},
  volume={11},
  number={3},
  pages={589--608},
  year={2001},
  publisher={Springer}
}

@incollection{ambrosio2012user,
  title={A user’s guide to optimal transport},
  author={Ambrosio, Luigi and Gigli, Nicola},
  booktitle={Modelling and Optimisation of Flows on Networks: Cetraro, Italy 2009, Editors: Benedetto Piccoli, Michel Rascle},
  pages={1--155},
  year={2012},
  publisher={Springer}
}

@book{boumal2023introduction,
  title={An introduction to optimization on smooth manifolds},
  author={Boumal, Nicolas},
  year={2023},
  publisher={Cambridge University Press}
}

@misc{saidi2026wassersteinparalleltransportpredicting,
      title={Wasserstein Parallel Transport for Predicting the Dynamics of Statistical Systems}, 
      author={Tristan Luca Saidi and Gonzalo Mena and Larry Wasserman and Florian Gunsilius},
      year={2026},
      eprint={2603.23736},
      archivePrefix={arXiv},
      primaryClass={stat.ML},
      url={https://arxiv.org/abs/2603.23736}, 
}

@article{nomizu1961existence,
  title={The existence of complete Riemannian metrics},
  author={Nomizu, Katsumi and Ozeki, Hideki},
  journal={Proceedings of the American Mathematical Society},
  volume={12},
  number={6},
  pages={889--891},
  year={1961},
  publisher={JSTOR}
}

@article{schiebinger2019optimal,
  title={Optimal-transport analysis of single-cell gene expression identifies developmental trajectories in reprogramming},
  author={Schiebinger, Geoffrey and Shu, Jian and Tabaka, Marcin and Cleary, Brian and Subramanian, Vidya and Solomon, Aryeh and Gould, Joshua and Liu, Siyan and Lin, Stacie and Berube, Peter and others},
  journal={Cell},
  volume={176},
  number={4},
  pages={928--943},
  year={2019},
  publisher={Elsevier}
}

@article{sejourne2023unbalanced,
  title={Unbalanced optimal transport, from theory to numerics},
  author={S{\'e}journ{\'e}, Thibault and Peyr{\'e}, Gabriel and Vialard, Fran{\c{c}}ois-Xavier},
  journal={Handbook of Numerical Analysis},
  volume={24},
  pages={407--471},
  year={2023},
  publisher={Elsevier}
}

@article{monsaingeon2021new,
  title={A new transportation distance with bulk/interface interactions and flux penalization},
  author={Monsaingeon, L{\'e}onard},
  journal={Calculus of Variations and Partial Differential Equations},
  volume={60},
  number={3},
  pages={101},
  year={2021},
  publisher={Springer}
}

@article{chizat2016scaling,
  title={Scaling algorithms for unbalanced transport problems},
  author={Chizat, Lenaic and Peyr{\'e}, Gabriel and Schmitzer, Bernhard and Vialard, Fran{\c{c}}ois-Xavier},
  journal={arXiv preprint arXiv:1607.05816},
  year={2016}
}

@article{zhang2024learning,
  title={Learning stochastic dynamics from snapshots through regularized unbalanced optimal transport},
  author={Zhang, Zhenyi and Li, Tiejun and Zhou, Peijie},
  journal={arXiv preprint arXiv:2410.00844},
  year={2024}
}

@article{chewi2023log,
  title={Log-concave sampling},
  author={Chewi, Sinho},
  journal={Book draft available at https://chewisinho. github. io},
  volume={9},
  pages={17--18},
  year={2023}
}

@article{cloninger2025linearized,
  title={Linearized Wasserstein dimensionality reduction with approximation guarantees},
  author={Cloninger, Alexander and Hamm, Keaton and Khurana, Varun and Moosm{\"u}ller, Caroline},
  journal={Applied and Computational Harmonic Analysis},
  volume={74},
  pages={101718},
  year={2025},
  publisher={Elsevier}
}

@article{laschos2019geometric,
  title={Geometric properties of cones with applications on the Hellinger--Kantorovich space, and a new distance on the space of probability measures},
  author={Laschos, Vaios and Mielke, Alexander},
  journal={Journal of Functional Analysis},
  volume={276},
  number={11},
  pages={3529--3576},
  year={2019},
  publisher={Elsevier}
}

@article{gallouet2025regularity,
  title={Regularity theory and geometry of unbalanced optimal transport},
  author={Gallou{\"e}t, Thomas and Ghezzi, Roberta and Vialard, Fran{\c{c}}ois-Xavier},
  journal={Journal of Functional Analysis},
  volume={289},
  number={7},
  pages={111042},
  year={2025},
  publisher={Elsevier}
}

@misc{ponnoprat2026minimaxoptimalestimationtransportgrowth,
      title={Minimax Optimal Estimation of Transport-Growth Pairs in Unbalanced Optimal Transport}, 
      author={Donlapark Ponnoprat and Noboru Isobe and Masaaki Imaizumi},
      year={2026},
      eprint={2605.08705},
      archivePrefix={arXiv},
      primaryClass={math.ST},
      url={https://arxiv.org/abs/2605.08705}, 
}

\end{document}